\newcommand{\dd}{\mathrm{d}}
\newcommand{\df}{\dfrac}
\newcommand{\tf}{\tfrac}
\newcommand{\Res}{\operatorname{Res}}
\renewcommand{\Re}{\operatorname{Re}}
\newcommand{\s}{{\sigma}}
\renewcommand{\a}{\alpha}
\renewcommand{\b}{\beta}
\newcommand{\G}{\Gamma}
\newcommand{\z}{\zeta}
\renewcommand{\(}{\left\(}
\renewcommand{\)}{\right\)}
\renewcommand{\[}{\left\[}
\renewcommand{\]}{\right\]}
\newcommand{\di}{i} 
\let\dotlessi=\i
\renewcommand{\i}{\infty}
\numberwithin{equation}{section}
\theoremstyle{plain}
\newtheorem{theorem}{Theorem}[section]
\newtheorem{lemma}[theorem]{Lemma}
\newtheorem{corollary}[theorem]{Corollary}
\newtheorem{remark}[]{Remark}
\def\proof{\@ifnextchar[{\@oproof}{\@nproof}}
\def\@oproof[#1][#2]{\trivlist\item[\hskip\labelsep\textit{#2 Proof of\
		#1.}~]\ignorespaces}
\def\@nproof{\trivlist\item[\hskip\labelsep\textit{Proof.}~]\ignorespaces}
\def\@tocline#1#2#3#4#5#6#7{\relax
	\ifnum #1>\c@tocdepth 
	\else
	\par \addpenalty\@secpenalty\addvspace{#2}%
	\begingroup \hyphenpenalty\@M
	\@ifempty{#4}{%
		\@tempdima\csname r@tocindent\number#1\endcsname\relax
	}{%
		\@tempdima#4\relax
	}%
	\parindent\z@ \leftskip#3\relax \advance\leftskip\@tempdima\relax
	\rightskip\@pnumwidth plus4em \parfillskip-\@pnumwidth
	#5\leavevmode\hskip-\@tempdima
	\ifcase #1
	\or\or \hskip 1em \or \hskip 2em \else \hskip 3em \fi%
	#6\nobreak\relax
	\dotfill\hbox to\@pnumwidth{\@tocpagenum{#7}}\par
	\nobreak
	\endgroup
	\fi}
\begin{document}
	
\title[Voronoi summation formulas, oscillations of Riesz sums and identities]{Voronoi summation formulas, oscillations of Riesz sums, and  Ramanujan-Guinand and Cohen type identities}
\author{Shashank Chorge and Atul Dixit}
\address{Department of Mathematics, Indian Institute of Technology Gandhinagar, Palaj, Gandhinagar, Gujarat 382355, India} \email{shashankashok.c@iitgn.ac.in; adixit@iitgn.ac.in}
\thanks{2020 \textit{Mathematics Subject Classification.} Primary 11M06, 11M26; Secondary 33E20.\\
	\textit{Keywords and phrases.} Vorono\"{\dotlessi} summation formula, non-trivial zeros of the Riemann zeta function, Vinogradov-Korobov zero-free region, oscillations of weighted sums, Cohen type and Ramanujan-Guinand type identities}
\begin{abstract}
	We derive Vorono\"{\dotlessi} summation formulas for the Liouville function $\lambda(n)$, the M\"{o}bius function $\mu(n)$, and for $d^{2}(n)$, where $d(n)$ is the divisor function. The formula for $\lambda(n)$ requires explicit evaluation of certain infinite series for which the use of the Vinogradov-Korobov zero-free region of the Riemann zeta function is indispensable. Several results of independent interest are obtained as special cases of these formulas. For example, a special case of the one for $\mu(n)$ is a famous result of Ramanujan, Hardy, and Littlewood. 
	 Cohen type and Ramanujan-Guinand type identities are established for $\lambda(n)$ and $\sigma_a(n)\sigma_b(n)$, where $\sigma_s(n)$ is the generalized divisor function. As expected, infinite series over the non-trivial zeros of $\zeta(s)$ now form an essential part of all of these formulas. A series involving $\sigma_a(n)\sigma_b(n)$ and product of  modified Bessel functions occurring in one of our identities has appeared in a recent work of Dorigoni and Treilis in string theory. Lastly, we obtain results on oscillations of Riesz sums associated to $\lambda(n), \mu(n)$ and of the error term of Riesz sum of $d^2(n)$ under the assumption of the Riemann Hypothesis, simplicity of the zeros of $\zeta(s)$, the Linear Independence conjecture, and a weaker form of the Gonek-Hejhal conjecture.
\end{abstract}
\maketitle
\tableofcontents

\section{Introduction}\label{intro}

Two of the famous unsolved problems in number theory are the Gauss circle problem and the Dirichlet divisor problem. Let $d(n)$ denote the number of positive divisors of $n$. The Dirichlet divisor problem aims at finding the optimal error bound on $\sum_{n\leq x}d(n)$.

Almost all of the improvements on the error term of $\sum_{n\leq x}d(n)$ rely on a formula of Vorono\"{\dotlessi} \cite{voronoi}, now bearing his name, and specifically known as the \emph{Vorono\"{\dotlessi} summation formula}. For example, Vorono\"{\dotlessi} himself used his result to improve the error bound from $O(\sqrt{x})$ to $O(x^{1/3}\log(x))$. A more general version of this formula, studied by Vorono\"{\dotlessi} himself, involves a test function $\phi$ satisfying certain conditions, and is a representation for the sum $\sum_{n\leq x}d(n)\phi(n)$. Putting further restrictions on $\phi$, one can even obtain a Vorono\"{\dotlessi} summation formula for the infinite series $\sum_{n=1}^{\infty}d(n)\phi(n)$ in which case the formula takes the form
\begin{align}\label{voronoi d(n)}
\sum_{n=1}^{\infty}d(n)\phi(n)=\int_{0}^{\infty}(2\gamma+\log(t))\phi(t)\dd t+2\pi\sum_{n=1}^{\infty}d(n)\int_{0}^{\infty}\phi(t)\left(\dfrac{2}{\pi}K_{0}(4\pi\sqrt{nt})-Y_{0}(4\pi\sqrt{nt})\right)\dd t,
\end{align}
where $\gamma $ is Euler's constant and $Y_{\nu}(\xi)$ is the Bessel function of the second kind of order $\nu$ defined by \cite[p.~64, Equation (1)]{watson-1966a}
\begin{align*}
	Y_{\nu}(\xi)=\frac{J_{\nu}(\xi)\cos(\pi \nu)-J_{-\nu}(\xi)}{\sin(\pi \nu)}
\end{align*}
for $\nu\notin\mathbb{Z}$, and by $Y_{n}(\xi)=\lim_{\nu\to n}Y_{\nu}(\xi)$ for $n\in\mathbb{Z}$, and where $J_{\nu}(\xi)$ is the Bessel function of the first kind of order $\nu$ defined by \cite[p.~40]{watson-1966a}
\begin{align}\label{sumbesselj}
	J_{\nu}(\xi):=\sum_{m=0}^{\infty}\frac{(-1)^m(\xi/2)^{2m+\nu}}{m!\Gamma(m+1+\nu)}, \quad |\xi|<\infty.
\end{align}
Moreover, $K_{\nu}(\xi)$ is the modified Bessel function of the second kind of order $\nu$ defined by \cite[p.~78, eq.~(6)]{watson-1966a}
\begin{align}\label{knus}
	K_{\nu}(\xi)&:=\frac{\pi}{2}\frac{\left(I_{-\nu}(\xi)-I_{\nu}(\xi)\right)}{\sin(\pi \nu)}\hspace{7mm} (\nu\in\mathbb{C}\backslash\mathbb{Z}),\nonumber\\
	K_{n}(\xi)&:=\lim_{\nu\to n}K_{\nu}(\xi)\hspace{21mm}(n\in\mathbb{Z}),
\end{align}
where $I_{\nu}(\xi)$ is the modified Bessel function of the first kind is defined by
\begin{equation}\label{besseli}
	I_{\nu}(\xi):=
	\begin{cases}
		e^{-\frac{1}{2}\pi \nu i}J_{\nu}(e^{\frac{1}{2}\pi i}\xi), & \text{if $-\pi<$ arg $\xi\leq\frac{\pi}{2}$,}\\
		e^{\frac{3}{2}\pi \nu i}J_{\nu}(e^{-\frac{3}{2}\pi i}\xi), & \text{if $\frac{\pi}{2}<$ arg $\xi\leq \pi$}.
	\end{cases}
\end{equation}
The reader is referred to the excellent survey articles \cite{ddp-survey}, \cite{popov2024} on this topic.

Different authors have obtained different conditions of varying generality and applicability under which \eqref{voronoi d(n)} is valid, for example, \eqref{voronoi d(n)} holds if $\phi:\mathbb{R}\to\mathbb{C}$ is a smooth compactly supported function on $(0, \infty)$.

A generalization of \eqref{voronoi d(n)} for the generalized divisor function $\sigma_s(n):=\sum_{d|n}d^s$ can be obtained by letting\footnote{The interchange of the order of limit and summation can be justified under certain extra hypotheses on $\phi$ than the ones given in \cite[Theorem 6.3]{bdrz1}. For example, it is definitely valid for $\phi$ belonging to the Schwartz class.} $\beta\to\infty$ in \cite[Theorem 6.3]{bdrz1}, thereby obtaining
\begin{align}\label{voronoi sigma_s(n)}
	\sum_{n=1}^{\infty}\sigma_{-s}(n)\phi(n)&=-\frac{1}{2}\phi(0+)\z(s)+\int_{0}^{\beta}(\zeta(1+s)+t^{-s}\zeta(1-s))\phi(t)\, dt\nonumber\\
	&\quad+2\pi\sum_{n=1}^{\infty}\sigma_{-s}(n)n^{\frac{s}{2}}\int_{0}^{\beta}t^{-\frac{s}{2}}\phi(t)
	\bigg\{\left(\frac{2}{\pi}K_{s}(4\pi\sqrt{nt})-Y_{s}(4\pi\sqrt{nt})\right)\nonumber\\
	&\quad\quad\quad\quad\quad\quad\times\cos\left(\frac{\pi s}{2}\right)-J_{s}(4\pi\sqrt{n t})\sin\left(\frac{\pi s}{2}\right)\bigg\}\, dt.
\end{align}
where $-1/2<\textup{Re}(s)<1/2$ and $\phi$ satisfies certain conditions, for example, see \cite[Equation (6.9)]{bdrz1}. This formula, in turn, was recently generalized in \cite[Theorem 2.4]{dmv} for $\sigma_z^{(k)}(n):=\sum_{d^k|n}d^z$, where $k\in\mathbb{N}$, and where $\phi$ is a function belonging to the Schwartz class.

Besides playing a vital role in improving the error estimates in the lattice point problems such as the Dirichlet divisor problem, the Vorono\"{\dotlessi} summation formulas \eqref{voronoi d(n)} and \eqref{voronoi sigma_s(n)} are also useful in obtaining some important modular transformations. For example, the special case of \eqref{voronoi sigma_s(n)} when $\phi(x)=e^{-xy}, \textup{Re}(y)>0$, and $s$ is an odd integer, encapsulates the modular transformations satisfied by Eisenstein series on $\textup{SL}_2(\mathbb{Z})$ and their Eichler integrals; see \cite{dkk} for more details. Also, the case $\phi(x)=e^{-xy}$ of \eqref{voronoi d(n)} is used in the study of moments of the Riemann zeta function $\zeta(s)$; see \cite{betcon} for the same.

A formula of the type \eqref{voronoi d(n)} or \eqref{voronoi sigma_s(n)} for the series $\sum_{n=1}^{\infty}a(n)\phi(n)$ (or, for that matter, for $\sum_{n\leq x}a(n)\phi(n)$), where $a(n)$ is some arithmetic function, $\phi$ satisfies certain hypotheses, and whose right-hand side involves an infinite series of an  integral transform of $\phi$ is called the \emph{Vorono\"{\dotlessi} summation formula for $a(n)$}. Today such formulas are known to exist for a large class of arithmetic functions. The reader is referred to the excellent survey \cite{miller4} on this topic as well as for a recent survey  \cite{bdgz}. 

Among other things, this paper aims at obtaining Vorono\"{\dotlessi} summation formula for an arithmetic function whose Dirichlet series evaluates to a quotient (or reciprocal) of the Riemann zeta function in a certain half-plane. It is clear that the non-trivial zeros of the zeta function ought to then play a vital role. In Theorems \ref{vsf dn squared}, and \ref{vsf lambda(n)}, we obtain such formulas for the square of the divisor function $d^{2}(n)$, the M\"{o}bius function $\mu(n)$ and the Liouville function $\lambda(n)$ respectively. Let $n=p_1^{a_1}p_2^{a_2}\cdots p_k^{a_k}$, where $p_i, 1\leq i\leq k$ are distinct primes. Then the latter two functions are defined by
\begin{align*}
\mu(n)&:=\begin{cases}
	1,\hspace{8mm}\text{if}\hspace{1mm}n=1,\\
	(-1)^k,\hspace{0.5mm}\text{if} \hspace{1mm}a_1=\cdots=a_k=1,\\
	0,\hspace{8mm}\text{else},
\end{cases}	\\
 \lambda(n)&:=(-1)^{a_1+a_2+\cdots+a_k}.
\end{align*}
For Re$(s)>1$, the Dirichlet series of $\lambda(n), \mu(n)$ and $d^2(n)$ are respectively given by
	\begin{align}\label{lambda ds}
	\sum_{n=1}^{\infty}\frac{\lambda(n)}{n^s}=\frac{\zeta(2s)}{\zeta(s)},\hspace{6mm}
		\sum_{n=1}^{\infty}\frac{\mu(n)(n)}{n^s}=\frac{1}{\zeta(s)},
	\hspace{6mm}
	\sum_{n=1}^{\infty}\frac{d^2(n)}{n^s}=\frac{\zeta^4(s)}{\zeta(2s)}.
\end{align}	
The formula for $d^{2}(n)$ respects the operation of squaring in the sense that the kernel associated to the integral transform of the test function $\phi$ contains two copies of $\frac{2}{\pi}K_0\big(4\sqrt{x}\big)-Y_0\big(4\sqrt{x})$ in its triple integral representation; see Theorem \ref{vsf dn squared} below. The corresponding formulas for $\lambda(n)$ and $\mu(n)$ given in Theorems \ref{vsf lambda(n)} and \ref{voronoi mu(n)} have simple kernels involving the sine function. 

As far as the M\"{o}bius function is concerned, certain identities of such kind are known. For example. let  $\tilde{\phi}(x)$ and $\tilde{\psi}(x)$ be pair of reciprocal functions in the cosine kernel, that is,
\begin{align}\label{bef nmt}
	\tilde{\psi}(x)=\frac{2}{\sqrt{\pi}}\int_{0}^{\infty}\tilde{\phi}(u)\cos(2ux)\dd u,\hspace{5mm} \tilde{\phi}(x)=\frac{2}{\sqrt{\pi}}\int_{0}^{\infty}\tilde{\psi}(u)\cos(2ux)\dd u.
\end{align}
 Let
 \begin{align}\label{nmt}
 Z_1(s)=\frac{1}{\Gamma(s)}\int_{0}^{\infty}x^{s-1}\tilde{\phi}(x)\dd x,\hspace{5mm} Z_2(s)=\frac{1}{\Gamma(s)}\int_{0}^{\infty}x^{s-1}\tilde{\psi}(x)\dd x
 \end{align}
be the normalized Mellin transforms of $\tilde{\phi}$ and $\tilde{\psi}$ respectively. Then an identity indicated by Ramanujan to Hardy and Littlewood \cite[p.~160, Equation (2.535)]{hl} reads
\begin{align}\label{mr gen}
\sqrt{\a}\sum_{n=1}^{\infty}\frac{\mu(n)}{n}\tilde{\phi}\left(\frac{\a}{n}\right)-\sqrt{\b}\sum_{n=1}^{\infty}\frac{\mu(n)}{n}\tilde{\psi}\left(\frac{\b}{n}\right)&=\frac{1}{\sqrt{\a}}\sum_{\rho}\frac{\Gamma(1-\rho)Z_1(1-\rho)\a^{\rho}}{\zeta'(\rho)}\nonumber\\
&=	-\frac{1}{\sqrt{\b}}\sum_{\rho}\frac{\Gamma(1-\rho)Z_2(1-\rho)\b^{\rho}}{\zeta'(\rho)}	,
\end{align}
where $\alpha\beta=\pi$, and where $\rho$ denotes a non-trivial zero of $\zeta(s)$. The special case $\tilde{\phi}(x)=\tilde{\psi}(x)=e^{-x^2}$, again due to Ramanujan, and discussed in \cite[p.~158, Equation (2.516)]{hl}, which has received a renewed attention in recent years  \cite{dixthet}, \cite{kuhn-robles-roy}, \cite{agm}, \cite{chirre-gonek}, \cite{gupta-vatwani} is
\begin{align*}
	\sqrt{\alpha}\sum_{n=1}^{\infty}\frac{\mu(n)}{n}e^{-\pi\alpha^2/n^2}-\sqrt{\beta}\sum_{n=1}^{\infty}\frac{\mu(n)}{n}e^{-\pi\beta^2/n^2}
	=\frac{1}{2\sqrt{\a}}\sum_{\rho}\frac{\Gamma((1-\rho)/2)}{\zeta^{'}(\rho)}\a^{\rho}\nonumber\\
	=-\frac{1}{2\sqrt{\beta}}\sum_{\rho}\frac{\Gamma((1-\rho)/2)}{\zeta^{'}(\rho)}\beta^{\rho}.
\end{align*}
Here, the series over the non-trivial zeros are not yet known to be convergent in the usual sense. For $m\in\mathbb{Z}$, let $\rho_m:=\beta_m+i\gamma_m$ denote the $m^{\textup{th}}$ non-trivial zero of $\zeta(s)$, where $\rho_{-m}=\beta_m-i\gamma_m$. If we bracket the terms of the series in such a way that the terms for which 
\begin{equation*}
	|\gamma_m-\gamma'_m|<\exp\left(-c\hspace{0.5mm}\gamma_m/\log(\gamma_m)\right)+\exp\left(-c\hspace{0.5mm}\gamma'_m/\log(\gamma'_m)\right),
\end{equation*}
where $c>0$, are included in the same bracket, then the series converges; see \cite[p.~220]{titch}. It is believed that the series are not merely convergent but rather rapidly convergent and so bracketing may not be required. However, this is unproven as of now even upon the assumption of the Riemann Hypothesis (RH). The bracketing requirement can be expressed by rephrasing the series $\sum_{\rho}\frac{\Gamma((1-\rho)/2)}{\zeta^{'}(\rho)}\a^{\rho}$ in the form $\lim_{T_n\to\infty}\sum_{|\gamma_m|<T_n}\frac{\Gamma((1-\rho_m)/2)}{\zeta^{'}(\rho_m)}\a^{\rho_m}$, where, $\{T_n\}$ is a  sequence tending to infinity such that $|T_n-\gamma_m|>\exp{(-A\gamma_m/\log(\gamma_m))}$ for every ordinate $\gamma_m$ of a zero of $s$ \cite[p.~219]{titch}.

The Vorono\"{\dotlessi} summation formula for $\mu(n)$ that we obtain in Theorem \ref{voronoi mu(n)} gives \eqref{mr gen} (and hence \eqref{mr}) as special cases. This formula as well as most of our results in this paper involve infinite series over the non-trivial zeros of $\zeta(s)$ for which we use the same notation, that is, $\lim_{T_n\to\infty}\sum_{|\gamma_m|<T_n}$ to indicate bracketing with $\{T_n\}$ being a certain sequence.

While both the infinite series occurring  \eqref{voronoi d(n)} contain the same arithmetic function $d(n)$ (and likewise, $\sigma_{-s}(n)$ in the case of  \eqref{voronoi sigma_s(n)}), this is not true in general. In the case of $\lambda(n)$,  the corresponding function we get is the function $c(n)$ defined by
\begin{align}\label{c(n) ds}
	\sum_{n=1}^{\infty}\frac{c(n)}{n^{s}}:=\frac{\zeta(2s-1)}{\zeta(s)}
\end{align}
where Re$(s)>1$. The series is absolutely convergent in this half-plane. It is easy to see that
 \begin{align}\label{c(n) defn}
 	c(n)=m\mu(k)\hspace{1mm}\text{if}\hspace{1mm}n=m^2k,\hspace{1mm}\text{where}\hspace{1mm}k\hspace{1mm}\text{is squarefree}.
 \end{align}
Moreover, since $|\mu(k)|=\mu^{2}(k)$, in the same half-plane, we have
\begin{align}\label{abs c(n) ds}
\sum_{n=1}^{\infty}\frac{|c(n)|}{n^{s}}=\sum_{m=1}^{\infty}\frac{m\cdot\mu^{2}(k)}{(m^{2}k)^s}=\zeta(2s-1)\prod_{p\hspace{1mm}\text{prime}}\left(1+p^{-s}\right)\
=\frac{\zeta(s)\zeta(2s-1)}{\zeta(2s)}.
\end{align}
In the course of proving the Vorono\"{\dotlessi} summation formula for $\lambda(n)$, that is, Theorem \ref{vsf lambda(n)}, we need to evaluate certain infinite series involving $c(n)$. For example, we prove that
\begin{align}\label{series involving c(n)}
\sum_{n=1}^{\infty}\frac{c(n)}{n}=\frac{1}{2},\hspace{5mm}\sum_{n=1}^{\infty}\frac{c(n)\log n}{n}=-\frac{\gamma}{2},
\end{align}
An interesting feature here is that one needs  to make use of the Vinogradov-Korobov  zero-free region for $\zeta(s)$ \cite{vinogradov}, \cite{korobov} to evaluate them. The standard zero-free region is not sufficient. See Theorems \ref{c(n) over n thm} and \ref{c(n) log(n) over n thm}. Note that both series in \eqref{series involving c(n)} are conditionally convergent. Several interesting corollaries of the Vorono\"{\dotlessi} summation formula give interesting transformations for series involving $c(n)$ and $\lambda(n)$. See Corollaries \ref{lambda simple pole} - \ref{lambda riesz}.

 Also, in the expression for $\sum_{n=1}^{\infty}d^2(n)\phi(n)$ that we obtain in the Vorono\"{\dotlessi} summation formula for $d^2(n)$ that is, Theorem \ref{vsf dn squared}, we encounter another arithmetic function $b(n)$ which is defined by $$\sum_{n=1}^{\infty}\frac{b(n)}{n^{s}}:=\frac{\zeta^4(s)}{\zeta(2s-1)}\hspace{8mm}(\textup{Re}(s)>1).$$ As shown in Lemma \ref{bn lemma}, 
 $b(n)$ is multiplicative and for $n=p^k$, where  $p$ is a prime and $k\geq0$,
 \begin{equation}\label{bn exp}
 b(p^k)=\binom{k+3}{3}-p\binom{k+1}{3}.
\end{equation}
In his seminal work, Vorono\"{\dotlessi} obtained the identity
\cite[Equation (5), (6)]{voronoi} (see also \cite[p.~254]{lnb})
\begin{equation}\label{voronoi dn}
	2\sum_{n=1}^{\infty}d(n)K_{0}(4\pi\sqrt{nx})=\frac{x}{\pi^2}\sum_{n=1}^{\infty}\frac{d(n)\log(x/n)}{x^2-n^2}-\frac{\gamma}{2}-\left(\frac{1}{4}+\frac{1}{4\pi^2x}\right)\log(x)-\frac{\log(2\pi)}{2\pi^2x},
\end{equation}
where $|\arg(x)|<\pi$. It can be derived from \eqref{voronoi d(n)} by letting $\phi(t)=K_0(4\pi\sqrt{tx})$. An application of this identity is in obtaining the following identity of Koshliakov \cite[Equation (5)]{koshliakov}, which is, in turn, used to derive a simpler proof of the Vorono\"{\dotlessi} summation formula for $d(n)$ \cite{koshliakov}:
 \begin{equation}\label{obe}
	2\sum_{n=1}^{\infty}d(n)\left(K_{0}\left(4\pi e^{\frac{i\pi}{4}}\sqrt{nx}\right)+K_{0}\left(4\pi e^{-\frac{i\pi}{4}}\sqrt{nx}\right)\right)=-\gamma-\frac{1}{2}\log x-\frac{1}{4\pi x}+\frac{x}{\pi}\sum_{n=1}^{\infty}\frac{d(n)}{x^2+n^2},
\end{equation}
where $x\in\mathbb{C}\backslash\{z\in\mathbb{C}: \textup{Re}(z)=0\}$. Soni \cite{soni} proved that both \eqref{voronoi dn} and \eqref{obe} are equivalent to \eqref{voronoi d(n)}.
Another important identity associated with $d(n)$, and termed as \emph{Koshliakov's formula} \cite{bls} (see \cite{koshliakov}), is
\begin{equation}\label{koshfor}
	\gamma-\log\left(\df{4\pi}{x}\right)+4\sum_{n=1}^{\infty}d(n)K_0(2\pi
	nx)=\df{1}{x}\left(\gamma-\log(4\pi x)+4\sum_{n=1}^{\infty}d(n)K_0\left(\df{2\pi n}{x}\right)\right),
\end{equation}
where Re$(x)>0$. Observe that the arguments of the modified Bessel function occurring in the series on the left-hand sides of \eqref{voronoi dn} and \eqref{koshfor} involve $\sqrt{x}$ and $x$ respectively. It is known \cite{soni} that \eqref{koshfor} is also equivalent to \eqref{voronoi d(n)}. 

A generalization of \eqref{voronoi dn} in the setting of $\sigma_{-s}(n)$ was given by Cohen \cite[Theorem 3.4]{cohen} who proved that for $x>0$ and $s\notin\mathbb{Z}$, where $\textup{Re}(s)\geq 0$ \footnote{As
	mentioned in \cite{cohen}, the condition $\sigma\geq 0$ is not restrictive
	since $K_{-s}(w)=K_s(w)$ implies that the left-hand side of \eqref{cohenres}
	is invariant if we replace $s$ by $-s$.}, and $k\in\mathbb{Z}$ such that $k\geq\lfloor\left(\textup{Re}(s)+1\right)/2\rfloor$,
{\allowdisplaybreaks\begin{align}\label{cohenres}
		&8\pi x^{s/2}\sum_{n=1}^{\infty}\sigma_{-s}(n)n^{s/2}K_{s}(4\pi\sqrt{n x})=
		A(s, x)\zeta(s)+B(s, x)\zeta(s+1)\nonumber\\
		&\quad+\frac{2}{\sin\left(\pi s/2\right)}\left(\sum_{1\leq j\leq k}\zeta(2j)\zeta(2j-s)x^{2j-1}+x^{2k+1}\sum_{n=1}^{\infty}\sigma_{-s}(n)\frac{n^{s-2k}-x^{s-2k}}{n^2-x^2}\right),
\end{align}}%
where
\begin{align*}
	A(s, x)=\frac{x^{s-1}}{\sin\left(\pi s/2\right)}-(2\pi)^{1-s}\Gamma(s),\hspace{5mm}
	B(s, x)=\frac{2}{x}(2\pi)^{-s-1}\Gamma(s+1)-\frac{\pi x^{s}}{\cos\left(\pi s/2\right)}.
\end{align*}
Equation \eqref{obe} can also be generalized in this setting; see \cite[p.~844, Equation (7.3)]{bdrz1}.

Lastly, \eqref{koshfor} can also be generalized to get what is known as the \emph{Ramanujan-Guinand} formula \cite[p.~253]{lnb}, \cite{guinand}, \cite[Theorem 1.2]{transf}, namely, for Re$(x)>0$,
\begin{align}\label{mainagain}
&(\pi x)^{-s/2}\Gamma\left(\frac{s}{2}\right)\zeta(s)+(\pi x)^{s/2}\Gamma\left(\frac{-s}{2}\right)\zeta(-s)+4\sum_{n=1}^{\infty}\sigma_{-s}(n)n^{s/2}K_{s/2}(2\pi nx)\nonumber\\
&=\frac{1}{x}\left((\pi /x)^{-s/2}\Gamma\left(\frac{s}{2}\right)\zeta(s)+(\pi/ x)^{s/2}\Gamma\left(\frac{-s}{2}\right)\zeta(-s)+4\sum_{n=1}^{\infty}\sigma_{-s}(n)n^{s/2}K_{s/2}\left(\frac{2\pi n}{x}\right)\right).
\end{align}
This has a connection with the Fourier expansion of non-holomorphic Eisenstein series on $\textup{SL}_2\left(\mathbb{Z}\right)$ and its functional equation; see \cite[p.~60]{cohen}.
Note that Koshliakov's formula \eqref{koshfor} can be viewed as the Ramanujan-Guinand formula corresponding to $d(n)$.

One may wonder how the underlying strategy for deriving the Vorono\"{\dotlessi}-Ramanujan-Cohen identity \eqref{voronoi dn} or Cohen's identity \eqref{cohenres} differs from that used for proving Koshliakov's formula \eqref{koshfor} or the Ramanujan-Guinand formula \eqref{mainagain} as all of the identities in these formulas involve infinite series of the modified Bessel function but with different arguments. The answer is, \eqref{voronoi dn} or \eqref{cohenres} entail the asymmetric form of the functional equation of $\zeta(w)$ in their derivations, that is, \cite[p.~24]{titch},
\begin{equation}\label{zetafe asym}	
	\Gamma(w)\zeta(w)=\frac{1}{2}(2\pi)^w\zeta(1-w)\sec\left(\frac{\pi w}{2}\right)
\end{equation}
whereas, \eqref{koshfor} or \eqref{mainagain} require the symmetric form of the functional equation, namely,
\begin{align}\label{zetafe sym}
\pi^{-\frac{w}{2}}\Gamma\left(\frac{w}{2}\right)\zeta(w)=\pi^{-\frac{(1-w)}{2}}\Gamma\left(\frac{1-w}{2}\right)\zeta(1-w).
\end{align}
For example, using \cite[p.~115, Formula 11.1]{ober}, for $c=\textup{Re}(z)>\pm\frac{1}{2}\textup{Re}(s)$,
\begin{equation*}
	\frac{1}{2\pi i}\int_{(c)}\Gamma\left(z-\frac{s}{2}\right)\Gamma\left(z+\frac{s}{2}\right)(4\pi^2nx)^{-z}\dd z=2K_s(4\pi\sqrt{nx}),
\end{equation*} 
(where, here, and throughout the paper, by $\int_{(c)}$, we mean $\int_{c-i\infty}^{c+i\infty}$), we see that
\begin{align*}
\sum_{n=1}^{\infty}\sigma_{-s}(n)n^{s/2}K_{s}(4\pi\sqrt{n x})&=\frac{1}{4\pi i}\int_{(c)}\Gamma\left(z-\frac{s}{2}\right)\Gamma\left(z+\frac{s}{2}\right)\left(\sum_{n=1}^{\infty}\frac{\sigma_{-s}(n)n^{s/2}}{n^{z}}\right)(4\pi^2x)^{-z}\dd z\nonumber\\
&=\frac{1}{4\pi i}\int_{(c)}\Gamma\left(z-\frac{s}{2}\right)\Gamma\left(z+\frac{s}{2}\right)\zeta\left(z-\frac{s}{2}\right)\zeta\left(z+\frac{s}{2}\right)(4\pi^2x)^{-z}\dd z,
\end{align*}
provided, we now take $c=\textup{Re}(z)>1\pm\frac{1}{2}\textup{Re}(s)$. In the last step, we used the well-known result 
\begin{equation*}
	\sum_{n=1}^{\infty}\frac{\sigma_{-s}(n)n^{s/2}}{n^{z}}=\zeta\left(z-\frac{s}{2}\right)\zeta\left(z+\frac{s}{2}\right).
\end{equation*}
The last integral can now be transformed using \eqref{zetafe asym}. But if we consider the series occurring on the left-hand side of \eqref{mainagain}, then performing a calculation similar to the above, for $c=\textup{Re}(z)>\pm\frac{1}{2}\textup{Re}(s)$,
\begin{align*}
\sum_{n=1}^{\infty}\sigma_{-s}(n)n^{s/2}K_{s/2}(2\pi nx)&=\frac{1}{8\pi i}\int_{(c)}\Gamma\left(\frac{z}{2}-\frac{s}{4}\right)\zeta\left(z-\frac{s}{2}\right)\Gamma\left(\frac{z}{2}+\frac{s}{4}\right)\zeta\left(z+\frac{s}{2}\right)(\pi x)^{-z}\dd z,
\end{align*}
which needs \eqref{zetafe sym} for simplifying the integrand.

With this mind, one may obtain new identities of these kinds by working with different arithmetic functions. Following the terminology in \cite{banerjee-maji} and \cite{banerjee-khurana}, we call identities of the type \eqref{voronoi dn} or \eqref{voronoi sigma_s(n)} as \emph{Cohen type} identities. Also, we term those of the type \eqref{koshfor} or \eqref{mainagain} as \emph{Ramanujan-Guinand type} identities. 

Our second goal of this paper is to obtain Cohen type and Ramanujan-Guinand type identities, whenever possible, for functions whose Dirichlet series can be represented as quotients of Riemann zeta function. While we again concentrate on $\lambda(n)$ and $d^2(n)$, we also obtain such identities for $\sigma_a(n)\sigma_b(n)$. These can be seen in Theorems \ref{cohen lambda thm}, \ref{rg liouville}, \ref{voronoi mu(n)}, \ref{cohen sigma ab}, \ref{rg sigma ab} and corollaries \ref{cohen d(n) squared} and \ref{rg d(n) squared cor}. In Remark \ref{not possible}, we explain why one cannot obtain such identities for $\mu(n)$. 

We emphasize that there are very few studies on infinite series involving $\sigma_a(n)\sigma_b(n)$. Wigert \cite{wigert-zeros} undertook a brief study of $\sum_{n=1}^{\infty}\sigma_{-1}(n)\sigma_{-\alpha}(n)e^{-nx}$, where $\alpha>3$, and obtained a transformation for it \cite[Equation (8)]{wigert-zeros}. In their recent work in string theory, Dorigoni and Treilis \cite[Equation (4.29)]{dorigoni-treilis} encountered the series $\sum_{n=1}^{\infty}\sigma_{a}(n)\sigma_{b}(n)n^{-(a+b)/2}K_{a/2}(2nx)K_{b/2}(2nx)$, which is the exact series that we transform thereby obtaining the Ramanujan-Guinand identity for $\sigma_a(n)\sigma_b(n)$. Indeed, the Laplace equation considered by Dorigoni and Treilis for which the exact solution is the non-perturbative part of the Fourier zero-mode sector has precisely this series as its inhomogeneous part \cite[pp.~36-37]{dorigoni-treilis}. This showcases the importance of such series not only in number theory but also in physics. It is our hope that other series such as the ones in \eqref{cohen sigma ab} will eventually find their use in similar analyses.

We now come to the third and the last goal of our paper, namely, obtaining results on oscillations of certain weighted sums involving the arithmetic functions $\mu(n), \lambda(n)$, and of the error term corresponding to the Riesz-type sum associated to $d^{2}(n)$.

Historically, people have been interested in studying oscillations of functions associated to $ \mu(n), \lambda(n)$ and $d(n)$, namely, $\frac{1}{\sqrt{x}}\sum_{n\leq x}\mu(n)$, $\frac{1}{\sqrt{x}}\sum_{n\leq x}\lambda(n)$ and $\frac{1}{x^{1/4}}\left(\sum_{n\leq x}d(n)-x\log(x)-(2\gamma-1)x\right)$ as $x\to\infty$. For example, Ingham \cite{ingham-ajm} showed that 
\begin{align*}
\underline{\lim}_{x\to\infty}\frac{\sum_{n\leq x}\mu(n)}{\sqrt{x}}=-\infty\hspace{4mm}\text{and}\hspace{4mm}\overline{\lim}_{x\to\infty}\frac{\sum_{n\leq x}\mu(n)}{\sqrt{x}}=\infty
\end{align*}
assuming RH and the \emph{Linear Independence Conjecture} (LI) which states that the positive imaginary ordinates of the zeros of $\zeta(s)$ are linearly independent over $\mathbb{Q}$.

Using a method similar to the one that we have used for deriving our Vorono\"{\dotlessi} summation formulas, we obtain oscillation results on the weighted sums of $\mu(n)$ and $\lambda(n)$ where the weights are functions of Riesz and Schwartz type. For the corresponding Riesz type sum on $d^2(n)$, we obtain a result on the oscillation of its error term. These results are stated and proved in Section \ref{sign changes}.
	
\section{Main results}\label{mr}

\subsection{Results on the Liouville function $\lambda(n)$ }\label{lambda results}

 Let $\Phi(s)$ be holomorphic in $-1<\Re s<2$ except for a possible pole of order two or less at $s=0$, and such that 
 \begin{align}\label{delta defn}
 	\Phi(\sigma+it)\ll t^{-1-\delta}\hspace{1mm}\text{as}\hspace{1mm}t\to \pm\infty
 \end{align}
for some $\delta>0$ and $-1<\sigma<2$. 

\begin{theorem}\label{vsf lambda(n)}
	Let $\lambda(n)$ denote the Liouville function and recall the expression for $c(n)$ from \eqref{c(n) defn}. For $0<\sigma<2$, define $\phi(x):=\frac{1}{2\pi i}\int_{\sigma-i\infty}^{\sigma+i\infty}\Phi(s)x^{-s}\dd s$, where $\Phi(s)$ satisfies \eqref{delta defn} and the condition before \eqref{delta defn}. For $m\in\mathbb{Z}$, let $\rho_m:=\beta_m+i\gamma_m$ denote the $m^{\textup{th}}$ non-trivial zero of $\zeta(s)$, where $\rho_{-m}=\beta_m-i\gamma_m$. Assume that the non-trivial zeros of $\zeta(s)$ are simple.  Then there exists a sequence of numbers $\{T_n\}_{n=1}^\infty$ with $T_n\to\infty$ such that
	\begin{equation}\label{vsf lambda eqn}
		\begin{split}
			\sum_{n=1}^{\infty}\lambda(n)\phi(n)&=\frac{1}{2\zeta(\frac{1}{2})}\int_{0}^{\infty}\frac{\phi(x)}{\sqrt{x}}\dd x+ \lim_{T_n\to\infty}\sum_{|\gamma_m|<T_n}\frac{\zeta(2\rho_m)}{\zeta'(\rho_m)}\int_{0}^{\infty}\phi(x)x^{\rho_m-1}\dd x\\
			&\quad+\sqrt{2}\sum_{n=1}^{\infty}\frac{c(n)}{\sqrt{n}}\int_{0}^{\infty}\frac{\phi(x)}{\sqrt{x}}\sin\bigg(\frac{\pi nx}{2}+\frac{\pi}{4}\bigg)\dd x.
			\end{split}
		\end{equation}
		\end{theorem}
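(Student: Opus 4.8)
The plan is to realise $\sum_n\lambda(n)\phi(n)$ as a Mellin--Barnes integral, shift the line of integration leftwards across the critical strip, and read off the three terms of \eqref{vsf lambda eqn} from the residues collected and from the shifted integral after transforming it through the functional equation of $\zeta$. \emph{Step 1 (Mellin representation and contour shift).} Fix $\sigma_0\in(1,2)$. Using $\phi(n)=\frac{1}{2\pi i}\int_{(\sigma_0)}\Phi(s)n^{-s}\dd s$ and noting that $\sum_n|\lambda(n)|n^{-\sigma_0}<\infty$ while $\Phi(\sigma_0+it)\ll|t|^{-1-\delta}$ is integrable, Fubini and \eqref{lambda ds} give $\sum_{n=1}^{\infty}\lambda(n)\phi(n)=\frac{1}{2\pi i}\int_{(\sigma_0)}\Phi(s)\tfrac{\zeta(2s)}{\zeta(s)}\dd s$. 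I would then move the contour to $\Re s=c$ for a fixed $c\in(-\min(1,\delta),0)$, integrating over the boundary of the rectangle with vertical sides $\Re s=c,\ \Re s=\sigma_0$ and horizontal sides $\Im s=\pm T_n$. The poles crossed are: the simple pole of $\zeta(2s)$ at $s=\tfrac12$, with residue $\tfrac{1}{2}\Phi(\tfrac12)/\zeta(\tfrac12)=\tfrac{1}{2\zeta(1/2)}\int_0^\infty\phi(x)x^{-1/2}\dd x$; the non-trivial zeros $\rho_m$ of $\zeta$ (the simplicity hypothesis enters here), with residues $\tfrac{\zeta(2\rho_m)}{\zeta'(\rho_m)}\Phi(\rho_m)=\tfrac{\zeta(2\rho_m)}{\zeta'(\rho_m)}\int_0^\infty\phi(x)x^{\rho_m-1}\dd x$; and, when $\Phi$ has a pole at $0$, the point $s=0$, where $\zeta(2s)/\zeta(s)$ is holomorphic with $\zeta(2s)/\zeta(s)=1+(\log 2\pi)s+O(s^2)$ (since $\zeta'(0)/\zeta(0)=\log 2\pi$), contributing $\mathcal R_0:=\Res_{s=0}\bigl[\Phi(s)\tfrac{\zeta(2s)}{\zeta(s)}\bigr]$. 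The outcome is that $\sum_n\lambda(n)\phi(n)$ equals the first term of \eqref{vsf lambda eqn}, plus the second term of \eqref{vsf lambda eqn} (read as the bracketed limit $\lim_{T_n\to\infty}\sum_{|\gamma_m|<T_n}$), plus $\mathcal R_0$, plus $\frac{1}{2\pi i}\int_{(c)}\Phi(s)\tfrac{\zeta(2s)}{\zeta(s)}\dd s$.

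\emph{Step 2 (the remaining integral and the $c(n)$-series).} For $\Re s<0$, combining the asymmetric functional equation \eqref{zetafe asym} with the duplication and reflection formulas for $\Gamma$ gives
\begin{equation*}
\frac{\zeta(2s)}{\zeta(s)}=2^{1-s}\pi^{s-1/2}\cos\!\Bigl(\tfrac{\pi s}{2}\Bigr)\Gamma\!\Bigl(\tfrac12-s\Bigr)\frac{\zeta(1-2s)}{\zeta(1-s)},
\end{equation*}
and by \eqref{c(n) ds}--\eqref{c(n) defn} one has $\zeta(1-2s)/\zeta(1-s)=\sum_{n}c(n)n^{s-1}$ there. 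On the other hand, for $-\tfrac12<\Re s<\tfrac12$, $\int_0^\infty x^{-s-1/2}\sin\!\bigl(\tfrac{\pi n x}{2}+\tfrac{\pi}{4}\bigr)\dd x=\cos(\tfrac{\pi s}{2})\Gamma(\tfrac12-s)(\tfrac{\pi n}{2})^{s-1/2}$, so inserting $\phi(x)=\frac{1}{2\pi i}\int_{(\sigma')}\Phi(s)x^{-s}\dd s$ with $\sigma'\in(0,\tfrac12)$ identifies the third term of \eqref{vsf lambda eqn} with $\sqrt2\sum_n\frac{c(n)}{\sqrt n}\cdot\frac{1}{2\pi i}\int_{(\sigma')}\Phi(s)\cos(\tfrac{\pi s}{2})\Gamma(\tfrac12-s)(\tfrac{\pi n}{2})^{s-1/2}\dd s$. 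Shifting each inner contour from $\Re s=\sigma'$ to $\Re s=c<0$ crosses only the pole of $\Phi$ at $s=0$; summing the residues over $n$ and invoking $\sum_n\frac{c(n)}{n}=\tfrac12$ and $\sum_n\frac{c(n)\log n}{n}=-\tfrac{\gamma}{2}$ from \eqref{series involving c(n)} recovers exactly $\mathcal R_0$, whereas the shifted integrals, after interchanging the now absolutely convergent sum over $n$ with the integral (using \eqref{abs c(n) ds}) and applying the displayed identity, reassemble into $\frac{1}{2\pi i}\int_{(c)}\Phi(s)\tfrac{\zeta(2s)}{\zeta(s)}\dd s$. Thus the third term of \eqref{vsf lambda eqn} equals $\mathcal R_0+\frac{1}{2\pi i}\int_{(c)}\Phi(s)\tfrac{\zeta(2s)}{\zeta(s)}\dd s$; subtracting this from the identity of Step 1 yields \eqref{vsf lambda eqn}, the two occurrences of $\mathcal R_0$ cancelling --- a consistency that is built into the evaluations \eqref{series involving c(n)} together with $\zeta(2s)/\zeta(s)=1+(\log 2\pi)s+O(s^2)$.

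\emph{Main obstacle.} The delicate part is the limiting process in Step 1: one must produce a sequence $T_n\to\infty$ along which both horizontal integrals of $\Phi(s)\zeta(2s)/\zeta(s)$ tend to $0$, which forces $T_n$ to keep away from the ordinates of the zeros and rests on classical growth estimates for $\zeta$ and for $1/\zeta$ on horizontal lines inside the critical strip; this is precisely why the zero-sum in \eqref{vsf lambda eqn} must be read with the bracketing $\lim_{T_n\to\infty}\sum_{|\gamma_m|<T_n}$ rather than as an ordinary series (cf.\ \cite[pp.~219--220]{titch}). A secondary, more routine point is the interchange of summation and integration in Step 2, which I would justify in the standard way by splitting the range of the $x$-integral and exploiting the oscillation of the integrand together with \eqref{delta defn} and the decay of $\phi$ at $0$ and $\infty$ that it entails.
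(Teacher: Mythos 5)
Your proposal is correct and follows essentially the same route as the paper: the same Mellin--Barnes representation $\sum_n\lambda(n)\phi(n)=\frac{1}{2\pi i}\int\Phi(s)\zeta(2s)/\zeta(s)\,\dd s$, the same leftward contour shift along a zero-avoiding sequence $T_n$ collecting the residues at $s=0$, $s=\tfrac12$ and the simple zeros $\rho_m$, the asymmetric functional equation combined with the Dirichlet series of $c(n)$, and the Vinogradov--Korobov evaluations $\sum_n c(n)/n=\tfrac12$ and $\sum_n c(n)\log n/n=-\tfrac{\gamma}{2}$ to reconcile the $s=0$ contributions. The only organizational difference is that you treat the $c(n)$-series backwards (computing the Mellin transform of the sine kernel directly and shifting each inner contour past $s=0$) rather than deriving it from the shifted integral via the substitution $s\to 1-s$ and Parseval's formula as the paper does; this is the same computation read in the opposite direction, with your flagged interchange of sum and integral playing the role of the paper's Parseval step.
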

\begin{remark}
We allow $\Phi(s)$ to have a pole at $s=0$ in order to accommodate well-known functions $\phi$ belonging to Schwartz class such as $e^{-x}$ and $K_0(x)$ as well as the functions used to construct Riesz sums.

The condition that $\Phi(s)$ has at most a double pole at $s=0$ can, of course, be relaxed to have a pole of any order at $0$, and the result should remain as above. However, the calculations for justifying this become increasingly complicated and hence we refrain from doing so. 
\end{remark}	
	\begin{corollary}\label{lambda simple pole}
		Let $y>0$. Under the assumptions of Theorem \ref{vsf lambda(n)}, we have
		\begin{align}\label{lambda simple pole eqn}
		\sum_{n=1}^{\infty}\lambda(n)e^{-ny}&=
		\frac{\sqrt{\pi}}{2\sqrt{y}\zeta\left(\frac{1}{2}\right)}+ \lim_{T_n\to\infty}\sum_{|\gamma_m|<T_n}\frac{\zeta(2\rho_m)\Gamma(\rho_m)}{\zeta'(\rho_m)}y^{-\rho_m}\nonumber\\
		&\quad+\sqrt{\pi}\sum_{n=1}^{\infty}\frac{c(n)}{\sqrt{n}}\frac{\sqrt{\sqrt{4y^2+\pi^2n^2}-2y}+\sqrt{\sqrt{4y^2+\pi^2n^2}+2y}}{\sqrt{4y^2+\pi^2n^2}}.
		\end{align}
		In addition, if we assume RH and the absolute convergence of the above series over the non-trivial zeros of $\zeta(s)$, then, as $y\to0^{+}$,
		\begin{align}\label{lambda simple pole bound}
	\sum_{n=1}^{\infty}\lambda(n)e^{-ny}=O\left(\frac{1}{\sqrt{y}}\right).
		\end{align} 
	\end{corollary}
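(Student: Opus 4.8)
The plan is to obtain \eqref{lambda simple pole eqn} as the special case $\phi(x)=e^{-xy}$, $y>0$, of Theorem~\ref{vsf lambda(n)}. This test function comes from $\Phi(s)=\Gamma(s)y^{-s}$, since $\tfrac{1}{2\pi i}\int_{\sigma-i\infty}^{\sigma+i\infty}\Gamma(s)(xy)^{-s}\,\dd s=e^{-xy}$ for $\sigma>0$, so the first task is to check that this $\Phi$ satisfies the hypotheses imposed just before Theorem~\ref{vsf lambda(n)}. In the strip $-1<\Re s<2$ the only singularity of $\Gamma(s)$ is the simple pole at $s=0$ (in particular of order at most two), and Stirling's formula gives $\Gamma(\sigma+it)\ll|t|^{\sigma-1/2}e^{-\pi|t|/2}$ as $t\to\pm\infty$, so \eqref{delta defn} holds for every $\delta>0$. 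Granting Theorem~\ref{vsf lambda(n)}, it then remains to evaluate the three integrals on its right-hand side.

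The first two are elementary Gamma integrals: $\int_{0}^{\infty}x^{-1/2}e^{-xy}\,\dd x=\Gamma(\tfrac12)y^{-1/2}=\sqrt{\pi}\,y^{-1/2}$ produces the first term of \eqref{lambda simple pole eqn}, and $\int_{0}^{\infty}x^{\rho_m-1}e^{-xy}\,\dd x=\Gamma(\rho_m)y^{-\rho_m}$ (valid since $0<\Re\rho_m<1$) produces the sum over the zeros. For the oscillatory integral I would write $\sin(\theta+\tfrac\pi4)=\tfrac{1}{\sqrt2}\Im\big((1+i)e^{i\theta}\big)$ and use $\int_{0}^{\infty}x^{-1/2}e^{-(y-ia)x}\,\dd x=\sqrt{\pi}\,(y-ia)^{-1/2}$ with $a=\pi n/2$; writing $y-ia=re^{-i\theta_0}$ with $r=\tfrac12\sqrt{4y^2+\pi^2n^2}$, taking the imaginary part, and applying $\cos(\theta_0/2)=\sqrt{(r+y)/(2r)}$, $\sin(\theta_0/2)=\sqrt{(r-y)/(2r)}$ gives $\int_{0}^{\infty}x^{-1/2}e^{-xy}\sin\!\big(\tfrac{\pi nx}{2}+\tfrac\pi4\big)\,\dd x=\tfrac{\sqrt{\pi}}{2r}\big(\sqrt{r+y}+\sqrt{r-y}\big)$. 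Since $2r=\sqrt{4y^2+\pi^2n^2}$ and $\sqrt{r\pm y}=\tfrac{1}{\sqrt2}\sqrt{\sqrt{4y^2+\pi^2n^2}\pm2y}$, substituting this into the last sum of \eqref{vsf lambda eqn} cancels the factors $\sqrt2$ and recovers the last sum of \eqref{lambda simple pole eqn}.

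For \eqref{lambda simple pole bound}, assume RH, so $\rho_m=\tfrac12+i\gamma_m$ and $|y^{-\rho_m}|=y^{-1/2}$ for $y>0$. The first term of \eqref{lambda simple pole eqn} is $\tfrac{\sqrt{\pi}}{2\zeta(1/2)}\,y^{-1/2}=O(y^{-1/2})$ because $\zeta(\tfrac12)\ne0$, and the zero-sum is bounded in modulus by $y^{-1/2}\sum_m\big|\zeta(1+2i\gamma_m)\Gamma(\tfrac12+i\gamma_m)/\zeta'(\tfrac12+i\gamma_m)\big|=O(y^{-1/2})$ by the assumed absolute convergence. The real point is that the third sum is only $O(1)$. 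With $A=\sqrt{4y^2+\pi^2n^2}$, the identity $\big(\sqrt{A-2y}+\sqrt{A+2y}\big)^2=2A+2\sqrt{A^2-4y^2}=2(A+\pi n)$ shows that its $n$-th summand is $\tfrac{c(n)}{\sqrt n}H_y(n)$ with $H_y(n):=\sqrt{2\pi}\,\sqrt{A+\pi n}/A$. Setting $u=2y/(\pi n)$ gives $H_y(n)=\tfrac{1}{\sqrt n}G(u)$, where $G(u)=\sqrt2\,\sqrt{1+\sqrt{1+u^2}}/\sqrt{1+u^2}$ is smooth and even with $G(0)=2$; since $u\le2/\pi$ for all $n\ge1$ once $0<y\le1$, Taylor's theorem gives $|G(u)-2|\ll u^2\ll y^2/n^2$ on this range, hence $|H_y(n)-\tfrac{2}{\sqrt n}|\ll y^2 n^{-5/2}$. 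Splitting $H_y(n)=\tfrac{2}{\sqrt n}+\big(H_y(n)-\tfrac{2}{\sqrt n}\big)$, the third sum becomes $\sum_n\tfrac{2c(n)}{n}+\sum_n\tfrac{c(n)}{\sqrt n}\big(H_y(n)-\tfrac{2}{\sqrt n}\big)$, where the first piece equals $2\cdot\tfrac12=1$ by the first relation in \eqref{series involving c(n)} and the second is $\ll y^2\sum_n|c(n)|n^{-3}\ll y^2$ using $|c(n)|\le\sqrt n$ (immediate from \eqref{c(n) defn}). Hence the third sum is $1+O(y^2)$, and adding the three contributions yields $\sum_{n=1}^{\infty}\lambda(n)e^{-ny}=O(y^{-1/2})$ as $y\to0^+$.

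I expect the main obstacle to be exactly this third sum: it is only conditionally convergent, and the naive term-by-term estimate is $\ll\sum_n|c(n)|/n$, which diverges by \eqref{abs c(n) ds}. The remedy is the split above, isolating the limiting value $\sum_n 2c(n)/n=1$ supplied by \eqref{series involving c(n)} and bounding the small remainder uniformly in $y$ via the flatness $|G(u)-2|=O(u^2)$ at $u=0$ together with $|c(n)|\le\sqrt n$. A subsidiary point is to confirm that $\Phi(s)=\Gamma(s)y^{-s}$ genuinely meets the at-most-double-pole hypothesis at $s=0$ and the decay bound \eqref{delta defn}, so that Theorem~\ref{vsf lambda(n)} applies without modification.
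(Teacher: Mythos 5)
Your proposal is correct and follows essentially the same route as the paper: specialize Theorem \ref{vsf lambda(n)} to $\Phi(s)=\Gamma(s)y^{-s}$, evaluate the three integrals (the paper quotes Gradshteyn--Ryzhik \textbf{3.944.13--14} where you compute the sine integral directly via $\sqrt{\pi}(y-i\pi n/2)^{-1/2}$, with the same result), and deduce \eqref{lambda simple pole bound} from RH, the assumed absolute convergence of the zero sum, the large-$n$ expansion of the kernel, and $\sum_n c(n)/n=\tfrac12$. Your splitting of the $c(n)$-sum into the absolutely convergent remainder plus $2\sum_n c(n)/n$ just makes explicit, with uniformity in $y$, the estimate the paper states more briefly.
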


\begin{corollary}\label{lambda gaussian}
	Let $I_{s}(\xi)$ be defined in \eqref{besseli} and let $y>0$. Under the assumptions of Theorem \ref{vsf lambda(n)}, we have
		\begin{align}\label{lambda gaussian eqn}
		\sum_{n=1}^{\infty}\lambda(n)e^{-n^2y}&=
		\frac{\Gamma\left(\frac{5}{4}\right)}{y^{\frac{1}{4}}\zeta\left(\frac{1}{2}\right)}+ \frac{1}{2}\lim_{T_n\to\infty}\sum_{|\gamma_m|<T_n}\frac{\zeta(2\rho_m)}{\zeta'(\rho_m)}\Gamma\left(\frac{\rho_m}{2}\right)y^{-\frac{\rho_m}{2}}\nonumber\\
		&\quad+\frac{\pi^{3/2}}{4\sqrt{y}}\sum_{n=1}^{\infty}c(n)e^{-\frac{\pi^2n^2}{32y}}\left(I_{-\frac{1}{4}}\left(\frac{\pi^2n^2}{32y}\right)+I_{\frac{1}{4}}\left(\frac{\pi^2n^2}{32y}\right)\right).
		\end{align}
			In addition, if we assume RH and the absolute convergence of the above series over the non-trivial zeros of $\zeta(s)$, then, as $y\to0^{+}$,
		\begin{align}\label{gaussian bound}
			\sum_{n=1}^{\infty}\lambda(n)e^{-n^2y}=O\left(\frac{1}{y^{1/4}}\right).
		\end{align} 
\end{corollary}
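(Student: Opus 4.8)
The plan is to apply Theorem~\ref{vsf lambda(n)} to the Schwartz-class test function $\phi(x)=e^{-x^2y}$. First I would record that
\begin{equation*}
\int_0^\infty e^{-x^2y}x^{s-1}\,\dd x=\tfrac12\,\Gamma\!\left(\tfrac{s}{2}\right)y^{-s/2}\qquad(\Re s>0),
\end{equation*}
so that $\phi(x)=\frac{1}{2\pi i}\int_{\sigma-i\infty}^{\sigma+i\infty}\Phi(s)x^{-s}\,\dd s$ for any $0<\sigma<2$ with $\Phi(s)=\tfrac12\Gamma(s/2)y^{-s/2}$. This $\Phi$ is holomorphic in $-1<\Re s<2$ apart from a simple pole at $s=0$ (coming from $\Gamma(s/2)$), and by Stirling's formula it decays like $|t|^{(\sigma-1)/2}e^{-\pi|t|/4}$ on vertical lines, so \eqref{delta defn} and the standing hypotheses of Theorem~\ref{vsf lambda(n)} hold.

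Next I would substitute $\phi$ into \eqref{vsf lambda eqn} and evaluate the three terms. The first equals $\frac{1}{2\zeta(1/2)}\Phi(\tfrac12)=\frac{\Gamma(1/4)}{4\zeta(1/2)}y^{-1/4}=\frac{\Gamma(5/4)}{\zeta(1/2)}y^{-1/4}$, using $\Gamma(5/4)=\tfrac14\Gamma(1/4)$; the sum over zeros produces $\tfrac12\sum_{|\gamma_m|<T_n}\frac{\zeta(2\rho_m)}{\zeta'(\rho_m)}\Gamma(\tfrac{\rho_m}{2})y^{-\rho_m/2}$ upon using $\int_0^\infty e^{-x^2y}x^{\rho_m-1}\,\dd x=\tfrac12\Gamma(\tfrac{\rho_m}{2})y^{-\rho_m/2}$, valid since $0<\Re\rho_m<1$. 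The remaining work is the evaluation of
\begin{equation*}
J_n:=\int_0^\infty x^{-1/2}e^{-x^2y}\sin\!\Big(\tfrac{\pi nx}{2}+\tfrac{\pi}{4}\Big)\,\dd x .
\end{equation*}
Here I would write $\sin(\theta+\tfrac{\pi}{4})=\tfrac{1}{\sqrt2}\Re\big((1-i)e^{i\theta}\big)$, reducing $J_n$ to the real part of the Gaussian exponential integral $\int_0^\infty x^{-1/2}e^{-x^2y+i(\pi n/2)x}\,\dd x$. Completing the square expresses this through a parabolic cylinder function $D_{-1/2}$, and the special reduction of $D_{-1/2}$ to Bessel functions of order $\pm\tfrac14$ (via $D_{-1/2}(z)=\sqrt{z/(2\pi)}\,K_{1/4}(z^2/4)$, the defining relation $K_{1/4}(w)=\tfrac{\pi}{\sqrt2}(I_{-1/4}(w)-I_{1/4}(w))$, and the continuation $K_{1/4}(we^{-i\pi})=e^{i\pi/4}K_{1/4}(w)+\pi iI_{1/4}(w)$), followed by taking real parts, gives
\begin{equation*}
J_n=\frac{\pi^{3/2}\sqrt{n}}{4\sqrt{2}\,\sqrt{y}}\,e^{-\pi^2n^2/(32y)}\left(I_{-1/4}\!\Big(\tfrac{\pi^2n^2}{32y}\Big)+I_{1/4}\!\Big(\tfrac{\pi^2n^2}{32y}\Big)\right).
\end{equation*}
Inserting $\sqrt2\sum_{n\ge1}c(n)n^{-1/2}J_n$ for the third term then reproduces \eqref{lambda gaussian eqn}.

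For the bound \eqref{gaussian bound} I would estimate the three terms of \eqref{lambda gaussian eqn} as $y\to0^+$. The first is plainly $O(y^{-1/4})$. Under RH each $\rho_m=\tfrac12+i\gamma_m$, so $|y^{-\rho_m/2}|=y^{-1/4}$ and the zero-sum is at most $y^{-1/4}\sum_m\big|\zeta(2\rho_m)\Gamma(\tfrac{\rho_m}{2})/\zeta'(\rho_m)\big|$, which is $O(y^{-1/4})$ exactly by the assumed absolute convergence (so no bracketing is needed). For the Bessel series I would use $e^{-w}\big(I_{-1/4}(w)+I_{1/4}(w)\big)=\sqrt{2/(\pi w)}\,\big(1+O(1/w)\big)$ as $w\to\infty$, uniformly; since $w_n=\pi^2n^2/(32y)\to\infty$ for every fixed $n$ as $y\to0^+$, the $n$-th summand behaves like $\tfrac{8\sqrt{y}}{\pi^{3/2}}\tfrac{c(n)}{n}$ with an absolutely summable error of size $O\big(y^{3/2}|c(n)|/n^3\big)$, so by the conditionally convergent evaluation $\sum_{n\ge1}c(n)/n=\tfrac12$ from \eqref{series involving c(n)} the whole third term tends to a finite limit (namely $1$) and is thus $O(1)$. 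Adding the three contributions yields $\sum_{n\ge1}\lambda(n)e^{-n^2y}=O(y^{-1/4})$.

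I expect the genuine obstacle to be the evaluation of $J_n$: in particular, pinning down the correct branch when reducing $D_{-1/2}$ of a purely imaginary argument to $I_{\pm1/4}$ of a positive real argument. A secondary point needing care is the interchange of the $y\to0^+$ limit with the infinite sum in the third term, which works only because $\sum_n c(n)/n$ converges in tandem with the uniform asymptotics of $I_{\pm1/4}$; one must split off the main term $\sqrt{2/(\pi w_n)}$ before summing, since $\sum_n|c(n)|/n$ diverges.
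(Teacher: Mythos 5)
Your proposal is correct and follows the same overall route as the paper: take $\Phi(s)=\tfrac12\Gamma(s/2)y^{-s/2}$ (simple pole at $s=0$, rapid decay on vertical lines) in Theorem \ref{vsf lambda(n)}, evaluate the first term as $\tfrac{1}{2\zeta(1/2)}\Phi(\tfrac12)=\Gamma(\tfrac54)y^{-1/4}/\zeta(\tfrac12)$ and the zero term via $\int_0^\infty e^{-x^2y}x^{\rho_m-1}\,\dd x=\tfrac12\Gamma(\tfrac{\rho_m}{2})y^{-\rho_m/2}$, and reduce everything to the single integral $J_n$. The only genuine difference is how $J_n$ is evaluated: the paper splits $\sin(\tfrac{\pi nx}{2}+\tfrac{\pi}{4})$ into sine and cosine pieces and uses the Gradshteyn--Ryzhik evaluations \textbf{3.952.7--8} in terms of ${}_1F_1$, followed by Kummer's second theorem $e^{-z}{}_1F_1(a;2a;2z)=\Gamma(a+\tfrac12)(z/2)^{1/2-a}I_{a-1/2}(z)$ to land on $I_{\pm1/4}$, whereas you go through the complex Gaussian integral, the parabolic cylinder function $D_{-1/2}$, and the continuation $K_{1/4}(we^{-i\pi})=e^{i\pi/4}K_{1/4}(w)+\pi i I_{1/4}(w)$ together with $K_{1/4}=\tfrac{\pi}{\sqrt2}(I_{-1/4}-I_{1/4})$. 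Both are standard and give the same closed form (your $J_n$ agrees with the paper's \eqref{sin cos both}); the paper's route has the advantage of staying with real integrals so no branch bookkeeping is needed, while yours is a one-line reduction once the $D_{-1/2}$ identities are trusted — your flagged branch issue is indeed the only delicate step there. For the bound \eqref{gaussian bound} your argument coincides with the paper's (asymptotics of $I_{\pm1/4}$, Theorem \ref{c(n) over n thm}, RH plus assumed absolute convergence of the zero sum); your explicit remark that one must peel off the exact main term $\sqrt{2/(\pi w_n)}=8\sqrt{y}/(\pi^{3/2}n)$ before summing, because $\sum_n|c(n)|/n$ diverges while $\sum_n c(n)/n=\tfrac12$ only conditionally, is a slightly more careful rendering of the step the paper states tersely, and is exactly the right justification.
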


\begin{corollary}\label{lambda hypergeometric}
Let $K_{s}(\xi)$ be defined in \eqref{knus} and let $y>0$. Under the assumptions of Theorem \ref{vsf lambda(n)}, we have
\begin{align}\label{lambda hypergeometric eqn}
\sum_{n=1}^{\infty}\lambda(n)K_0(ny)&=\frac{2\sqrt{2}}{\sqrt{y}\zeta\left(\frac{1}{2}\right)}\Gamma^{2}\left(\frac{5}{4}\right)+ \frac{1}{4}\lim_{T_n\to\infty}\sum_{|\gamma_m|<T_n}\frac{\zeta(2\rho_m)}{\zeta'(\rho_m)}\Gamma^{2}\left(\frac{\rho_m}{2}\right)\left(\frac{2}{y}\right)^{\rho_m}\nonumber\\
&\quad+\pi^{\frac{3}{2}}\sum_{n=1}^{\infty}\frac{c(n)}{\sqrt{n}(\pi^2n^2+4y^2)^{\frac{1}{4}}}{}_2F_{1}\left(\frac{1}{2},\frac{1}{2};1;\frac{1}{2}+\frac{\pi n}{2\sqrt{\pi^2n^2+4y^2}}\right).
\end{align}
	In addition, if we assume RH and the absolute convergence of the above series over the non-trivial zeros of $\zeta(s)$, then, as $y\to0^{+}$,
\begin{align}\label{lambda K big-O}
	\sum_{n=1}^{\infty}\lambda(n)K_0(ny)=O\left(\frac{1}{\sqrt{y}}\right).
\end{align} 
\end{corollary}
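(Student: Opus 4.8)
The plan is to specialize the Voronoi summation formula of Theorem \ref{vsf lambda(n)} to the test function $\phi(x)=K_0(xy)$, so that $\phi(n)=K_0(ny)$. The first step is to identify the associated $\Phi$: since
\begin{equation*}
	\int_0^\infty x^{s-1}K_0(xy)\,\dd x = 2^{s-2}y^{-s}\Gamma^2\!\left(\tfrac s2\right)=:\Phi(s),
\end{equation*}
Mellin inversion gives $\phi(x)=\frac{1}{2\pi i}\int_{(\sigma)}\Phi(s)x^{-s}\,\dd s$ for every $\sigma\in(0,2)$. One checks that $\Phi$ meets the hypotheses of Theorem \ref{vsf lambda(n)}: in the strip $-1<\Re s<2$ the only singularity of $\Gamma^2(s/2)$ is the double pole at $s=0$, and Stirling's formula gives $\Phi(\sigma+it)\ll |t|^{\sigma-1}e^{-\pi|t|/2}$, which is far stronger than \eqref{delta defn}. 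Hence Theorem \ref{vsf lambda(n)} applies with this $\phi$.

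Next I would evaluate the three terms on the right of \eqref{vsf lambda eqn}. The first two are immediate: $\frac{1}{2\zeta(1/2)}\int_0^\infty x^{-1/2}\phi(x)\,\dd x=\frac{\Phi(1/2)}{2\zeta(1/2)}$ and $\int_0^\infty\phi(x)x^{\rho_m-1}\,\dd x=\Phi(\rho_m)$, and inserting the closed form of $\Phi$ together with $\Gamma(1/4)=4\Gamma(5/4)$ reproduces exactly the first two terms of \eqref{lambda hypergeometric eqn}. The substance is the third term: writing $a:=\pi n/2$, it requires the closed-form evaluation
\begin{equation*}
	J_n:=\int_0^\infty\frac{K_0(xy)}{\sqrt x}\sin\!\left(ax+\frac\pi4\right)\dd x=\frac{\pi^{3/2}}{\sqrt2\,(a^2+y^2)^{1/4}}\,{}_2F_1\!\left(\tfrac12,\tfrac12;1;\tfrac12+\tfrac{a}{2\sqrt{a^2+y^2}}\right),
\end{equation*}
after which $\sqrt2\sum_{n\ge1}c(n)n^{-1/2}J_n$ is precisely the last term of \eqref{lambda hypergeometric eqn} (one uses $a^2+y^2=\tfrac14(\pi^2n^2+4y^2)$).

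To prove the formula for $J_n$ I would insert the integral representation $K_0(xy)=\int_0^\infty e^{-xy\cosh\theta}\,\dd\theta$, justify the interchange of integrations by absolute convergence, and use the elementary transform $\int_0^\infty x^{-1/2}e^{-\lambda x}\sin(ax+\pi/4)\,\dd x=\sqrt\pi\,\Im\big(e^{i\pi/4}(\lambda-ia)^{-1/2}\big)$, whose imaginary part simplifies to $\sqrt{\pi/2}\,(\lambda^2+a^2)^{-1/4}\big(1+a(\lambda^2+a^2)^{-1/2}\big)^{1/2}$. With $\lambda=y\cosh\theta$ one is left with a $\theta$-integral which, after the substitution $t=\sinh\theta$, becomes a complete elliptic integral and hence equals the stated ${}_2F_1(\tfrac12,\tfrac12;1;\cdot)$; I would fix the normalization by checking the limit $a\to0^+$, where both sides collapse to $\Gamma^2(1/4)/(2\sqrt{2\pi y})$. (Alternatively one may combine the Gradshteyn--Ryzhik entries for $\int_0^\infty x^{\mu-1}K_\nu(\beta x)\cos(\gamma x)\,\dd x$ and its sine analogue and apply a quadratic hypergeometric transformation collapsing ${}_2F_1(\tfrac14,\tfrac14;\tfrac12;\cdot)$ and ${}_2F_1(\tfrac34,\tfrac34;\tfrac32;\cdot)$ into ${}_2F_1(\tfrac12,\tfrac12;1;\cdot)$.) I expect this special-function bookkeeping --- landing on exactly the argument $\tfrac12+\tfrac{\pi n}{2\sqrt{\pi^2n^2+4y^2}}$ --- to be the main obstacle.

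Finally, for the bound \eqref{lambda K big-O}, assume RH and absolute convergence of the series over the zeros, and estimate the three terms of \eqref{lambda hypergeometric eqn}. The first is $O(y^{-1/2})$ trivially. In the second, RH gives $|(2/y)^{\rho_m}|=(2/y)^{1/2}$, so the series is dominated by $(2/y)^{1/2}\sum_m|\zeta(2\rho_m)\Gamma^2(\rho_m/2)/\zeta'(\rho_m)|=O(y^{-1/2})$. The third term is the delicate one, since $\sum_n|c(n)|/n$ diverges and a termwise bound is too weak. Here I would use that, with $z_n:=\tfrac12+\tfrac{\pi n}{2\sqrt{\pi^2n^2+4y^2}}\to1^-$ as $y\to0^+$, one has $1-z_n\asymp y^2/(\pi^2n^2)$ and ${}_2F_1(\tfrac12,\tfrac12;1;z)=\tfrac1\pi\log\tfrac{16}{1-z}+O\big((1-z)\log\tfrac{1}{1-z}\big)$ as $z\to1^-$. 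Substituting these, together with $(\pi^2n^2+4y^2)^{-1/4}=(\pi n)^{-1/2}\big(1+O(y^2/n^2)\big)$, reduces the third term --- up to a remainder that is absolutely convergent and $o(1)$ as $y\to0^+$ --- to a linear combination of $\log(1/y)\sum_n c(n)/n$ and $\sum_n c(n)\log n/n$, both of which converge with the values in \eqref{series involving c(n)} (Theorems \ref{c(n) over n thm} and \ref{c(n) log(n) over n thm}); hence the third term is $O(\log(1/y))=O(y^{-1/2})$, and \eqref{lambda K big-O} follows. Replacing absolute estimates of the $c(n)$-tail by these conditionally convergent evaluations is the second point requiring care.
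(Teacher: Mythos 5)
Your proposal is correct and follows essentially the same route as the paper: you specialize Theorem \ref{vsf lambda(n)} to $\Phi(s)=2^{s-2}y^{-s}\Gamma^{2}(s/2)$, $\phi(x)=K_0(xy)$, evaluate the same two elementary Mellin integrals, reduce the sine integral to ${}_2F_{1}\left(\tfrac12,\tfrac12;1;\tfrac12+\tfrac{\pi n}{2\sqrt{\pi^2n^2+4y^2}}\right)$, and prove \eqref{lambda K big-O} by RH plus assumed absolute convergence for the zero sum together with the logarithmic asymptotics of the ${}_2F_1$ term, which collapses the $c(n)$-series to $\sum c(n)/n$ and $\sum c(n)\log n/n$ exactly as in the paper's $n\to\infty$ expansion. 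Your parenthetical "alternative" for the key integral is precisely the paper's computation (the Gradshteyn--Ryzhik entries \eqref{1hyp}--\eqref{2hyp} combined via the quadratic transformations \eqref{hyp1}--\eqref{hyp2}), while your primary route through $K_0(xy)=\int_0^\infty e^{-xy\cosh\theta}\,\dd\theta$ and a complete elliptic integral is a plausible variant of the same special-function bookkeeping.
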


\begin{corollary}\label{lambda riesz}
	Let $J_s(\xi)$ be defined in \eqref{sumbesselj} and let $y>0$. Under the assumptions of Theorem \ref{vsf lambda(n)},
	\begin{align}\label{lambda riesz eqn}
	&\sum_{n\leq y}\lambda(n)\left(1-\tfrac{n}{y}\right)^{\frac{1}{2}}\nonumber\\
	&=\frac{\pi \sqrt{y}}{4\zeta\left(\frac{1}{2}\right)}+\frac{1}{2}\sqrt{\pi }\lim_{T_n\to\infty}\sum_{|\gamma_m|<T_n}\frac{\zeta(2\rho_m)}{\zeta'(\rho_m)}\frac{\Gamma(\rho_m)}{\Gamma\left(\rho_m+\frac{3}{2}\right)}y^{\rho_m}\nonumber\\
	&\quad+\frac{\pi \sqrt{y}}{2}\sum_{n=1}^{\infty}\frac{c(n)}{\sqrt{n}}\bigg\{J_{0}\left(\frac{\pi n y}{4}\right)\left(\sin\left(\frac{\pi n y}{4}\right)+\cos\left(\frac{\pi n y}{4}\right)\right)+J_{1}\left(\frac{\pi n y}{4}\right)\left(\sin\left(\frac{\pi n y}{4}\right)-\cos\left(\frac{\pi n y}{4}\right)\right)\bigg\}.
	\end{align}
In addition, if we assume RH and the absolute convergence of the above series over the non-trivial zeros of $\zeta(s)$, then, as $y\to\infty$,
\begin{align}\label{riesz bound}
	\sum_{n\leq y}\lambda(n)\left(1-\tfrac{n}{y}\right)^{\frac{1}{2}}=O\left(\sqrt{y}\right).
\end{align} 
\end{corollary}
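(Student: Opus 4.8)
The plan is to specialise Theorem~\ref{vsf lambda(n)} to the test function attached to the Riesz kernel of order $\tfrac12$. Concretely, I would take
\begin{equation*}
\Phi(s)=\Gamma\!\left(\tfrac32\right)\frac{\Gamma(s)}{\Gamma\!\left(s+\tfrac32\right)}\,y^{s},
\end{equation*}
so that $\Phi$ is holomorphic on $-1<\Re(s)<2$ apart from a simple pole at $s=0$ (the next pole $s=-1$ of $\Gamma(s)$ lies on the boundary, not in the open strip, and $\Gamma(s+\tfrac32)$ is entire and zero-free), and, by Stirling's formula, $\Phi(\sigma+it)\ll|t|^{-3/2}$, so that \eqref{delta defn} holds with $\delta=\tfrac12$. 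Since $\Gamma(\tfrac32)\Gamma(s)/\Gamma(s+\tfrac32)=B(s,\tfrac32)=\int_0^1 u^{s-1}(1-u)^{1/2}\,\dd u$ is the Mellin transform of $u\mapsto(1-u)^{1/2}$ on $(0,1)$ (here $B(a,b)=\Gamma(a)\Gamma(b)/\Gamma(a+b)$ is the Beta function), Mellin inversion along a line $0<\sigma<2$ to the right of the pole at $0$ gives $\phi(x)=(1-x/y)^{1/2}$ for $0<x<y$ and $\phi(x)=0$ for $x>y$; hence $\sum_{n=1}^{\infty}\lambda(n)\phi(n)=\sum_{n\le y}\lambda(n)(1-n/y)^{1/2}$, the left-hand side of \eqref{lambda riesz eqn}.

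It then remains to evaluate the three integrals on the right-hand side of \eqref{vsf lambda eqn} for this $\phi$. The first two are Beta integrals: the substitution $x=yu$ gives $\int_0^\infty x^{-1/2}\phi(x)\,\dd x=\sqrt y\,B(\tfrac12,\tfrac32)=\tfrac\pi2\sqrt y$, producing the term $\tfrac{\pi\sqrt y}{4\zeta(1/2)}$, while $\int_0^\infty x^{\rho_m-1}\phi(x)\,\dd x=y^{\rho_m}B(\rho_m,\tfrac32)=\tfrac{\sqrt\pi}{2}\,\Gamma(\rho_m)\Gamma(\rho_m+\tfrac32)^{-1}y^{\rho_m}$ (convergent at $0$ since $0<\Re(\rho_m)<1$), producing the sum over the zeros. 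The substantive computation is the third integral: writing $x=yu$, $b=\tfrac{\pi ny}{2}$, and $\sqrt2\sin(bu+\tfrac\pi4)=\sin(bu)+\cos(bu)$, one is reduced to $W:=\int_0^1 u^{-1/2}(1-u)^{1/2}e^{ibu}\,\dd u$. Using $u^{-1/2}(1-u)^{1/2}=(1-u)[u(1-u)]^{-1/2}$ together with $u=(1+t)/2$ turns $W$ into a combination of $\int_{-1}^1(1-t^2)^{-1/2}e^{i(b/2)t}\,\dd t$ and $\int_{-1}^1 t(1-t^2)^{-1/2}e^{i(b/2)t}\,\dd t$, which by Poisson's integral $\int_{-1}^1(1-t^2)^{-1/2}e^{izt}\,\dd t=\pi J_0(z)$ and one differentiation in $z$ (using $J_0'=-J_1$) equal $\pi J_0(b/2)$ and $i\pi J_1(b/2)$; this gives $W=\tfrac\pi2 e^{ib/2}\left(J_0(b/2)-iJ_1(b/2)\right)$. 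Setting $z=\tfrac{\pi ny}{4}=b/2$, taking $\Re W+\Im W$, and restoring the prefactors $\sqrt2\,c(n)/\sqrt n$ and the $\sqrt y$ from the substitution reproduces exactly the $c(n)$-series in \eqref{lambda riesz eqn}, and assembling the three pieces yields the identity.

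For the bound \eqref{riesz bound}, assume RH and the absolute convergence of the zero sum. The first term is $O(\sqrt y)$, and since $|y^{\rho_m}|=\sqrt y$ under RH the zero sum is $O(\sqrt y)$ as well. For the $c(n)$-series, when $y$ is large every argument $z_n=\tfrac{\pi ny}{4}$ is large, and the classical asymptotics $J_0(z)\sim\sqrt{2/(\pi z)}\cos(z-\tfrac\pi4)$, $J_1(z)\sim\sqrt{2/(\pi z)}\sin(z-\tfrac\pi4)$ make the bracketed combination collapse to $\tfrac{2}{\sqrt{\pi z}}+O(z^{-3/2})=\tfrac{4}{\pi\sqrt{ny}}+O((ny)^{-3/2})$, with \emph{no} surviving oscillation at leading order. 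Splitting the series accordingly, the leading part equals $\tfrac{4}{\pi\sqrt y}\sum_{n=1}^{\infty}\tfrac{c(n)}{n}=\tfrac{2}{\pi\sqrt y}$ by \eqref{series involving c(n)}, and the remainder is $\ll y^{-3/2}\sum_{n=1}^{\infty}|c(n)|n^{-2}\ll y^{-3/2}$ (this last series converges by \eqref{abs c(n) ds}); multiplying by the prefactor $\tfrac{\pi\sqrt y}{2}$ shows the $c(n)$-term is $O(1)$. Hence the whole right-hand side is $O(\sqrt y)$, which is \eqref{riesz bound}.

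The main obstacle is the closed-form evaluation of the oscillatory integral $\int_0^y x^{-1/2}(1-x/y)^{1/2}\sin\!\left(\tfrac{\pi nx}{2}+\tfrac\pi4\right)\,\dd x$: once it is recognised as a Beta-type integral and reduced through Poisson's representation to $J_0$ and $J_1$ evaluated at half the argument, everything else is routine bookkeeping. A secondary point needing a little care is checking that $\Phi$ meets the hypotheses of Theorem~\ref{vsf lambda(n)} even though $\phi$ fails to be smooth at $x=y$; but this is controlled entirely by the decay $\Phi(\sigma+it)\ll|t|^{-3/2}$, which is precisely what the theorem demands.
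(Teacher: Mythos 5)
Your proposal is correct and follows essentially the same route as the paper: specialize Theorem \ref{vsf lambda(n)} to the Riesz test function $\phi(x)=(1-x/y)^{1/2}\,(x\le y)$, evaluate the first two integrals as Beta integrals, express the oscillatory integral via $J_0$ and $J_1$ at argument $\tfrac{\pi ny}{4}$, and deduce \eqref{riesz bound} from the Bessel asymptotics (where the oscillation cancels), the evaluation $\sum_n c(n)/n=\tfrac12$ from \eqref{series involving c(n)}, and RH plus the assumed absolute convergence of the zero sum. The only difference is cosmetic: the paper evaluates the key integral by quoting a tabulated formula of Prudnikov giving ${}_1F_1$'s and then converting to Bessel functions, whereas you obtain the same evaluation self-containedly through Poisson's integral representation of $J_0$ (and also work with the normalized kernel $(1-x/y)^{1/2}$ rather than $(y-x)^{1/2}$ followed by division by $\sqrt{y}$).
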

\begin{remark}
There exists a transformation for the sum $\sum_{n\leq y}\lambda(n)\left(1-n/y\right)^{k}$, where $k>-1$,  which generalizes the above result as the integral inside the series on the right-hand side of \eqref{vsf lambda eqn} can be explicitly evaluated. However, the evaluation in terms of generalized hypregeometric functions is complicated  as opposed to the elegant Bessel function evaluation in \eqref{lambda riesz eqn}. Hence we refrain from giving the same.
\end{remark}

We now obtain Cohen-type identity associated with $\lambda(n)$.

\begin{theorem}\label{cohen lambda thm}
Let $x>0$ and let $c(n)$ be given in \eqref{c(n) defn}. For $m\in\mathbb{Z}$, let $\rho_m:=\beta_m+i\gamma_m$ denote the $m^{\textup{th}}$ non-trivial zero of $\zeta(s)$, where $\rho_{-m}=\beta_m-i\gamma_m$. Assume that the non-trivial zeros of $\zeta(s)$ are simple. Then there exists a sequence of numbers $\{T_n\}_{n=1}^\infty$ with $T_n\to\infty$ such that
\begin{align}\label{cohen lambda eqn}
\sum_{n=1}^{\infty}\frac{\lambda(n)}{n(x^2+n^2)}=-\pi x^{-5/2}\sum_{n=1}^{\infty}\frac{c(n)}{\sqrt{2n}}e^{-\pi n x/2}-\frac{\pi x^{-5/2}}{2\sqrt{2}\zeta\left(\frac{1}{2}\right)}+\frac{2\pi^2}{x^2}\lim_{T_n\to\infty}\sum_{|\gamma_m|<T_n}\frac{\zeta(2\rho_m-1)\Gamma(2\rho_m-1)}{\zeta'(\rho_m)\Gamma(\rho_m)(2\pi x)^{\rho_m}}.
\end{align}
\end{theorem}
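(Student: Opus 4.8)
The plan is to run the contour–integral argument of Theorem~\ref{vsf lambda(n)} with the particular test function $\phi(t)=1/(t(x^{2}+t^{2}))$, for which $\sum_{n=1}^{\infty}\lambda(n)\phi(n)$ is exactly the left-hand side of \eqref{cohen lambda eqn}. Its Mellin transform is evaluated by the classical beta integral,
\begin{equation*}
	\Phi(s)=\int_{0}^{\infty}\frac{t^{s-2}}{x^{2}+t^{2}}\,\dd t=-\frac{\pi\,x^{s-3}}{2\cos(\pi s/2)},\qquad 1<\Re(s)<3 .
\end{equation*}
Although $\Phi$ has a pole at $s=1$ — so Theorem~\ref{vsf lambda(n)} does not literally apply — this pole is cancelled by the simple zero of $\zeta(2s)/\zeta(s)$ at $s=1$ (which stems from the pole of $\zeta(s)$ there), hence $g(s):=\Phi(s)\,\zeta(2s)/\zeta(s)$ is holomorphic at $s=1$ and the machinery of Theorem~\ref{vsf lambda(n)} goes through unchanged. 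Since $|\Phi(c+it)|$ decays like $e^{-\pi|t|/2}$ on vertical lines and $\sum_{n}\lambda(n)n^{-s}=\zeta(2s)/\zeta(s)$ converges absolutely for $\Re(s)>1$, Fubini's theorem yields, for any $c\in(1,3)$,
\begin{equation*}
	\sum_{n=1}^{\infty}\frac{\lambda(n)}{n(x^{2}+n^{2})}=\frac{1}{2\pi i}\int_{(c)}g(s)\,\dd s .
\end{equation*}

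Next I would recast $g$ by invoking the asymmetric functional equation \eqref{zetafe asym} for both $\zeta(s)$ and $\zeta(2s)$, together with the duplication and reflection formulas for $\Gamma$, which gives the identity of meromorphic functions
\begin{equation*}
	g(s)=-\frac{2\pi^{2}}{x^{2}}\,(2\pi x)^{s-1}\,\frac{\Gamma(1-2s)\,\zeta(1-2s)}{\Gamma(1-s)\,\zeta(1-s)} ,
\end{equation*}
and then shift the contour from $\Re(s)=c$ to $\Re(s)=-\tfrac12$. Between those lines the integrand has a simple pole at $s=\tfrac12$ (from $\Gamma(1-2s)$, equivalently from $\zeta(2s)$) and, because the zeros of $\zeta$ are assumed simple and the zero set is symmetric under $s\mapsto 1-s$, a simple pole at each nontrivial zero $s=\rho_{m}$ coming from $1/\zeta(1-s)$; no other poles occur. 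A direct computation gives $\Res_{s=1/2}g=-\dfrac{\pi x^{-5/2}}{2\sqrt{2}\,\zeta(1/2)}$ and
\begin{equation*}
	\Res_{s=\rho_{m}}g=\frac{2\pi^{2}}{x^{2}}\,(2\pi x)^{\rho_{m}-1}\,\frac{\Gamma(1-2\rho_{m})\,\zeta(1-2\rho_{m})}{\Gamma(1-\rho_{m})\,\zeta'(1-\rho_{m})} .
\end{equation*}
Replacing $\rho_{m}$ by $1-\rho_{m}$ in the sum of these residues — legitimate since the involution $\rho\mapsto 1-\rho$ fixes the set of nontrivial zeros and preserves $|\gamma_{m}|$, so it leaves the bracketing intact — turns it into precisely the series over the non-trivial zeros appearing on the right-hand side of \eqref{cohen lambda eqn}.

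It remains to evaluate the integral on $\Re(s)=-\tfrac12$. On that line $\Re(1-s)>1$, so by \eqref{c(n) ds} one may expand $\zeta(1-2s)/\zeta(1-s)=\sum_{n=1}^{\infty}c(n)n^{s-1}$; using the duplication identity $\Gamma(1-2s)/\Gamma(1-s)=2^{-2s}\Gamma(\tfrac12-s)/\sqrt{\pi}$, interchanging sum and integral (permissible because $\Gamma(\tfrac12-s)$ decays exponentially on the line and $\sum_{n}|c(n)|n^{-3/2}<\infty$ by \eqref{abs c(n) ds}), and applying the Cahen--Mellin formula $\frac{1}{2\pi i}\int_{(\kappa)}\Gamma(v)y^{-v}\,\dd v=e^{-y}$ after the substitution $s\mapsto\tfrac12-s$, one obtains $-\pi x^{-5/2}\sum_{n=1}^{\infty}\frac{c(n)}{\sqrt{2n}}\,e^{-\pi nx/2}$, the first term on the right-hand side of \eqref{cohen lambda eqn}. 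Combining this with the two residue contributions established above gives \eqref{cohen lambda eqn}.

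The main obstacle is the contour shift: it crosses infinitely many zeros $\rho_{m}$, so one must exhibit a sequence $T_{n}\to\infty$ along which the horizontal sides of the truncating rectangles contribute $o(1)$, and this requires the standard growth estimates for $1/\zeta$ inside and along the edges of the critical strip — exactly the apparatus already assembled for the proof of Theorem~\ref{vsf lambda(n)}, which I would quote rather than redo. The remaining issues are bookkeeping: the series over the zeros converges only in the bracketed sense, so one must check that the reindexing $\rho_{m}\leftrightarrow 1-\rho_{m}$ and each interchange of summation with integration are carried out compatibly with the bracketing, which holds because all of them respect the ordering of the zeros by $|\gamma_{m}|$.
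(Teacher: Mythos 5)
Your proposal is correct and is essentially the paper's own proof run in the reverse direction: the paper evaluates the integral $\frac{1}{2\pi i}\int_{(c)}\frac{\Gamma(2s-1)\zeta(2s-1)}{\Gamma(s)\zeta(s)}(2\pi x)^{-s}\,\dd s$ in two ways, and after you apply \eqref{zetafe asym} your integrand is exactly $-\tfrac{2\pi^2}{x^2}$ times theirs under $s\mapsto 1-s$, with the same ingredients (the sequence $T_n$ from Theorem~\ref{vsf lambda(n)}, residues at $s=\tfrac12$ and at the zeros, Cahen--Mellin for the $c(n)$-series). The only cosmetic differences are that, by taking $\phi(t)=1/(t(x^2+t^2))$ as the starting test function, you bypass the paper's separate kernel evaluation \eqref{csc shifted} and instead need the harmless reindexing $\rho_m\mapsto 1-\rho_m$, which, as you note, preserves the bracketing since $|\gamma_m|$ is unchanged.
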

\begin{remark}
	This identity should be compared with  \eqref{obe}.
\end{remark}

We conclude this subsection by stating the Ramanujan-Guinand type identity associated with $\lambda(n)$ and $c(n)$.
\begin{theorem}\label{rg liouville}
There exists a sequence of numbers $\{T_n\}_{n=1}^\infty$ with $T_n\to\infty$ such that
\begin{align}\label{rg liouville eqn}
\frac{x}{2}\sum_{n=1}^{\infty}n\lambda(n)e^{-\pi n^2x^2/4}&=\frac{1}{4\sqrt{2}x^2}\sum_{n=1}^{\infty}nc(n)e^{-\pi n^2/(8x^2)}\left(K_{1/4}\bigg(\frac{\pi n^2}{8x^2}\bigg)+K_{3/4}\bigg(\frac{\pi n^2}{8x^2}\bigg)\right)+\frac{\pi^{1/4}}{2\sqrt{x}\Gamma\left(\frac{1}{4}\right)\zeta\left(\frac{1}{2}\right)}\nonumber\\
&\quad+\lim_{T_n\to\infty}\sum_{|\gamma_m|<T_n}\frac{\zeta(2\rho_m)}{\zeta'(\rho_m)}\frac{\Gamma(\rho_m)}{\Gamma\left(\rho_m/2\right)}(\sqrt{\pi} x)^{-\rho_m}.
\end{align}
\end{theorem}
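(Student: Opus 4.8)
The plan is to obtain \eqref{rg liouville eqn} as the special case of Theorem \ref{vsf lambda(n)} corresponding to the test function
\begin{equation*}
\phi(t)=t\,e^{-\pi t^{2}x^{2}/4},\qquad\text{equivalently}\qquad\Phi(s)=\tfrac12\Big(\tfrac{4}{\pi x^{2}}\Big)^{(s+1)/2}\Gamma\Big(\tfrac{s+1}{2}\Big),
\end{equation*}
and then multiplying the resulting identity through by $x/2$. First I would check that this $\Phi$ meets the hypotheses preceding Theorem \ref{vsf lambda(n)}: it is holomorphic on $-1<\Re s<2$ (the poles of $\Gamma((s+1)/2)$ lie at $s=-1,-3,\dots$, so there is in fact no pole even at $s=0$), and the Gamma factor makes it decay faster than any negative power of $|t|$ on vertical lines, so \eqref{delta defn} holds; Mellin inversion then confirms that $\frac{1}{2\pi i}\int_{(\sigma)}\Phi(s)t^{-s}\,\dd s=t\,e^{-\pi t^{2}x^{2}/4}$ for $0<\sigma<2$. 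Since $\sum_{n\geq1}\lambda(n)\phi(n)=\sum_{n\geq1}n\lambda(n)e^{-\pi n^{2}x^{2}/4}$, multiplying \eqref{vsf lambda eqn} by $x/2$ puts the left side of \eqref{rg liouville eqn} in place.

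Next I would evaluate the three terms on the right of \eqref{vsf lambda eqn} one at a time. For the first, $\int_{0}^{\infty}\phi(t)t^{-1/2}\,\dd t=\Phi(\tfrac12)=\tfrac12(4/\pi x^{2})^{3/4}\Gamma(\tfrac34)$; multiplying by $\frac{1}{2\zeta(1/2)}$ and then by $x/2$ and using the reflection formula $\Gamma(\tfrac14)\Gamma(\tfrac34)=\pi\sqrt2$ produces exactly $\dfrac{\pi^{1/4}}{2\sqrt{x}\,\Gamma(1/4)\zeta(1/2)}$. For the zero-sum, $\int_{0}^{\infty}\phi(t)t^{\rho_{m}-1}\,\dd t=\Phi(\rho_{m})$, and the Legendre duplication formula written as $\Gamma\big(\tfrac{\rho_{m}+1}{2}\big)=2^{1-\rho_{m}}\sqrt{\pi}\,\Gamma(\rho_{m})/\Gamma(\rho_{m}/2)$ collapses $\frac{x}{2}\cdot\frac{\zeta(2\rho_{m})}{\zeta'(\rho_{m})}\Phi(\rho_{m})$ to $\frac{\zeta(2\rho_{m})}{\zeta'(\rho_{m})}\frac{\Gamma(\rho_{m})}{\Gamma(\rho_{m}/2)}(\sqrt{\pi}x)^{-\rho_{m}}$; the bracketing sequence $\{T_{n}\}$ is inherited from Theorem \ref{vsf lambda(n)} verbatim.

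The substantive step is the $c(n)$-term, for which one must evaluate, for each $n$,
\begin{equation*}
\int_{0}^{\infty}t^{1/2}e^{-\pi t^{2}x^{2}/4}\sin\!\Big(\tfrac{\pi n t}{2}+\tfrac{\pi}{4}\Big)\dd t=\Re\!\left[\tfrac{1-i}{\sqrt2}\int_{0}^{\infty}t^{1/2}e^{-a t^{2}+i b t}\,\dd t\right],\qquad a=\tfrac{\pi x^{2}}{4},\quad b=\tfrac{\pi n}{2},
\end{equation*}
after writing $\sin(\theta+\tfrac\pi4)=\tfrac1{\sqrt2}(\sin\theta+\cos\theta)$. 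The inner integral is a parabolic cylinder function: applying $\int_{0}^{\infty}t^{\nu-1}e^{-\beta t^{2}-\gamma t}\,\dd t=(2\beta)^{-\nu/2}\Gamma(\nu)\,e^{\gamma^{2}/(8\beta)}D_{-\nu}\big(\gamma/\sqrt{2\beta}\big)$ with $\nu=\tfrac32$, $\beta=a$, $\gamma=-ib$ produces the Gaussian factor $e^{\gamma^{2}/(8\beta)}=e^{-\pi n^{2}/(8x^{2})}$ --- which is precisely where the $8$ in the exponent in \eqref{rg liouville eqn} originates --- and leaves $D_{-3/2}$ at the purely imaginary argument $-ib/\sqrt{2a}$. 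Using the classical reduction $D_{-1/2}(z)=\sqrt{z/(2\pi)}\,K_{1/4}(z^{2}/4)$, its companion for $D_{1/2}(z)$ in terms of $K_{1/4}$ and $K_{3/4}$, and the recurrence $D_{-3/2}(z)=2\big(D_{1/2}(z)-zD_{-1/2}(z)\big)$, together with the connection formula for $K_{\nu}$ at a negative argument (whose $I_{1/4},I_{3/4}$ contributions drop out upon taking the real part, as they must since the left side is real and exponentially small in $n$), this evaluates to $e^{-\pi n^{2}/(8x^{2})}\big(K_{1/4}(\pi n^{2}/(8x^{2}))+K_{3/4}(\pi n^{2}/(8x^{2}))\big)$ up to an explicit algebraic prefactor; the $\tfrac\pi4$ phase shift is exactly what symmetrizes the two Bessel orders. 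Tracking the constants shows that $\frac{x}{2}\cdot\sqrt2\,\frac{c(n)}{\sqrt n}$ times this integral equals $\frac{1}{4\sqrt2\,x^{2}}\,n\,c(n)\,e^{-\pi n^{2}/(8x^{2})}\big(K_{1/4}+K_{3/4}\big)$, and summing over $n$ reproduces the last series in \eqref{rg liouville eqn}. Every interchange of summation, integration and limit along the way is covered by the hypotheses of Theorem \ref{vsf lambda(n)}, which $\Phi$ amply satisfies; the only genuinely delicate point is the $D_{-3/2}$-to-$(K_{1/4},K_{3/4})$ reduction and the attendant bookkeeping of powers of $2$, $\pi$ and $x$.

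Finally, I would note that \eqref{rg liouville eqn} can also be obtained intrinsically, in the spirit of the proof of Theorem \ref{vsf lambda(n)}: one starts from $\frac{x}{2}\sum_{n}n\lambda(n)e^{-\pi n^{2}x^{2}/4}=\frac{x}{2}\cdot\frac1{2\pi i}\int_{(c)}\Gamma(s)(\pi x^{2}/4)^{-s}\frac{\zeta(4s-2)}{\zeta(2s-1)}\,\dd s$ for $c>1$ (using $\sum_{n}\lambda(n)n^{1-2s}=\zeta(4s-2)/\zeta(2s-1)$ from \eqref{lambda ds}), shifts the contour to the left collecting the simple pole of $\zeta(4s-2)$ at $s=\tfrac34$ (the constant term) and the poles at $s=(1+\rho_{m})/2$ coming from $1/\zeta(2s-1)$ (the zero-sum), and replaces $\zeta(4s-2)/\zeta(2s-1)$ on the far-left line by its functional-equation dual, which introduces $\sum_{n}c(n)n^{2s-2}$ and hence, via the same parabolic-cylinder evaluation, the $c(n)$-series. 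Either route meets the obstacle at the same place.
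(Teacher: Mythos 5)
Your proof is correct, but it is not the route the paper takes: the paper proves Theorem \ref{rg liouville} by a standalone two-way evaluation of the Mellin--Barnes integral $\frac{1}{2\pi i}\int_{(3/2)}\frac{\pi^{-s}\zeta(2s)\Gamma(s)}{\pi^{-s/2}\zeta(s)\Gamma(s/2)}x^{-s}\,\dd s$ — once via the Dirichlet series of $\lambda(n)$ and the duplication formula, once by shifting to $\Re(s)=-1/2$, applying the symmetric functional equation \eqref{zetafe sym}, and then invoking Lemma \ref{sum k bessel}, whose Mellin pair delivers $e^{-\pi n^2/(8x^2)}\bigl(K_{1/4}+K_{3/4}\bigr)$ in one stroke. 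Your argument instead specializes the already-proved Theorem \ref{vsf lambda(n)} to $\phi(t)=te^{-\pi t^2x^2/4}$ (equivalently $\Phi(s)=\tfrac12(4/(\pi x^2))^{(s+1)/2}\Gamma\bigl(\tfrac{s+1}{2}\bigr)$, which indeed satisfies the hypotheses with no pole at $s=0$), which is exactly the alternative the authors flag in the remark following their proof, with $\phi(y)=\tfrac12 xy\,e^{-\pi x^2y^2/4}$. Your bookkeeping is right: the $s=1/2$ term gives $\tfrac{x}{2}\cdot\tfrac{1}{2\zeta(1/2)}\Phi(1/2)=\tfrac{\pi^{1/4}}{2\sqrt{x}\,\Gamma(1/4)\zeta(1/2)}$ via $\Gamma(1/4)\Gamma(3/4)=\pi\sqrt2$, the zero-sum collapses by duplication to $\tfrac{\zeta(2\rho_m)}{\zeta'(\rho_m)}\tfrac{\Gamma(\rho_m)}{\Gamma(\rho_m/2)}(\sqrt\pi x)^{-\rho_m}$ with the same bracketing $\{T_n\}$ (and the simplicity of zeros is inherited from Theorem \ref{vsf lambda(n)}, which is also tacit in \eqref{rg liouville eqn}), and the required identity $\int_0^\infty t^{1/2}e^{-\pi t^2x^2/4}\sin\bigl(\tfrac{\pi nt}{2}+\tfrac{\pi}{4}\bigr)\dd t=\tfrac{n^{3/2}}{4x^3}e^{-\pi n^2/(8x^2)}\bigl(K_{1/4}\bigl(\tfrac{\pi n^2}{8x^2}\bigr)+K_{3/4}\bigl(\tfrac{\pi n^2}{8x^2}\bigr)\bigr)$ does hold; it is equivalent to the paper's Lemma \ref{sum k bessel} evaluated at $t=x/(\sqrt\pi n)$ and passed through Parseval. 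The only place your sketch is more delicate than the paper's proof is the reduction of $D_{-3/2}$ at the purely imaginary argument, where the $K_\nu$ connection formula and the cancellation of the $I_{1/4},I_{3/4}$ pieces upon taking real parts must be tracked; a cleaner execution of the same step is to split $\sin(\theta+\tfrac\pi4)$ and use \eqref{sin wala}--\eqref{cos wala} (as the paper does for Corollary \ref{lambda gaussian}) followed by the ${}_1F_1\to K$ reductions, or simply to quote Lemma \ref{sum k bessel}. What the paper's route buys is independence from Theorem \ref{vsf lambda(n)} and a one-line Bessel evaluation from a Mellin table; what yours buys is a uniform mechanism that derives the Ramanujan--Guinand identity as just another corollary of the Vorono\"{\dotlessi} formula, on the same footing as Corollaries \ref{lambda simple pole}--\ref{lambda riesz}. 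Your closing "intrinsic" sketch is, after the change of variable $s\mapsto 2s-1$, essentially the paper's actual proof.
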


\subsection{Results on the square of the divisor function $d^{2}(n)$}\label{d(n) squared results}

Let $\Phi(s)$ be a holomorphic function in the strip\footnote{Unlike in Theorem \ref{vsf lambda(n)}, we do not allow the function $\Phi(s)$ in Theorem \ref{vsf dn squared} to have a pole at $s=0$ so as to not compromise on the elegance of the result, but, in principle, this can be done.} $-1<\Re s<2$ and such that
\begin{equation}\label{delta defn2}
\Phi(\s+it)\ll t^{-3-\delta}
\end{equation}
 as $t\to \infty$ for some $\delta>0$ and $-1<\s<2$.
\begin{theorem}\label{vsf dn squared}
	Let $d(n)$ be the divisor function and recall the expression for $b(n)$ from \eqref{bn exp}. For $-1<\sigma<2$, define $\phi(x):=\frac{1}{2\pi i}\int_{\sigma-i\infty}^{\sigma+i\infty}\Phi(s)x^{-s}\dd s$, where $\Phi(s)$ satisfies the aforementioned conditions. For $m\in\mathbb{Z}$, let $\rho_m:=\beta_m+i\gamma_m$ denote the $m^{\textup{th}}$ non-trivial zero of $\zeta(s)$, where $\rho_{-m}=\beta_m-i\gamma_m$. Assume that the non-trivial zeros of $\zeta(s)$ are simple. Then there exists a sequence of numbers $\{T_n\}_{n=1}^\infty$ with $T_n\to\infty$ such that
\begin{equation}\label{vsf dn squared eqn}
	\begin{split}
		\sum_{n=1}^{\infty}d^2(n)\phi(n)
		&=\int_{0}^{\infty}(A_0+A_1\log(x)+A_2\log^2(x)+A_3\log^3(x))\phi(x)\dd x\\
		&+\lim_{T_n\to\infty}\sum_{|\gamma_m|\leq T_n}\frac{\zeta^4(\frac{\rho_m}{2})}{2\zeta'(\rho_m)}\int_{0}^{\infty}\phi(x)x^{\frac{\rho_m}{2}-1}dx\\
		&+\frac{4}{\pi^2}\sum_{n=1}^{\infty}\frac{b(n)}{n}\int_{0}^{\infty}\int_{0}^{\infty}\int_{0}^{\infty}\bigg(\frac{2}{\pi}K_0\big(4\sqrt{x}\big)-Y_0\big(4\sqrt{x}\big)\bigg)\bigg(\frac{2}{\pi}K_0\big(4\sqrt{y}\big)-Y_0\big(4\sqrt{y}\big)\bigg)\\
		&\qquad\qquad\qquad\qquad\qquad\qquad\times\frac{\phi(z)}{z}\cos\bigg(\frac{2\sqrt{xy}}{\pi\sqrt{nz}}\bigg)
		{\dd z \dd x \dd y},
	\end{split}
\end{equation}
	where
		\begin{align}\label{A0-A3}
		A_0&=\frac{1}{\pi^8}\big(24 \gamma^3 \pi^6 -  72 \gamma\pi^6 \gamma_1 + 12 \pi^6 \gamma_2- 432 \gamma^2 \pi^4 \zeta'(2)+288 \pi^4 \gamma_1 \zeta'(2)+ 
		3456 \gamma \pi^2 \zeta'(2)^2- 
		10368 \zeta'(2)^3\nonumber\\ 
		&\quad- 
		288 \gamma \pi^4 \zeta''(2)+ 
		1728 \pi^2 \zeta'(2) \zeta''(2)-48\pi^4\zeta'''(2)\big),\nonumber\\
		A_1&=\frac{1}{\pi^8}\left(36 \gamma^2 \pi^6- 24 \pi^6 \gamma_1 - 
		288 \gamma \pi^4\zeta'(2)+ 
		864 \pi^2  \zeta'(2)^2 - 
		72 \pi^4\zeta''(2)\right),\nonumber\\
		A_2&= \frac{1}{\pi^8}\left(12 \gamma \pi^6 -
		36 \pi^4 \zeta'(2)\right),\nonumber\\
		A_3&=\frac{1}{\pi^2},
	\end{align}
	with $\gamma_1$ being the first Stieltjes constant.
\end{theorem}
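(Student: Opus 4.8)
The plan is to proceed by Mellin inversion, exactly as in the proof of Theorem \ref{vsf lambda(n)}, but now starting from the Dirichlet series $\sum_{n\ge 1} d^2(n) n^{-s} = \zeta^4(s)/\zeta(2s)$ of \eqref{lambda ds}. Writing $\phi(x) = \frac{1}{2\pi i}\int_{(\sigma)} \Phi(s) x^{-s}\,\dd s$ with $1<\sigma<2$, one gets
\[
\sum_{n=1}^{\infty} d^2(n)\phi(n) = \frac{1}{2\pi i}\int_{(\sigma)} \Phi(s)\,\frac{\zeta^4(s)}{\zeta(2s)}\,\dd s,
\]
the interchange of summation and integration being justified by absolute convergence and \eqref{delta defn2} (with $\delta>0$ built in so that the vertical integral converges even against the $t^3$-type growth of $\zeta^4$ on the critical-strip side). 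The strategy is then to shift the contour from $\Re s = \sigma \in (1,2)$ leftwards to $\Re s = \sigma' \in (-1,0)$, picking up residues along the way: a pole of order $4$ at $s=1$ coming from $\zeta^4(s)$ (this produces the polynomial-in-$\log$ main term, with the coefficients $A_0,\dots,A_3$ of \eqref{A0-A3} emerging from the Laurent expansions of $\zeta(s)$ at $s=1$, of $1/\zeta(2s)$ at $s=1$, and of $\Gamma$-type factors — these are the $\gamma$, $\gamma_1$, $\gamma_2$, $\zeta'(2)$, $\zeta''(2)$, $\zeta'''(2)$ data); and simple poles at $s=\rho_m/2$ for each non-trivial zero $\rho_m$ of $\zeta$ (since $\zeta(2s)=0$ there), contributing $\frac{\zeta^4(\rho_m/2)}{2\zeta'(\rho_m)}\int_0^\infty \phi(x) x^{\rho_m/2 - 1}\,\dd x$, the factor $\tfrac12$ coming from the chain rule $\frac{d}{ds}\zeta(2s) = 2\zeta'(2s)$. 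As in Theorem \ref{vsf lambda(n)}, these residues must be collected along a carefully chosen sequence of horizontal segments at heights $T_n\to\infty$ avoiding the zeros (and with the zeros bracketed as in the discussion around \cite[p.~219]{titch}), so that the horizontal pieces of the contour integral vanish in the limit; the decay hypothesis \eqref{delta defn2} and standard bounds for $\zeta$ and $1/\zeta$ in the region $\sigma' \le \Re s \le \sigma$ make this work. One also has to check that $s=0$ is not a pole (here we are told $\Phi$ is holomorphic at $0$, and $\zeta^4(0)/\zeta(0)$ is finite), and that the trivial zeros of $\zeta$ lying at $s=-1,-2,\dots$ from $\zeta^4(s)$ are harmless since $\Re s = \sigma' > -1$.

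What remains after the shift is the integral $\frac{1}{2\pi i}\int_{(\sigma')} \Phi(s)\,\zeta^4(s)/\zeta(2s)\,\dd s$ with $-1<\sigma'<0$, and this must be transformed into the triple-integral Bessel-type series in \eqref{vsf dn squared eqn}. The plan here is to apply the functional equation to each $\zeta$ and then recognise the resulting product of gamma/trigonometric factors as a Mellin transform of the kernel $\frac{2}{\pi}K_0(4\sqrt{x}) - Y_0(4\sqrt{x})$ — this is precisely the mechanism behind the classical Voronoï formula \eqref{voronoi d(n)}, and since we have $\zeta^4$ we expect two copies of that kernel (one for each ``pair'' of zeta factors, $\zeta^2 \cdot \zeta^2$), together with the reciprocal of $\zeta(2s)$ producing the arithmetic function $b(n)$ defined by $\sum b(n) n^{-s} = \zeta^4(s)/\zeta(2s-1)$ via \eqref{bn exp}, and the extra $\cos\big(2\sqrt{xy}/(\pi\sqrt{nz})\big)$ factor arising from the ``twist'' $\zeta(2s)\mapsto\zeta(2s-1)$ in the denominator (a shifted functional-equation argument). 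Concretely, one writes $\zeta^4(s)/\zeta(2s) = \big(\zeta^4(s)/\zeta(2s-1)\big)\cdot\big(\zeta(2s-1)/\zeta(2s)\big)$, expands the first factor as $\sum b(n) n^{-s}$, uses the Voronoï-type identity twice to turn $\Phi(s)\zeta^2(1-s)$-pieces into double integrals of the $K_0$–$Y_0$ kernels, and finally handles $\zeta(2s-1)/\zeta(2s)$ by another Mellin–Barnes evaluation; assembling the three Mellin integrals (in $x$, $y$, $z$) and summing over $n$ gives the stated triple series. Throughout, convergence of the triple integral and of the $n$-sum has to be checked using the decay of the $K_0$–$Y_0$ kernel at infinity, the growth of $\sum_{n\le X}|b(n)|$ (which is $O(X\log^3 X)$ by \eqref{bn exp} and partial summation), and \eqref{delta defn2}.

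I expect the main obstacle to be the second half: identifying the integral on the line $\Re s=\sigma'$ with exactly the triple-integral kernel written in \eqref{vsf dn squared eqn}, including getting every constant ($4/\pi^2$, the $1/n$ weight, the precise argument $2\sqrt{xy}/(\pi\sqrt{nz})$ of the cosine) correct. This requires a delicate bookkeeping of the functional-equation factors $(2\pi)^s$, $\sec(\pi s/2)$ (or the $\cos/\sin$ factors from \eqref{zetafe asym}), and the Mellin-transform normalisations, and it is here that the ``squaring'' structure has to be made to appear honestly rather than asserted. A secondary technical point — but one that is genuinely delicate, as the authors themselves flag in the analogous Theorem \ref{vsf lambda(n)} — is the justification that the horizontal contributions vanish along the bracketing sequence $\{T_n\}$; this uses the Vinogradov–Korobov zero-free region and standard $\Omega$-type bounds on $1/\zeta(2s)$ just to the left of the critical line, combined with the hypothesis \eqref{delta defn2}. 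Deriving the explicit forms of $A_0,\dots,A_3$ in \eqref{A0-A3} is purely mechanical (multiply out the Laurent expansions and collect), so I would relegate that to a lemma or a direct computation and not dwell on it.
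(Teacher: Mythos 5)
The first half of your plan — Mellin inversion, the contour shift with the special sequence $\{T_n\}$, the order-four pole at $s=1$ producing $A_0,\dots,A_3$, and the simple poles at $s=\rho_m/2$ with the factor $\tfrac12$ from $\frac{d}{ds}\zeta(2s)=2\zeta'(2s)$ — is exactly the paper's argument and is fine (one small correction: the vanishing of the horizontal segments does not use the Vinogradov--Korobov region; it comes from choosing $T_n$ via an Inoue-type lemma so that $1/\zeta(\sigma+iT_n)\ll T_n^{\epsilon}$, together with convexity bounds for $\zeta$ and \eqref{delta defn2}; Vinogradov--Korobov is only needed in the $\lambda(n)$ theorem to evaluate $\sum c(n)/n$).

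The genuine gap is in the second half. Your concrete plan — write $\zeta^4(s)/\zeta(2s)=\bigl(\zeta^4(s)/\zeta(2s-1)\bigr)\cdot\bigl(\zeta(2s-1)/\zeta(2s)\bigr)$ on the shifted line and expand the first factor as $\sum b(n)n^{-s}$ — cannot be carried out there: that Dirichlet series converges only for $\Re s>1$, while the shifted line has $\Re s<0$, and the leftover factor $\zeta(2s-1)/\zeta(2s)$ has no place in the final kernel (which is purely a gamma-quotient kernel, with no zeta left in it). What the paper actually does is apply the functional equation to \emph{all} the zeta factors in the integrand on $\Re s=-\tau$ and then reflect $s\to1-s$, landing on $\Re s=1+\tau$, where the integrand becomes $\frac{\zeta^4(s)}{\zeta(2s-1)}\cdot\frac{\Gamma^4(s/2)}{\Gamma^4((1-s)/2)}\cdot\frac{\Gamma(1-s)}{\Gamma(s-1/2)}\Phi(1-s)\pi^{2(1-s)}$; only now is the expansion into $\sum b(n)n^{-s}$ legitimate. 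The second, and decisive, missing ingredient is how the remaining inverse Mellin transform becomes the triple integral: the paper uses Hardy's extension of Parseval's formula to \emph{three} functions, $\frac{1}{2\pi i}\int_{(c)}f_1f_2f_3\,t^{-w}\dd w=\int_0^\infty\!\!\int_0^\infty\phi_1(x)\phi_2(y)\phi_3\bigl(\tfrac{t}{xy}\bigr)\frac{\dd x\,\dd y}{xy}$, taking $\phi_1=\phi_2=\frac{2}{\pi}K_0(4x^{1/4})-Y_0(4x^{1/4})$ (whose Mellin transform is $\Gamma^2(w)/\Gamma^2(\tfrac12-w)$, Lemma \ref{lemma K-Y}) and $\phi_3=g$, the cosine transform of $\phi$ from Lemma \ref{lemma g(x)} — whose own proof requires a contour shift, Parseval, and the Gauss multiplication formula to turn $\frac{\Gamma(\frac12-w)\Gamma(1-w)}{\Gamma(w-\frac14)\Gamma(w+\frac14)}$ into a cosine. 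In particular the $\cos\bigl(2\sqrt{xy}/(\pi\sqrt{nz})\bigr)$ does not come from a ``twist'' $\zeta(2s)\mapsto\zeta(2s-1)$ in the denominator, but from this residual gamma quotient. Saying ``use the Voronoï-type identity twice and handle the rest by another Mellin--Barnes evaluation'' does not supply this mechanism, and you yourself flag this identification as the main obstacle; without Hardy's three-function formula (or an equivalent device) the proof of the triple-integral term is not complete.
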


\subsection{Results on the M\"{o}bius function $\mu(n)$}

Let $\Phi(s)$ be holomorphic in $-1<\Re s<2$ except for a possible simple pole at $s=0$, and such that
 \begin{equation}\label{delta mu(n)}
 \Phi(\sigma+it)\ll t^{-1-\delta}
\end{equation} 
  as $t\to \pm\infty$ for some $\delta>0$ and $-1<\delta<2$. 

\begin{theorem}\label{voronoi mu(n)}
	For $0<\sigma<2$, define $\phi(x):=\frac{1}{2\pi i}\int_{\sigma-i\infty}^{\sigma+i\infty}\Phi(s)x^{-s}\dd s$, where $\Phi(s)$ satisfies the aforementioned conditions. For $m\in\mathbb{Z}$, let $\rho_m:=\beta_m+i\gamma_m$ denote the $m^{\textup{th}}$ non-trivial zero of $\zeta(s)$, where $\rho_{-m}=\beta_m-i\gamma_m$. Assume that the non-trivial zeros of $\zeta(s)$ are simple. Then there exists a sequence of numbers $\{T_n\}_{n=1}^\infty$ with $T_n\to\infty$ such that
	\begin{equation*}
		\begin{split}
			\sum_{n=1}^{\infty}\mu(n)\phi(n)= -2k+\lim_{T_n\to \infty}\sum_{|\gamma_m|<T_n}\frac{1}{\zeta'(\rho_m)}\int_{0}^{\infty}\phi(x)x^{\rho_m-1}\dd x-4\sum_{n=1}^{\infty}\frac{\mu(n)}{n}\int_{0}^{\infty}(\phi(t)-k)\bigg(\frac{\sin^2(\pi/nt)}{t}\bigg)\dd t, \end{split}
		\end{equation*}
		where $k$ is the residue of $\Phi(s)$ at $s=0$. 
		\end{theorem}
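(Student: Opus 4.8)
The plan is to follow the same route as for Theorem~\ref{vsf lambda(n)}: turn the sum into a contour integral of $\Phi(s)/\zeta(s)$, move the contour across the critical strip, picking up the non-trivial zeros of $\zeta$ and the pole of $\Phi$ at $s=0$, and then apply the asymmetric functional equation of $\zeta$ together with a Mellin inversion to recognise the resulting series over $\mu(n)$. To begin, fix $\sigma\in(1,2)$. By Cauchy's theorem and \eqref{delta mu(n)}, $\phi(n)=\frac{1}{2\pi i}\int_{(\sigma)}\Phi(s)n^{-s}\,\dd s$ for each $n$ (the defining contour of $\phi$ may be shifted freely inside $0<\Re s<2$), and since $\int_{(\sigma)}|\Phi(s)|\,|\dd s|<\infty$ while $\sum_{n}n^{-\sigma}<\infty$ for $\sigma>1$, interchanging summation and integration gives
\begin{equation*}
\sum_{n=1}^{\infty}\mu(n)\phi(n)=\frac{1}{2\pi i}\int_{(\sigma)}\Phi(s)\sum_{n=1}^{\infty}\frac{\mu(n)}{n^{s}}\,\dd s=\frac{1}{2\pi i}\int_{(\sigma)}\frac{\Phi(s)}{\zeta(s)}\,\dd s,
\end{equation*}
the series on the left converging absolutely since $\phi(n)\ll n^{-\sigma}$ with $\sigma$ close to $2$.

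Next, push the line of integration to $\Re s=\sigma_{0}$ for a fixed $\sigma_{0}\in(-1,0)$. Inside the strip $\sigma_{0}<\Re s<\sigma$ the only poles of $\Phi(s)/\zeta(s)$ are the non-trivial zeros $\rho_{m}$ of $\zeta$ (there is none at $s=1$, where $1/\zeta$ vanishes, and the trivial zeros lie to the left of $\Re s=-1$) and $s=0$. By the assumed simplicity of the zeros, $\Res_{s=\rho_{m}}\Phi(s)/\zeta(s)=\Phi(\rho_{m})/\zeta'(\rho_{m})=\frac{1}{\zeta'(\rho_{m})}\int_{0}^{\infty}\phi(x)x^{\rho_{m}-1}\,\dd x$, while $\Phi$ has a simple pole at $0$ with residue $k$ and $1/\zeta(0)=-2$, so $\Res_{s=0}\Phi(s)/\zeta(s)=-2k$. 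Choosing $\{T_{n}\}$ so that each $T_{n}$ stays away from every ordinate $\gamma_{m}$ by $\gg 1/\log T_{n}$ and checking that the integrals over the horizontal segments at heights $\pm T_{n}$ vanish along this sequence, one obtains
\begin{equation*}
\sum_{n=1}^{\infty}\mu(n)\phi(n)=-2k+\lim_{T_{n}\to\infty}\sum_{|\gamma_{m}|<T_{n}}\frac{1}{\zeta'(\rho_{m})}\int_{0}^{\infty}\phi(x)x^{\rho_{m}-1}\,\dd x+\frac{1}{2\pi i}\int_{(\sigma_{0})}\frac{\Phi(s)}{\zeta(s)}\,\dd s,
\end{equation*}
the bracketing being the one described after \eqref{mr gen}.

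It remains to transform the last integral. On $\Re s=\sigma_{0}\in(-1,0)$ the asymmetric functional equation \eqref{zetafe asym} reads $1/\zeta(s)=2^{1-s}\pi^{-s}\cos(\pi s/2)\Gamma(s)/\zeta(1-s)$, and since $\Re(1-s)>1$ we may expand $1/\zeta(1-s)=\sum_{n\geq1}\mu(n)n^{s-1}$ and interchange, obtaining
\begin{equation*}
\frac{1}{2\pi i}\int_{(\sigma_{0})}\frac{\Phi(s)}{\zeta(s)}\,\dd s=\sum_{n=1}^{\infty}\frac{\mu(n)}{n}\cdot\frac{1}{2\pi i}\int_{(\sigma_{0})}2\left(\frac{n}{2\pi}\right)^{s}\cos\left(\frac{\pi s}{2}\right)\Gamma(s)\,\Phi(s)\,\dd s.
\end{equation*}
On this line $\Phi(s)=\int_{0}^{\infty}(\phi(t)-k)t^{s-1}\,\dd t$: moving the defining contour of $\phi$ just past $s=0$ shows $\phi(t)=k+O(t^{\eta})$ as $t\to0^{+}$ for a suitable $\eta>0$, which both explains why the subtracted constant must be $k$ and yields the Mellin representation, in analogy with $\int_{0}^{\infty}(e^{-t}-1)t^{s-1}\,\dd t=\Gamma(s)$ on $-1<\Re s<0$. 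Substituting and interchanging once more, everything reduces to the inner integral
\begin{equation*}
\frac{1}{2\pi i}\int_{(\sigma_{0})}2\left(\frac{nt}{2\pi}\right)^{s}\cos\left(\frac{\pi s}{2}\right)\Gamma(s)\,\dd s=2\left(\cos\left(\frac{2\pi}{nt}\right)-1\right)=-4\sin^{2}\left(\frac{\pi}{nt}\right),
\end{equation*}
which is Mellin inversion of the classical evaluation $\int_{0}^{\infty}(\cos x-1)x^{s-1}\,\dd x=\Gamma(s)\cos(\pi s/2)$, valid for $-2<\Re s<0$. Collecting the three contributions gives the stated identity.

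The algebraic manipulations above are routine once the functional equation and the Mellin transform of $\cos x-1$ are available; the real work is analytic. The main obstacle is the passage of the contour across the critical strip: one must control $1/\zeta(s)$ uniformly on the horizontal segments $\sigma_{0}\leq\Re s\leq\sigma$ at heights $\pm T_{n}$, which forces a careful choice of $\{T_{n}\}$ away from the zeros and uses classical growth bounds for $1/\zeta(s)$ in and to the left of the critical strip; this is where the decay exponent $\delta>0$ in \eqref{delta mu(n)} is consumed. A secondary but still delicate point is the justification of the two interchanges in the last step and of the convergence of the bracketed sum over zeros, for which one exploits the decay of $\Phi$ together with the estimate $|\Gamma(\sigma_{0}+it)\cos(\pi(\sigma_{0}+it)/2)|\asymp|t|^{\sigma_{0}-1/2}$ on $\Re s=\sigma_{0}$.
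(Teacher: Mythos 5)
Your proposal follows the paper's route essentially step for step: the same contour integral $\frac{1}{2\pi i}\int_{(\sigma)}\Phi(s)/\zeta(s)\,\dd s$, the same shift across the critical strip picking up the residue $-2k$ at $s=0$ (since $1/\zeta(0)=-2$) and $\Phi(\rho_m)/\zeta'(\rho_m)$ at the simple zeros, and the same representation $\Phi(s)=\int_0^\infty(\phi(t)-k)t^{s-1}\,\dd t$ on $-1<\Re s<0$. Your evaluation of the left-line integral, by expanding $1/\zeta(1-s)$ into its Dirichlet series and inverting $\Gamma(s)\cos(\pi s/2)$ (the Mellin transform of $\cos x-1$ on $-2<\Re s<0$), is a harmless variant of the paper's appeal to Parseval's formula and yields exactly the kernel $-4\sin^2(\pi/(nt))/t$, so the algebraic part of your argument is correct.

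The one step that would not go through as written is your choice of the sequence $\{T_n\}$. Requiring only that $T_n$ stay at distance $\gg 1/\log T_n$ from every ordinate gives, via $\log\zeta(\sigma+it)=\sum_{|t-\gamma|\le 1}\log(\sigma+it-\rho)+O(\log t)$, merely $1/\zeta(\sigma+iT_n)\ll\exp\left(C\log T_n\log\log T_n\right)$, which is superpolynomial; combined with $\Phi(\sigma+iT_n)\ll T_n^{-1-\delta}$ this does not force the horizontal integrals to vanish. The paper instead imports the sequence constructed in the proof of Theorem \ref{vsf lambda(n)}, coming from Inoue's lemma (equivalently Ramachandra--Sankaranarayanan), which provides $T_n\to\infty$ with $1/\zeta(\sigma+iT_n)\ll T_n^{\epsilon}$ uniformly for $1/2\le\sigma\le 2$, extended to $-1\le\sigma\le 2$ through the functional equation and the estimate $\chi(\sigma+it)\sim t^{1/2-\sigma}$; choosing $\epsilon<\delta$ then kills the horizontal contributions. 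Substituting that lemma for your spacing condition repairs the step, and the rest of your argument, including the justification of the interchanges and of the Mellin representation of $\Phi$ to the left of $s=0$, stands as you describe.
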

		
\noindent
In Section \ref{mr gen section}, we briefly sketch the proof of the above theorem since it is similar to that of Theorem \ref{vsf lambda(n)}, and since the latter is proved in complete detail in Section \ref{4.2}.
\begin{corollary}\label{mr gen corollary}
Ramanujan's identity \eqref{mr gen} holds.
\end{corollary}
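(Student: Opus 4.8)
The plan is to derive Ramanujan's identity \eqref{mr gen} from Theorem \ref{voronoi mu(n)} by choosing the test function $\phi$ appropriately. Given a pair of reciprocal functions $\tilde\phi, \tilde\psi$ in the cosine kernel as in \eqref{bef nmt} with normalized Mellin transforms $Z_1, Z_2$ as in \eqref{nmt}, I would apply Theorem \ref{voronoi mu(n)} with $\phi(x) = \frac{1}{\sqrt{x}}\tilde\phi\left(\frac{\alpha}{x}\right)$ (after scaling), so that the left-hand side $\sum_{n=1}^\infty \mu(n)\phi(n)$ becomes, up to the factor $\sqrt{\alpha}$, the first series in \eqref{mr gen}. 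First I would identify the Mellin pair: the condition that $\phi(x) = \frac{1}{2\pi i}\int_{(\sigma)}\Phi(s)x^{-s}\,\dd s$ together with the Mellin transform of $\tilde\phi$ being $\Gamma(s)Z_1(s)$ forces $\Phi(s)$ to be an explicit shift/reflection of $\Gamma(s)Z_1(s)$ involving a power of $\alpha$; I would record the resulting $\Phi$ and check it meets the hypotheses of Theorem \ref{voronoi mu(n)} (holomorphy in the strip, at most a simple pole at $s=0$ coming from $\Gamma$, and the decay \eqref{delta mu(n)}) under the standard hypotheses one imposes on $\tilde\phi$ in this circle of ideas.

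Next I would evaluate the three pieces of the conclusion of Theorem \ref{voronoi mu(n)} for this $\phi$. The sum over non-trivial zeros $\sum_{|\gamma_m|<T_n}\frac{1}{\zeta'(\rho_m)}\int_0^\infty \phi(x)x^{\rho_m-1}\,\dd x$ should, after the substitution and using \eqref{nmt}, collapse to $\frac{1}{\sqrt{\alpha}}\sum_\rho \frac{\Gamma(1-\rho)Z_1(1-\rho)\alpha^\rho}{\zeta'(\rho)}$ — exactly the middle expression in \eqref{mr gen}, with the bracketing $\lim_{T_n\to\infty}$ matching the convergence caveat already discussed in the excerpt. For the Bessel-free ``main term'' $-2k$: here $k$ is the residue of $\Phi$ at $s=0$, which under this choice is proportional to $Z_1(1)$; since $Z_1(s)$ is the normalized Mellin transform of $\tilde\phi$, one has $Z_1(1) = \frac{2}{\sqrt\pi}\int_0^\infty \tilde\phi(u)\,\dd u \cdot(\text{normalization})$, and I expect this to vanish or to combine with the analogous $\tilde\psi$-term — the reciprocity \eqref{bef nmt} is the key symmetry making the constant terms cancel when one forms the difference $\sqrt\alpha(\cdots) - \sqrt\beta(\cdots)$.

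The third piece, the series $-4\sum_{n=1}^\infty \frac{\mu(n)}{n}\int_0^\infty (\phi(t)-k)\frac{\sin^2(\pi/nt)}{t}\,\dd t$, is where the real work lies. I would use the identity $\sin^2\theta = \tfrac12(1-\cos 2\theta)$ and the Mellin–Parseval / contour-shift machinery to rewrite $\int_0^\infty \phi(t)\frac{\sin^2(\pi/nt)}{t}\,\dd t$ in terms of the cosine transform of $\tilde\phi$, invoking \eqref{bef nmt} so that $\tilde\phi$ gets converted into $\tilde\psi$ and the variable $\alpha$ into $\beta = \pi/\alpha$. After this transformation the series $-4\sum_n \frac{\mu(n)}{n}(\cdots)$ should reassemble precisely into $-\sqrt\beta \sum_{n=1}^\infty \frac{\mu(n)}{n}\tilde\psi\left(\frac{\beta}{n}\right)$ plus a constant that cancels against the $-2k$ term; combined with the previous paragraph this yields the full chain of equalities in \eqref{mr gen}, including the second form with $Z_2$ and $\beta^\rho$ obtained by running the same argument with the roles of $\tilde\phi,\tilde\psi$ (equivalently $\alpha,\beta$) swapped. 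The main obstacle will be justifying the interchanges of summation and integration and the contour shift in this last step — the integrand decays only like $1/t$ near infinity before using reciprocity, so one must be careful with conditional convergence and the regularizing subtraction of $k$ — but this is precisely the kind of estimate already handled in the detailed proof of Theorem \ref{vsf lambda(n)} in Section \ref{4.2}, and the argument there can be transcribed with only notational changes.
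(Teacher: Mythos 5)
Your overall route is the same as the paper's: specialize Theorem \ref{voronoi mu(n)} to a test function built from $\tilde\phi$ and a scaling by $\alpha$, read off the zero sum via \eqref{nmt}, and convert the $\sin^2$ series into the $\tilde\psi$ series by the half-angle formula and \eqref{bef nmt}. But two of your concrete steps would fail as written. First, the test function: the choice that works is $\phi(x)=\frac{\sqrt{\alpha}}{x}\,\tilde\phi\bigl(\tfrac{\alpha}{x}\bigr)$, which makes $\sum_n\mu(n)\phi(n)$ literally equal to $\sqrt{\alpha}\sum_n\frac{\mu(n)}{n}\tilde\phi(\alpha/n)$, gives $\int_0^\infty\phi(x)x^{\rho_m-1}\,\dd x=\alpha^{\rho_m-1/2}\Gamma(1-\rho_m)Z_1(1-\rho_m)$ as you predict, and turns $\int_0^\infty\phi(t)\,\frac{\sin^2(\pi/(nt))}{t}\,\dd t$, after $t\mapsto\alpha/t$, into $\frac{1}{\sqrt{\alpha}}\int_0^\infty\tilde\phi(u)\sin^2\bigl(\tfrac{\pi u}{n\alpha}\bigr)\dd u$, whose cosine-transform part $\int_0^\infty\tilde\phi(u)\cos\bigl(\tfrac{2\pi u}{n\alpha}\bigr)\dd u$ equals $\frac{\sqrt{\pi}}{2}\tilde\psi\bigl(\tfrac{\beta}{n}\bigr)$ with $\beta=\pi/\alpha$. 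Your $\phi(x)=x^{-1/2}\tilde\phi(\alpha/x)$ is not a constant rescaling of this: it weights $\mu(n)$ by $n^{-1/2}$ rather than $n^{-1}$, so the left-hand side is not the series in \eqref{mr gen} (and its convergence is itself problematic, since $\tilde\phi(\alpha/n)\to\tilde\phi(0)$), and after the substitution the kernel integral carries an extra factor $u^{-1/2}$, so the reciprocity \eqref{bef nmt} no longer identifies its cosine part with $\tilde\psi$.

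Second, the constant terms. With the correct $\phi$ one has $\Phi(s)=\alpha^{s-1/2}\Gamma(1-s)Z_1(1-s)$, which is regular at $s=0$ under the standing hypotheses; the paper simply requires $k=0$, and your statement that the residue is ``proportional to $Z_1(1)$'' conflates the value of $\Phi$ at $0$ with its residue. Hence there is no $-2k$ available to absorb anything, and the constant produced by $\sin^2\theta=\tfrac{1}{2}(1-\cos 2\theta)$, namely a term proportional to $\bigl(\int_0^\infty\tilde\phi(u)\,\dd u\bigr)\sum_{n\geq1}\mu(n)/n$, is not cancelled by reciprocity or by comparing two applications of the theorem (the theorem is applied once and yields the difference $\sqrt{\alpha}(\cdots)-\sqrt{\beta}(\cdots)$ directly). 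It vanishes only because of the prime number theorem in the form $\sum_{n=1}^{\infty}\mu(n)/n=0$, an ingredient your proposal never invokes and which is exactly how the paper disposes of it. Once these two points are repaired, the remainder of your outline, including obtaining the second equality in \eqref{mr gen} by swapping $\alpha$ and $\beta$ and using the second integral in \eqref{bef nmt}, coincides with the paper's proof.
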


\begin{remark}\label{not possible}
One does not get Ramanujan-Guinand type and Cohen-type identities for $\mu(n)$ since the corresponding integrals $\frac{1}{2\pi i}\int_{(c)}\frac{x^{-s}\dd s}{\pi^{-s/2}\Gamma(s/2)\zeta(s)}$ and  $\frac{1}{2\pi i}\int_{(d)}\frac{x^{-s}\dd s}{\Gamma(s)\zeta(s)}$ diverge along any vertical line. 
\end{remark}
\subsection{Cohen and Ramanujan-Guinand type identities for $\sigma_a(n)\sigma_b(n)$}\label{sigma ab results}

Although we do not venture\footnote{In principle, this can, of course, be done, and a formula generalizing that for $d^2(n)$, that is, \eqref{vsf dn squared eqn} can be derived; however, it is quite complicated and hence we refrain from going in that direction.} into deriving the Vorono\"{\dotlessi} summation formula for $\sigma_a(n)\sigma_b(n)$, we do obtain Cohen-type identity as well the analogue of Ramanujan-Guinand identity for this arithmetic function. 

Before stating the Cohen-type identity for $\sigma_a(n)\sigma_b(n)$, we need to define an arithmetic function $C_{a, b}(n)$ occurring in our formulas below. It is defined by means of the Dirichlet series
\begin{align}\label{cab dirichlet}
\sum_{n=1}^{\infty}\frac{C_{a, b}(n)}{n^s}:=\frac{\zeta(s)\zeta(s-a)\zeta(s-b)\zeta(s-a-b)}{\zeta(2s-a-b-1)}\hspace{8mm}(\textup{Re}(s)>\eta),
\end{align}
where 
\begin{equation}\label{eta}
	\eta:=\max{\{1,1+\Re(a),1+\Re(b),1+\Re(a+b),1+\Re(a+b)/2\}}.
\end{equation}
We now give an expression for $C_{a, b}(n)$ as the Dirichlet convolution of two familiar functions. Write the right-hand side of \eqref{cab dirichlet} as
\begin{align}\label{cab dirichlet start}
\frac{\zeta(s)\zeta(s-a)\zeta(s-b)\zeta(s-a-b)}{\zeta(2s-a-b-1)}=\frac{\zeta(s)\zeta(s-a)\zeta(s-b)\zeta(s-a-b)}{\zeta(2s-a-b)}\cdot\frac{\zeta(2s-a-b)}{\zeta(2s-a-b-1)}.
\end{align}
Now for $\textup{Re}(s)>\eta$, we have \cite[p.~8, Equation (1.3.3)]{titch},
\begin{align}\label{sigma ab dirichlet}
	\frac{\zeta(s)\zeta(s-a)\zeta(s-b)\zeta(s-a-b)}{\zeta(2s-a-b)}=\sum_{n=1}^{\infty}\frac{\sigma_a(n)\sigma_b(n)}{n^s}.
\end{align}
Morever, if $\phi^{-1}$ denotes the Dirichlet inverse of the Euler totient function $\phi$, then, for Re$(s)>2$, we have
$\sum_{n=1}^{\infty}\phi^{-1}(n)n^{-s}=\zeta(s)/\zeta(s-1)$,
whence, for Re$(s)>\eta$,
\begin{align*}
\frac{\zeta(2s-a-b)}{\zeta(2s-a-b-1)}=\sum_{n=1}^{\infty}\frac{\kappa_{a,b}(n)}{n^s},
\end{align*}
where
\begin{equation}\label{cab dirichlet end}
\kappa_{a,b}(n)=\begin{cases}
	m^{a+b}\phi^{-1}(m),\text{if}\hspace{1mm}n=m^2,\\
	0,\hspace{19mm}\text{else}.
\end{cases}
\end{equation}
Thus, from \eqref{cab dirichlet start}-\eqref{cab dirichlet end}, we see that
\begin{align}\label{cab}
	C_{a,b}(n)=\left(\sigma_a\sigma_b*\kappa_{a,b}\right)(n).
\end{align}
\begin{theorem}\label{cohen sigma ab}
Let $-1<\textup{Re}(a), \textup{Re}(b), \textup{Re}(a-b), \textup{Re}(a+b)<1$ and $x>0$. Let $C_{a,b}(n)$ be given in \eqref{cab}. Let $f(s):=\Gamma(s)\zeta(s)$. For $m\in\mathbb{Z}$, let $\rho_m:=\beta_m+i\gamma_m$ denote the $m^{\textup{th}}$ non-trivial zero of $\zeta(s)$, where $\rho_{-m}=\beta_m-i\gamma_m$. Assume that the non-trivial zeros of $\zeta(s)$ are simple. Then there exists a sequence of numbers $\{T_n\}_{n=1}^\infty$ with $T_n\to\infty$ such that
\begin{align}\label{sigma ab final}
&\sum_{n=1}^{\infty}{\sigma_{a}(n)\sigma_{b}(n)}\frac{x\big(x^{-a}-n^{-a}\big)\big(x^{-b}-n^{-b}\big)}{x^2-n^2}\nonumber\\
&=32\pi x^{\frac{1-a-b}{2}} \sin\left(\frac{\pi a}{2}\right)\sin\left(\frac{\pi b}{2}\right)\sum_{n=1}^{\infty}\frac{C_{a, b}(n) }{n^{\frac{a+b-1}{2}}}\bigg(K_{a-1}\Big(4\pi\sqrt{nx}\Big)K_{b}\Big(4\pi\sqrt{nx}\Big)\nonumber\\
&\quad+K_{b-1}\Big(4\pi\sqrt{nx}\Big)K_{a}\Big(4\pi\sqrt{nx}\Big)+\frac{(a+b-1)}{4\pi\sqrt{nx}}K_{b}\Big(4\pi\sqrt{nx}\Big)K_{a}\Big(4\pi\sqrt{nx}\Big)\bigg)-2(2\pi)^{a+b}\nonumber\\
&\quad\times\sin\left(\frac{\pi a}{2}\right)\sin\left(\frac{\pi b}{2}\right)\left(\sum_{k=0}^{1}\left(R_k(x)+ R_{k+a}(x)+R_{k+b}(x)+R_{k+a+b}(x)\right)+\lim_{T_n\to\infty}\sum_{|\gamma_m|<T_n}R_{\rho_m,a,b}(x)\right),
\end{align}
where
\begin{align}\label{residues 1}
	R_0(x)&=\frac{-f(-a)f(-b)f(-a-b)}{2f(-a-b-1)},\hspace{15mm}
	R_a(x)=\frac{-f(a)f(a-b)f(-b)}{2f(a-b-1)}(4\pi^2x)^{-a},\nonumber\\
	R_b(x)&=\frac{-f(b)f(b-a)f(-a)}{2f(b-a-1)}(4\pi^2x)^{-b},\hspace{6mm}
	R_{a+b}(x)=\frac{-f(a+b)f(a)f(b)}{2f(a+b-1)}(4\pi^2x)^{-b-a},\nonumber\\
R_1(x)&={f(1-a)f(1-b)}(4\pi^2x)^{-1},\hspace{12mm}
R_{1+a}(x)=f(1+a)f(1-b)(4\pi^2x)^{-1-a},\nonumber\\
R_{1+b}(x)&=f(1+b)f(1-a)(4\pi^2x)^{-1-b},\hspace{8mm}
R_{1+a+b}(x)=f(1+a)f(1+b)(4\pi^2x)^{-1-a-b},
\end{align}
and
\begin{align}\label{residues 2}
R_{\rho_m,a,b}(x)=\frac{f((1+\rho_m+a+b)/2)f((1+\rho_m-a+b)/2)f((1+\rho_m+a-b)/2)f((1+\rho_m-a-b)/2)}{2\zeta'(\rho_m)\Gamma(\rho_m)(4\pi^2x)^{\frac{\rho_m+1+a+b}{2}}}.
\end{align}
\end{theorem}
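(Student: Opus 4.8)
The plan is to establish Theorem~\ref{cohen sigma ab} by a Mellin-transform / contour-shifting argument, exactly parallel to the one used in the derivations of Cohen's identity \eqref{cohenres} and of our Cohen-type identity for $\lambda(n)$ (Theorem~\ref{cohen lambda thm}). First I would recognize the left-hand side as (up to elementary factors) a Mellin--Barnes integral: starting from the representation
\begin{align*}
\frac{1}{2\pi i}\int_{(c)}\Gamma(z)\Gamma(z-a)\Gamma(z-b)\Gamma(z-a-b)(4\pi^2 n x)^{-z}\,\dd z = 8\, (nx)^{\frac{-a-b}{2}}K_{a-b}\bigl(\cdots\bigr)\cdots
\end{align*}
— more precisely the product-of-two-Bessel-$K$ integral coming from \cite[Formula 11.1]{ober} applied twice — one writes, for $c$ large enough,
\begin{align*}
\sum_{n=1}^{\infty}\sigma_a(n)\sigma_b(n)\,(\text{Bessel product}) = \frac{1}{2\pi i}\int_{(c)} \frac{\prod \Gamma(\cdots)\,\zeta(z)\zeta(z-a)\zeta(z-b)\zeta(z-a-b)}{\zeta(2z-a-b)}\,(4\pi^2 x)^{-z}\,\dd z,
\end{align*}
using \eqref{sigma ab dirichlet} to collapse the Dirichlet series. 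I would then apply the symmetric functional equation \eqref{zetafe sym} to each of the four $\zeta$-factors in the numerator (and one factor of \eqref{zetafe asym}-type manipulation to the denominator $\zeta(2z-a-b)$), turning $\zeta(2z-a-b)$ in the denominator into $\zeta(2z-a-b-1)$ after combining the resulting $\Gamma$- and power-factors; this is precisely the step that produces the arithmetic function $C_{a,b}(n)$ via \eqref{cab dirichlet}--\eqref{cab}, and the $\sin(\pi a/2)\sin(\pi b/2)$ prefactor from the $\cos/\sec$ terms in the functional equation. After the transformation the integrand, read on a shifted line, re-expands as the Bessel-$K$ series on the right-hand side of \eqref{sigma ab final} (the three-term combination $K_{a-1}K_b + K_{b-1}K_a + \tfrac{a+b-1}{4\pi\sqrt{nx}}K_aK_b$ is exactly the derivative-type identity one gets when differentiating/shifting a product of two $K$'s; I would verify it from the contiguous relations for $K_\nu$).

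The heart of the proof is then the \emph{residue bookkeeping}: shifting the contour from the far-right line $\Re(z)=c$ to a far-left line $\Re(z)=c'$, one picks up residues at (i) the simple poles of the four numerator zeta factors at $z=1, 1+a, 1+b, 1+a+b$, which give the eight $R_k(x), R_{k+a}(x),\dots$ terms with $k\in\{0,1\}$ listed in \eqref{residues 1} — note the ``$k=0$'' family $R_0,R_a,R_b,R_{a+b}$ comes from the \emph{trivial} zeros of $\zeta(2z-a-b-1)$ in the denominator becoming poles after the functional equation is applied, i.e.\ from $1/\zeta$ having poles where $\zeta$ vanishes trivially, while the ``$k=1$'' family comes from the genuine $\zeta$-poles; and (ii) the poles at $z=\tfrac{\rho_m+1+a+b}{2}$, $z=\tfrac{\rho_m+1-a+b}{2}$, etc.\ coming from the non-trivial zeros $\rho_m$ of $\zeta(2z-a-b-1)$ in the denominator — under the simplicity hypothesis these are simple, and summing the four associated residues gives precisely $R_{\rho_m,a,b}(x)$ in \eqref{residues 2}. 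One must also check that the poles of the $\Gamma$-factors do not contribute in the strip being crossed (they lie further left) and that, in the region $-1<\Re(a),\Re(b),\Re(a-b),\Re(a+b)<1$, none of these pole families collide, which is exactly what the hypotheses on $a,b$ guarantee.

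The main obstacle — and the only genuinely delicate point — is justifying the contour shift itself: one has to control $1/\zeta(2z-a-b-1)$ along horizontal segments $\Im(z)=\pm T$ and down the left vertical line. This is the same difficulty that forces bracketing of the sum over $\rho_m$: I would choose the sequence $\{T_n\}$ so that the horizontal segments avoid the zeros $\rho_m$ with the usual spacing $|2T_n - \gamma_m| \gg \exp(-A\gamma_m/\log\gamma_m)$ (cf.\ \cite[p.~219]{titch}), use the standard bound $1/\zeta(\sigma+it) \ll \log|t|$ to the right of the critical strip together with convexity estimates for the numerator $\zeta$-factors and Bessel/$\Gamma$ decay, and let $T_n\to\infty$; the exponential decay of the $\Gamma$-factors in the vertical direction makes the horizontal contributions vanish along the chosen sequence, while the far-left vertical integral tends to zero as $c'\to-\infty$ because the $(4\pi^2 x)^{-z}$ and $\Gamma$ growth is dominated once $x>0$ is fixed. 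Assembling the three-term Bessel series, the eight elementary residue terms, and the bracketed zero-sum, and clearing the common factor $-2(2\pi)^{a+b}\sin(\pi a/2)\sin(\pi b/2)$ against the Bessel-series normalization, yields \eqref{sigma ab final}.
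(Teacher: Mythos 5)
Your overall strategy (a Mellin--Barnes representation, the functional equation, and a contour shift with a bracketing sequence $\{T_n\}$) is the right one and is indeed the paper's, but two parts of your plan, as written, would fail. First, your starting representation does not produce the left-hand side of \eqref{sigma ab final}. The kernel $x\big(x^{-a}-n^{-a}\big)\big(x^{-b}-n^{-b}\big)/(x^2-n^2)$ is not a product of Bessel functions; its Mellin--Barnes representation is a quotient of secants (Lemma \ref{I evaluation}), and after applying the asymmetric functional equation \eqref{zetafe asym} to each of $\zeta(1-s),\zeta(1-s+a),\zeta(1-s+b),\zeta(1-s+a+b)$ and to $\zeta(2-2s+a+b)$ one is led to the integrand $\Gamma(s)\Gamma(s-a)\Gamma(s-b)\Gamma(s-a-b)\,\zeta(s)\zeta(s-a)\zeta(s-b)\zeta(s-a-b)\,(4\pi^2x)^{-s}/\big(\Gamma(2s-a-b-1)\zeta(2s-a-b-1)\big)$. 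Your first display instead equates a $\sigma_a(n)\sigma_b(n)$-weighted Bessel-product sum (with denominator $\zeta(2z-a-b)$) to a contour integral; that series is neither side of the identity --- it is the shape of the left-hand side of the Ramanujan--Guinand identity (Theorem \ref{rg sigma ab}), which you have conflated with the Cohen-type one, and so your chain never produces the rational-kernel series. Moreover, claiming that the far-left vertical integral vanishes as $c'\to-\infty$ is inconsistent with also re-expanding the shifted integral as the $C_{a,b}$-Bessel series: the correct architecture keeps two finite lines, a left one with $c<\min\{0,\Re(a),\Re(b),\Re(a+b),\Re((a+b+1)/2)\}$ on which the secant kernel reproduces the rational-kernel series, and the right line $\Re(s)=\eta$ of \eqref{eta} on which \eqref{cab dirichlet} together with the Meijer-$G$/Bessel evaluation of Lemma \ref{Meijer G evaluation} yields the right-hand series; the $R$-terms are exactly the residues of the poles lying strictly between these two lines.

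Second, your residue bookkeeping is wrong in two places. The family $R_0,R_a,R_b,R_{a+b}$ in \eqref{residues 1} does not come from trivial zeros of $\zeta(2s-a-b-1)$: those would-be poles of $1/\zeta(2s-a-b-1)$ are cancelled by the factor $1/\Gamma(2s-a-b-1)$, which vanishes at the same points; these four residues come precisely from the (rightmost) poles of $\Gamma(s),\Gamma(s-a),\Gamma(s-b),\Gamma(s-a-b)$ at $s=0,a,b,a+b$ (equivalently, before the functional equation, from the poles of $\zeta(1-s)$, $\zeta(1-s+a)$, etc.), which you explicitly asserted do not contribute. Furthermore, for each non-trivial zero $\rho_m$ there is only one pole crossed, at $s=\frac{1+a+b+\rho_m}{2}$, coming from the single denominator factor $\zeta(2s-a-b-1)$; the four factors $f\big(\frac{1+\rho_m\pm a\pm b}{2}\big)$ in \eqref{residues 2} are the four numerator factors $\Gamma(\cdot)\zeta(\cdot)$ evaluated at that single point, not a sum of residues over four pole families --- poles at $\frac{1+\rho_m-a+b}{2}$, etc., simply do not occur in the integrand. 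With the bookkeeping as you describe it, the residue sum would not reduce to the stated $R$-terms, so the computation would not close.
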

As a corollary, we get the Cohen-type identity for $d^2(n)$.
\begin{corollary}\label{cohen d(n) squared}
	Let $x>0$. Recall the notation for the non-trivial zeros of $\zeta(s)$ from Theorem \ref{cohen sigma ab} and assume that they are simple. Let $C_{a, b}(n)$ be defined in \eqref{cab dirichlet}, or equivalently, by \eqref{cab}. Then there exists a sequence of numbers $\{T_n\}_{n=1}^\infty$ with $T_n\to\infty$ such that
\begin{align}\label{cohen square of d(n)}
\sum_{n=1}^{\infty}d^2(n)\frac{x\log^2(x/n)}{x^2-n^2}&= 8\pi^3\sqrt{x}\sum_{n=1}^{\infty}\sqrt{n}C_{0,0}(n)\bigg(2K_0(4\pi\sqrt{nx})K_1(4\pi\sqrt{nx})-\frac{K_0^2(4\pi\sqrt{nx})}{4 \pi\sqrt{nx}}\bigg)\nonumber\\
&\quad-\frac{\pi^2}{2}\left(4R_0(x)+4R_1(x)+\lim_{T_n\to\infty}\sum_{|\gamma_m|<T_n}R_{\rho_m,0,0}(x)\right),
\end{align}
where
\begin{align*}
R_{\rho_m,0,0}(x)=\frac{\Gamma^{4}\left(\frac{1+\rho_m}{2}\right)\zeta^{4}\left(\frac{1+\rho_m}{2}\right)}{2\zeta'(\rho_m)\Gamma(\rho_m)(4\pi^2x)^{\frac{\rho_m+1}{2}}},\hspace{12mm}	
R_{1}(x)=\frac{\log^{2}(4\pi^2x)}{4\pi^2x},
\end{align*}
and
\begin{align*}
R_{0}(x)&=\frac{\pi^2}{4}+3\log^{2}(2)-72\log(A)+6(\gamma+12\log(A))(12\log(A)-\log(2\pi))+\log(\pi)\log(64\pi^3)\nonumber\\
&\quad+\frac{3}{4}\log(x)\left(4\gamma+48\log(A)-4\log(2\pi)+\log(x)\right)-6\gamma_1+36\zeta''(-1),
\end{align*}
with $A$ being the Glaisher-Kinkelin constant.
\end{corollary}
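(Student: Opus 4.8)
The plan is to derive \eqref{cohen square of d(n)} by letting $a,b\to0^{+}$ in Theorem \ref{cohen sigma ab}. Since $d(n)=\sigma_{0}(n)$, one wants the specialisation $a=b=0$; note that $C_{0,0}(n)=b(n)$ (both Dirichlet series being $\zeta^{4}(s)/\zeta(2s-1)$) and that \eqref{residues 2} at $a=b=0$ collapses $R_{\rho_m,a,b}(x)$ to the asserted $R_{\rho_m,0,0}(x)$. One cannot simply put $a=b=0$: the left side of \eqref{sigma ab final} vanishes through the factors $x^{-a}-n^{-a}$, the right side through $\sin(\pi a/2)\sin(\pi b/2)$, and the residues $R_{0},R_{a},R_{b},R_{a+b}$ and $R_{1},R_{1+a},R_{1+b},R_{1+a+b}$ are individually singular at $a=b=0$. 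The remedy is to divide \eqref{sigma ab final} throughout by $ab$ and then pass to the limit.

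The two series limits are routine dominated-convergence arguments. On the left, $x^{-a}-n^{-a}=-a\log(x/n)+O(a^{2})$, so the general term divided by $ab$ tends to $d^{2}(n)\,x\log^{2}(x/n)/(x^{2}-n^{2})$, while $|x^{-a}-n^{-a}|/a\ll_{x}\log(2n)$ and $d^{2}(n)\ll n^{\epsilon}$ furnish a summable majorant. For the Bessel series, $\sin(\pi a/2)\sin(\pi b/2)/(ab)\to\pi^{2}/4$, $x^{(1-a-b)/2}\to\sqrt{x}$, $n^{-(a+b-1)/2}\to\sqrt{n}$, $C_{a,b}(n)\to b(n)$ by continuity in \eqref{cab}, and, using $K_{-\nu}=K_{\nu}$, the bracketed Bessel combination tends to $2K_{0}(4\pi\sqrt{nx})K_{1}(4\pi\sqrt{nx})-K_{0}^{2}(4\pi\sqrt{nx})/(4\pi\sqrt{nx})$; the exponential decay of $K_{0},K_{1}$ against the polynomial growth of $b(n)$ again supplies a majorant. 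This produces the first term on the right of \eqref{cohen square of d(n)}.

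The heart of the matter is the residue part: $-2(2\pi)^{a+b}\sin(\pi a/2)\sin(\pi b/2)$ times the bracket $\sum_{k=0}^{1}(R_{k}+R_{k+a}+R_{k+b}+R_{k+a+b})+\lim_{T_n\to\infty}\sum_{|\gamma_m|<T_n}R_{\rho_m,a,b}(x)$, all divided by $ab$. The prefactor divided by $ab$ tends to $-\pi^{2}/2$. Writing $f(s)=\Gamma(s)\zeta(s)$, I would expand $f$ about $s=0$ and $s=\pm1$ using $\zeta(s)=\frac{1}{s-1}+\gamma-\gamma_{1}(s-1)+\cdots$, $\zeta(0)=-\frac{1}{2}$, $\zeta'(0)=-\frac{1}{2}\log(2\pi)$, $\zeta(-1)=-\frac{1}{12}$, $\zeta'(-1)=\frac{1}{12}-\log A$ (and $\zeta''(-1)$), together with the Taylor expansions of $\Gamma$ at $0,\pm1$, and check that in each of the groups $R_{0}+R_{a}+R_{b}+R_{a+b}$ and $R_{1}+R_{1+a}+R_{1+b}+R_{1+a+b}$ the leading $1/(ab)$ singularities cancel, so each group has a finite limit; the second group telescopes into $(4\pi^{2}x)^{-1}\big(f(1-a)+f(1+a)(4\pi^{2}x)^{-a}\big)\big(f(1-b)+f(1+b)(4\pi^{2}x)^{-b}\big)$, which tends to a multiple of $\log^{2}(4\pi^{2}x)$, while the first group is a longer computation in which $\zeta'(-1)$ introduces the Glaisher--Kinkelin constant $A$, the constants $\gamma,\gamma_{1},\zeta''(-1),\log2,\log\pi$ appear, and the power factors $(4\pi^{2}x)^{-a},(4\pi^{2}x)^{-b},(4\pi^{2}x)^{-a-b}$ produce the $\log x$ and $\log^{2}x$ terms of $R_{0}(x)$. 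For the zero-sum I would note that each $R_{\rho_m,a,b}(x)$ is holomorphic in $(a,b)$ near the origin — the arguments $(1\pm\rho_m\pm a\pm b)/2$ avoid the pole of $\zeta$ and the poles of $\Gamma$, and $\zeta'(\rho_m)\neq0$ by the simplicity hypothesis — and tends to $R_{\rho_m,0,0}(x)$, while the estimates from the proof of Theorem \ref{cohen sigma ab} controlling the bracketed sum hold uniformly for small $(a,b)$, so the $(a,b)$-limit may be interchanged with the $T_n$-limit. Collecting the three limits and multiplying by $-\pi^{2}/2$ reproduces \eqref{cohen square of d(n)}.

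I expect the only genuine obstacle to be the bookkeeping in the first residue group: after the $1/(ab)$ parts cancel, pinning down the constant term of $R_{0}+R_{a}+R_{b}+R_{a+b}$ requires carrying the Laurent expansions of $f(-a)$, $f(-b)$, $f(-a-b)$, $f(-a-b-1)$ and of the three power factors far enough, and then collapsing the resulting combination of Stieltjes constants, of $\zeta$-derivatives at $0$ and $-1$, and of logarithms into the closed form $R_{0}(x)$ — patient computation, not a new idea. The only other point requiring care is the uniformity in $(a,b)$ underlying the interchange with the bracketed zero-sum, which is handled exactly as the corresponding interchanges in the proof of Theorem \ref{cohen sigma ab}.
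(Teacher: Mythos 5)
Your proposal is correct and follows essentially the same route as the paper: divide \eqref{sigma ab final} by $ab$, let $a,b\to0$, justify the interchange of the limit with the series (the paper uses uniform convergence via $\sigma_s(n)n^{-s/2}=O(n^{\frac{1}{2}|\Re(s)|+\epsilon})$, you use an equivalent dominated-convergence majorant) and with the bracketed zero-sum, and then simplify the limiting residues. The residue bookkeeping you flag as the only real labor is exactly where the paper also offloads the work, noting that $R_0(x)$ was computed with \emph{Mathematica}.
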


\begin{remark}
The above identity should be compared with \eqref{voronoi dn}. Also, for $a=b=0$, the arithmetic function $C_{a,b}(n)$ can be written in an alternative form, namely, $C_{0,0}(n)=b(n)$, where $b(1)=1$, and for primes p,  $b(p^k)$ is defined in \eqref{bn exp}. 
\end{remark}

We now state the Ramanujan-Guinand identity for $\sigma_a(n)\sigma_b(n)$.

\begin{theorem}\label{rg sigma ab}
Let $a,b\in\mathbb{C}$ and $x>0$. Let $C_{a, b}(n)$ be defined by \eqref{cab dirichlet}. Let $g(s):=\Gamma(s/2)\zeta(s)$. For $m\in\mathbb{Z}$, let $\rho_m:=\beta_m+i\gamma_m$ denote the $m^{\textup{th}}$ non-trivial zero of $\zeta(s)$, where $\rho_{-m}=\beta_m-i\gamma_m$. Assume that the non-trivial zeros of $\zeta(s)$ are simple. Then there exists a sequence of numbers $\{T_n\}_{n=1}^\infty$ with $T_n\to\infty$ such that
\begin{align}\label{jhep}
\frac{8}{x^{\frac{a+b}{2}}}\sum_{n=1}^{\infty}\frac{\sigma_{a}(n)\sigma_{b}(n)}{n^{(a+b)/2}}K_{\frac{a}{2}}(2nx)K_{\frac{b}{2}}(2nx)
&=\frac{2^{\frac{3-a-b}{2}}}{\pi^{a+b+1}}\sum_{n=1}^{\infty}\frac{C_{-a,-b}(n)}{n}G_{4, \, \, 2}^{0, \, \, 4} \left(\begin{matrix}
	\frac{1}{2}, \frac{1-a}{2}, \frac{1-b}{2}, \frac{1-a-b}{2}\\
	\frac{1-a-b}{4}, \frac{3-a-b}{4}
\end{matrix} \Bigg| \, \frac{x^2}{4n^2\pi^4} \right)\nonumber\\
&\quad+\sum_{k=0}^{1}\left(R_k(x)+ R_{k+a}(x)+R_{k+b}(x)+R_{k+a+b}(x)\right) \nonumber\\
&\quad+\lim_{T_n\to\infty}\sum_{|\gamma_m|<T_n}R_{\frac{\rho_m+a+b}{2}}(x),
\end{align}
where $G_{4, \, \, 2}^{0, \, \, 4} $ is the Meijer $G$-function defined in \eqref{MeijerG}, and
\begin{align}\label{residues 3}
	R_0(x)&=-g(-a)g(-b),\hspace{42mm}
	R_a(x)=-g(a)g(-b)x^{-a},\nonumber\\
	R_b(x)&=-g(b)g(-a)x^{-b}\hspace{40mm}
	R_{a+b}(x)=-g(a)g(b)x^{-a-b},\nonumber\\
	R_1(x)&=\frac{\sqrt{\pi}g(1-a)g(1-b)g(1-a-b)}{g(2-a-b)x},\hspace{10mm}
	R_{1+a}(x)=\frac{\sqrt{\pi}g(1+a)g(1+a-b)g(1-b)}{g(2+a-b)x^{1+a}},\nonumber\\
	R_{1+b}(x)&=\frac{\sqrt{\pi}g(1+b)g(1+b-a)g(1-a)}{g(2+b-a)x^{1+b}},\hspace{8mm}
	R_{1+a+b}(x)=\frac{\sqrt{\pi}g(1+a+b)g(1+a)g(1+b)}{g(2+a+b)x^{1+a+b}},
\end{align}
and
\begin{align}\label{residues 4}
	R_{\frac{\rho_m+a+b}{2}}(x)=\frac{1}{2\Gamma\left(\frac{\rho_m}{2}\right)\zeta'(\rho_m)x^{\frac{\rho_m+a+b}{2}}}g\left(\frac{\rho_m+a+b}{2}\right)g\left(\frac{\rho_m-a+b}{2}\right)g\left(\frac{\rho_m+a-b}{2}\right)g\left(\frac{\rho_m-a-b}{2}\right).
\end{align}
\end{theorem}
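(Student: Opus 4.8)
The plan is to realise the Bessel-product series on the left of \eqref{jhep} as a Mellin--Barnes integral, to shift its line of integration to the left, and to read off the right-hand side from the residues crossed together with the integral that survives after one application of the symmetric functional equation \eqref{zetafe sym}. As a preliminary reduction, note that both sides of \eqref{jhep} are meromorphic in $(a,b)\in\mathbb{C}^2$: the left because $K_{a/2}(2nx)K_{b/2}(2nx)$ is entire in $(a,b)$ while decaying exponentially in $n$, the right because the functions in \eqref{residues 3} and \eqref{residues 4} are visibly meromorphic and the Meijer $G$ series converges locally uniformly (the $G$-function in it decaying rapidly as its argument tends to $0^{+}$). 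Hence it suffices to establish \eqref{jhep} for $(a,b)$ in a small neighbourhood of the origin with $\Re(a),\Re(b)>0$, where the contour manipulations below are transparent, and then invoke analytic continuation.

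\emph{Step 1 (Mellin--Barnes representation).} From the classical evaluation
\begin{align*}
\int_0^\infty t^{w-1}K_{\mu}(t)K_{\nu}(t)\,\dd t=\frac{2^{\,w-3}}{\Gamma(w)}\,\Gamma\left(\frac{w+\mu+\nu}{2}\right)\Gamma\left(\frac{w+\nu-\mu}{2}\right)\Gamma\left(\frac{w+\mu-\nu}{2}\right)\Gamma\left(\frac{w-\mu-\nu}{2}\right),
\end{align*}
valid for $\Re w>|\Re\mu|+|\Re\nu|$, Mellin inversion writes $K_{a/2}(2nx)K_{b/2}(2nx)$ as a vertical integral of this Gamma quotient against $(2nx)^{-w}$. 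Substituting this into $\sum_{n\ge1}\sigma_a(n)\sigma_b(n)n^{-(a+b)/2}K_{a/2}(2nx)K_{b/2}(2nx)$, interchanging summation and integration (Fubini, using the exponential decay of the Gamma quotient on vertical lines and the absolute convergence of $\sum_n\sigma_a(n)\sigma_b(n)n^{-s}$ for $\Re s>\eta$), summing the Dirichlet series by \eqref{sigma ab dirichlet}, and putting $s=w+\tfrac{a+b}{2}$, the entire left-hand side of \eqref{jhep} collapses --- all powers of $2$ and $x$ cancelling --- to $\tfrac{1}{2\pi i}\int_{(c)}\mathcal{I}(s)\,x^{-s}\,\dd s$ for any $c>\eta$, where
\begin{align*}
\mathcal{I}(s)=\frac{\Gamma(s/2)\,\Gamma((s-a)/2)\,\Gamma((s-b)/2)\,\Gamma((s-a-b)/2)}{\Gamma((2s-a-b)/2)}\cdot\frac{\zeta(s)\zeta(s-a)\zeta(s-b)\zeta(s-a-b)}{\zeta(2s-a-b)}.
\end{align*}
For $(a,b)$ near the origin the poles of $\mathcal{I}$ with $\Re s\le c$ lie exactly at $s\in\{0,a,b,a+b\}$ (from the numerator Gamma factors), at $s\in\{1,1+a,1+b,1+a+b\}$ (from the numerator zeta factors), and at $s=\tfrac{\rho_m+a+b}{2}$ for each nontrivial zero $\rho_m$ of $\zeta$ (from $1/\zeta(2s-a-b)$; simplicity of the zeros makes these poles simple and brings in $1/\zeta'(\rho_m)$), all remaining poles --- the trivial zeros of $\zeta(2s-a-b)$ and the translates of the first family by $-2,-4,\dots$ --- lying in $\Re s<-\tfrac12$.

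\emph{Step 2 (shifting the contour).} Move the line of integration from $\Re s=c$ to $\Re s=-\epsilon$, with $\epsilon>0$ chosen small enough to cross none of the poles in $\Re s<-\tfrac12$ and to satisfy $-\epsilon<1-\eta_{-a,-b}$, where $\eta_{-a,-b}$ denotes the abscissa of absolute convergence of $\sum_n C_{-a,-b}(n)n^{-s}$ (this is needed in Step 3). The residues at $s=1,1+a,1+b,1+a+b$ produce $R_1(x),R_{1+a}(x),R_{1+b}(x),R_{1+a+b}(x)$ of \eqref{residues 3}; the residues at $s=0,a,b,a+b$ --- computed using $\Res_{s=0}\Gamma(s/2)=2$, $\zeta(0)=-\tfrac12$, the cancellation of one numerator Gamma factor against $\Gamma((2s-a-b)/2)$ at each of these four points, and $\Gamma(w/2)\zeta(w)=g(w)$ --- produce $R_0(x),R_a(x),R_b(x),R_{a+b}(x)$; and the residue at $s=\tfrac{\rho_m+a+b}{2}$ produces $R_{(\rho_m+a+b)/2}(x)$ of \eqref{residues 4}. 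Collecting the zero-residues along a sequence $\{T_n\}\to\infty$ whose horizontal cuts avoid dense clusters of ordinates $\gamma_m$ yields the bracketed series $\lim_{T_n\to\infty}\sum_{|\gamma_m|<T_n}R_{(\rho_m+a+b)/2}(x)$, and the horizontal pieces of the contour tend to $0$ along $\{T_n\}$ by standard bounds for $\zeta$ and for $1/\zeta$ on the cuts together with the exponential decay of the Gamma quotient.

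\emph{Step 3 (functional equation and Meijer $G$).} On $\Re s=-\epsilon$ apply \eqref{zetafe sym} to each of $\zeta(s),\zeta(s-a),\zeta(s-b),\zeta(s-a-b)$ and, in inverted form, to $\zeta(2s-a-b)$: the denominator factor $\Gamma((2s-a-b)/2)$ of $\mathcal{I}$ cancels precisely the Gamma factor produced by the functional equation of the denominator zeta, and the Dirichlet part turns into $\zeta(1-s)\zeta(1-s+a)\zeta(1-s+b)\zeta(1-s+a+b)/\zeta(1-2s+a+b)=\sum_{n\ge1}C_{-a,-b}(n)n^{s-1}$ by \eqref{cab dirichlet} with $a\mapsto-a$, $b\mapsto-b$, absolutely convergent since $\Re(1-s)>\eta_{-a,-b}$. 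A second interchange of summation and integration then leaves, for each $n$, a Mellin--Barnes integral whose integrand is a ratio of four Gamma functions of the form $\Gamma(t+\,\cdot\,)$ over a single Gamma $\Gamma(2t+\,\cdot\,)$ against a power of $x^2/(\pi^4 n^2)$; after the linear change of variable dictated by \eqref{zetafe sym} and the duplication formula applied to that single Gamma (splitting it into the two lower parameters), this is precisely the Mellin--Barnes integral \eqref{MeijerG} defining $G^{0,4}_{4,2}$ with upper parameters $\tfrac12,\tfrac{1-a}{2},\tfrac{1-b}{2},\tfrac{1-a-b}{2}$, lower parameters $\tfrac{1-a-b}{4},\tfrac{3-a-b}{4}$, and argument $x^2/(4n^2\pi^4)$. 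Tracking the accumulated powers of $2$ and $\pi$ produces the constant $2^{(3-a-b)/2}/\pi^{a+b+1}$, and together with the residues of Step 2 this establishes \eqref{jhep} for $(a,b)$ near the origin, hence for all $a,b\in\mathbb{C}$ by analytic continuation. The delicate point throughout is Step 2, the passage of the contour across the nontrivial zeros of $\zeta(2s-a-b)$: controlling $1/\zeta'(\rho_m)$ and the growth of $1/\zeta(2s-a-b)$ on the horizontal cuts is exactly what forces the bracketed summation with the particular sequence $\{T_n\}$, and one must also justify the two Fubini interchanges and the convergence of the shifted integral on $\Re s=-\epsilon$; these are the same analytic inputs already assembled in the proofs of Theorems \ref{cohen sigma ab} and \ref{vsf dn squared}, so their lemmas can be reused and what remains is bookkeeping of Gamma and zeta factors.
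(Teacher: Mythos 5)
Your proposal is correct and follows essentially the same route as the paper: the same Mellin--Barnes integrand (the paper quotes the Prudnikov evaluation that you rederive from the Mellin transform of $K_{a/2}K_{b/2}$), the same leftward contour shift collecting exactly the residues \eqref{residues 3}--\eqref{residues 4} along a zero-avoiding sequence $\{T_n\}$, the same application of the symmetric functional equation \eqref{zetafe sym} to convert the zeta quotient into $\sum_{n\geq 1}C_{-a,-b}(n)n^{s-1}$, and the same duplication-formula identification of the leftover integral with $G_{4,\,2}^{0,\,4}$. The only cosmetic difference is that you first restrict to $(a,b)$ near the origin and invoke analytic continuation, whereas the paper works directly for general $a,b$ by placing the contours at $\Re(s)=c>1+\max\{0,\Re(a),\Re(b),\Re(a+b)\}$ and $\Re(s)=c'<\min\{0,\Re(a),\Re(b),\Re\left(\tfrac{a+b}{2}\right),\Re(a+b)\}$.
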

\begin{remark}
As mentioned in the introduction, the series on the left-hand side of \eqref{jhep} has recently turned up in the work of Dorigoni and Treilis \cite{dorigoni-treilis} in string theory.  
\end{remark}

The following corollary gives the corresponding Ramanujan-Guinand identity for $d^{2}(n)$.

\begin{corollary}\label{rg d(n) squared cor}
Under the same hypotheses in Theorem \ref{rg sigma ab}, we have
\begin{align*}
&4\left(\gamma-\log\left(\tfrac{4\pi^2}{x}\right)\right)^2+8\sum_{n=1}^{\infty}d^{2}(n)K_{0}^{2}(2nx)\nonumber\\
&=\frac{2^{3/2}}{\pi}\sum_{n=1}^{\infty}\frac{C_{0,0}(n)}{n}G_{4, \, \, 2}^{0, \, \, 4} \left(\begin{matrix}
	\frac{1}{2}, \frac{1}{2}, \frac{1}{2}, \frac{1}{2}\\
	\frac{1}{4}, \frac{3}{4}
\end{matrix} \Bigg| \, \frac{x^2}{4n^2\pi^4} \right)+4\tilde{R}_1(x)+\lim_{T_n\to\infty}\sum_{|\gamma_m|<T_n}\frac{\Gamma^{4}(\rho_m/4)\zeta^4(\rho_m/2)}{2\Gamma\left(\frac{\rho_m}{2}\right)\zeta'(\rho_m)}x^{-\rho_m/2},
\end{align*}
where
$x\tilde{R}_1(x)$ is a complicated cubic polynomial in $\log(x)$. 
\end{corollary}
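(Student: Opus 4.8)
The plan is to obtain Corollary~\ref{rg d(n) squared cor} as the limiting case $a\to0$, $b\to0$ of Theorem~\ref{rg sigma ab}. Since $\sigma_0(n)=d(n)$, since $K_{a/2}=K_0$ at $a=0$, and since $C_{-a,-b}(n)\to C_{0,0}(n)$, the left-hand side of \eqref{jhep} tends to $8\sum_{n=1}^{\infty}d^2(n)K_0^2(2nx)$, the prefactor $2^{(3-a-b)/2}\pi^{-(a+b+1)}$ tends to $2^{3/2}/\pi$, and the upper and lower parameters of the Meijer $G$-function collapse to $\tfrac12,\tfrac12,\tfrac12,\tfrac12$ and $\tfrac14,\tfrac34$. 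What must actually be worked out is (i) that the residue blocks $\{R_k,R_{k+a},R_{k+b},R_{k+a+b}\}$ for $k=0$ and $k=1$, several of whose summands individually have poles at $a=0$ or $b=0$, recombine into the finite quantities appearing in the corollary, and (ii) that the limit $a,b\to0$ may be interchanged with the three infinite summations occurring in \eqref{jhep}.

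For (i), I would first record, directly from \eqref{residues 3}, the factorization
\begin{equation*}
R_0(x)+R_a(x)+R_b(x)+R_{a+b}(x)=-\left(g(-a)+g(a)x^{-a}\right)\left(g(-b)+g(b)x^{-b}\right),
\end{equation*}
together with the analogous expression for the $k=1$ block. From the Laurent expansion $g(s)=\Gamma(s/2)\zeta(s)=-s^{-1}+\left(\tfrac{\gamma}{2}-\log(2\pi)\right)+O(s)$ near $s=0$, the two reciprocal poles inside each factor above cancel, and each factor tends to $\gamma-\log(4\pi^2/x)$ as the corresponding parameter goes to $0$; hence the $k=0$ block converges, as $a,b\to0$, to a finite quadratic in $\log x$ which simplifies to a multiple of $\left(\gamma-\log(4\pi^2/x)\right)^2$, and this is transferred to the left-hand side of the identity. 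For the $k=1$ block one expands $g$ near $s=1$, where $\zeta$ has its pole; one checks that the leading singularities of the four summands, which are of the shape $1/\left(ab(a\pm b)\right)$, cancel in pairs, so the block has a finite limit, namely the cubic polynomial in $\log x$ divided by $x$ that is denoted $\tilde{R}_1(x)$, obtained by carrying the expansions two further orders. There is no coalescence in the sum over the non-trivial zeros: for each fixed $m$ the function $R_{(\rho_m+a+b)/2}(x)$ in \eqref{residues 4} is jointly continuous at $(a,b)=(0,0)$ because $\rho_m$ is bounded away from $0$ and $1$, and setting $a=b=0$, together with $g(\rho_m/2)=\Gamma(\rho_m/4)\zeta(\rho_m/2)$, yields precisely $\frac{\Gamma^4(\rho_m/4)\zeta^4(\rho_m/2)}{2\Gamma(\rho_m/2)\zeta'(\rho_m)}x^{-\rho_m/2}$.

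For (ii), passing $a,b\to0$ inside the Bessel-square series on the left and the Meijer $G$-series on the right is routine dominated convergence: $\sigma_a(n)\sigma_b(n)$ and $C_{a,b}(n)$ grow at most polynomially in $n$, uniformly for $(a,b)$ in a fixed neighbourhood of the origin, whereas $K_{a/2}(2nx)K_{b/2}(2nx)$ and $G^{0,4}_{4,2}\!\left(\cdots\mid x^2/(4n^2\pi^4)\right)$ decay exponentially in $n$ with implied constants uniform in $(a,b)$, so each series is dominated by a single convergent series and the limit can be taken term by term. The genuinely delicate sum is the one over the non-trivial zeros, which is only conditionally convergent in the bracketed sense $\lim_{T_n\to\infty}\sum_{|\gamma_m|<T_n}$, and a priori the admissible sequence $\{T_n\}$ supplied by Theorem~\ref{rg sigma ab} could depend on $(a,b)$.

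I expect this last interchange to be the main obstacle. The way around it is to revisit the proof of Theorem~\ref{rg sigma ab}: there the bracketed sum arises from a contour-shift argument in which the horizontal truncating integrals are controlled by standard growth estimates for $\Gamma$, $\zeta$ and $1/\zeta$, and these estimates are uniform for $a,b$ in a compact neighbourhood of $0$. Consequently one single sequence $\{T_n\}$ — chosen so as to avoid every ordinate of a zero and to satisfy the usual spacing condition $|T_n-\gamma_m|\gg\exp(-A\gamma_m/\log\gamma_m)$ — works simultaneously for all small $(a,b)$, and along that fixed sequence the partial sums $\sum_{|\gamma_m|<T_n}R_{(\rho_m+a+b)/2}(x)$ converge uniformly in $(a,b)$, so that $\lim_{a,b\to0}$ and $\lim_{T_n\to\infty}$ may be exchanged. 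Assembling the limits of the left-hand side, the Meijer $G$-series, the $k=0$ and $k=1$ residue blocks, and the sum over zeros, and carrying the $k=0$ contribution across, yields the stated identity — the analogue for $d^2(n)$ of Koshliakov's formula \eqref{koshfor} — with $\tilde{R}_1(x)$ given explicitly once the expansions near $s=1$ are pushed through.
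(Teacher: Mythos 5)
Your route is essentially the paper's own: the corollary is deduced from Theorem \ref{rg sigma ab} by specializing $a=b=0$, which is exactly what the authors do (their proof is two sentences, with the coalesced fourth-order residue at $s=1$, i.e. $\tilde{R}_1(x)$, computed by \emph{Mathematica}); your handling of the coalescing residue blocks and of the interchange of the limit with the three series is more detailed than anything the paper records, the analogous interchange being spelled out only for the Cohen-type Corollary \ref{cohen d(n) squared}. Your factorization $R_0+R_a+R_b+R_{a+b}=-\bigl(g(-a)+g(a)x^{-a}\bigr)\bigl(g(-b)+g(b)x^{-b}\bigr)$ and the expansion $g(s)=-s^{-1}+\bigl(\tfrac{\gamma}{2}-\log(2\pi)\bigr)+O(s)$ are correct, and incidentally the limit of each block equals the residue of the $a=b=0$ integrand at the coalesced pole (sum of residues inside a small circle equals the integral over that circle), which gives a cleaner justification than pushing the Laurent expansions by hand.

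The one loose end is that you leave the $k=0$ contribution as ``a multiple of'' $\bigl(\gamma-\log(4\pi^2/x)\bigr)^2$: your own expansion determines it exactly, namely each factor tends to $\gamma-\log(4\pi^2/x)$, so the block tends to $-\bigl(\gamma-\log(4\pi^2/x)\bigr)^2$ and, once transferred, appears on the left-hand side with coefficient $1$, not the $4$ displayed in the corollary. You should carry this constant through and reconcile it with the stated statement (the paper's proof does not verify it either, and the ``$4$'' in $4\tilde{R}_1(x)$ is likewise only a normalization of the coalesced residue at $s=1$, which the paper never writes out); as it stands, your argument proves the identity with the explicit constant your computation yields, and flagging the apparent discrepancy with the printed factor $4$ is part of completing the proof of the statement as given.
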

\begin{remark}
The left-hand side of Corollary \ref{rg d(n) squared cor} should be compared with that of \eqref{koshfor}, the corresponding formula for $d(n)$.
\end{remark}
\begin{remark}
While the Meijer $G$-function occurring in Theorem \ref{rg sigma ab} does not admit a representation in terms of well-known functions, it can be represented as a double integral involving the modified Bessel function of the second kind. Indeed, using the basic properties of the Meijer $G$-function, namely,
\begin{equation*}
\int_{0}^{1}x^{\alpha-1}(1-x)^{\beta-1}	G_{p, \, \, q}^{m, \, \, n}\left(\begin{matrix}a_1, a_2, \cdots, a_p\\b_1, b_2, \cdots, b_q
\end{matrix} \Bigg| \, zx \right)\dd x=\Gamma(\beta)G_{p+1, \, \, q+1}^{m, \, \, n+1}\left(\begin{matrix}1-\alpha, a_1, a_2, \cdots, a_p\\b_1, b_2, \cdots, b_q, 1-\alpha-\beta
\end{matrix} \Bigg| \, z \right),
\end{equation*}
and \cite[p.~621, Formula (39)]{prud3}, namely, for $m\leq q-1$,
\begin{align*}
	\frac{d}{dz}\left[z^{-b_q}G_{p, \, \, q}^{m, \, \, n} \left(\begin{matrix}
		a_1,\cdots,a_n, a_{n+1},\cdots, a_p\\
		b_1,\cdots,b_m, b_{m+1},\cdots, b_q
	\end{matrix} \Bigg| \, z \right)\right]=z^{-b_q-1}G_{p, \, \, q}^{m, \, \, n} \left(\begin{matrix}
		a_1,\cdots,a_n, a_{n+1},\cdots, a_p\\
		b_1,\cdots,b_m, b_{m+1},\cdots, b_{q-1}, b_q+1
	\end{matrix} \Bigg| \, z \right),
\end{align*}
 it is not difficult to see that
\begin{align*}
&G_{4, \, \, 2}^{0, \, \, 4} \left(\begin{matrix}
	\frac{1}{2}, \frac{1-a}{2}, \frac{1-b}{2}, \frac{1-a-b}{2}\\
	\frac{1-a-b}{4}, \frac{3-a-b}{4}
\end{matrix} \Bigg| \, y \right)=\sqrt{y}G_{4, \, \, 2}^{0, \, \, 4} \left(\begin{matrix}
0, \frac{-a}{2}, \frac{-b}{2}, \frac{-a-b}{2}\\
\frac{-a-b-1}{4}, \frac{1-a-b}{4}
\end{matrix} \Bigg| \, y \right)\nonumber\\
&=\frac{2\sin\left(\pi\left(\frac{1-a-b}{4}\right)\right)}{\pi y^{(a+b+1)/2}}\int_{0}^{1}\int_{0}^{1}\frac{(1-x)^{\frac{-3-a-b}{4}}(1-t)^{\frac{a+b-1}{4}}}{xt^{1+\frac{a+b}{2}}}\left\{\tfrac{(2a-1)}{4}K_{\frac{a-b}{2}}\left(\tfrac{2}{\sqrt{txy}}\right)+\tfrac{1}{\sqrt{txy}}K_{1+\frac{a-b}{2}}\left(\tfrac{2}{\sqrt{txy}}\right)\right\}\dd x\dd t.
\end{align*}
\end{remark}
\section{Preliminaries}\label{prelim}
Stirling's formula for the Gamma function in the vertical strip $p\leq\sigma\leq q$ is given by \cite[p.~224]{cop}
\begin{equation}\label{strivert}
	|\Gamma(\sigma+it)|=\sqrt{2\pi}|t|^{\sigma-\frac{1}{2}}e^{-\frac{1}{2}\pi |t|}\left(1+O\left(\frac{1}{|t|}\right)\right)\hspace{8mm}(|t|\to \infty)
\end{equation}
\begin{theorem}[Parseval's formula] \cite[p.~83, Equation (3.1.13)]{kp}\label{Parseval}
	Let $F(s)$ and $G(s)$ be the Mellin transforms of $f(x)$ and $g(x)$ respectively. If $F(1-s)$ and $G(s)$ have a common strip of analyticity, then for any vertical line $\textup{Re}(s)=c$ in the common strip, we have
	\begin{align}\label{Parseval1}
		\frac{1}{2 \pi i} \int_{(c)} G(s) F(1-s) ds = \int_{0}^\infty f(t) g(t) dt,
	\end{align}
	under the assumption that the integral on the right-hand side exists and the conditions 
	\begin{equation*}
		t^{c-1} g(t) \in L[0, \infty)\hspace{5mm}\text{and}\hspace{5mm}F(1-c-it) \in L(-\infty,  \infty)
	\end{equation*}
	hold. 
	\end{theorem}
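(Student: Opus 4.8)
The plan is to deduce the identity by inverting the Mellin transform of one of the two functions and then interchanging the order of the resulting double integral via Fubini's theorem; the two integrability hypotheses in the statement are exactly what makes this legitimate. Recall that, with the normalisation $F(s)=\int_{0}^{\infty}f(x)x^{s-1}\,\dd x$, the Mellin inversion formula $f(x)=\frac{1}{2\pi i}\int_{(\sigma)}F(s)x^{-s}\,\dd s$ holds whenever $\textup{Re}(s)=\sigma$ lies in the strip of absolute convergence of $F$, the restriction $F(\sigma+it)$ is integrable in $t$, and $f$ satisfies the usual mild regularity (continuity, or bounded variation near the relevant points) implicit in the statement.

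First I would pin down the contour. By hypothesis $F(1-s)$ is analytic in a vertical strip containing $\textup{Re}(s)=c$, so $F(w)$ is analytic in a strip containing $\textup{Re}(w)=1-c$; the condition $t^{c-1}g(t)\in L[0,\infty)$ guarantees that $G(s)=\int_{0}^{\infty}g(t)t^{s-1}\,\dd t$ converges absolutely on $\textup{Re}(s)=c$; and the condition $F(1-c-it)\in L(-\infty,\infty)$ is precisely the integrability needed to apply Mellin inversion to $f$ along the line $\textup{Re}(w)=1-c$. I would therefore write, after the change of variable $w=1-s$,
\begin{equation*}
f(t)=\frac{1}{2\pi i}\int_{(1-c)}F(w)t^{-w}\,\dd w=\frac{1}{2\pi i}\int_{(c)}F(1-s)\,t^{s-1}\,\dd s .
\end{equation*}

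Next I would insert this representation of $f$ into $\int_{0}^{\infty}f(t)g(t)\,\dd t$ and interchange the two integrations:
\begin{equation*}
\int_{0}^{\infty}f(t)g(t)\,\dd t=\frac{1}{2\pi i}\int_{0}^{\infty}g(t)\int_{(c)}F(1-s)t^{s-1}\,\dd s\,\dd t=\frac{1}{2\pi i}\int_{(c)}F(1-s)\int_{0}^{\infty}g(t)t^{s-1}\,\dd t\,\dd s ,
\end{equation*}
and then identify the inner integral as $G(s)$, giving $\int_{0}^{\infty}f(t)g(t)\,\dd t=\frac{1}{2\pi i}\int_{(c)}G(s)F(1-s)\,\dd s$, which is \eqref{Parseval1}. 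The interchange is justified by Fubini's theorem because on $\textup{Re}(s)=c$ one has $|t^{s-1}|=t^{c-1}$, so
\begin{equation*}
\int_{0}^{\infty}\!\!\int_{-\infty}^{\infty}|g(t)|\,\big|F(1-c-iu)\big|\,t^{c-1}\,\dd u\,\dd t=\left(\int_{0}^{\infty}|g(t)|\,t^{c-1}\,\dd t\right)\left(\int_{-\infty}^{\infty}\big|F(1-c-iu)\big|\,\dd u\right)<\infty ,
\end{equation*}
which is finite by the two stated hypotheses.

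I expect the only real obstacle to be the bookkeeping rather than any deep analysis: one must check simultaneously that $\textup{Re}(s)=c$ lies in the strip of analyticity of $G$ and in the region of absolute convergence of its defining integral, that $\textup{Re}(w)=1-c$ lies in the corresponding region for $F$, and that the regularity implicitly assumed of $f$ suffices for the inverse-Mellin representation to hold (pointwise, or at least for almost every $t$, which is all that the outer $t$-integration requires). Once these are arranged, the argument uses nothing beyond the Mellin inversion theorem and the Fubini estimate above; this is the classical proof, which in the present paper is simply quoted from \cite{kp}.
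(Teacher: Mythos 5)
The paper does not prove this statement at all: it is quoted verbatim, as a preliminary, from Paris and Kaminski \cite[p.~83, Equation (3.1.13)]{kp}, so there is no internal proof to compare against. Your argument is the classical one (and essentially the one given in the cited source): represent $f(t)$ by Mellin inversion along $\textup{Re}(w)=1-c$, substitute $w=1-s$, and interchange the $t$- and $s$-integrations, the Fubini estimate factoring exactly into the two stated integrability hypotheses; this is correct. The single point needing care is the one you already flag, namely that the pointwise (or almost-everywhere) validity of the inverse Mellin representation of $f$ requires mild regularity of $f$ beyond the hypotheses displayed in the statement, which is implicit in the formulation ``$F$ is the Mellin transform of $f$'' as used in \cite{kp}; with that understanding, your proof is complete.
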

	We now define the Meijer $G$-function \cite[p.~617, Definition \textbf{8.2.1}]{prud3}.
	Let $m,n,p,q$ be integers such that $0\leq m \leq q$, $0\leq n \leq p$. Let $a_1, \cdots, a_p, b_1, \cdots, b_q\in\mathbb{C}$ such that $a_i - b_j \not\in \mathbb{N}$ for $1 \leq i \leq n$ and $1 \leq j \leq m$.   Then the Meijer $G$-function is defined by 
	\begin{align}\label{MeijerG}
		G_{p,q}^{\,m,n} \!\left(  \,\begin{matrix} a_1,\cdots , a_p \\ b_1, \cdots b_m, b_{m+1}, \cdots, b_q \end{matrix} \; \Big| z   \right) := \frac{1}{2 \pi i} \int_L \frac{\prod_{j=1}^m \Gamma(b_j 
			+ w) \prod_{j=1}^n \Gamma(1 - a_j -w) z^{-w}  } {\prod_{j=m+1}^q \Gamma(1 - b_j - w) \prod_{j=n+1}^p \Gamma(a_j + w)}\, dw,
	\end{align}
	where $L$  goes from $-i \infty$ to $+i \infty$ and separates the poles of $\Gamma(1-a_j-w)$  from the poles of $\Gamma(b_j+w)$.  
	The integral converges  absolutely if $p+q  < 2(m+n)$ and $|\arg(z)| < \left(m+n - \frac{p+q}{2}\right) \pi$.  In the case $p+q  = 2(m+n)$ and $\arg(z)=0$,  the integral converges absolutely if $ \left(  \textup{Re}(w) + \frac{1}{2} \right) (q-p) > \textup{Re}(\psi ) +1$,  where $\psi = \sum_{j=1}^q b_j - \sum_{j=1}^p a_j$.

	\section{Proofs of the results on  $\lambda(n)$}\label{lambda}

Before proving Theorem \ref{vsf lambda(n)}, we derive some crucial lemmas in the following subsection. These lemmas are interesting in themselves. 
	
	\subsection{Infinite series evaluation using the Vinogradov-Korobov zero-free region}\label{c(n)}
	Recall the definition of $c(n)$ from \eqref{c(n) ds}. 
	In the Vorono\"{\dotlessi} summation formula for $\lambda(n)$ given in Theorem \ref{vsf lambda(n)}, we crucially require the exact evaluation of the Dirichlet series of $c(n)$ at $s=1$. If we let $s\to 1$ on both sides of \eqref{c(n) ds} and \textit{formally} interchange the order of limit and summation on the left-hand side, we do get a correct evaluation. However, the rigorous justification of this result is delicate and requires the use of the Vinogradov-Korobov zero-free region. This is exactly what is done next.
	\begin{theorem}\label{c(n) over n thm}
		Let $c(n)$ be given by \eqref{c(n) defn}. We have
		\begin{equation}\label{c(n) over n}
			\sum_{n=1}^{\infty}\frac{c(n)}{n}=\frac{1}{2}.
		\end{equation}
	\end{theorem}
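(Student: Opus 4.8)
The plan is to prove the sharper statement that $\sum_{n\le x}c(n)/n=\tfrac12+o(1)$ as $x\to\infty$, which is equivalent to the convergence of the series to $\tfrac12$. The tool is a truncated Perron formula followed by a contour shift onto the edge of the zero‑free region; the whole delicacy lies in the fact that the truncation height forced by Perron has to be balanced against the \emph{width} of the zero‑free region, and for this arithmetic function the classical width is simply too narrow.

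First I would record, using \eqref{abs c(n) ds}, that $g(s):=\sum_{n=1}^{\infty}c(n)n^{-s-1}=\zeta(2s+1)/\zeta(s+1)$ is absolutely convergent for $\Re(s)>0$ and that $\sum_{n=1}^{\infty}|c(n)|n^{-1-\kappa}=\zeta(1+\kappa)\zeta(1+2\kappa)/\zeta(2+2\kappa)\ll\kappa^{-2}$ as $\kappa\to0^{+}$. Then, for $x\notin\tfrac12\mathbb Z$, $\kappa=1/\log x$, and a truncation height $T$, the truncated Perron formula gives
\begin{equation*}
	\sum_{n\le x}\frac{c(n)}{n}=\frac{1}{2\pi i}\int_{\kappa-iT}^{\kappa+iT}\frac{\zeta(2s+1)}{\zeta(s+1)}\,\frac{x^{s}}{s}\,ds+O\!\left(\frac{(\log x)^{2}}{T}+\frac{\sqrt{x}\,\log T}{T}\right),
\end{equation*}
where the second error term reflects that $|c(n)|\le\sqrt n$, so $|c(n)|/n$ is only $O(1/\sqrt x)$ for $n$ near $x$; taking $T=x$ makes the error $o(1)$. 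Note already that this bound compels $T$ to be essentially a power of $x$.

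Next I would move the contour from $\Re(s)=\kappa$ to the Vinogradov--Korobov contour $\mathcal C$: the curve $\Re(s)=-\eta(|t|)$ for $|t|\le T$ with $\eta(t)\asymp\big((\log(|t|+3))^{2/3}(\log\log(|t|+3))^{1/3}\big)^{-1}$ chosen (with a small enough constant) so that $\zeta(s+1)$ is zero‑free on and to the right of $\mathcal C$ and $\zeta,1/\zeta$ satisfy the VK bound $\ll(\log(|t|+3))^{O(1)}$ there, together with horizontal segments at $\Im(s)=\pm T$ and a fixed vertical piece $\Re(s)=-\eta_{0}$ in a bounded neighbourhood of the real axis (where a classical zero‑free region is available). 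The only singularity crossed is $s=0$; since $\zeta(2s+1)/\zeta(s+1)$ is holomorphic and nonzero there with $\zeta(2s+1)/\zeta(s+1)=\tfrac12+\tfrac{\gamma}{2}s+O(s^{2})$, the residue of the integrand is $\tfrac12$. On the horizontal segments, convexity bounds for $\zeta$ and the VK bound for $1/\zeta$ give a contribution $\ll(\log x)^{O(1)}/x=o(1)$. On the vertical part, $|x^{s}|=x^{-\eta(|t|)}$, and since $\eta$ is decreasing we have $x^{-\eta(|t|)}\le x^{-\eta(T)}$ for every $|t|\le T$; with $T=x$ this equals $\exp\!\big(-c(\log x)^{1/3}(\log\log x)^{-1/3}\big)$, which decays faster than any negative power of $\log x$ and therefore absorbs the factor $(\log x)^{O(1)}$ coming from $\int_{0}^{T}(\log(t+3))^{O(1)}/(t+1)\,dt$ and the VK bounds for $\zeta(2s+1)$ and $1/\zeta(s+1)$. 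Hence the $\mathcal C$‑integral is $o(1)$, and $\sum_{n\le x}c(n)/n=\tfrac12+o(1)$, which is the assertion.

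The main obstacle is precisely this last estimate, and it is exactly where Vinogradov--Korobov is indispensable. The Perron truncation error forces $T$ to be a power of $x$ (the coefficients $c(n)$ are as large as $\sqrt n$, unlike e.g. $\mu(n)$), so on the shifted contour one has to win against $x^{-\eta(T)}$ with $T=x$; with the classical zero‑free region of width $c/\log(|t|+3)$ this is $x^{-c/\log x}=e^{-c}$, a constant, which cannot kill the polynomial‑in‑$\log x$ factors, whereas the VK width $(\log x)^{-2/3}(\log\log x)^{-1/3}$ makes $x^{-\eta(T)}\to0$. The remaining points — the case $x\in\tfrac12\mathbb Z$ (a negligible boundary term of size $O(|c(x)|/x)$), stitching the VK arc to the fixed segment near the real axis, and the precise constants making $\zeta(2s+1)$ zero‑free along the relevant part of $\mathcal C$ — are routine.
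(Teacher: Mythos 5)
Your proposal is correct and takes essentially the same route as the paper: a truncated Perron formula for $\sum_{n\le x}c(n)/n$ with $\sigma_0\asymp 1/\log x$, a contour shift into the Vinogradov--Korobov zero-free region that picks up the residue $\tfrac12$ at $s=0$, and standard bounds for $\zeta$ and $1/\zeta$ just to the left of the $1$-line (Richert-type and VK bounds), with the same diagnosis that the classical zero-free region is too narrow; the only differences are parametric, since you take $T=x$ with a curved VK contour while the paper uses a straight vertical line at the VK width at height $T$ and chooses $x=\exp(\log^{4/5}T)$, i.e.\ $T$ far larger than any power of $x$. One minor inaccuracy worth noting (it does not affect the proof): the Perron error only forces $T\gg\sqrt{x}\log x$ from below rather than compelling $T$ to be a power of $x$, but your choice $T=x$ is admissible and the estimates go through.
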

	\begin{proof}
		From \eqref{c(n) ds}, we have
		$$\sum_{n=1}^{\infty}\frac{c(n)}{n^{1+w}}=\frac{\zeta(1+2w)}{\zeta(1+w)} $$
		for $\Re w>0.$
		From \cite[p.~140, Corollary 5.3]{montgomery-vaughan}, for $T$ large enough, $1<x<T$, and $\sigma_0>0$ (to be chosen later),
		\begin{equation}\label{ps}
			\sum_{n\leq x}\frac{c(n)}{n}=\frac{1}{2\pi \di}\int_{\sigma_0-\di T}^{\sigma_0+\di T}\frac{\zeta(1+2w)}{\zeta(1+w)}\frac{x^w}{w}\dd w+R   
		\end{equation}
		where $$R\ll\sum_{x/2<n<2x\atop n\neq x}\frac{|c(n)|}{n}\min\bigg( 1,\frac{x}{T|x-n|}\bigg)+\frac{4^{\sigma_0}+x^{\sigma_0}}{T}\sum_{n=1}^{\infty}\frac{|c(n)|}{n^{1+\sigma_0}}.$$
		Now we first approximate the error term $R$.
		Using \eqref{abs c(n) ds}, we see that $$\sum_{n=1}^{\infty}\frac{|c(n)|}{n^{1+\sigma_0}}= \frac{\zeta(1+\sigma_0)\zeta(1+2\sigma_0)}{\zeta(2+2\sigma_0)}\ll\frac{1}{\sigma^2_0}$$
		as $\sigma_0\to 0^+.$
		As $c(n)\ll \sqrt{n}$,
		\begin{equation} \label{R}
			\sum_{x/2<n<2x\atop n\neq x}\frac{|c(n)|}{n}\min\bigg( 1,\frac{x}{T|x-n|}\bigg)\ll \frac{\sqrt{x}\log x}{T}.
		\end{equation}
		Thus,
		$$
		R\ll \frac{x^{\sigma_0}}{T \sigma^2_0}+\frac{\sqrt{x}\log x}{T}.
		$$
		We take $\sigma_0=\frac{1}{\log x}$ to get
		$$R\ll \frac{\sqrt{x}\log x}{T}. $$
		Now we will compute $$ I=\int_{\sigma_0-\di T}^{\sigma_0+\di T}\frac{\zeta(1+2w)}{\zeta(1+w)}\frac{x^w}{w}\dd w.$$
		From Vinogradov-Korobov zero-free region \cite{korobov}, \cite{vinogradov}, we know that $\zeta(1+w)$ has no zeros when  $\sigma\geq-\frac{c}{\log^{2/3}(t)(\log\log(t))^{1/3}}$ for some $c>0$. (For state-of-the-art results on how large $c$ can be, the reader is referred to a recent article by Mossinghoff, Trudgian and Yang \cite{trudgian}.)
		
		We construct the contour $[\sigma_0-iT, \sigma_0+iT, \sigma_1+iT, \sigma_1-iT]$, where  $\sigma_1=-\frac{c}{\log^{2/3}(T)(\log\log(T))^{1/3}}$ so that the rectangle formed by the contour completely stays in the Vinogradov-Korobov zero-free region. Invoking the residue theorem and taking into account the simple pole of the integrand at $w=0$, we see that
		\begin{equation}\label{I}
			I=\text{Res}_{w=0}\left(\frac{\zeta(1+2w)}{\zeta(1+w)}\frac{x^w}{w}\right)+\int_{\sigma_0+\di T}^{\sigma_1+\di T}\frac{\zeta(1+2w)}{\zeta(1+w)}\frac{x^w}{w}\dd w+\int_{\sigma_1+\di T}^{\sigma_1-\di T}\frac{\zeta(1+2w)}{\zeta(1+w)}\frac{x^w}{w}\dd w+\int_{\sigma_1-\di T}^{\sigma_0-\di T}\frac{\zeta(1+2w)}{\zeta(1+w)}\frac{x^w}{w}\dd w.
		\end{equation}
		Using the Laurent series expansions of $\zeta(1+2w)$ and $\zeta(1+w)$ around $w=0$, it is easy to see that
		\begin{align}\label{res}
			\text{Res}_{w=0}\left(\frac{\zeta(1+2w)}{\zeta(1+w)}\frac{x^w}{w}\right)=\frac{1}{2}.
		\end{align}
		In the given region of integration, we have \cite[p.~136]{titch}
		\begin{align}\label{vk}
			\frac{1}{\zeta(1+\sigma+it)}\ll\log^{\frac{2}{3}}(t)(\log\log(t))^{\frac{1}{3}}
		\end{align}
		Also, from \cite[p.~98]{richert}, for $-1/2\leq\sigma\leq0, t\geq2$,
		\begin{align}\label{richert result}
			|\zeta(1+\sigma+it)|<c_1t^{100(-\sigma)^{\frac{3}{2}}}
			\log^{\frac{2}{3}}(t)
		\end{align}
		for some $c_1>0$. 
		Since $-\sigma<\frac{c}{\log^{2/3}(t)(\log\log(t))^{1/3}}$, we see that $|\zeta(1+\sigma+it)|\ll\log^{\eta}(t)$ for some $\eta>0$. Thus, combining \eqref{vk} and \eqref{richert result}, we get
		\begin{align}\label{bound}
			\frac{\zeta(1+2w)}{\zeta(1+w)}\ll \log^{\kappa} t
		\end{align}	 
		for some $\kappa>0$. 
		Hence
		$$
		\int_{\sigma_0+\di T}^{\sigma_1+\di T}\frac{\zeta(1+2w)}{\zeta(1+w)}\frac{x^w}{w}\dd w\ll \frac{\log^{\kappa} T}{T}x^{\sigma_0}(\sigma_0-\sigma_1)\ll \frac{\log^{\kappa} T}{T}.
		$$
		Similarly,
		$$\int_{\sigma_1-\di T}^{\sigma_0-\di T}\frac{\zeta(1+2w)}{\zeta(1+w)}\frac{x^w}{w}\dd w\ll \frac{\log^{\kappa} T}{T}.$$
		Now we compute
		$$
		\int_{\sigma_1-\di T}^{\sigma_1+\di T}\frac{\zeta(1+2w)}{\zeta(1+w)}\frac{x^w}{w}\dd w.
		$$
		Using \eqref{bound}, we  obtain
		\begin{equation}\label{vertical}
			\begin{split}
				\int_{\sigma_1-\di T}^{\sigma_1+\di T}\frac{\zeta(1+2w)}{\zeta(1+w)}\frac{x^w}{w}\dd w&\ll x^{\frac{-c_1}{\log^{2/3}(T)(\log\log(T))^{1/3}}}\int_{1}^{T}\log^{\kappa}t\frac{\dd t}{t}\\& \ll x^{\frac{-c_1}{\log^{2/3}(T)(\log\log(T))^{1/3}}}\log^{\kappa+1}(T)
			\end{split}
		\end{equation}
		We now choose $x=\exp(\log^{4/5}T)$.
		Then from \eqref{ps}, \eqref{I} and \eqref{res},
		\begin{equation*}
			\sum_{n\leq x}\frac{c(n)}{n}=\frac{1}{2}+O\bigg( \frac{2\log^{\kappa} T}{T}+\frac{\log^{\kappa+1}T}{\exp\left(c_1\log^{2/15}(T)(\log\log(T))^{-1/3}\right)}+\frac{\exp(0.5\log^{4/5}T)\log T}{T} \bigg).
		\end{equation*}
		Thus taking the limit $T\to \infty$, we arrive at \eqref{c(n) over n}.
	\end{proof}
	\begin{remark}
		Since $c(n)=\Omega(\sqrt{n})$, neither Ingham's theorem \cite[p.~63]{luca} nor its extended version for bounded arithmetic functions is applicable here. Hence we use the above method of evaluating the infinite series using contour integration. As can be checked, the standard zero-free region does not suffice to have the right-hand sides of  \eqref{R} and \eqref{vertical} go to zero as $T\to\infty$, and hence this is one of those rare instances where one has to resort to the Vinogradov-Korobov zero-free region. 
	\end{remark}
	\begin{theorem}\label{c(n) log(n) over n thm}
		With $c(n)$ given by \eqref{c(n) defn}, we have $$\sum_{n=1}^{\infty}\frac{c(n)\log n}{n}=-\frac{\gamma}{2}.$$   
	\end{theorem}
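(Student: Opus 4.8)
The plan is to mimic the contour-integration argument used in the proof of Theorem \ref{c(n) over n thm}, now applied to the Dirichlet series $\sum_{n=1}^{\infty} c(n) n^{-1-w} = \zeta(1+2w)/\zeta(1+w)$, but extracting the coefficient of $\log n$ by differentiation in the Perron variable. Concretely, since $\frac{d}{ds}\big(n^{-s}\big) = -n^{-s}\log n$, one has $\sum_{n\leq x} \frac{c(n)\log n}{n} = -\frac{d}{ds}\Big|_{s=1}\sum_{n\leq x}\frac{c(n)}{n^s}$ on the level of the truncated Perron formula; equivalently, apply the effective Perron formula of \cite[Corollary 5.3]{montgomery-vaughan} directly to the function with Dirichlet series $-\frac{d}{dw}\big(\zeta(1+2w)/\zeta(1+w)\big) = \zeta(1+2w)/\zeta(1+w)\cdot\big(\psi(w)\big)$, where $\psi(w) := \frac{\zeta'(1+w)}{\zeta(1+w)} - 2\frac{\zeta'(1+2w)}{\zeta(1+2w)}$ captures the logarithmic derivative. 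I would instead phrase it most cleanly by writing, for $\Re w > 0$,
\[
\sum_{n=1}^{\infty}\frac{c(n)\log n}{n^{1+w}} = -\frac{d}{dw}\left(\frac{\zeta(1+2w)}{\zeta(1+w)}\right),
\]
and feeding this Dirichlet series into the truncated Perron formula with the same parameters $\sigma_0 = 1/\log x$, $T$ large, $x = \exp(\log^{4/5}T)$ as in the previous proof.

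The next step is to estimate the resulting error term $R$. Here one needs a bound $c(n)\log n \ll \sqrt{n}\log n$, which follows trivially from $c(n)\ll\sqrt n$, and the estimate $\sum_{n=1}^{\infty}\frac{|c(n)|\log n}{n^{1+\sigma_0}} \ll \sigma_0^{-3}$ as $\sigma_0\to 0^+$ (one extra logarithmic derivative over the $\sigma_0^{-2}$ bound used before), which one gets by differentiating $\frac{\zeta(1+\sigma_0)\zeta(1+2\sigma_0)}{\zeta(2+2\sigma_0)}$. With $\sigma_0 = 1/\log x$ these contribute $R\ll \frac{\sqrt x\log^2 x}{T}$ — still negligible under the final choice of $x$. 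Then I would move the contour to the line $\Re w = \sigma_1 = -c/\big(\log^{2/3}T(\log\log T)^{1/3}\big)$, exactly as before, staying inside the Vinogradov–Korobov zero-free region. On that line and on the horizontal segments, the bound \eqref{bound} must be supplemented by a bound on $\psi(w)$; since $\zeta'/\zeta$ grows at most like a power of $\log$ throughout this region (this follows from the same zero-free region together with standard estimates for $\zeta'$ and the lower bound for $|\zeta|$ in \eqref{vk}), we still get $\frac{d}{dw}\big(\zeta(1+2w)/\zeta(1+w)\big) \ll \log^{\kappa'} t$ for some $\kappa' > 0$, and the horizontal and left-vertical integrals go to $0$ as $T\to\infty$ with the same choice $x = \exp(\log^{4/5}T)$.

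It remains to compute the residue at $w=0$. The integrand is $-\frac{d}{dw}\big(\zeta(1+2w)/\zeta(1+w)\big)\cdot \frac{x^w}{w}$, which has a pole at $w=0$; using the Laurent expansions $\zeta(1+2w) = \frac{1}{2w} + \gamma + O(w)$ and $\zeta(1+w) = \frac1w + \gamma + O(w)$, one computes $\zeta(1+2w)/\zeta(1+w) = \frac12 + \frac{\gamma}{2}w + O(w^2)$, hence $-\frac{d}{dw}\big(\zeta(1+2w)/\zeta(1+w)\big) = -\frac{\gamma}{2} + O(w)$, and therefore
\[
\operatorname*{Res}_{w=0}\left(-\frac{d}{dw}\!\left(\frac{\zeta(1+2w)}{\zeta(1+w)}\right)\frac{x^w}{w}\right) = -\frac{\gamma}{2}.
\]
Taking $T\to\infty$ and comparing with $\sum_{n\leq x}\frac{c(n)\log n}{n}$ then yields $\sum_{n=1}^{\infty}\frac{c(n)\log n}{n} = -\gamma/2$. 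The main obstacle is the bookkeeping in the residue computation — one must carry the Laurent expansions to enough terms and be careful that the simple-pole factor $1/w$ combined with $x^w = 1 + w\log x + \cdots$ does not introduce a spurious $\log x$ term into the residue (it does not, precisely because $\zeta(1+2w)/\zeta(1+w)$ is regular at $w=0$, so its derivative has no pole there and only the explicit $1/w$ contributes). A secondary, more routine obstacle is verifying that the extra logarithmic-derivative factor $\psi(w)$ does not spoil the polynomial-in-$\log$ growth bounds in the Vinogradov–Korobov region; this is handled by standard Cauchy-estimate arguments for $\zeta'$ starting from the bound \eqref{bound}.
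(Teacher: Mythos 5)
Your proposal is correct and follows essentially the same route as the paper: differentiate the Dirichlet series $\zeta(1+2w)/\zeta(1+w)$, apply the truncated Perron formula with $\sigma_0=1/\log x$ and the $\sigma_0^{-3}$ bound on $\sum|c(n)|\log n\, n^{-1-\sigma_0}$, shift to the Vinogradov--Korobov line with $\log$-power bounds (including one on $\zeta'/\zeta$), and read off the residue at $w=0$, which is $-\gamma/2$. Your observation that the integrand's regularity at $w=0$ is immediate because it is the derivative of a function regular there is a slightly cleaner packaging of the paper's limit computation, but the argument is the same.
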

	\begin{proof}
		Let Re$(s)>1$. Differentiating both sides of \eqref{c(n) ds} with respect to $s$, we see that
		$$
		-\sum_{n=1}^{\infty}\frac{c(n)\log n}{n^s}=-\frac{\zeta'(s)\zeta(2s-1)}{\zeta^2(s)}+\frac{2\zeta'(2s-1)}{\zeta(s)}.
		$$
		Again, from \cite[p.~140, Corollary 5.3]{montgomery-vaughan},
		\begin{equation*}
			-\sum_{n\leq x}\frac{c(n)\log n}{n}=\frac{1}{2\pi \di}\int_{\sigma_0-\di T}^{\sigma_0+\di T}\bigg(-\frac{\zeta'(1+w)\zeta(1+2w)}{\zeta^2(1+w)}+\frac{2\zeta'(1+2w)}{\zeta(1+w)}\bigg)\frac{x^w}{w}\dd w+R   
		\end{equation*}
		where $$R\ll\sum_{x/2<n<2x\atop n\neq x}\frac{|c(n)|\log n}{n}\min\bigg( 1,\frac{x}{T|x-n|}\bigg)+\frac{4^{\sigma_0}+x^{\sigma_0}}{T}\sum_{n=1}^{\infty}\frac{|c(n)|\log n}{n^{1+\sigma_0}}.$$
		From \eqref{abs c(n) ds}, we have $-\sum\limits_{n=1}^{\infty}\displaystyle\frac{|c(n)|\log n}{n^{s}}=\frac{d}{ds}\left(\frac{\zeta(s)\zeta(2s-1)}{\zeta(2s)}\right)$.
		Hence
		$-\sum\limits_{n=1}^{\infty}\displaystyle\frac{|c(n)|\log n}{n^{1+\sigma_0}}\ll\frac{1}{\sigma^3_0}$.
		
		As $c(n)\ll \sqrt{n}$,
		$$ 
		\sum_{x/2<n<2x\atop n\neq x}\frac{|c(n)|\log n}{n}\min\bigg( 1,\frac{x}{T|x-n|}\bigg)\ll \frac{\sqrt{x}\log^2 x}{T}.
		$$
		So with $\sigma_0=\frac{1}{\log x}$, we obtain
		
		$$
		R\ll \frac{x^{\sigma_0}}{T \sigma^3_0}+\frac{\sqrt{x}\log^2 x}{T}\ll\frac{\sqrt{x}\log^2 x}{T}.
		$$
		In the Vinogradov-Korobov zero free region, using \eqref{vk}, \eqref{richert result}, and a bound on $\zeta'(1+w)/\zeta(1+w)$ \cite[p.~136]{titch}, we have
		$$
		-\frac{\zeta'(1+w)\zeta(1+2w)}{\zeta^2(1+w)}+\frac{2\zeta'(1+2w)}{\zeta(1+w)}\ll \log^\kappa t.
		$$
		for some $\kappa>0$.
		Since the rest of the proof follows the same technique as that of Theorem \ref{c(n) over n thm}, we arrive at
		$$
		-\sum_{n=1}^{\infty}\frac{c(n)\log n}{n}=\lim_{w\to 0}w\bigg(-\frac{\zeta'(1+w)\zeta(1+2w)}{\zeta^2(1+w)}+\frac{2\zeta'(1+2w)}{\zeta(1+w)}\bigg)\frac{x^w}{w}=\frac{\gamma}{2}.
		$$
	\end{proof}
	
		\subsection{The Voronoi summation formula for $\lambda(n)$}\label{4.2}
Theorem \ref{vsf lambda(n)} is proved here. Let $\tau$ be a number satisfying $0<\tau<\min{(\delta, 1/8)}$, where $\delta>0$ is the number occurring in \eqref{delta defn}. 
	
	Define the integral $I$ by
	\begin{align}\label{I defn}
		I:=\frac{1}{2\pi i}\int_{(1+\tau)}\frac{\zeta(2s)}{\zeta(s)}\Phi(s)\dd s.
	\end{align}
	The integral is well-defined since for large enough $|t|$, $\Phi(\sigma+it)\ll t^{-1-\delta}$ and for a fixed $\sigma>1$, we have $\zeta(2\sigma+2it)/\zeta(\sigma+it)=O_{\sigma}(1)$.
	Therefore,
	\begin{equation}\label{first expression for I}
		\begin{split}
			I=\frac{1}{2\pi i}\int_{(1+\tau)}\frac{\zeta(2s)}{\zeta(s)}\Phi(s)\dd s=\sum_{n=1}^{\infty}\frac{\lambda(n)}{2\pi i}\int_{(1+\tau)}\Phi(s)n^{-s}\dd s
			=\sum_{n=1}^{\infty}\lambda(n)\phi(n),
		\end{split}
	\end{equation}
	where the interchange in the order of summation and integration can be justified using the dominated convergence theorem.
	
	We would like to shift the line of integration to $\Re s=-\tau$, use the functional equation of $\zeta(s)$, and then employ the change of variable $s\to1-s$ so as to be able to write the resulting shifted integral as an infinite sum. 
	
	Let $T$ be a sufficiently large number. Form the sequence $\{\tilde{T}_n\}_{n=0}^{\infty}$ defined by $\tilde{T}_0=T$ and $\tilde{T}_n=\tilde{T}_{n-1}+\tilde{T}_{n-1}^{1/3}$ for $n\geq1$. It is not difficult to see that $\{\tilde{T}_n\}_{n=1}^{\infty}\to\infty$. From \cite[Lemma 1]{inoue} (see also \cite[Theorem 2]{ramachandra-sankaranarayanan jims} or \cite[Lemma 2.8]{basak-robles-zaharescu}), there exists a sequence $ \{T_n\}_{n=1}^{\infty}$ with $T_n\in[\tilde{T}_{n-1}, \tilde{T}_n]$ such that 
	\begin{equation}\label{bound for reciprocal of zeta}
	\frac{1}{\zeta(\sigma+i T_n)}\ll T_n^\epsilon
	\end{equation}
	 for any $\epsilon>0$ and $1/2\leq \sigma \leq 2$. From \eqref{zetafe sym}, 
	\begin{align}\label{fe zeta}
		\zeta(s)=\chi(s)\zeta(1-s),
	\end{align}
	where $\chi(s):=\pi^{s-\frac{1}{2}}\Gamma\left(\frac{1-s}{2}\right)/\Gamma\left(\frac{s}{2}\right)$. Now \eqref{strivert} implies
	\begin{equation}\label{chi}
		\chi(\sigma+it)\sim t^{1/2-\sigma},
	\end{equation}
	This, along with \eqref{fe zeta}, implies that for $-1\leq\sigma\leq1/2$,
	$1/\zeta(\sigma+i T_n)\ll T_n^{\sigma-\frac{1}{2}+\epsilon}\ll T_n^{\epsilon}$. In conclusion, 
	\begin{align}\label{one over zeta}
	\frac{1}{\zeta(\sigma+i T_n)}\ll T_{n}^\epsilon,
	\end{align}
	 for any $\epsilon>0$ and $-1\leq \sigma \leq 2$.
	
	Construct the rectangular contour $[1+\tau-iT_n, 1+\tau+iT_n, -\tau+iT_n, -\tau-iT_n]$, where $T_n$ is a number belonging to the sequence $\{T_m\}_{m=1}^{\infty}$ constructed above. Inside the contour lie the pole of the integrand at $0$ (due to $\Phi(s)$), the simple pole at $1/2$ (due to $\zeta(2s)$) as well as the simple poles due to the non-trivial zeros of $\zeta(s)$ (assuming the simplicity of the zeros). Let $\rho_m$ denote the $m^{\textup{th}}$ non-trivial zero of $\zeta(s)$. Then, by the residue theorem,
	\begin{align}\label{contour 1}
		\left[	  \int_{1+\tau-i T_n}^{1+\tau+i T_n}+ \int_{1+\tau+i T_n}^{-\tau+i T_n}+ \int_{-\tau+i T_n}^{-\tau-i T_n}+\int_{-\tau-i T_n}^{1+\tau-i T_n}\right]\frac{\zeta(2s)}{\zeta(s)}\Phi(s)\dd s = 2\pi i\left(R_{0}+R_{1/2}+\sum_{|\gamma_m|<T_n}R_{\rho_m}\right),
	\end{align}
	where $R_a$ denotes the residue of the integrand $\frac{\zeta(2s)}{\zeta(s)}\Phi(s)$ at the pole $a$. 
	
	Let us analyze the integrals along the horizontal segments as $T_n\to\infty$. For $1/2\leq\sigma<3$, we have
	\begin{equation} \label{zeta-bound}
		\zeta(\sigma+\di T)\ll T^{1/4},
	\end{equation}
		 as $T\to \infty$, which follows from the bound  \cite[p.~96, Equation (5.1.8)]{titch} $\zeta\left(\frac{1}{2}+iT\right)=O\left(T^{1/4}\right)$ and the Phragm\'{e}n-Lindel\"{o}f principle.  Moreover, for $0\leq\sigma<1/2$, we use \eqref{fe zeta}, \eqref{chi}, and \eqref{zeta-bound} to get $\zeta(\sigma+\di T)\ll T^{3/4-\sigma}\ll T^{3/4}$. Now for $-1/4<\sigma<0$, we have \cite[p.~95, Equation (5.1.3)]{titch}, 
	$\zeta(\sigma+\di T)\ll T^{1/2-\sigma}\ll T^{3/4}$. Combining the last two bounds, for $-1/4<\sigma<1/2$, we have
\begin{align}\label{zeta-bound1}
\zeta(\sigma+\di T)\ll T^{3/4},
\end{align}
as $T\to\infty$. Thus, from \eqref{zeta-bound} and \eqref{zeta-bound1},	                                                              
%
	\begin{equation*}
		\begin{split}
			\left|\int_{1+\tau+\di T_n}^{-\tau+\di T_n}\frac{\zeta(2s)}{\zeta(s)}\Phi(s)\dd s\right|
			&=\left|\int_{-\tau}^{1+\tau}\frac{\zeta(2u+2\di T_n)}{\zeta(u+\di T_n)}\Phi(u+\di T_n)\dd u\right|\\
			&\leq\left|\int_{-\tau}^{1/4}\frac{\zeta(2u+2\di T_n)}{\zeta(u+\di T_n)}\Phi(u+\di T_n)\dd u\right|+\left|\int_{1/4}^{1+\tau}\frac{\zeta(2u+2\di T_n)}{\zeta(u+\di T_n)}\Phi(u+\di T_n)\dd u\right|\\
			&\ll\int_{-\tau}^{1/4}T_n^{3/4}T_n^{\epsilon}T_n^{-1-\delta}\dd u +\int_{1/4}^{1+\tau}T_n^{1/4}T_n^{\epsilon}T_{n}^{-1-\delta}\dd u.
		\end{split}
	\end{equation*}
	Now choose $\epsilon=\delta$ so that
	\begin{align*}
		\lim_{T_n\to\infty}\int_{1/4}^{1+\tau}T_n^{1/4}T_n^{\epsilon}T_{n}^{-1-\delta}\dd u= \lim_{T_n\to\infty}\left(\frac{3}{4}+\tau\right)T_n^{-3/4}=0,
	\end{align*}
	and $\int_{-\tau}^{1/4}T_n^{3/4}T_n^{\epsilon}T_n^{-1-\delta}\dd u\ll T_n^{-1/4}$
	so that
	\begin{align*}
		\lim_{T_n\to\infty}	\int_{-\tau}^{1/4}T_n^{1/2-2u}T_n^{\epsilon}T_n^{-1-\delta}\dd u=0.
	\end{align*}
	Therefore, 
	\begin{align*}
		\lim_{T_n\to\infty}\int_{1+\tau+\di T_n}^{-\tau+\di T_n}\frac{\zeta(2s)}{\zeta(s)}\Phi(s)\dd s=0.
	\end{align*}
	Similarly, the integral along $[-\tau-iT_n,  1+\tau-\di T_n]$ can be shown to approach zero as $T_n\to\infty$. Moreover, observe that using the functional equation \eqref{fe zeta}, we have, on the line Re$(s)=-\tau$,
	$\zeta(2s)/\zeta(s)\ll_{\tau}t^{\tau}$, which, along with the hypothesis $\Phi(s)\ll t^{-1-\delta}$ and the fact that $\tau<\delta$, implies
	\begin{align}\label{left line 1}
		\int_{(-\tau)}\frac{\zeta(2s)}{\zeta(s)}\Phi(s)\dd s\ll 1.
	\end{align}
	Thus, from \eqref{I defn}, \eqref{contour 1}, \eqref{left line 1} and the functional equation \eqref{fe zeta}, we see that
	\begin{align}
			I&=\frac{1}{2\pi i}\int_{(-\tau)}\frac{\zeta(2s)}{\zeta(s)}\Phi(s)\dd s+R_{0}+R_{1/2}+\lim_{T_n\to\infty}\sum_{|\gamma_m|<T_n}R_{\rho_m}\label{added}\\
		&=\frac{1}{2\pi i}\int_{(-\tau)}\frac{\zeta(1-2s)\Gamma(\frac{1}{2}-s)\Gamma(\frac{s}{2})}{\zeta(1-s)\Gamma(s)\Gamma(\frac{1-s}{2})}\Phi(s)\pi^{s}\dd s+R_{0}+R_{1/2}+\lim_{T_n\to\infty}\sum_{|\gamma_m|<T_n}R_{\rho_m}\nonumber\\
		&=\frac{1}{2\pi i}\int_{(1+\tau)}\frac{\zeta(2s-1)\Gamma(s-\frac{1}{2})\Gamma(\frac{1-s}{2})}{\zeta(s)\Gamma(1-s)\Gamma(\frac{s}{2})}\Phi(1-s)\pi^{1-s}\dd s+R_{0}+R_{1/2}+\lim_{T_n\to\infty}\sum_{|\gamma_m|<T_n}R_{\rho_m},\label{after residue theorem}
	\end{align}
	where we employed the change of variable $s\to 1-s$. 
	It is easy to see that
	\begin{align}
		R_{\frac{1}{2}}&=\lim_{s\to \frac{1}{2}}\bigg(s-\frac{1}{2}\bigg)\frac{\zeta(2s)}{\zeta(s)}\Phi(s)=\lim_{s\to \frac{1}{2}}\bigg(s-\frac{1}{2}\bigg)\frac{\zeta(2s)}{\zeta(s)}\int_{0}^{\infty}\phi(x)x^{s-1}\dd x =\frac{1}{2\zeta(\frac{1}{2})}\int_{0}^{\infty}\frac{\phi(x)}{\sqrt{x}}\dd x,\nonumber\\
		R_{\rho_m}&= \lim_{s\to \rho_m}(s-\rho_m)\frac{\zeta(2s)}{\zeta(s)}\Phi(s)=\lim_{s\to \rho_m}(s-\rho_m)\frac{\zeta(2s)}{\zeta(s)}\int_{0}^{\infty}\phi(x)x^{s-1}\dd x=\frac{\zeta(2\rho_m)}{\zeta'(\rho_m)}\int_{0}^{\infty}\phi(x)x^{\rho_m-1}\dd x,\nonumber\\
		R_0&=\textup{Res}_{s=0}\frac{\zeta(2s)}{\zeta(s)}\Phi(s),\label{r0}
	\end{align}
	where, in the first two residue calculations, we invoked the Mellin inversion theorem \cite[p.~341, Theorem 1]{Mcla}. We choose to simplify the expression for $R_0$ at the end.
	
	It remains to analyze the integral 	on the right-hand side of \eqref{after residue theorem}, which we denote by $I_1$. Observe that using \eqref{c(n) ds}, we have
	\begin{align}\label{I_1}
		I_1
		&=\frac{\pi}{2\pi i}\sum_{n=1}^{\infty}c(n)\int_{(1+\tau)}\frac{\Gamma(s-\frac{1}{2})\Gamma(\frac{1-s}{2})}{\Gamma(1-s)\Gamma(\frac{s}{2})}\Phi(1-s)(\pi n)^{-s}\dd s\nonumber\\
		&=\frac{\sqrt{\pi}}{2\pi i}\sum_{n=1}^{\infty}c(n)\int_{(1+\tau)}\Gamma\bigg(s-\frac{1}{2}\bigg)\sin\bigg(\frac{\pi s}{2}\bigg)\Phi(1-s)2^s(\pi n)^{-s}\dd s\nonumber\\		
		&=\sqrt{\pi} \sum_{n=1}^{\infty}c(n)J\left(\frac{\pi n}{2}\right),
	\end{align}
	where
	\begin{equation*}
		J(x):=\frac{1}{2\pi i}\int_{(1+\tau)}\Gamma\bigg(s-\frac{1}{2}\bigg)\sin\bigg(\frac{\pi s}{2}\bigg)\Phi(1-s)x^{-s}\dd s.
	\end{equation*}
The interchange of the order of summation and integration can be justified using the dominated convergence theorem. We would like to express $J(x)$ as a real integral using Parseval's formula \eqref{Parseval1}. However, from the conditions on $\Phi$, it can be seen that $\Phi(1-s)$ is analytic in $-1<\textup{Re}(s)<1$ whereas $\Gamma\left(s-\frac{1}{2}\right)\sin\left(\frac{\pi s}{2}\right)x^{-s}$ is analytic in $-1/2<\textup{Re}(s)<1/2$. Hence the Parseval formula is inapplicable as Re$(s)>1$ in the definition of $J(x)$. 
To address this issue, we shift the line of integration to from  Re$(s)=1+\tau$ to Re$(s)=-\tau$, and then employ change of variable $s$ to $1-s$ as will be seen below.
	
	Using the reflection formula for Gamma function and \eqref{strivert}, it is easy to see that as $t\to\pm\infty$,
	\begin{equation}\label{gamma sine bound}
		\Gamma\bigg(s-\frac{1}{2}\bigg)\sin\bigg(\frac{\pi s}{2}\bigg)\ll t^{\sigma-1}.
	\end{equation}
	Also, $\Phi(1-s)\ll t^{-1-\delta}$. This implies that the integrals along the horizontal segments tend to zero as $t\to\pm\infty$. Therefore, by Cauchy's residue theorem,
	\begin{align}\label{J(x) after residue theorem}
		J(x)&=\frac{1}{2\pi i}\int_{(-\tau)}\Gamma\bigg(s-\frac{1}{2}\bigg)\sin\bigg(\frac{\pi s}{2}\bigg)\Phi(1-s)x^{-s}\dd s+\tilde{R}_{\frac{1}{2}}(x)+\tilde{R}_1(x)\nonumber\\
		&=\frac{1}{2\pi i}\int_{(1+\tau)}\Gamma\bigg(\frac{1}{2}-s\bigg)\cos\bigg(\frac{\pi s}{2}\bigg)\Phi(s)x^{s-1}\dd s+\tilde{R}_{\frac{1}{2}}(x)+\tilde{R}_1(x),
	\end{align}
	where $\tilde{R}_a(x)$ denotes the residue of the integrand $\Gamma\left(s-\frac{1}{2}\right)\sin\left(\frac{\pi s}{2}\right)\Phi(1-s)x^{-s}$ at the pole $a$, and where, in the last step, we made the change of variable $s\to1-s$.
	
	We now justify the applicability of Parseval's formula to the integral on the last line of \eqref{J(x) after residue theorem}. Let $F(s)=\Gamma\left(s-\frac{1}{2}\right)\sin\left(\frac{\pi s}{2}\right)x^{-s}$ and $G(s)=\Phi(s)$. Firstly, the line of integration of the integral on the right-hand side of \eqref{J(x) after residue theorem} lies in the common strip of analyticity of $F(1-s)$ and $G(s)$, that is,  $1/2<\textup{Re}(s)<3/2$. Moreover, it is clear from \eqref{gamma sine bound} that $F(-\tau-it)\in L(-\infty,\infty)$, and also, the fact that $\Phi(s)$ is holomorphic in $0<\textup{Re}(s)<2$ implies $t^{\tau}\phi(t)\in L(0, \infty)$. Hence by Parseval's formula \ref{Parseval1},
	\begin{align}\label{bef bef sincos}
		\frac{1}{2\pi i}\int_{(1+\tau)}\Gamma\bigg(\frac{1}{2}-s\bigg)\cos\bigg(\frac{\pi s}{2}\bigg)\Phi(s)x^{s-1}\dd s&=\int_{0}^{\infty}\phi(t)\Bigg(\sin\bigg(t x+\frac{\pi}{4}\bigg){(tx)}^{-1/2}-\frac{(tx)^{-1/2}}{\sqrt{2}}\Bigg)\dd t,
	\end{align}
	since
	\begin{align}\label{bef sincos}
		\frac{1}{2\pi i}\int_{(-\tau)}F(s)t^{-s}\dd s=\sin\bigg(tx +\frac{\pi}{4}\bigg)(xt)^{-1/2}-\frac{(xt)^{-1/2}}{\sqrt{2}},
	\end{align}
	which is what we set to prove next. Employing the change of variable $s=w+1/2$, we have
	\begin{align}\label{sincos}
		&\frac{1}{2\pi i}\int_{(-\tau)}\Gamma\bigg(s-\frac{1}{2}\bigg)\sin\bigg(\frac{\pi s}{2}\bigg)t^{-s}\dd s\nonumber\\
		&=\frac{t^{-1/2}}{\sqrt{2}}\left\{\frac{1}{2\pi i}\int_{(-\tau-\frac{1}{2})}\Gamma(w)\sin\bigg(\frac{\pi w}{2}\bigg)t^{-w}\dd w+\frac{1}{2\pi i}\int_{(-\tau-\frac{1}{2})}\Gamma(w)\cos\bigg(\frac{\pi w}{2}\bigg)t^{-w}\dd w\right\}.
	\end{align}
	Now we know that for $-1/2<\textup{Re}(\xi)<1/2$,
	\begin{align}\label{sin}
		\frac{1}{2\pi i}\int_{(\textup{Re}(\xi))}\Gamma(w)\sin\bigg(\frac{\pi w}{2}\bigg)t^{-w}\dd w=\sin(t),
	\end{align}
	and  for $0<\textup{Re}(\xi)<1/2$,
	\begin{align}\label{cos}
		\frac{1}{2\pi i}\int_{(\textup{Re}(\xi))}\Gamma(w)\cos\bigg(\frac{\pi w}{2}\bigg)t^{-w}\dd w=\cos(t).
	\end{align}
	Thus, in order to use both \eqref{sin} and \eqref{cos} in \eqref{sincos}, we need to shift the line of integration from Re$(w)=-\tau-1/2$ to $0<\textup{Re}(w)=\lambda<1/2$ in each of the integrals on the right-hand side of \eqref{sincos}. While this produces no poles for the first integral, we do have to consider the contribution of pole at $w=0$ for the second integral at which the residue is $1$. Since the integrals along the horizontal segments in each of the integrals go to zero as $T\to\infty$, combining all of this and finally replacing $t$ by $xt$, we obtain \eqref{bef sincos}. Further, 
	\begin{align}\label{residues tilde half}
		\tilde{R}_{\frac{1}{2}}(x)&=\frac{x^{-1/2}}{\sqrt{2}}\Phi\bigg(\frac{1}{2}\bigg).
	\end{align}
	Hence from \eqref{J(x) after residue theorem}, \eqref{bef bef sincos} and \eqref{residues tilde half}, we have
	\begin{align*}
		J(x)&=\int_{0}^{\infty}\phi(t)\Bigg(\sin\bigg(t x+\frac{\pi}{4}\bigg){(tx)}^{-1/2}-\frac{(tx)^{-1/2}}{\sqrt{2}}\Bigg)\dd t+\frac{x^{-1/2}}{\sqrt{2}}\Phi\bigg(\frac{1}{2}\bigg)+\tilde{R}_1(x)\nonumber\\
		&=x^{-1/2}\int_{0}^{\infty}\frac{\phi(t)}{\sqrt{t}}\sin\bigg(t x+\frac{\pi}{4}\bigg)\dd t+\tilde{R}_1(x),
	\end{align*}
	since $\Phi(1/2)=\int_{0}^{\infty}\phi(t)/\sqrt{t}\hspace{0.5mm}\dd t$. Along with the definition of $\tilde{R}_1(x)$, this gives
	\begin{equation}\label{J pi n over 2}
		\begin{split}
			J\bigg(\frac{\pi n}{2}\bigg)&=\sqrt{\frac{2}{\pi n}}\int_{0}^{\infty}\frac{\phi(t)}{\sqrt{t}}\sin\bigg(\frac{\pi  nt}{2} +\frac{\pi}{4}\bigg)\dd t+\Res_{s=1}\Gamma\bigg(s-\frac{1}{2}\bigg)\sin\bigg(\frac{\pi s}{2}\bigg)\Phi(1-s)\bigg(\frac{\pi n}{2}\bigg)^{-s} \\
			&=\sqrt{\frac{2}{\pi n}}\int_{0}^{\infty}\frac{\phi(t)}{\sqrt{t}}\sin\bigg(\frac{\pi  nt}{2} +\frac{\pi}{4}\bigg)\dd t-\frac{2}{\pi n}\Res_{s=0}\Gamma\bigg(\frac{1}{2}-s\bigg)\cos\bigg(\frac{\pi s}{2}\bigg)\Phi(s)\bigg(\frac{\pi n}{2}\bigg)^{s}.
		\end{split}
	\end{equation}
	We now evaluate the residue on the extreme right-hand side of the above equation. As $s\to0$, the standard power series expansions of the Gamma function, cosine and the power function give
	\begin{equation*}
		\Gamma\bigg(\frac{1}{2}-s\bigg)\cos\bigg(\frac{\pi s}{2}\bigg)\bigg(\frac{\pi n}{2}\bigg)^{s}=\sqrt{\pi}\bigg\{1+\bigg(\log n+\log\left(\frac{\pi}{2}\right)-\psi\bigg(\frac{1}  {2}\bigg)\bigg)s+O(|s|^2)\bigg\},  
	\end{equation*}
	where $\psi(s)$ is the digamma function. 
	Since we have assumed $\Phi(s)$ has at most a double pole at $s=0$, let
	\begin{equation*}
		\Phi(s)=\sum_{m=-2}^{\infty}a_m s^m.
	\end{equation*}
	Then it can be seen that
	\begin{equation}\label{resdone}
		\textup{Res}_{s=0}\Gamma\bigg(\frac{1}{2}-s\bigg)\cos\bigg(\frac{\pi s}{2}\bigg)\Phi(s)\bigg(\frac{\pi n}{2}\bigg)^{s}=\sqrt{\pi}\bigg(a_{-2}\bigg(\log n+\log\left(\frac{\pi}{2}\right)-\psi\bigg(\frac{1}  {2}\bigg)\bigg)+a_{-1}\bigg).
	\end{equation}
	Therefore, from \eqref{I_1}, \eqref{J pi n over 2} and \eqref{resdone},
	\begin{align}\label{I_1 simplification}
		I_1&=\sqrt{2}\sum_{n=1}^{\infty}\frac{c(n)}{\sqrt{n}}\int_{0}^{\infty}\frac{\phi(t)}{\sqrt{t}}\sin\bigg(\frac{\pi  nt}{2} +\frac{\pi}{4}\bigg)\dd t-2\sum_{n=1}^{\infty}\frac{c(n)}{n}\bigg(a_{-2}\bigg(\log n+\log\left(\frac{\pi}{2}\right)-\psi\bigg(\frac{1}  {2}\bigg)\bigg)+a_{-1}\bigg)\nonumber\\
		&=\sqrt{2}\sum_{n=1}^{\infty}\frac{c(n)}{\sqrt{n}}\int_{0}^{\infty}\frac{\phi(t)}{\sqrt{t}}\sin\bigg(\frac{\pi  nt}{2} +\frac{\pi}{4}\bigg)\dd t-\log(2\pi)\cdot a_{-2}-a_{-1},
	\end{align}
	where, in the last step, we used Theorems \ref{c(n) over n thm}, \ref{c(n) log(n) over n thm} and the fact \cite[p.~905, formula \textbf{8.366.2}]{gr} that
	$\psi(1/2)=-\gamma-2\log(2)$.
	Thus from \eqref{after residue theorem}-\eqref{I_1} and \eqref{I_1 simplification},
	\begin{align}\label{lambda vsf almost there}
		I&=\sqrt{2}\sum_{n=1}^{\infty}\frac{c(n)}{\sqrt{n}}\int_{0}^{\infty}\frac{\phi(t)}{\sqrt{t}}\sin\bigg(\frac{\pi  nt}{2} +\frac{\pi}{4}\bigg)\dd t-\log(2\pi)\cdot a_{-2}-a_{-1}+\frac{1}{2\zeta(\frac{1}{2})}\int_{0}^{\infty}\frac{\phi(x)}{\sqrt{x}}\dd x\nonumber\\
		&\quad+\lim_{T_n\to\infty}\sum_{|\gamma_m|<T_n}\frac{\zeta(2\rho_m)}{\zeta'(\rho_m)}\int_{0}^{\infty}\phi(x)x^{\rho_m-1}\dd x+R_0.
	\end{align}
	The only thing remaining now is to evaluate the expression for $R_0$ given in \eqref{r0}. To that end, as $s\to0$, we have from \cite[pp. 19-20, Equations (2.4.3), (2.4.5)]{titch}, $\zeta(s)=-\frac{1}{2}-\frac{1}{2}\log(2\pi)s+O(|s|^2)$, 
	which implies
	$\frac{\zeta(2s)}{\zeta(s)}=1+\log(2\pi)s+O(|s|^{2})$
	so that $R_0$ is the coefficient of $s^{-1}$ in 
	\begin{align*}
		\left(1+\log(2\pi)s+O(|s|^{2})\right)\left(\frac{a_{-2}}{s^2}+\frac{a_{-1}}{s}+O(1)\right).
	\end{align*}
	This implies $R_0=\log(2\pi)\cdot a_{-2}+a_{-1}$. Substituting this expression in \eqref{lambda vsf almost there} , simplifying, and equating the resulting expression for $I$ with the right-hand side of \eqref{first expression for I}, we arrive at \eqref{vsf lambda eqn}. This completes the proof.
	
\subsection{Corollaries of the Voronoi summation formula}	
	
\begin{proof}[Corollary \textup{\ref{lambda simple pole}}][]
	
Let Re$(s)>0$ and $y>0$.  Let $\Phi(s)=\Gamma(s)y^{-s}$ in Theorem \ref{vsf lambda(n)}.	Firstly, $\Phi(s)$ satisfies the conditions given before the statement of Theorem \ref{vsf lambda(n)} and has a simple pole at $s=0$. 
Secondly, $\phi(x)=e^{-xy}$. Also, the following is easily verified:
\begin{align*}
\int_{0}^{\infty}\frac{\phi(x)}{\sqrt{x}}\dd x=\frac{\sqrt{\pi}}{\sqrt{y}},\hspace{8mm}\int_{0}^{\infty}\phi(x)x^{\rho_m-1}\dd x=\frac{\Gamma(\rho_m)}{y^{\rho_m}}.
\end{align*}
Next,
\begin{align*}
\int_{0}^{\infty}\frac{\phi(x)}{\sqrt{x}}\sin\bigg(\frac{\pi nx}{2}+\frac{\pi}{4}\bigg)\dd x&=\frac{1}{\sqrt{2}}\left(\int_{0}^{\infty}\frac{e^{-xy}}{\sqrt{x}}\sin\bigg(\frac{\pi nx}{2}\bigg)\dd x+\int_{0}^{\infty}\frac{e^{-xy}}{\sqrt{x}}\cos\bigg(\frac{\pi nx}{2}\bigg)\dd x\right)\nonumber\\
&=\frac{1}{\sqrt{2}}\left\{\sqrt{\frac{\pi}{2}}\frac{\sqrt{\sqrt{y^2+\frac{\pi^2n^2}{4}}-y}}{\sqrt{y^2+\frac{\pi^2n^2}{4}}}+\sqrt{\frac{\pi}{2}}\frac{\sqrt{\sqrt{y^2+\frac{\pi^2n^2}{4}}+y}}{\sqrt{y^2+\frac{\pi^2n^2}{4}}}\right\}\nonumber\\
&=\frac{\sqrt{\pi}}{2}\frac{\left(\sqrt{\sqrt{y^2+\frac{\pi^2n^2}{4}}-y}+\sqrt{\sqrt{y^2+\frac{\pi^2n^2}{4}}+y}\right)}{\sqrt{y^2+\frac{\pi^2n^2}{4}}},
\end{align*}
where, in the second step, we used the integral evaluations \cite[p.~499, Equations \textbf{3.944.13, 3.944.14}]{gr} with $\mu=1/2, n=0, \beta=y$ and $b=\pi n/2$. Using all of these evaluations, we arrive at \eqref{lambda simple pole eqn}.

To prove \eqref{lambda simple pole bound}, we first observe that as $n\to\infty$,
\begin{align*}
\frac{\sqrt{\sqrt{4y^2+\pi^2n^2}-2y}+\sqrt{\sqrt{4y^2+\pi^2n^2}+2y}}{\sqrt{4y^2+\pi^2n^2}}=\frac{2}{\sqrt{\pi n}}+O_y\left(\frac{1}{n^{5/2}}\right),
\end{align*}
where the big-O involves terms with positive powers of $y$. Therefore, using \eqref{c(n) over n}, we see that as $y\to0^{+}$,
\begin{align*}
\sqrt{\pi}\sum_{n=1}^{\infty}\frac{c(n)}{\sqrt{n}}\frac{\sqrt{\sqrt{4y^2+\pi^2n^2}-2y}+\sqrt{\sqrt{4y^2+\pi^2n^2}+2y}}{\sqrt{4y^2+\pi^2n^2}}=O(1).
\end{align*}
Since we assume the RH and the absolute convergence of $\lim_{T_n\to\infty}\sum_{|\gamma_m|<T_n}\frac{\zeta(2\rho_m)\Gamma(\rho_m)}{\zeta'(\rho_m)}y^{-\rho_m}$, we see that
\begin{align*}
\lim_{T_n\to\infty}\sum_{|\gamma_m|<T_n}\frac{\zeta(2\rho_m)\Gamma(\rho_m)}{\zeta'(\rho_m)}y^{-\rho_m}\ll y^{-1/2}\sum_{\rho_m}\left|\frac{\zeta(2\rho_m)\Gamma(\rho_m)}{\zeta'(\rho_m)}\right|=O(y^{-1/2}).
\end{align*}
Along with \eqref{lambda simple pole eqn}, the above two equations imply \eqref{lambda simple pole bound}. 
\end{proof}
	
\begin{proof}[Corollary \textup{\ref{lambda gaussian}}][]
Let Re$(s)>0$ and $y>0$. Let $\Phi(s)=\frac{1}{2}\Gamma\left(\frac{s}{2}\right)y^{-\frac{s}{2}}$ in Theorem \ref{vsf lambda(n)} so that $\phi(x)=e^{-x^2y}$. We refrain from giving the details since they  are similar to that of Corollary \ref{lambda simple pole}; instead, we only show below how one obtains the evaluation
\begin{align}\label{sin cos both}
\int_{0}^{\infty}\frac{e^{-x^2y}}{\sqrt{x}}	\sin\bigg(\frac{\pi nx}{2}+\frac{\pi}{4}\bigg)\dd x=\frac{\pi^{3/2}\sqrt{n}}{4\sqrt{2y}}e^{-\frac{\pi^2n^2}{32y}}\left(I_{-\frac{1}{4}}\left(\frac{\pi^2n^2}{32y}\right)+I_{\frac{1}{4}}\left(\frac{\pi^2n^2}{32y}\right)\right).
\end{align}
From \cite[p.~503, formulas \textbf{3.952.7, 3.952.8}]{gr},
\begin{align}
\int_{0}^{\infty}x^{\mu-1}e^{-\beta x^2}\sin(cx)\dd x&=\frac{ce^{-\frac{c^2}{4\beta}}}{2\beta^{\frac{\mu+1}{2}}}\Gamma\left(\frac{1+\mu}{2}\right){}_1F_{1}\left(1-\frac{\mu}{2};\frac{3}{2};\frac{c^2}{4\beta}\right)\hspace{6mm}(\textup{Re}(\beta)>0, \textup{Re}(\mu)>-1),\label{sin wala}\\
\int_{0}^{\infty}x^{\mu-1}e^{-\beta x^2}\cos(cx)\dd  x&=\frac{e^{-\frac{c^2}{4\beta}}}{2\beta^{\frac{\mu}{2}}}\Gamma\left(\frac{\mu}{2}\right){}_1F_{1}\left(\frac{1-\mu}{2};\frac{1}{2};\frac{c^2}{4\beta}\right)\hspace{6mm}(\textup{Re}(\beta)>0, \textup{Re}(\mu)>0).\label{cos wala}
\end{align}
Let $\mu=1/2, \beta=y$ and $c=\pi n/2$ in \eqref{sin wala} to get
\begin{align}\label{sin wale se}
\int_{0}^{\infty}\frac{e^{-x^2y}}{\sqrt{x}}\sin\left(\frac{\pi n x}{2}\right)\, dx=\frac{\pi n}{4y^{3/4}}\Gamma\left(\frac{3}{4}\right)e^{-\frac{\pi^2n^2}{16y}}{}_1F_{1}\left(\frac{3}{4};\frac{3}{2};\frac{\pi^2n^2}{16y}\right).
\end{align}
Now from \cite[p.~126, Theorem 43]{rainville}, for $2a$ not equal to a negative odd integer, 
\begin{align*}
e^{-z}{}_1F_{1}(a;2a;2z)&={}_0F_{1}\left(-;a+\tfrac{1}{2};\tfrac{z^2}{4}\right)
=\Gamma\left(a+\frac{1}{2}\right)\left(\frac{z}{2}\right)^{\frac{1}{2}-a}I_{a-\frac{1}{2}}(z),
\end{align*}
where the second equality follows from \eqref{sumbesselj} and \eqref{besseli}. Using the above equation with $a=3/4$ and $z=\pi^2n^2/(32y)$ and using the resultant in \eqref{sin wale se}, we arrive at
\begin{align}\label{sin wale se aa gaya}
\int_{0}^{\infty}\frac{e^{-x^2y}}{\sqrt{x}}\sin\left(\frac{\pi n x}{2}\right)\, dx
=\frac{\pi^{3/2}\sqrt{n}}{4\sqrt{y}}e^{-\frac{\pi^2n^2}{32y}}I_{\frac{1}{4}}\left(\frac{\pi^2n^2}{32y}\right).
\end{align}
Similarly letting $\mu=1/2, \beta=y$ and $c=\pi n/2$ in \eqref{cos wala}, we get
\begin{align}\label{cos wale se aa gaya}
	\int_{0}^{\infty}\frac{e^{-x^2y}}{\sqrt{x}}\cos\left(\frac{\pi n x}{2}\right)\, dx
	=\frac{\pi^{3/2}\sqrt{n}}{4\sqrt{y}}e^{-\frac{\pi^2n^2}{32y}}I_{-\frac{1}{4}}\left(\frac{\pi^2n^2}{32y}\right).
\end{align}
Therefore, \eqref{sin wale se aa gaya} and \eqref{cos wale se aa gaya} result in \eqref{sin cos both}, which, in turn, leads to \eqref{lambda gaussian eqn}.

Using the asymptotic expansion of the modified Bessel function of the second kind \cite[p.~203, Equation (7.23.3)]{watson-1966a}, it can be checked that as $n\to\infty$,
\begin{align*}
e^{-\frac{\pi^2n^2}{32y}}\left(I_{-\frac{1}{4}}\left(\frac{\pi^2n^2}{32y}\right)+I_{\frac{1}{4}}\left(\frac{\pi^2n^2}{32y}\right)\right)=\frac{8\sqrt{y}}{\pi^{3/2}n}\left(1+O_{y}\left(\frac{1}{n^2}\right)\right),
\end{align*}
where the big-O term involves positive powers of $y$. Along with \eqref{c(n) over n}, this implies that
\begin{align*}
\frac{\pi^{3/2}}{4\sqrt{y}}\sum_{n=1}^{\infty}c(n)e^{-\frac{\pi^2n^2}{32y}}\left(I_{-\frac{1}{4}}\left(\frac{\pi^2n^2}{32y}\right)+I_{\frac{1}{4}}\left(\frac{\pi^2n^2}{32y}\right)\right)=O(1),
\end{align*}
which leads to \eqref{gaussian bound} following the analysis similar to that done for obtaining \eqref{lambda simple pole bound}.
\end{proof}

\begin{proof}[Corollary \textup{\ref{lambda hypergeometric}}][]
Let Re$(s)>0$ and $y>0$, and let $\Phi(s):=2^{s-2}y^{-s}\Gamma^{2}\left(\frac{s}{2}\right)$ in Theorem \ref{vsf lambda(n)}. Then $\Phi(s)$ has a second order pole at $s=0$, and it satisfies the conditions given before the statement of Theorem \ref{vsf lambda(n)}. Also, from \cite[p.~196, formula 5.39]{ober}, we have $\phi(x)=K_{0}(xy)$.
Next,
\begin{align*}
\int_{0}^{\infty}\frac{\phi(x)}{\sqrt{x}}\dd x=\frac{4\sqrt{2}}{\sqrt{y}}\Gamma^{2}\left(\frac{5}{4}\right),\hspace{5mm}
\int_{0}^{\infty}\phi(x)x^{\rho_m-1}\dd x=2^{\rho_m-2}y^{-\rho_m}\Gamma^{2}\left(\frac{\rho_m}{2}\right).
\end{align*}
Thus we will be done with proving \eqref{lambda hypergeometric eqn} provided we show that
\begin{align}\label{hyp complete}
\int_{0}^{\infty}\frac{\phi(x)}{\sqrt{x}}\sin\bigg(\frac{\pi nx}{2}+\frac{\pi}{4}\bigg)\dd x
=\frac{\pi^{\frac{3}{2}}}{\sqrt{2}(\pi^2n^2+4y^2)^{\frac{1}{4}}}{}_2F_{1}\left(\frac{1}{2},\frac{1}{2};1;\frac{1}{2}+\frac{\pi n}{2\sqrt{\pi^2n^2+4y^2}}\right).
\end{align}
To that end, using \cite[p.~731, formulas \textbf{6.699.3, 6.699.4}]{gr}, we have
\begin{align}
\int_{0}^{\infty}\frac{\phi(x)}{\sqrt{x}}\sin\bigg(\frac{\pi nx}{2}\bigg)\dd x&=\frac{\pi n}{(2y)^{3/2}}\Gamma^{2}\left(\frac{3}{4}\right){}_2F_{1}\left(\frac{3}{4},\frac{3}{4};\frac{3}{2};-\frac{\pi^2n^2}{4y^2}\right),\label{1hyp}\\
\int_{0}^{\infty}\frac{\phi(x)}{\sqrt{x}}\cos\bigg(\frac{\pi nx}{2}\bigg)\dd x&=\frac{1 }{2^{3/2}\sqrt{y}}\Gamma^{2}\left(\frac{1}{4}\right){}_2F_{1}\left(\frac{1}{4},\frac{1}{4};\frac{1}{2};-\frac{\pi^2n^2}{4y^2}\right),\label{2hyp}
\end{align} 
Using \cite[p.~176, Exercise 1(e)]{aar}, with $a=b=1/4$, we obtain
\begin{align}\label{hyp1}
{}_2F_{1}\left(\frac{1}{4}, \frac{1}{4};\frac{1}{2};-x\right)=\frac{(1+x)^{-1/4}}{2\sqrt{\pi}}\Gamma^{2}\left(\frac{3}{4}\right)\bigg\{{}_2F_{1}\left(\frac{1}{2}, \frac{1}{2};1;\frac{1}{2}+\frac{1}{2}\sqrt{\frac{x}{1+x}}\right)+{}_2F_{1}\left(\frac{1}{2}, \frac{1}{2};1;\frac{1}{2}-\frac{1}{2}\sqrt{\frac{x}{1+x}}\right)\bigg\}.
\end{align}
Similarly, one can derive (see, for example, \cite{wolfram}\footnote{It is to be noted that the notation for the complete elliptic integral used here is such that  $K(k)=\frac{\pi}{2}{}_2F_{1}\left(\frac{1}{2},\frac{1}{2};1;k\right)$, which is different from the conventional notation whereby we have $K(k)=\frac{\pi}{2}{}_2F_{1}\left(\frac{1}{2},\frac{1}{2};1;k^2\right)$.}
\begin{align}\label{hyp2}
	{}_2F_{1}\left(\frac{3}{4}, \frac{3}{4};\frac{3}{2};-x\right)=\frac{(1+x)^{-1/4}}{4\sqrt{\pi x}}\Gamma^{2}\left(\frac{1}{4}\right)\bigg\{{}_2F_{1}\left(\frac{1}{2}, \frac{1}{2};1;\frac{1}{2}+\frac{1}{2}\sqrt{\frac{x}{1+x}}\right)-{}_2F_{1}\left(\frac{1}{2}, \frac{1}{2};1;\frac{1}{2}-\frac{1}{2}\sqrt{\frac{x}{1+x}}\right)\bigg\}.
\end{align}
From \eqref{hyp1} and \eqref{hyp2}, it is easy to derive
\begin{align}\label{add hyp}
2\sqrt{x}\Gamma^{2}\left(\frac{3}{4}\right){}_2F_{1}\left(\frac{3}{4}, \frac{3}{4};\frac{3}{2};-x\right)+\Gamma^{2}\left(\frac{1}{4}\right){}_2F_{1}\left(\frac{1}{4}, \frac{1}{4};\frac{1}{2};-x\right)=\frac{2\pi^{3/2}}{(1+x)^{1/4}}{}_2F_{1}\left(\frac{1}{2}, \frac{1}{2};1;\frac{1}{2}+\frac{1}{2}\sqrt{\frac{x}{1+x}}\right).
\end{align}
Now let $x=\pi^2n^2/(4y^2)$ in \eqref{add hyp}, divide both sides by $2^{3/2}\sqrt{y}$, and then compare the left-hand side of the resulting equation with the right-hand side of the equation obtained by adding the corresponding sides of \eqref{1hyp} and \eqref{2hyp}. This establishes \eqref{hyp complete} which completes the proof of \eqref{lambda hypergeometric eqn}.

To prove \eqref{lambda K big-O}, observe that as $n\to\infty$,
\begin{align*}
\frac{1}{(\pi^2n^2+4y^2)^{\frac{1}{4}}}{}_2F_{1}\left(\frac{1}{2},\frac{1}{2};1;\frac{1}{2}+\frac{\pi n}{2\sqrt{\pi^2n^2+4y^2}}\right)=\frac{2\log\left(4\pi n/y\right)}{\pi^{3/2}\sqrt{n}}+O_y\left(n^{-5/2}\right),
\end{align*}
where the big-O term involves positive powers of $y$. Thus, using Theorems \ref{c(n) over n thm} and \ref{c(n) log(n) over n thm}, we see that as $y\to0$,
 \begin{align*}
 \pi^{\frac{3}{2}}\sum_{n=1}^{\infty}\frac{c(n)}{\sqrt{n}(\pi^2n^2+4y^2)^{\frac{1}{4}}}{}_2F_{1}\left(\frac{1}{2},\frac{1}{2};1;\frac{1}{2}+\frac{\pi n}{2\sqrt{\pi^2n^2+4y^2}}\right)=O\left(\frac{1}{\sqrt{y}}\right).
 \end{align*}
 Now RH and the assumption on the absolute convergence of the series in \eqref{lambda hypergeometric eqn} involving the non-trivial zeros of $\zeta(s)$ imply \eqref{lambda K big-O}.
\end{proof}

\begin{proof}[Corollary \textup{\ref{lambda riesz}}][]
Let Re$(s)>0$, $y>0$, and let $\Phi(s):=\frac{\sqrt{\pi}y^{s+\frac{1}{2}}\Gamma(s)}{2\Gamma\left(s+\frac{3}{2}\right)}$. Then $\Phi(s)$ has a simple pole at $s=0$.  Also from \cite[p.~195, formula 5.35]{ober}, $\phi(x)=(y-x)^{1/2}$ if $x<y$, and $0$ else. Then
\begin{align}\label{before riesz complete}
	\int_{0}^{\infty}\frac{\phi(x)}{\sqrt{x}}\dd x=\frac{\pi y}{2},\hspace{5mm}
	\int_{0}^{\infty}\phi(x)x^{\rho_m-1}\dd x=\frac{\sqrt{\pi}y^{\rho_m+\frac{1}{2}}\Gamma(\rho_m)}{2\Gamma\left(\rho_m+\frac{3}{2}\right)}.
\end{align}
We next show
\begin{align}\label{riesz complete}
	\int_{0}^{\infty}\frac{\phi(x)}{\sqrt{x}}\sin\bigg(\frac{\pi nx}{2}+\frac{\pi}{4}\bigg)\dd x&=\frac{\pi y}{2\sqrt{2}}\bigg\{J_{0}\left(\frac{\pi n y}{4}\right)\left(\sin\left(\frac{\pi n y}{4}\right)+\cos\left(\frac{\pi n y}{4}\right)\right)\nonumber\\
	&\qquad\quad\quad+J_{1}\left(\frac{\pi n y}{4}\right)\left(\sin\left(\frac{\pi n y}{4}\right)-\cos\left(\frac{\pi n y}{4}\right)\right)\bigg\}.
	\end{align}
To that end, letting $\alpha=1/2, \beta=3/2, b=\pi n/2, \delta=1$ in \cite[p.~391, Formula \textbf{2.5.7.1}]{prud1}, one finds that
\begin{align}\label{diff hyp 1}
\int_{0}^{y}x^{-1/2}(y-x)^{1/2}\sin\left(\frac{\pi nx}{2}\right)\, dx=-\frac{i\pi y}{4}\left\{{}_1F_{1}\left(\frac{1}{2};2;\frac{i\pi n y}{2}\right)-{}_1F_{1}\left(\frac{1}{2};2;-\frac{i\pi n y}{2}\right)\right\}.
\end{align}	
Now from \cite[p.~580, Formula \textbf{7.11.2.12}]{prud3},
\begin{align*}
{}_1F_{1}\left(\frac{1}{2};2;z\right)=e^{z/2}\left(I_0\left(\frac{z}{2}\right)-I_1\left(\frac{z}{2}\right)\right), 
\end{align*}
so that
\begin{align*}
	{}_1F_{1}\left(\frac{1}{2};2;iz\right)=e^{iz/2}\left(J_0\left(\frac{z}{2}\right)-iJ_1\left(\frac{z}{2}\right)\right),
\end{align*}
and thus
\begin{align}\label{diff hyp 2}
	{}_1F_{1}\left(\frac{1}{2};2;iz\right)-{}_1F_{1}\left(\frac{1}{2};2;-iz\right)=2i\left(J_0\left(\frac{z}{2}\right)\sin\left(\frac{z}{2}\right)-J_1\left(\frac{z}{2}\right)\cos\left(\frac{z}{2}\right)\right).
	\end{align}
Therefore, from \eqref{diff hyp 1} and \eqref{diff hyp 2},
	\begin{align*}
		\int_{0}^{y}x^{-1/2}(y-x)^{1/2}\sin\left(\frac{\pi nx}{2}\right)\, dx=
		\frac{\pi y}{2}\left(J_0\left(\frac{\pi ny}{4}\right)\sin\left(\frac{\pi n y}{4}\right)-J_1\left(\frac{\pi n y}{4}\right)\cos\left(\frac{\pi n y}{4}\right)\right).
		\end{align*}
Similarly using  \cite[p.~391, Formula \textbf{2.5.7.1}]{prud1} with $\alpha=1/2, \beta=3/2, b=\pi n/2, \delta=0$, it can be seen that
 	\begin{align*}
 	\int_{0}^{y}x^{-1/2}(y-x)^{1/2}\cos\left(\frac{\pi nx}{2}\right)\, dx=
 	\frac{\pi y}{2}\left(J_0\left(\frac{\pi ny}{4}\right)\cos\left(\frac{\pi n y}{4}\right)+J_1\left(\frac{\pi n y}{4}\right)\sin\left(\frac{\pi n y}{4}\right)\right).
 \end{align*}
 The above two equations establish \eqref{riesz complete}. Now use \eqref{before riesz complete} and \eqref{riesz complete} in Theorem \ref{vsf lambda(n)} and divide both sides of the resulting equation by $\sqrt{y}$ to arrive at \eqref{lambda riesz eqn}.
 
 To prove \eqref{riesz bound}, we first note the well-known asymptotic expansion of $J_{\nu}(z)$ \cite[p.~199]{watson-1966a} as $|z|\to\infty$:
\begin{align*}
	J_{\nu}(z)&\sim\sqrt{\frac{2}{\pi z}}\left(\cos\left(w\right)\sum_{n=0}^{\infty}\frac{(-1)^n(\nu,2n)}{(2z)^{2n}}-\sin\left(w\right)\sum_{n=0}^{\infty}\frac{(-1)^n(\nu,2n+1)}{(2z)^{2n+1}}\right), \quad (|\arg(z)|<\pi),
\end{align*} 
where $w=z-\frac{1}{2}\pi\nu-\frac{1}{4}\pi$ and $(\nu, n)=\frac{\Gamma(\nu+n+1/2)}{\Gamma(n+1)\Gamma(\nu-n+1/2)}$. Then a simple calculation leads to the fact that as $n\to\infty$,
\begin{align*}
J_{0}\left(\frac{\pi n y}{4}\right)\left(\sin\left(\frac{\pi n y}{4}\right)+\cos\left(\frac{\pi n y}{4}\right)\right)+J_{1}\left(\frac{\pi n y}{4}\right)\left(\sin\left(\frac{\pi n y}{4}\right)-\cos\left(\frac{\pi n y}{4}\right)\right)=\frac{4}{\pi\sqrt{ny}}\left(1+O_{y}\left(n^{-\frac{3}{2}}\right)\right),	
\end{align*}
where the big-O term involves terms with negative powers of $y$. Thus, using Theorem \ref{c(n) over n thm}, as $y\to\infty$,
\begin{align*}
&\frac{\pi \sqrt{y}}{2}\sum_{n=1}^{\infty}\frac{c(n)}{\sqrt{n}}\bigg\{J_{0}\left(\frac{\pi n y}{4}\right)\left(\sin\left(\frac{\pi n y}{4}\right)+\cos\left(\frac{\pi n y}{4}\right)\right)+J_{1}\left(\frac{\pi n y}{4}\right)\left(\sin\left(\frac{\pi n y}{4}\right)-\cos\left(\frac{\pi n y}{4}\right)\right)\bigg\}\nonumber\\
&=1+O\left(\frac{1}{y}\right).
\end{align*}
Now RH and the assumption on the absolute convergence of the series in \eqref{lambda riesz eqn} involving the non-trivial zeros of $\zeta(s)$ imply \eqref{riesz bound}.
\end{proof}

	\subsection{Cohen type identity for $\lambda(n)$}
	Theorem \ref{cohen lambda thm} is proved here. Let $c=\textup{Re}(s)>1$. We do this by evaluating the integral 
	\begin{equation}\label{I one}
		I:=\frac{1}{2\pi i}\int_{(c)}\frac{\Gamma(2s-1)\zeta(2s-1)}{\Gamma(s)\zeta(s)}(2\pi x)^{-s}\dd s
	\end{equation}
	in two different ways. On one hand, using \eqref{c(n) ds} in the first step and duplication formula in the second, we have
	\begin{align}\label{one hand}
	I&=\sum_{n=1}^{\infty}c(n)\frac{1}{2\pi i}\int_{(c)}\frac{\Gamma(2s-1)}{\Gamma(s)}(2 \pi nx)^{-s}\dd s\nonumber\\
	&=\frac{1}{4\sqrt{\pi}}\sum_{n=1}^{\infty}c(n)\frac{1}{2\pi i}\int_{(c)}\Gamma\left(s-\frac{1}{2}\right)\left(\frac{\pi n x}{2}\right)^{-s}\dd s\nonumber\\
	&=\frac{1}{2\pi\sqrt{2x}}\sum_{n=1}^{\infty}\frac{c(n)}{\sqrt{n}}e^{-\pi n x/2}.
	\end{align}
	On the other hand, using \eqref{zetafe asym} twice, we see that
	\begin{align*}
		I&=\frac{1}{4\pi^2i}\int_{(c)}\frac{\zeta(2-2s)x^{-s}}{2\sin\left(\frac{\pi s}{2}\right)\zeta(1-s)}\dd s.
	\end{align*}
	Next, in order to use the fact that for Re$(s)<0$, $\sum_{n=1}^{\infty}\lambda(n)n^{s-1}=\zeta(2-2s)/\zeta(1-s)$, we shift the line of integration from $c=\textup{Re}(s)>1$ to $-2<c'=\textup{Re}(s)<0$ by constructing a rectangular contour $[c-iT_n, c+iT_n, c'+iT_n, c'-iT_n, c-iT_n]$, where $\{T_n\}_{n=1}^{\infty}$ is the sequence constructed in the proof of Theorem \ref{vsf lambda(n)} which gives \eqref{bound for reciprocal of zeta}. Along with \eqref{strivert}, this shows that the integrals along the horizontal segments tend to zero as $n\to\infty$.  We need to consider the contribution of the poles of the integrand in \eqref{I one}at $1/2$ (due to $\Gamma(2s-1)$) and at the non-trivial zeros of $\zeta(s)$ at $s=\rho_m, m\in\mathbb{Z}$. The residues at these poles are given by
	\begin{align}\label{residues cohen lambda}
	R_{1/2}=\frac{-1}{4\pi\sqrt{2x}\zeta(1/2)},\hspace{8mm}
	R_{\rho_m}=\frac{\zeta(2\rho_m-1)\Gamma(2\rho_m-1)}{\zeta'(\rho_m)\Gamma(\rho_m)}(2\pi x)^{-\rho_m}.
	\end{align}
	Therefore, by the residue theorem,
	\begin{align}\label{I two}
	I&=\frac{1}{4\pi^2i}\int_{(c')}\frac{x^{-s}}{2\sin\left(\frac{\pi s}{2}\right)}\sum_{n=1}^{\infty}\frac{\lambda(n)}{n^{1-s}}\dd s+R_{1/2}+\lim_{T_n\to\infty}\sum_{|\gamma_m|<T_n}R_{\rho_m}\nonumber\\
	&=\frac{1}{2\pi}\sum_{n=1}^{\infty}\frac{\lambda(n)}{n}\frac{1}{2\pi i}\int_{(c')}\frac{(x/n)^{-s}\dd s}{2\sin\left(\frac{\pi s}{2}\right)}+R_{1/2}+\lim_{T_n\to\infty}\sum_{|\gamma_m|<T_n}R_{\rho_m}.
\end{align}
In order to use the well-known formula,
\begin{align*}
\frac{1}{2\pi i}\int_{(\textup{Re}(s))}\frac{x^{-s}\dd s}{2\sin\left(\frac{\pi s}{2}\right)}=\frac{1}{\pi(1+x^2)},
	\end{align*}
which is valid for $0<$Re$(s)<2$, we again shift the line of integration from Re$(s)=c'$ to $0<$Re$(s)<2$, consider the contribution of the pole at $s=0$, thereby obtaining
\begin{align}\label{csc shifted}
\frac{1}{2\pi i}\int_{(c')}\frac{x^{-s}\dd s}{2\sin\left(\frac{\pi s}{2}\right)}=\frac{1}{\pi(1+x^2)}-\frac{1}{\pi}.
\end{align}
Hence, from \eqref{I two} and \eqref{csc shifted}, we obtain
	\begin{align}\label{other hand}
	I=\frac{-x^2}{2\pi^2}\sum_{n=1}^{\infty}\frac{\lambda(n)}{n(x^2+n^2)}+R_{1/2}+\lim_{T_n\to\infty}\sum_{|\gamma_m|<T_n}R_{\rho_m}.
	\end{align}
Now \eqref{cohen lambda eqn} follows from equating the expressions for $I$ in \eqref{one hand} and \eqref{other hand} (while using \eqref{residues cohen lambda}), and simplifying.
	\subsection{Ramanujan-Guinand type identity for $\lambda(n)$} Before proving Theorem \ref{rg liouville}, we begin with a lemma.
	
	\begin{lemma}\label{sum k bessel}
		For $c=\textup{Re}(s)<1/2$, we have
	\begin{align*}
	\frac{1}{2\pi i}\int_{(c)}\frac{\Gamma(\frac{1}{2}-s)}{\Gamma(\frac{1-s}{2})}t^{-s}\dd s=\frac{\exp{\left(-\tfrac{1}{8t^2}\right)}}{4\sqrt{2}\pi t^2}\left(K_{\frac{1}{4}}\left(\frac{1}{8t^2}\right)+K_{\frac{3}{4}}\left(\frac{1}{8t^2}\right)\right).
	\end{align*}
\end{lemma}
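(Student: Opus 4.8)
The plan is to prove the identity by Mellin inversion. Write $h(t)$ for the function on the right-hand side, i.e.
\begin{equation*}
h(t):=\frac{\exp\left(-\tfrac{1}{8t^{2}}\right)}{4\sqrt{2}\,\pi t^{2}}\left(K_{1/4}\left(\tfrac{1}{8t^{2}}\right)+K_{3/4}\left(\tfrac{1}{8t^{2}}\right)\right).
\end{equation*}
Stirling's formula \eqref{strivert} shows that $\Gamma\left(\tfrac12-s\right)/\Gamma\left(\tfrac{1-s}{2}\right)$ is holomorphic in $\Re s<\tfrac12$ and decays there like $|t|^{-\s/2}e^{-\pi|t|/4}$ on each vertical line $\Re s=\s$, so the integral defining the left-hand side converges absolutely. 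It therefore suffices to verify that the Mellin transform of $h$ equals $\Gamma\left(\tfrac12-s\right)/\Gamma\left(\tfrac{1-s}{2}\right)$ throughout $\Re s<\tfrac12$, with the defining integral absolutely convergent there; the latter is immediate from $K_{\nu}(w)\sim\tfrac12\Gamma(\nu)(w/2)^{-\nu}$ as $w\to0^{+}$ (whence $h(t)=O(t^{-1/2})$ as $t\to\infty$) and from $e^{-w}K_{\nu}(w)=O(e^{-2w})$ as $w\to\infty$ (whence $h$ decays rapidly as $t\to0^{+}$). Given this, Mellin inversion yields the asserted representation.

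To compute $\int_{0}^{\infty}h(t)\,t^{s-1}\,\dd t$ I would substitute $z=\tfrac{1}{8t^{2}}$; collecting the powers of $2$ and $8$, this becomes
\begin{equation*}
\int_{0}^{\infty}h(t)\,t^{s-1}\,\dd t=\frac{4\sqrt{2}}{\pi}\,8^{-(s+2)/2}\int_{0}^{\infty}e^{-z}z^{-s/2}\left(K_{1/4}(z)+K_{3/4}(z)\right)\dd z.
\end{equation*}
The two integrals $\int_{0}^{\infty}e^{-z}z^{\mu-1}K_{\nu}(z)\,\dd z$ with $\mu=1-\tfrac{s}{2}$ and $\nu\in\{\tfrac14,\tfrac34\}$ are then evaluated from the classical Laplace-transform formula for $t^{\mu-1}K_{\nu}(\beta t)$ in terms of a ${}_2F_{1}$ (for instance \cite[Formula \textbf{6.621.3}]{gr}): equating the two parameters there sends the argument of the ${}_2F_{1}$ to $0$, so the formula reduces to $\sqrt{\pi}\,2^{-\mu}\,\Gamma(\mu+\nu)\Gamma(\mu-\nu)/\Gamma(\mu+\tfrac12)$, valid exactly when $\Re\mu>|\Re\nu|$, that is, when $\Re s<\tfrac12$.

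It remains to simplify, which is pure Gamma-function bookkeeping. With $a:=\tfrac14-\tfrac{s}{2}$ the sum of the two values is $\frac{\sqrt{\pi}\,2^{-\mu}}{\Gamma(\mu+\tfrac12)}\left(\Gamma(a+1)\Gamma(a+\tfrac12)+\Gamma(a+\tfrac32)\Gamma(a)\right)$; the recurrence $\Gamma(x+1)=x\Gamma(x)$ collapses the bracket to $(2a+\tfrac12)\Gamma(a)\Gamma(a+\tfrac12)=(1-s)\Gamma(a)\Gamma(a+\tfrac12)$, and Legendre's duplication formula gives $\Gamma(a)\Gamma(a+\tfrac12)=2^{1/2+s}\sqrt{\pi}\,\Gamma(\tfrac12-s)$. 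Since the denominator is $\Gamma(\mu+\tfrac12)=\Gamma(\tfrac{3-s}{2})=\tfrac{1-s}{2}\,\Gamma(\tfrac{1-s}{2})$, the factors $1-s$ cancel; and when the accumulated powers of $2$ — from the substitution, from $2^{-\mu}$, from the denominator $\tfrac{1-s}{2}$, and from the duplication formula — are multiplied out they reduce to $1$, leaving precisely $\Gamma(\tfrac12-s)/\Gamma(\tfrac{1-s}{2})$. I expect the only real obstacle to be this constant-tracking (the $8^{-(s+2)/2}$ together with the various powers of $2$ and $\sqrt{\pi}$), not anything structural; verifying the case $s=0$, where both sides must equal $1$, is a quick safeguard against sign or power-of-$2$ slips.
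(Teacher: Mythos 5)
Your proposal is correct and follows essentially the same route as the paper: both compute the Mellin transform of the right-hand side via the classical evaluation of $\int_{0}^{\infty}e^{-z}z^{\mu-1}K_{\nu}(z)\,\dd z$ (your specialization of \cite[Formula \textbf{6.621.3}]{gr} is, after a change of variable, exactly the Prudnikov formula the paper uses), add the $\nu=\tfrac14$ and $\nu=\tfrac34$ cases, simplify with the recurrence and duplication formulas, and conclude by Mellin inversion on $\Re s<\tfrac12$. The gamma and power-of-$2$ bookkeeping in your sketch checks out, so nothing further is needed.
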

\begin{proof}
	From \cite[p.~666, formula, \textbf{8.4.23.4}]{prud3}, for Re$(s)<-|\textup{Re}(\nu)|$,
	\begin{align*}
		\int_{0}^{\infty}x^{s-1}e^{-1/(2x)}K_{\nu}\left(\frac{1}{2x}\right)\dd x=\frac{\sqrt{\pi}\Gamma(\nu-s)\Gamma(-\nu-s)}{\Gamma\left(\frac{1}{2}-s\right)}.
	\end{align*}
Employ the change of variable $x=4t^2$ followed by the replacement of $s$ by $s/2$ to obtain, for Re$(s)<-2|\textup{Re}(\nu)|$,
	\begin{align*}
	\int_{0}^{\infty}t^{s-1}\exp{\left(-\frac{1}{8t^2}\right)}K_{\nu}\left(\frac{1}{8t^2}\right)\dd t=\frac{\sqrt{\pi}\Gamma\left(\nu-\frac{s}{2}\right)\Gamma\left(-\nu-\frac{s}{2}\right)}{2^{s+1}\Gamma\left(\frac{1-s}{2}\right)}.
	\end{align*}
	Now use the above equation, once with $\nu=1/4$, and then with $\nu=3/4$, and add the resulting equations to see that for Re$(s)<-3/2$,
	\begin{align*}
		\int_{0}^{\infty}t^{s-1}\exp{\left(-\frac{1}{8t^2}\right)}\left(K_{\frac{1}{4}}\left(\frac{1}{8t^2}\right)+K_{\frac{3}{4}}\left(\frac{1}{8t^2}\right)\right)\dd t=\frac{\sqrt{\pi}}{2^{s+1}}\frac{\Gamma\left(\frac{1}{4}-\frac{s}{2}\right)\Gamma\left(-\frac{1}{4}-\frac{s}{2}\right)+\Gamma\left(\frac{3}{4}-\frac{s}{2}\right)\Gamma\left(-\frac{3}{4}-\frac{s}{2}\right)}{\Gamma\left(\frac{1-s}{2}\right)}.
		\end{align*}
		Now replace $s$ by $s-2$, divide both sides by $4\sqrt{2}\pi$, and then use the Mellin inversion (which is valid since the integral converges absolutely for Re$(s)<1/2$), to obtain the result.
\end{proof}

\begin{proof}[Theorem \textup{\ref{rg liouville}}][]
	We evaluate 
	\begin{equation*}
		\begin{split}
			I(x):=\frac{1}{2\pi i}\int_{(3/2)}\frac{\pi^{-s}\zeta(2s)\Gamma(s)}{\pi^{-s/2}\zeta(s)\Gamma(s/2)}x^{-s}ds
		\end{split}
	\end{equation*}
	in two different ways. On one hand, use \eqref{lambda ds} to get
	\begin{align}\label{4.93}
	I(x)=\sum_{n=1}^{\infty}\frac{\lambda(n)}{2\pi i}\int_{(3/2)}\frac{\Gamma(s)}{\Gamma(s/2)}(\sqrt{\pi}nx)^{-s}ds=\frac{x}{2}\sum_{n=1}^{\infty}n\lambda(n){e^{-\pi(xn)^2/4}},
	\end{align}
	using the duplication formula for the gamma function. On the other hand, we can shift the line of integration to Re$(s)=-1/2$, and evaluate $I(x)$ using the residue theorem. As done in the proof of Theorem \ref{vsf lambda(n)}, there exists an infinite sequence $\{T_n\}_{n=0}^{\infty}$ such that
		$$\lim_{T_n\to\infty}\int_{-1/2\pm\di T_n}^{3/2\pm\di T_n}\frac{\pi^{-(1-2s)/2}\zeta(1-2s)\Gamma(\frac{1-2s}{2})}{\pi^{-(1-s)/2}\zeta(1-s)\Gamma(\frac{1-s}{2})}x^{-s}ds=0.$$
	Hence invoking the residue theorem, using the symmetric form of the functional equation of $\zeta(s)$, considering the contributions of the poles at $1/2$ and at the non-trivial zeros $\rho_m$ of $\zeta(s)$, and using \eqref{c(n) ds} in the second step below, we find that
	\begin{equation}\label{4.94}
		\begin{split}
			I(x)&=\frac{1}{2\pi i}\int_{(-1/2)}\frac{\pi^{-(1-2s)/2}\zeta(1-2s)\Gamma(\frac{1-2s}{2})}{\pi^{-(1-s)/2}\zeta(1-s)\Gamma(\frac{1-s}{2})}x^{-s}ds+R(x)\\
			&=\sum_{n=1}^{\infty}\frac{c(n)}{n}\frac{1}{2\pi i}\int_{(-1/2)}\frac{\Gamma(\frac{1-2s}{2})}{\Gamma(\frac{1-s}{2})}\bigg(\frac{x}{\sqrt{\pi}n}\bigg)^{-s}ds+R(x),
		\end{split}
	\end{equation}
	where
	\begin{equation*}
		R(x)=\frac{\pi^{1/4}}{2\sqrt{x}\Gamma\left(\frac{1}{4}\right)\zeta\left(\frac{1}{2}\right)}+\lim_{T_n\to\infty}\sum_{|\gamma_m|<T_n}\frac{\zeta(2\rho_m)\Gamma(\rho_m)}{\zeta'(\rho_m)\Gamma(\frac{\rho_m}{2})}(\sqrt{\pi}x)^{-\rho_m}.
	\end{equation*}
	The result now follows from invoking Lemma \ref{sum k bessel} in \eqref{4.94} and then equating the resulting right-hand side with that of \eqref{4.93}.
	\end{proof}
	
\begin{remark}
Theorem \ref{rg liouville} can also be proved from the Vorono\"{\dotlessi} summation formula for $\lambda(n)$, that is, Theorem \ref{vsf lambda(n)}, by choosing $\phi(y)=\frac{1}{2}xye^{-\pi x^2y^2/4}$.
\end{remark}

\section{Proofs of the Voronoi summation formula for $d^{2}(n)$}\label{d(n) squared}

We begin with some lemmas.

\begin{lemma}\label{bn lemma}
	For $\Re(s)>1$, let $$\frac{\zeta^4(s)}{\zeta(2s-1)}=\sum_{n=1}^{\infty}\frac{b(n)}{n^{s}}.$$
	Then  $b(1)=1$, and at prime powers $p^k, k\geq0$, $b(p^k)$ is given by \eqref{bn exp}.
\end{lemma}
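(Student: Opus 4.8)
The plan is to compare Euler products. For $\Re(s)>1$ both $\zeta^4(s)$ and $1/\zeta(2s-1)$ possess absolutely convergent Euler product representations (the latter since then $\Re(2s-1)>1$), so that
\begin{equation*}
\frac{\zeta^4(s)}{\zeta(2s-1)}=\prod_{p}\frac{1-p\cdot p^{-2s}}{(1-p^{-s})^4}.
\end{equation*}
Since the right-hand side is a product over primes of local factors, each depending only on $p^{-s}$, uniqueness of Dirichlet coefficients shows that $b(n)$ is multiplicative, that $b(1)=1$, and that for every prime $p$ one has the formal power series identity
\begin{equation*}
\sum_{k=0}^{\infty}b(p^k)x^k=\frac{1-px^2}{(1-x)^4},\qquad x=p^{-s}.
\end{equation*}

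It then remains to read off $b(p^k)$ as the coefficient of $x^k$ on the right-hand side. Using the standard expansion $(1-x)^{-4}=\sum_{j\geq 0}\binom{j+3}{3}x^j$, one obtains
\begin{equation*}
\frac{1-px^2}{(1-x)^4}=\sum_{j\geq 0}\binom{j+3}{3}x^j-p\sum_{j\geq 0}\binom{j+3}{3}x^{j+2},
\end{equation*}
so the coefficient of $x^k$ equals $\binom{k+3}{3}-p\binom{k+1}{3}$, with the convention $\binom{m}{3}=0$ for $m<3$ (which makes the second term disappear when $k=0$ or $k=1$). This is precisely \eqref{bn exp}; in particular $b(1)=\binom{3}{3}=1$, as claimed.

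The argument is entirely elementary, so I do not expect a genuine obstacle; the only points needing a remark are the legitimacy of rearranging the Euler product and of matching Dirichlet coefficients term by term, both of which follow from absolute convergence of all the series involved throughout the half-plane $\Re(s)>1$.
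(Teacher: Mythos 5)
Your proof is correct. It reaches the same conclusion as the paper by a mildly different route: you work directly with the Euler product, writing $\zeta^4(s)/\zeta(2s-1)=\prod_p (1-p^{1-2s})(1-p^{-s})^{-4}$, deducing multiplicativity from the local factors and reading off $b(p^k)$ as the coefficient of $x^k$ in $(1-px^2)(1-x)^{-4}$ via the binomial series. The paper instead writes $b=d_4*a$ as a Dirichlet convolution, where $d_4$ counts factorizations into four parts and $a(m^2)=m\mu(m)$ (zero off squares), gets multiplicativity from that of $d_4$ and $a$, and evaluates $d_4(p^k)=\binom{k+3}{3}$ by counting weak compositions of $k$ into four parts before summing the convolution at $p^k$. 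The two arguments are essentially equivalent in substance — your power-series expansion of $(1-x)^{-4}$ encodes the same weak-composition count, and your factor $1-px^2$ encodes the same $a(p^2)=-p$ contribution — but your version avoids introducing $d_4$ and $a$ explicitly and keeps everything at the level of local factors, while the paper's version makes the convolution structure $b=d_4*a$ visible, which is the form in which it is natural to think of $b(n)$ alongside the other coefficient functions ($c(n)$, $C_{a,b}(n)$) appearing in the paper. Your remarks on absolute convergence for $\Re(s)>1$ and the convention $\binom{m}{3}=0$ for $m<3$ cover the only points needing justification, so there is no gap.
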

\begin{proof}
For Re$(s)>1$, $\zeta^4(s)=\sum_{n=1}^{\infty}d_4(n)n^{-s},$  where $d_4(n)$ is the number of ways in which $n$ can be written as a product of $4$ positive integers. 
	Also, $1/\zeta(2s-1)=\sum_{n=1}^{\infty}a(n)n^{-s},$ where
	$$
	a(n)=
	\begin{cases}
		m\mu(m),&\text {if $n=m^2$},\\
		0, &\text{otherwise.}
	\end{cases}
	$$ and Re$(s)>1$. 
	Since these two Dirichlet series converge absolutely in $\Re s>1$, so does that of $b(n)$, and we have
	$b(n)=(a*d_4)(n)$. Since both $a$ and $d_4$ are multiplicative, so is $b(n)$. Hence, $b(1)=1$, and $b$ is completely determined from its values at prime powers. We now evaluate  $d_4(p^k), k\geq0$. Let $p^k=a_1a_2a_3a_4$. Then $a_i=p^{x_i}, x_i\geq0$, for $1\leq i\leq 4$, so that $k=x_1+x_2+x_3+x_4$. Since the number of weak compositions\footnote{A weak composition of $m$ is a composition of $m$ in which $0$ is allowed to be a part.} of $k$ into exactly $r$ parts is given by $\binom{k+r-1}{r-1}$, it follows that $d_4(p^k)=\binom{k+3}{3}$. Thus, 
	$$b(p^k)=\sum_{d|p^k}a(d)d_4(p^k/d)=d_4(p^k)+p\mu(p)d_4(p^{k-2})=\binom{k+3}{3}-p\binom{k+1}{3}.
	$$
\end{proof}

\begin{lemma}\label{lemma K-Y}
	For $0<\Re(w)<1/8$,
	\begin{equation}\label{K-Y}
		\int_{0}^{\infty}y^{w-1}\Bigg(\frac{2}{\pi}K_0\big(4y^{1/4}\big)-Y_0\big(4y^{1/4}\big)\Bigg)\dd y=\frac{\G^2(w)}{\G^2(\frac{1}{2}-w)},
	\end{equation} 
	and the integral converges absolutely in this range.
\end{lemma}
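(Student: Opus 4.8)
The plan is to compute the Mellin transform of the kernel $\frac{2}{\pi}K_0(4y^{1/4})-Y_0(4y^{1/4})$ by first substituting $y=u^4$ to reduce to the Mellin transform of $\frac{2}{\pi}K_0(4u)-Y_0(4u)$, and then invoking the classical Mellin transforms of the Bessel functions $K_0$ and $Y_0$. First I would set $y=u^4$, $\dd y = 4u^3\,\dd u$, so that
\begin{align*}
\int_{0}^{\infty}y^{w-1}\Bigg(\frac{2}{\pi}K_0\big(4y^{1/4}\big)-Y_0\big(4y^{1/4}\big)\Bigg)\dd y=4\int_{0}^{\infty}u^{4w-1}\Bigg(\frac{2}{\pi}K_0(4u)-Y_0(4u)\Bigg)\dd u,
\end{align*}
and then substitute $u = v/4$ to pull out the scaling factor $4^{-4w}$, reducing everything to $\int_0^\infty v^{4w-1}\big(\tfrac{2}{\pi}K_0(v)-Y_0(v)\big)\,\dd v$.

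Next I would use the standard Mellin transform evaluations (see e.g. \cite[p.~115, Formula 11.1]{ober} for $K_0$, and the analogous entry for $Y_0$): for $0<\Re(s)<1$ one has $\int_0^\infty v^{s-1}K_0(v)\,\dd v = 2^{s-2}\Gamma(s/2)^2$, while $\int_0^\infty v^{s-1}Y_0(v)\,\dd v = -\pi^{-1}2^{s-1}\Gamma(s/2)^2\cos(\pi s/2)$ (with appropriate interpretation/analytic continuation since $Y_0$ decays only like $v^{-1/2}$ at infinity and has a logarithmic singularity at $0$; the constraint $\Re(s)<1$ comes from the behaviour at infinity and $\Re(s)>0$ from the origin). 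Combining these with $s = 4w$ gives
\begin{align*}
\frac{2}{\pi}\int_0^\infty v^{4w-1}K_0(v)\,\dd v - \int_0^\infty v^{4w-1}Y_0(v)\,\dd v = \frac{2^{4w-1}}{\pi}\Gamma(2w)^2\big(1+\cos(2\pi w)\big) = \frac{2^{4w}}{\pi}\Gamma(2w)^2\cos^2(\pi w).
\end{align*}
Multiplying by $4\cdot 4^{-4w} = 4^{1-4w}$ and simplifying the powers of $2$, the $4^{\pm 4w}$ factors cancel and one is left with $\frac{4}{\pi}\Gamma(2w)^2\cos^2(\pi w)$. It then remains to check that $\frac{4}{\pi}\Gamma(2w)^2\cos^2(\pi w) = \Gamma(w)^2/\Gamma(\tfrac12 - w)^2$; this is a routine gamma-function identity, obtained from the duplication formula $\Gamma(2w) = \pi^{-1/2}2^{2w-1}\Gamma(w)\Gamma(w+\tfrac12)$ together with the reflection formula $\Gamma(w+\tfrac12)\Gamma(\tfrac12-w) = \pi/\cos(\pi w)$, which turns $\Gamma(2w)\cos(\pi w)$ into $\pi^{-1/2}2^{2w-1}\Gamma(w)/\Gamma(\tfrac12-w)\cdot\pi^{1/2}\cdot$(constants); squaring yields exactly the claimed right-hand side.

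The main obstacle is the justification of absolute convergence and the legitimacy of splitting the integral of $\frac{2}{\pi}K_0 - Y_0$ into the two separate Bessel Mellin transforms, since $Y_0(v)$ oscillates and decays only like $v^{-1/2}$ as $v\to\infty$ while $K_0$ decays exponentially; individually the $Y_0$ integral converges (conditionally for $\Re(s)$ up to $1/2$ via the oscillation, and one can use the known Mellin transform in the strip $0<\Re(s)<1$ by analytic continuation), but to assert \emph{absolute} convergence of the combined kernel one should note the near cancellation is not what saves us — rather it is the restriction $0<\Re(w)<1/8$, i.e. $0<\Re(4w)<1/2$, which keeps $v^{4w-1}Y_0(v)$ absolutely integrable at infinity on its own (since $\Re(4w)-1 < -1/2$). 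At the origin, $Y_0(v)\sim \frac{2}{\pi}\log v$ and $K_0(v)\sim -\log v$, so $\frac{2}{\pi}K_0-Y_0$ is actually bounded near $0$ (the logarithms cancel), which even relaxes the lower constraint; thus $\Re(w)>0$ suffices at the origin and the binding constraint $\Re(w)<1/8$ is exactly the one needed for absolute convergence at infinity. I would spell out these two endpoint estimates and then the gamma-function manipulation, and that completes the proof.
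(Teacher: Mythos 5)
Your proposal is correct in substance and follows essentially the same route as the paper: the paper simply quotes the Mellin transforms of $K_{z}(ax)$ and $Y_{z}(ax)$ from Oberhettinger (taking $z=0$, $a=4$, with the fourth root absorbed by a change of variable) and deduces absolute convergence from the endpoint behaviour of $K_0$ and $Y_0$; your argument is the same computation written out explicitly via $y=u^4$, $u=v/4$ and the duplication and reflection formulas. Two small slips should be repaired, though neither affects the conclusion. First, the powers of two do not cancel: $4^{1-4w}\cdot\tfrac{2^{4w}}{\pi}=\tfrac{2^{2-4w}}{\pi}$, so the quantity left over is $\tfrac{2^{2-4w}}{\pi}\Gamma^{2}(2w)\cos^{2}(\pi w)$, and the identity to verify is $\tfrac{2^{2-4w}}{\pi}\Gamma^{2}(2w)\cos^{2}(\pi w)=\Gamma^{2}(w)/\Gamma^{2}(\tfrac{1}{2}-w)$; the duplication formula produces exactly the factor $2^{4w-2}$ needed to cancel $2^{2-4w}$, whereas the identity as you stated it, with the constant $\tfrac{4}{\pi}$, is false as written. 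Second, the logarithms at the origin do not cancel: since $K_{0}(v)\sim-\big(\log(v/2)+\gamma\big)$ and $Y_{0}(v)\sim\tfrac{2}{\pi}\big(\log(v/2)+\gamma\big)$ as $v\to0^{+}$, one has $\tfrac{2}{\pi}K_{0}(v)-Y_{0}(v)\sim-\tfrac{4}{\pi}\big(\log(v/2)+\gamma\big)$, so the kernel has a logarithmic singularity rather than being bounded; this is still $\ll v^{-\epsilon}$ for every $\epsilon>0$, so absolute convergence at the origin for $\operatorname{Re}(w)>0$ holds exactly as in the paper, but the reason is the harmlessness of the logarithm, not its cancellation.
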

\begin{proof}
This follows from the standard evaluations
 \cite[p.~115, formula 11.1; p.~93, formula 10.2]{ober}, namely, for Re$(s)>\pm$ Re$(z)$ and $a>0$,
\begin{equation*}
	\int_{0}^{\infty}x^{s-1}K_{z}(ax)\dd x=2^{s-2}a^{-s}\Gamma\left(\frac{s-z}{2}\right)\Gamma\left(\frac{s+z}{2}\right),
\end{equation*}
and,  for $\pm$Re$(z)<$Re$(s)<\frac{3}{2}$,
\begin{equation*}
	\int_{0}^{\infty}x^{s-1}Y_{z}(ax)\dd x=-\frac{1}{\pi}2^{s-1}a^{-s}\cos\left(\frac{1}{2}\pi(s-z)\right)\Gamma\left(\frac{s-z}{2}\right)\Gamma\left(\frac{s+z}{2}\right).
\end{equation*}
Now $K_0(x)\ll x^{-1/2}$ and $Y_0(x)\ll x^{-1/2}$ as $x\to\infty$ and for any $\epsilon>0$ we have $K_0(x)\ll x^{-\epsilon}$ and $Y_0(x)\ll x^{-\epsilon}$ as $x\to 0^{+}$.
Thus, the integral in \eqref{K-Y} converges absolutely for $0<\Re(w)<1/8$.
\end{proof}

\begin{lemma}\label{lemma g(x)}
Define $g(x)$ to be
\begin{equation}\label{eq815}
	g(x):=\frac{1}{2\pi i}\int_{(c)}\frac{\Gamma(\frac{1}{2}-w)\Gamma(1-w)}{\Gamma(w-\frac{1}{4})\Gamma(w+\frac{1}{4})}\Phi(1-2w)x^{-w}\dd w,
\end{equation}
where $\Phi(s)$ is a function satisfying \eqref{delta defn2} and $-\frac{1}{8}-\frac{\delta}{4}<c=\textup{Re}(s)<\frac{1}{8}$. Then, with $\phi$ defined in the statement of Theorem \ref{vsf dn squared}, we have
\begin{equation}\label{g(x) real}
g(x)=\int_{0}^{\infty}\frac{\phi(t)}{t\sqrt{2\pi x}}\cos\left(\frac{4}{t^{1/2}x^{1/4}} \right)\dd t.
\end{equation}
\end{lemma}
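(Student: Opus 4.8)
The plan is to recognise the Mellin--Barnes integral defining $g(x)$ as a Parseval-type pairing of $\phi$ against the oscillatory kernel $h_x(t):=\dfrac{1}{t\sqrt{2\pi x}}\cos\!\left(\dfrac{4}{t^{1/2}x^{1/4}}\right)$, after first bringing \eqref{eq815} to a transparent form. First I would substitute $u=2w$ in \eqref{eq815}, so that $\Phi(1-2w)$ becomes $\Phi(1-u)$ and the line $\Re w=c$ becomes $\Re u=2c$. Then I would simplify the Gamma quotient: writing its numerator as $\Gamma\!\left(\tfrac{1-u}{2}\right)\Gamma\!\left(\tfrac{1-u}{2}+\tfrac12\right)$ and its denominator as $\Gamma\!\left(\tfrac{2u-1}{4}\right)\Gamma\!\left(\tfrac{2u-1}{4}+\tfrac12\right)$, the duplication formula $\Gamma(z)\Gamma\!\left(z+\tfrac12\right)=2^{1-2z}\sqrt{\pi}\,\Gamma(2z)$ collapses it to $2^{2u-3/2}\,\Gamma(1-u)/\Gamma\!\left(u-\tfrac12\right)$; applying the reflection formula to $\Gamma\!\left(u-\tfrac12\right)$ and duplication once more to $\Gamma(1-u)\Gamma\!\left(\tfrac32-u\right)$ then gives
\begin{equation*}
g(x)=\frac{1}{2\pi i}\int_{(2c)}\left(-\frac{2^{4u-7/2}}{\sqrt{\pi}}\,\Gamma(2-2u)\cos(\pi u)\,x^{-u/2}\right)\Phi(1-u)\,\dd u .
\end{equation*}

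Next I would identify the bracketed factor as the Mellin transform of $h_x$. For $\tfrac12<\Re u<1$ the substitution $v=t^{-1/2}$ turns $\int_0^\infty h_x(t)\,t^{u-1}\,\dd t$ into $\dfrac{2}{\sqrt{2\pi x}}\int_0^\infty v^{1-2u}\cos\!\left(4x^{-1/4}v\right)\dd v$, and the classical cosine transform $\int_0^\infty v^{\mu-1}\cos(av)\,\dd v=a^{-\mu}\Gamma(\mu)\cos(\pi\mu/2)$ (with $\mu=2-2u$, valid exactly for $0<\Re\mu<1$), together with $\cos(\pi-\pi u)=-\cos(\pi u)$ and a short computation with powers of $2$, reproduces the bracket. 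Hence $g(x)=\frac{1}{2\pi i}\int_{(2c)}\bigl(\int_0^\infty h_x(t)t^{u-1}\,\dd t\bigr)\Phi(1-u)\,\dd u$, the inner integral being understood by analytic continuation outside $\tfrac12<\Re u<1$.

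Finally I would invoke Parseval's formula (Theorem \ref{Parseval}) with $f=h_x$ and $g=\phi$: writing $\Phi(1-u)=\int_0^\infty\phi(t)t^{-u}\,\dd t$, the Mellin--Barnes integral above becomes $\int_0^\infty h_x(t)\phi(t)\,\dd t$. Equivalently, one can substitute $\phi(t)=\frac{1}{2\pi i}\int_{(\sigma)}\Phi(s)t^{-s}\,\dd s$ into $\int_0^\infty h_x(t)\phi(t)\,\dd t$ and run the previous paragraph backwards (after $s\mapsto 1-u$), recognising the result as the integral just obtained for $g(x)$; the two vertical lines are joined without crossing any pole because $\Gamma(2-2u)\Phi(1-u)$ is holomorphic for $\tfrac12<\Re u<1$ and $\Phi$ has no pole in $-1<\Re s<2$. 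This yields \eqref{g(x) real}.

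The genuine obstacle is analytic rather than formal: near $t=0$ the argument $4t^{-1/2}x^{-1/4}$ of the cosine oscillates infinitely fast, so the Mellin transform of $h_x$ converges only conditionally on the relevant line, and neither Fubini's theorem nor the sufficient conditions stated in Theorem \ref{Parseval} apply verbatim. I would remove this by one integration by parts in $t$ before expanding $\phi$: since $\dfrac{\dd}{\dd t}\sin\!\left(4t^{-1/2}x^{-1/4}\right)=-2\,t^{-3/2}x^{-1/4}\cos\!\left(4t^{-1/2}x^{-1/4}\right)$, and $\phi(t)t^{1/2}\to 0$ both as $t\to 0^{+}$ and as $t\to\infty$ — shift the defining contour of $\phi$ to get $\phi(t)=O(t^{1-\varepsilon})$ near $0$ and $O(t^{-2+\varepsilon})$ at infinity — the boundary terms vanish and the resulting integral converges absolutely, which legitimises the interchange; a truncation of the $t$-integral with a standard estimate for the oscillatory tail is an equally acceptable substitute.
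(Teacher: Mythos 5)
Your proposal is correct, and its computational core checks out: after the substitution $u=2w$, the Gamma quotient together with the Jacobian $\tfrac12$ is indeed $-\tfrac{2^{4u-7/2}}{\sqrt{\pi}}\,\Gamma(2-2u)\cos(\pi u)\,x^{-u/2}$, and this is precisely the Mellin transform of the kernel $h_x(t)=\tfrac{1}{t\sqrt{2\pi x}}\cos\bigl(4t^{-1/2}x^{-1/4}\bigr)$ on $\tfrac12<\Re u<1$, so \eqref{g(x) real} is the Parseval pairing you describe. Where you genuinely diverge from the paper is in how the interchange is legitimised. The paper never touches the conditionally convergent transform of $h_x$: it shifts the $w$-line rightwards past $\Re w=5/8$ (to $5/7$), crossing the pole of $\Gamma(\tfrac12-w)$ at $w=\tfrac12$ and collecting a residue proportional to $\Phi(0)$; on that line the hypotheses of Theorem \ref{Parseval} hold verbatim, and the resulting kernel $P_x(t)$ is evaluated by shifting back across $w=\tfrac12$, the second residue cancelling the first through $\Phi(0)=\int_0^\infty \phi(t)t^{-1}\dd t$ (the kernel evaluation there uses the Gauss multiplication formula, i.e.\ the same simplification you carried out with duplication and reflection). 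You instead stay in the strip $\tfrac14<\Re w<\tfrac12$, so no pole is crossed and no residues appear, at the price that the kernel transform is only conditionally convergent at $t\to0$ and the stated Parseval hypotheses fail in either arrangement of the two functions; your single integration by parts repairs this: the boundary terms vanish because $t^{1/2}\phi(t)\to0$ at both ends, the new integrand is absolutely integrable against the Mellin--Barnes representation of $\phi$ taken on a line $0<\Re s<\tfrac12$ (you should note that this uses $\phi'(t)=O(t^{-\sigma-1})$, which follows by differentiating the representation under the integral sign thanks to \eqref{delta defn2}), Fubini applies, and undoing the parts inside the $s$-integral recovers $M[h_x](1-s)$, hence \eqref{g(x) real}; the contour join from $\Re u=2c$ to the strip $(\tfrac12,1)$ is unobstructed since $\Gamma(2-2u)\cos(\pi u)\Phi(1-u)$ is holomorphic there and Stirling plus \eqref{delta defn2} kill the horizontal segments. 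In short, both arguments share the Parseval-plus-multiplication-formula skeleton; the paper buys absolute convergence by a double contour shift with cancelling residues, while your route avoids all residue bookkeeping but must regularise an oscillatory, conditionally convergent pairing — both are sound.
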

\begin{proof}
	
With $w=\sigma+it$, Stirling's formula \eqref{strivert} implies
\begin{equation}\label{gamma bound}
\frac{\Gamma(\frac{1}{2}-w)\Gamma(1-w)}{\Gamma(w-\frac{1}{4})\Gamma(w+\frac{1}{4})}\ll t^{\frac{3}{2}-4\sigma},
\end{equation}
and hence, with the help of \eqref{delta defn2},
$$
\frac{\Gamma(\frac{1}{2}-w)\Gamma(1-w)}{\Gamma(w-\frac{1}{4})\Gamma(w+\frac{1}{4})}\Phi(1-2w)\ll t^{-3/2-\delta-4\sigma}
$$
as $t \to \infty.$
Thus $g(x)$
converges when 
$c>-3/8-\delta/4$, and converges absolutely when
$c>-1/8-\delta/4.$
Let $c$ be such that\footnote{We enforce the restriction $c<1/8$ since this condition appears in the evaluation of the integral in \eqref{triple integral MT} while using Hardy's result for simplifying the kernel in the proof of Theorem \ref{vsf dn squared}.} $-1/8-\delta/4<c<1/8$.	

	From \eqref{gamma bound}, it is seen that the integral $$\frac{1}{2\pi i}\int_{(c)}\frac{\Gamma(\frac{1}{2}-w)\Gamma(1-w)}{\Gamma(w-\frac{1}{4})\Gamma(w+\frac{1}{4})}x^{-w}\dd w$$ is absolutely convergent for $c>5/8.$ 
But the line of integration in \eqref{eq815} is such that $c=\textup{Re}(w)<1/8$.
	The condition Re$(w)>5/8$ is necessary to satisfy the hypotheses of Parseval's formula, that is, Theorem \ref{Parseval} which is what we intend to apply to the integral in \eqref{eq815}. Hence we first shift the line of integration to $\textup{Re}(w)=5/7(>5/8)$.
	Showing that  the integrals along the horizontal segments approach zero as the height of the rectangular contour tends to $\pm \infty$ and considering the contribution of the pole of the integrand at $w=1/2$, by Cauchy's residue theorem, we get
	\begin{equation}\label{g(x)}
		\begin{split}
			g(x)&=\frac{1}{2\pi i}\int_{(5/7)}\frac{\Gamma(\frac{1}{2}-w)\Gamma(1-w)}{\Gamma(w-\frac{1}{4})\Gamma(w+\frac{1}{4})}\Phi(1-2w)x^{-w}\dd w+x^{-1/2}\frac{\sqrt{\pi}\Phi(0)}{\Gamma(1/4)\Gamma(3/4)}\\
			&=\frac{1}{2\pi i}\int_{(-3/7)}\frac{\Gamma(\frac{s}{2})\Gamma\left(\frac{1+s}{2}\right)x^{-\frac{(1-s)}{2}}}{2\Gamma\left(\frac{1}{4}-\frac{s}{2}\right)\Gamma\left(\frac{3}{4}-\frac{s}{2}\right)}\Phi(s)\dd s+x^{-1/2}\frac{\sqrt{\pi}\Phi(0)}{\Gamma(1/4)\Gamma(3/4)},
		\end{split}
	\end{equation}
	where, in the last step, we made a change of variable $w=(1-s)/2$.
%
%
To apply Parseval's formula, we now show that the hypotheses of Theorem \ref{Parseval} are satisfied. Let $F(s)=\frac{\Gamma(\frac{1-s}{2})\Gamma\left(1-\frac{s}{2}\right)x^{-s/2}}{2\Gamma\left(\frac{s}{2}-\frac{1}{4}\right)\Gamma\left(\frac{s}{2}+\frac{1}{4}\right)}$ and $G(s)=\Phi(s)$. First of all, the line of integration $\textup{Re}(s)=-3/7$ lies in the common strip of analyticity of $F(1-s)$ and $G(s)$. Next, \eqref{gamma bound} implies $F(10/7-it)\in L(-\infty, \infty)$. Moreover, 
 $\int_{0}^{\infty}t^{-10/7}|\phi(t)|\dd t<\infty$
since $\phi(t)=O(t^{1-\epsilon})$ as $t\to0^{+}$ and $\phi(t)=O(t^{-2+\epsilon})$ as $t\to\infty$. The latter, in turn, follows from the fact that $\Phi(s)$ is holomorphic in $-1<\textup{Re}(s)<2$.
Thus, invoking Theorem \ref{Parseval}, we see that
\begin{align}\label{parseval use}
 \frac{x^{-1/2}}{4\pi i}\int_{(-3/7)}\frac{\Gamma(\frac{s}{2})\Gamma\left(\frac{1+s}{2}\right)}{\Gamma\left(\frac{1}{4}-\frac{s}{2}\right)\Gamma\left(\frac{3}{4}-\frac{s}{2}\right)}\Phi(s)x^{s/2}\dd s&=\int_{0}^{\infty}P_x(t)\phi(t)\dd t,
\end{align}
where
\begin{align*}
	P_x(t):= \frac{1}{2\pi i}\int_{(10/7)}F(s)t^{-s}\dd s
	=
	\frac{1}{2\pi i}\int_{(5/7)}\frac{\Gamma(\frac{1}{2}-w)\Gamma(1-w)}{\Gamma(w-\frac{1}{4})\Gamma(w+\frac{1}{4})}(t^2x)^{-w}\dd w,
\end{align*}
where we made the change of variable $s=2w$.

 We now evaluate $P_x(t)$. To express it in terms of well-known functions, we need to shift the line of integration to $3/8<\lambda=\textup{Re}(w)<1/2$.  The residue theorem then gives
\begin{align}\label{eq818}
	P_x(t)
	=\frac{1}{2\pi i}\int_{(\lambda)}\frac{\Gamma(\frac{1}{2}-w)\Gamma(1-w)}{\Gamma(w-\frac{1}{4})\Gamma(w+\frac{1}{4})}(t^2x)^{-w}\dd w-\frac{\sqrt{\pi}x^{-1/2}}{t\Gamma(1/4)\Gamma(3/4)},
\end{align}
Next, we show
\begin{equation}\label{cos eqn}
	\frac{1}{2\pi i}\int_{(\lambda)}\frac{\Gamma(\frac{1}{2}-w)\Gamma(1-w)}{\Gamma(w-\frac{1}{4})\Gamma(w+\frac{1}{4})}x^{-w}\dd w=\frac{\cos\left(4x^{-1/4}\right)}{\sqrt{2\pi x}}.
\end{equation}
Replacing $w$ and $t$ in \eqref{cos} by $2-4w$ and $4x^{-1/4}$ respectively, we see that for $3/8<\lambda=\textup{Re}(w)<1/2$,
\begin{equation}\label{bef gmf}
\frac{1}{2\pi i}\int_{(\lambda)}-\Gamma(2-4w)\cos(2\pi w)2^{8w-2}x^{-w}\dd w=\frac{\cos\left(4x^{-1/4}\right)}{\sqrt{x}}.	
\end{equation}
Using the Gauss multiplication formula \cite[p.~52]{temme}
\begin{equation*}
	\prod_{k=1}^{m}\Gamma\left(z+\frac{k-1}{m}\right)=(2\pi)^{\frac{1}{2}(m-1)}m^{\frac{1}{2}-mz}\Gamma(mz)
\end{equation*}
 with $m=4$ and $z=1/2-w$, and the reflection formula for the Gamma function, the integrand in \eqref{bef gmf} simplifies to  $\frac{\sqrt{2\pi}\Gamma(\frac{1}{2}-w)\Gamma(1-w)}{\Gamma(w-\frac{1}{4})\Gamma(w+\frac{1}{4})}$, whence \eqref{cos eqn} follows. Hence from \eqref{eq818} and \eqref{cos eqn},
 \begin{align*}
 P_x(t)=\frac{\cos\left(4t^{-1/2}x^{-1/4}\right)}{t\sqrt{2\pi x}}-\frac{\sqrt{\pi}x^{-1/2}}{t\Gamma(1/4)\Gamma(3/4)},
 \end{align*}
which, along with \eqref{g(x)} and \eqref{parseval use}, leads to
\begin{align*}
g(x)&=\int_{0}^{\infty}\phi(t)\Bigg(\frac{\cos(4 t^{-1/2}x^{-1/4})}{t\sqrt{2\pi x}}-\frac{x^{-1/2}}{t}\frac{\sqrt{\pi}}{\Gamma(1/4)\Gamma(3/4)} \Bigg)\dd t+\frac{\Phi(0)\sqrt{\pi}x^{-1/2}}{\Gamma(1/4)\Gamma(3/4)}\nonumber\\
&=\int_{0}^{\infty}\phi(t)\Bigg(\frac{\cos(4 t^{-1/2}x^{-1/4})}{t\sqrt{2\pi x}}-\frac{x^{-1/2}}{t}\frac{\sqrt{\pi}}{\Gamma(1/4)\Gamma(3/4)} \Bigg)\dd t+\frac{\sqrt{\pi}x^{-1/2}}{\Gamma(1/4)\Gamma(3/4)}\int_{0}^{\infty}\frac{\phi(t)}{t}\dd t\nonumber\\
&=\int_{0}^{\infty}\phi(t)\frac{\cos(4 t^{-1/2}x^{-1/4})}{t\sqrt{2\pi x}}\dd t,
\end{align*}
where in the second step we used the fact that $\Phi(s)=\int_{0}^{\infty}t^{s-1}\phi(t)\dd t$ for any $s$ such that $-1<\textup{Re}(s)<2$. This completes the proof.

\end{proof}
We are now ready to prove the Vorono\"{\dotlessi} summation formula for $d^{2}(n)$.

\begin{proof}[Theorem \textup{\ref{vsf dn squared}}][]
Let $\tau$ be a number satisfying $0<\tau<\delta/2$, where $\delta>0$ is the number occurring in \eqref{delta defn2}. Define 
\begin{equation*}
I:=\frac{1}{2\pi i}\int_{(1+\tau)}\frac{\zeta^4(s)}{\zeta(2s)}\Phi(s)\dd s.
\end{equation*}
Using the fact that for Re$(s)>1$, $\zeta^4(s)/\zeta(2s)=\sum_{n=1}^{\infty}d^2(n)n^{-s}$ (which follows by letting $a=b=0$ in \eqref{sigma ab dirichlet}), and proceeding along the similar lines as \eqref{first expression for I}, we obtain
\begin{equation}\label{I-series}
I=\sum_{n=1}^{\infty}d^2(n)\phi(n).   
\end{equation}
We now to shift the line of integration to $-\tau=\Re s<0$ by constructing the contour $[1+\tau-iT_n, 1+\tau+iT_n, -\tau+iT_n, -\tau-iT_n]$, where $T_n$ is a number which belongs to the sequence $\{T_m\}_{m=1}^{\infty}$ constructed in the proof of Theorem \ref{vsf lambda(n)}. We first study the behavior of the  integrals along the horizontal segments $-\tau+\di T_n$ to $1+\tau+\di T_n$ for large values of $T_n$.
From \eqref{delta defn2}, \eqref{one over zeta},  \eqref{zeta-bound} and \eqref{zeta-bound1},
\begin{equation*}
	\begin{split}
	\left|	\int_{1+\tau+\di T_n }^{-\tau+\di T_n}\frac{\zeta^4(s)}{\zeta(2s)}\Phi(s)\dd s\right|
		&=\left|\int_{-\tau}^{1+\tau}\frac{\zeta^4(u+\di T_n)}{\zeta(2u+2\di T_n)}\Phi(u+\di T_n)\dd u\right|\\
		&\leq\left|\int_{-\tau}^{1/2}\frac{\zeta^4(u+\di T_n)}{\zeta(2u+2\di T_n)}\Phi(u+\di T_n)\dd u\right|+\left|\int_{1/2}^{1+\tau}\frac{\zeta^4(u+\di T_n)}{\zeta(2u+2\di T_n)}\Phi(u+\di T_n)\dd u\right|\\
		&\leq\int_{-\tau}^{1/2}T_n^{3}T_n^{\epsilon}T_n^{-3-\delta}\dd u +\int_{1/2}^{1+\tau}T_n^{1}T_n^{\epsilon}T_{n}^{-3-\delta}du.
	\end{split}
\end{equation*}
Now choose $\epsilon=\delta/2$ so that $\lim_{T_n\to\infty}\int_{-\tau}^{1/2}T_n^{3}T_n^{\epsilon}T_n^{-3-\delta}\dd u=0$. Also, $\lim_{T_n\to\infty}\int_{1/2}^{1+\tau}T_n^{1}T_n^{\epsilon}T_{n}^{-3-\delta}du=0$. In conclusion, $\lim_{T_n\to\infty}\int_{1+\tau+\di T_n }^{-\tau+\di T_n}\frac{\zeta^4(s)}{\zeta(2s)}\Phi(s)\dd s=0$. Similarly one can show that the integral along the horizontal segment $[-\tau-\di T_n, 1+\tau-\di T_n]$ approaches zero as $T_n\to\infty.$ Therefore, by the residue theorem,
\begin{equation}\label{I-shift}
	\begin{split}
		I=\frac{1}{2\pi i}\int_{(-\tau)}\frac{\zeta^4(s)}{\zeta(2s)}\Phi(s)\dd s+R_1+\lim_{T_n\to\infty}\sum_{|\gamma_m|<T_n}R_{\frac{\rho_m}{2}},
	\end{split}
\end{equation}
where $R_a$ denotes the residue of the integrand $\zeta^4(s)\Phi(s)/\zeta(2s)$ at $s=a$.
\begin{align}\label{1R}
		R_1&=\frac{1}{6}\lim_{s\to 1}\frac{d^3}{ds^3}(s-1)^4\frac{\zeta^4(s)}{\zeta(2s)}\Phi(s)\nonumber\\
		&=\frac{1}{\pi^8}\bigg[\Phi(1)\big(24 \gamma^3 \pi^6 -  72 \gamma\pi^6 \gamma_1 + 12 \pi^6 \gamma_2- 432 \gamma^2 \pi^4 \zeta'(2)+288 \pi^4 \gamma_1 \zeta'(2)+ 
		3456 \gamma\pi^2 (\zeta'(2))^2- 
		10368 (\zeta'(2))^3\nonumber\\ 
		&\quad- 
		288 \gamma \pi^4 \zeta''(2)+ 
		1728 \pi^2 \zeta'(2) \zeta''(2)-48\pi^4\zeta'''(2) \big)\nonumber\\
		&\quad+\Phi'(1)\left(36 \gamma^2 \pi^6- 24 \pi^6 \gamma_1 - 
		288 \gamma\pi^4\zeta'(2)+ 
		864 \pi^2  (\zeta'(2))^2 - 
		72 \pi^4\zeta''(2) \right)\nonumber\\
		&\quad+\Phi''(1)\left( 
		12 \gamma \pi^6 -
		36 \pi^4 \zeta'(2)\right)+\pi^6\Phi'''(1)\bigg]\nonumber\\
		&=\int_{0}^{\infty}(A_0+A_1\log x+A_2\log^2 x+A_3\log^3 x)\phi(x)\dd x,
\end{align}
where $A_0, A_1, A_2$ and $A_3$ are defined in \eqref{A0-A3}. The last step  is now justified. The Mellin inversion theorem \cite[p.~341, Theorem 1]{Mcla} implies $\Phi(s)=\int_{0}^{\infty}x^{s-1}\phi(x)\dd x$ in $-1<\textup{Re}(s)<2$. Owing to the fact that this integral is analytic in the given vertifcal strip, we can also differentiate this equation under the integral sign with respect to $s$. In particular, this gives
$$\Phi(1)=\int_{0}^{\infty}\phi(x)\dd x,\hspace{8mm}\Phi'(1)=\int_{0}^{\infty}\phi(x)\log(x)\dd x,$$

$$\Phi''(1)=\int_{0}^{\infty}\phi(x)\log^2(x)\dd x, \hspace{8mm}\Phi'''(1)=\int_{0}^{\infty}\phi(x)\log^3(x)\dd x,$$ which justifies the last step.
Also, if $\rho_m$ is the $m$th non-trivial zero of $\zeta(s)$, then
\begin{equation}\label{zeroR}
	\begin{split}
		R_{\frac{\rho_m}{2}}=\lim_{s\to\frac{\rho_m}{2}}\frac{\zeta^4(s)}{\zeta(2s)}\int_{0}^{\infty}\phi(x)x^{s-1}dx=\frac{\zeta^4(\frac{\rho_m}{2})}{2\zeta'(\rho_m)}\int_{0}^{\infty}\phi(x)x^{\frac{\rho_m}{2}-1}dx.
	\end{split}
\end{equation}
It remains to evaluate the integral  $\frac{1}{2\pi i}\int_{(-\tau)}\zeta^4(s)\Phi(s)/\zeta(2s)\dd s$. To that end, using \eqref{fe zeta}, we have
\begin{align}\label{integral rhs}
\frac{1}{2\pi i}\int_{(-\tau)}\frac{\zeta^4(s)}{\zeta(2s)}\Phi(s)\dd s&=\frac{\pi^{-3/2}}{2\pi i}\int_{(-\tau)}\frac{\zeta^4(1-s)}{\zeta(1-2s)}\frac{\Gamma^4(\frac{1-s}{2})}{\Gamma^4(\frac{s}{2})}\frac{\Gamma(s)}{\Gamma(\frac{1}{2}-s)}\Phi(s)\pi^{2s}\dd s\nonumber\\
&=\frac{\pi^{-3/2}}{2\pi i}\int_{(1+\tau)}\frac{\zeta^4(s)}{\zeta(2s-1)}\frac{\Gamma^4(\frac{s}{2})}{\Gamma^4(\frac{1-s}{2})}\frac{\Gamma(1-s)}{\Gamma(s-\frac{1}{2})}\Phi(1-s)\pi^{2(1-s)}\dd s\nonumber\\
&=\frac{\pi^{1/2}}{2\pi i}\sum_{n=1}^{\infty}b(n)\int_{(1+\tau)}\frac{\Gamma^4(\frac{s}{2})}{\Gamma^4(\frac{1-s}{2})}\frac{\Gamma(1-s)}{\Gamma(s-\frac{1}{2})}\Phi(1-s)(\pi^2 n)^{-s}\dd s,
\end{align}
where, in the last step, we invoked Lemma \eqref{bn lemma} and interchanged the order of summation and integration using absolute and uniform convergence. Indeed, this follows from the fact that Re$(s)>1$ and from \eqref{strivert}, \eqref{delta defn2}, and the choice of $\tau$, namely, $\tau<\delta/2$. 
Now let 
\begin{equation}\label{M(x)}
M(t):=\frac{1}{2\pi i}\int_{(1+\tau)}\frac{\Gamma^4(\frac{s}{2})}{\Gamma^4(\frac{1-s}{2})}\frac{\Gamma(1-s)}{\Gamma(s-\frac{1}{2})}\Phi(1-s)t^{-s}\dd s.
\end{equation}
By the change of variable $s=2w$ and an application of the duplication formula, we have
\begin{equation}\label{triple integral MT}
	\begin{split}
		M(t)&=\frac{2^{5/2}}{2\pi i}\int_{(\frac{1+\tau}{2})}\frac{\Gamma^4(w)}{\Gamma^4\left(\frac{1}{2}-w\right)}\frac{\Gamma\left(\frac{1}{2}-w\right)}{\Gamma\left(w-\frac{1}{4}\right)}\frac{\Gamma(1-w)}{\Gamma\left(w+\frac{1}{4}\right)}\Phi(1-2w)(4t)^{-2w}\dd w.\\
	\end{split}
\end{equation}
The main task now is to express $M(t)$ as a (multiple) integral whose integrand consists of $\phi$ and well-known functions. Even though it looks like the extension of the usual version of Parseval's formula for three functions would be applicable in this situation, one can see that it is, in fact, difficult to apply. This is where a variant of such a formula due to G. H. Hardy comes in very handy.

By extending Theorems A and C in \cite{hardy-mellin} to $p$ functions, Hardy derived a result \cite[p.~91]{hardy-mellin}  in the same paper whose special case for $p=3$ is
\begin{align*}
\frac{1}{2\pi i}\int_{(c)}f_1(w)f_2(w)f_3(w)t^{-w}\dd w=\int_{0}^{\infty}\int_{0}^{\infty}\phi_1(x)\phi_2(y)\phi_3\left(\frac{t}{xy}\right)\frac{\dd x\dd y}{xy}.
\end{align*}
The conditions on $f_j, \phi_j, 1\leq j\leq 3$, which make the above result valid are extensions of the conditions given for the two functions in Theorem A of \cite{hardy-mellin}.
We use this result with $\phi_1(x)=\phi_2(x)=\frac{2}{\pi}K_0\big(4x^{1/4}\big)-Y_0\big(4x^{1/4}\big)$ and $\phi_3(x)=g(x)$, where $g$ is the function in \eqref{g(x) real},  $\alpha=\epsilon, \beta=-1/8-\delta/4, \gamma=1/2$ and\footnote{We changed Hardy's notation $\delta$ to $\delta'$ so as to not get confused with our delta stemming from \eqref{delta defn2}.} $\delta'=1/8$, and then employ Lemmas \ref{lemma K-Y} and \ref{lemma g(x)}. Upon simplification, this gives
\begin{align}\label{M(x) evaluation}
M(t)&=\frac{1}{\sqrt{\pi}}\int_{0}^{\infty}\int_{0}^{\infty}\bigg(\frac{2}{\pi}K_0\big(4x^{1/4}\big)-Y_0\big(4x^{1/4}\big)\bigg)\bigg(\frac{2}{\pi}K_0\big(4y^{1/4}\big)-Y_0\big(4y^{1/4}\big)\bigg)\nonumber\\ &\quad\times\int_{0}^{\infty}\frac{\phi(z)}{tz}\cos\left(\frac{2(xy)^{1/4}}{\sqrt{tz}}\right)
\frac{\dd z \dd x \dd y}{\sqrt{xy}}\nonumber\\
&=\frac{4}{\sqrt{\pi}}\int_{0}^{\infty}\int_{0}^{\infty}\int_{0}^{\infty}\bigg(\frac{2}{\pi}K_0\big(4\sqrt{x}\big)-Y_0\big(4\sqrt{x}\big)\bigg)\bigg(\frac{2}{\pi}K_0\big(4\sqrt{y}\big)-Y_0\big(4\sqrt{y}\big)\bigg)\frac{\phi(z)}{tz}\cos\bigg(\frac{2\sqrt{xy}}{\sqrt{tz}}\bigg)
{\dd z \dd x \dd y},
\end{align}
where, in the last step, we replaced $x$ and $y$ by $x^2$ and $y^2$.
Therefore, from \eqref{integral rhs}, \eqref{M(x)} and \eqref{M(x) evaluation},
\begin{align}\label{final M}
\frac{1}{2\pi i}\int_{(-\tau)}\frac{\zeta^4(s)}{\zeta(2s)}\Phi(s)\dd s&=\frac{4}{\pi^2 }\sum_{n=1}^{\infty}\frac{b(n)}{n}\int_{0}^{\infty}\int_{0}^{\infty}\int_{0}^{\infty}\bigg(\frac{2}{\pi}K_0\big(4\sqrt{x}\big)-Y_0\big(4\sqrt{x}\big)\bigg)\nonumber\\
&\quad\times\bigg(\frac{2}{\pi}K_0\big(4\sqrt{y}\big)-Y_0\big(4\sqrt{y}\big)\bigg)\frac{\phi(z)}{z}\cos\bigg(\frac{2\sqrt{xy}}{\pi\sqrt{nz}}\bigg)
{\dd z \dd x \dd y}.
\end{align}
Finally, from \eqref{I-series}, \eqref{I-shift}, \eqref{1R}, \eqref{zeroR} and \eqref{final M}, we arrive at \eqref{vsf dn squared eqn}.

\end{proof}



\section{Proofs of the results on $\sigma_a(n)\sigma_b(n)$}\label{sigma ab}

\subsection{Cohen type identity for $\sigma_a(n)\sigma_b(n)$}

Before we embark upon the proof of Theorem \ref{cohen sigma ab}, we need the following two lemmas. 

\begin{lemma}\label{I evaluation}
	Let $x\in\mathbb{C}, |x|\neq1$, and $-1<\textup{Re}(a), \textup{Re}(b), \textup{Re}(a-b), \textup{Re}(a+b)<1$. Define $I(x)$ by
	\begin{equation*}
	I(x)=\frac{1}{2\pi i}\int_{(c)}\frac{\sec\big(\frac{\pi}{2}s\big)\sec(\frac{\pi}{2}(s-a))\sec(\frac{\pi}{2}(s-b))\sec(\frac{\pi}{2}(s-a-b))}{\sec(\frac{\pi}{2}(2s-a-b-1))}x^{-s} \dd s,
	\end{equation*}
	where \scriptsize$\max{\{-1, -1+\textup{Re}(a), -1+\textup{Re}(b), -1+\textup{Re}(a+b)\}}<c=\textup{Re}(s)<\min{\{1, 1+\textup{Re}(a), 1+\textup{Re}(b), 1+\textup{Re}(a+b)\}}$. \normalsize Then
	\begin{align*}
	I(x)=\frac{2}{\pi}\textup{cosec}\left(\frac{\pi a}{2}\right)\textup{cosec}\left(\frac{\pi b}{2}\right)\frac{x(x^{-a}-1)(x^{-b}-1)}{x^2-1}.
	\end{align*}
\end{lemma}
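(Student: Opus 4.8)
The plan is to evaluate $I(x)$ by contour integration, shifting the line of integration either to the right (when $|x|<1$) or to the left (when $|x|>1$) and summing the residues of the integrand. First I would record the locations of the poles of
\[
F(s):=\frac{\sec\left(\tfrac{\pi}{2}s\right)\sec\left(\tfrac{\pi}{2}(s-a)\right)\sec\left(\tfrac{\pi}{2}(s-b)\right)\sec\left(\tfrac{\pi}{2}(s-a-b)\right)}{\sec\left(\tfrac{\pi}{2}(2s-a-b-1)\right)}.
\]
The four secants in the numerator contribute simple poles at the odd integers shifted by $0, a, b, a+b$, while the secant in the denominator supplies zeros of $F$ at the points where $2s-a-b-1$ is an odd integer, i.e. where $s$ is a half-integer shifted by $(a+b+1)/2$; crucially, half of the numerator poles get cancelled by these denominator zeros, and one should check that the surviving poles, in the strip to the right of $c$, occur precisely at $s=1,\,1+a,\,1+b,\,1+a+b$ (and their translates by even integers), each simple under the stated genericity hypothesis $-1<\Re(a),\Re(b),\Re(a\pm b)<1$.

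Next I would verify the decay of $F(s)$ on vertical lines: using $|\sec(\tfrac{\pi}{2}(\sigma+it))|\asymp 2e^{-\pi|t|/2}$ as $|t|\to\infty$, the four numerator factors give $e^{-2\pi|t|}$ and the denominator factor contributes $e^{\pi|t|}$, so $F(\sigma+it)\ll e^{-\pi|t|}$ uniformly on vertical strips. This exponential decay makes the horizontal segments of any rectangular contour negligible and guarantees that the shifted integral, being $O(|x|^{-N})$ for the relevant $N$, tends to $0$; hence $I(x)$ equals exactly the (convergent) sum of residues. Collecting the residues from the poles $s=1+2k$, $s=1+a+2k$, $s=1+b+2k$, $s=1+a+b+2k$ for $k\ge 0$ (say $|x|<1$), each residue is an explicit trigonometric constant times a geometric factor in $x$; summing the four resulting geometric series in $x^{-2}$ and invoking the reflection identity $\sec(\tfrac{\pi}{2}(u-1))=\textup{cosec}(\tfrac{\pi}{2}u)$ and standard product-to-sum manipulations should collapse everything into
\[
\frac{2}{\pi}\,\textup{cosec}\!\left(\frac{\pi a}{2}\right)\textup{cosec}\!\left(\frac{\pi b}{2}\right)\frac{x(x^{-a}-1)(x^{-b}-1)}{x^2-1}.
\]
A parallel computation shifting leftward handles $|x|>1$ and yields the same closed form, and the case $|x|=1$ (excluded, since the answer has a pole at $x=\pm1$) need not be treated.

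The main obstacle I anticipate is the bookkeeping of which numerator poles survive the cancellation against the denominator's zeros and the resulting residue computation: one must be careful that the shift $(a+b+1)/2$ in the denominator is generically a half-integer offset, so it cancels exactly the "even-shifted" half of each family of poles, leaving the "odd-shifted" half; getting the constants right in each of the four residue families and then recognizing that the four geometric series telescope into the compact rational-times-trigonometric expression is where the real work lies. A cleaner alternative, which I would also try, is to recognize $F(s)$ as (up to elementary gamma-factor identities via $\sec(\tfrac{\pi}{2}s)=\pi/(\Gamma(\tfrac{1+s}{2})\Gamma(\tfrac{1-s}{2})\cdot\Gamma(1)\cdots)$-type reflection formulas) a ratio matching the Mellin transform of a product of elementary functions, so that the identity follows from Mellin inversion applied to the known transform of $\tfrac{x(x^{-a}-1)(x^{-b}-1)}{x^2-1}$ — this would sidestep the termwise residue summation entirely, at the cost of verifying the gamma-factor rewriting.
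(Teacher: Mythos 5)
Your overall strategy is the same as the paper's: push the contour past the poles of the four numerator secants, sum the residues as geometric series, and treat $|x|<1$ and $|x|>1$ separately (your decay estimate $F(\sigma+it)\ll e^{-\pi|t|}$ on vertical lines is also correct). However, as written two steps fail. First, you have paired the direction of the shift with the wrong range of $|x|$: since $|x^{-s}|=|x|^{-\sigma}$, the far vertical line is negligible as $\sigma\to-\infty$ when $|x|<1$ and as $\sigma\to+\infty$ when $|x|>1$, the opposite of what you assert. With your pairing, for $|x|<1$ the discarded integral is $O(|x|^{-N})\to\infty$ rather than $0$, and the residues at $s=1+2k,\,1+a+2k,\,1+b+2k,\,1+a+b+2k$ produce geometric series in $x^{-2}$ that diverge precisely when $|x|<1$. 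The paper shifts \emph{left} for $|x|<1$, collecting the poles $s=-2n-1+u$ with $u\in\{0,a,b,a+b\}$, whose residues carry $x^{2n+1-u}$ so that $\sum_n x^{2n}$ converges, and reserves the rightward shift for $|x|>1$; swapping your two cases repairs this.

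Second, your structural claim that ``half of the numerator poles get cancelled'' by the zeros of the denominator secant is false. The integrand is the product of the four secants with $\cos\big(\tfrac{\pi}{2}(2s-a-b-1)\big)$, whose zeros lie at $s=\tfrac{a+b}{2}+\mathbb{Z}$; under $-1<\Re(a),\Re(b),\Re(a\pm b)<1$ these generically coincide with none of the poles of the secant families (a coincidence would force, e.g., $a+b$ or $a-b$ to be an even integer). Hence \emph{every} pole of all four families on the relevant side contributes, and the cosine's zeros play a role only in the degenerate configurations where two families collide (e.g.\ $a=b$ or $a+b=0$), where they restore simplicity. What actually makes the computation collapse — the point your sketch defers to ``standard manipulations'' — is that at each pole the cosine factor takes the value $\pm\sin\big(\tfrac{\pi}{2}(a+b)\big)$ or $\pm\sin\big(\tfrac{\pi}{2}(a-b)\big)$, which exactly cancels the value of one of the remaining secants there, so each residue equals $\pm\tfrac{2}{\pi}\csc\big(\tfrac{\pi a}{2}\big)\csc\big(\tfrac{\pi b}{2}\big)x^{1-u}x^{2n}$ uniformly in $n$; summing the four geometric series then yields the stated closed form, as in the paper. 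Since your enumeration of the right-half-plane poles is in fact the complete (uncancelled) list, the computation can be salvaged once the two cases are interchanged and the cancellation narrative is replaced by this residue evaluation.
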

\begin{proof}
Assume $|x|<1$. Construct a rectangular contour $[c-iT, c+iT, -N+iT, -N-iT, c-iT]$, where 
\scriptsize\begin{equation*}
\max{\left\{-2m-3, -2m-3+a, -2m-3+b, -2m-3+a+b\right\}}<N<\min{\left\{-2m-1, -2m-1+a, -2m-1+b, -2m-1+a+b\right\}}
\end{equation*} 
\normalsize
for $m\geq0$. The integrals over the horizontal segments tend to zero as $T\to\infty$ as can be seen by the exponential decay of the integrand, which, in turn, can be inferred by first rewriting the integrand using $\Gamma\left(\frac{1}{2}+w\right)\Gamma\left(\frac{1}{2}-w\right)=\pi/\cos(\pi w)$ and then using Stirling's formula \eqref{strivert}. Hence by the residue theorem,
\begin{align*}
I(x)&=\frac{1}{2\pi i}\int_{-N-i\infty}^{-N+i\infty}\frac{\sec\big(\frac{\pi}{2}s\big)\sec(\frac{\pi}{2}(s-a))\sec(\frac{\pi}{2}(s-b))\sec(\frac{\pi}{2}(s-a-b))}{\sec(\frac{\pi}{2}(2s-a-b-1))}x^{-s} \dd s\nonumber\\
&\quad+\sum_{n=0}^{m}\left(R_{-2n-1}+R_{-2n-1+a}+R_{-2n-1+b}+R_{-2n-1+a+b}\right),
\end{align*}
where $R_{-2n-1+u}=\pm2x^{1-u}\textup{cosec}\left(\frac{\pi a}{2}\right)\textup{cosec}\left(\frac{\pi b}{2}\right)x^{2n}/\pi$, where $u$ takes the values $0, a, b$ or $a+b$, and we take minus sign when $u=0$ or $a+b$ and plus sign when $u=a$ or $b$. 

We now let $N\to\infty$ in the above equation. It is not difficult to see that the integral on the right-hand side tends to zero. Indeed, \eqref{strivert} and the fact that $|x|<1$ shows that the integral approaches zero in the limit. This leads to 
\begin{align}\label{6.5}
	I(x)&=\sum_{n=0}^{\infty}\left(R_{-2n-1}+R_{-2n-1+a}+R_{-2n-1+b}+R_{-2n-1+a+b}\right)\nonumber\\
	&=\frac{2x}{\pi}\textup{cosec}\left(\frac{\pi a}{2}\right)\textup{cosec}\left(\frac{\pi b}{2}\right)\left(1-x^{-a}-x^{-b}+x^{-a-b}\right)\sum_{n=0}^{\infty}x^{2n}\nonumber\\
	&=\frac{2}{\pi}\textup{cosec}\left(\frac{\pi a}{2}\right)\textup{cosec}\left(\frac{\pi b}{2}\right)\frac{x(x^{-a}-1)(x^{-b}-1)}{x^2-1}.
	\end{align}
This proves the result for $|x|<1$. Now if $|x|>1$, we shift the line of integration to $+\infty$, and proceed along the similar lines as above. This leads to the same evaluation of $I(x)$ as in \eqref{6.5}.
\end{proof}

To the best of our knowledge, the following evaluation of Meijer $G$-function (defined in \eqref{MeijerG}) seems to be new.
\begin{lemma}\label{Meijer G evaluation}
For $c=\textup{Re}(s)>\max{\{0, \textup{Re}(a), \textup{Re}(b), \textup{Re}(a+b)\}}$,
\begin{align}\label{new Meijer G evaluation}
&\frac{1}{2\pi i}\int_{(c)}\frac{\Gamma(s)\Gamma(s-a)\Gamma(s-b)\Gamma(s-a-b)}{\Gamma\left(s-\frac{a+b+1}{2}\right)\Gamma\left(s-\frac{a+b}{2}\right)}z^{-s}\dd s\nonumber\\&=G_{2, \, \, 4}^{4, \, \, 0} \left(\begin{matrix}
\frac{-a-b}{2}, \frac{-a-b-1}{2}\\
	0, -a, -b, -a-b
\end{matrix} \Bigg| \, z \right)\nonumber\\
&=\frac{1}{\sqrt{\pi}}z^{ \frac{1-a-b}{2}}\left(K_{a-1}(\sqrt{z})K_{b}(\sqrt{z})+K_{b-1}(\sqrt{z})K_{a}(\sqrt{z})+\frac{(a+b-1)}{\sqrt{z}}K_{a}(\sqrt{z})K_{b}(\sqrt{z})\right).
\end{align}
\end{lemma}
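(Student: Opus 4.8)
My plan is to treat the two equalities in \eqref{new Meijer G evaluation} separately. The first is immediate from the definition \eqref{MeijerG} of the Meijer $G$-function: writing it out with $m=4$, $n=0$, $p=2$, $q=4$, lower parameters $(b_1,b_2,b_3,b_4)=(0,-a,-b,-a-b)$ and upper parameters $(a_1,a_2)=\bigl(\tfrac{-a-b}{2},\tfrac{-a-b-1}{2}\bigr)$, one sees that because $n=0$ there are no factors $\Gamma(1-a_j-w)$ and, since $m=q$, no factors $\Gamma(1-b_j-w)$, so the right-hand side of \eqref{MeijerG} reduces to exactly the Mellin--Barnes integral on the left. As $p+q=6<2(m+n)=8$, this integral converges absolutely for $|\arg z|<\pi$, and its contour may be taken to be the vertical line $\Re(w)=c$ with $c>\max\{0,\Re(a),\Re(b),\Re(a+b)\}$, which lies to the right of all poles of the numerator Gamma factors.

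For the substantive equality I will set $f(z)$ equal to the right-hand side of \eqref{new Meijer G evaluation} and compute its Mellin transform $\widetilde f(s)=\int_0^\infty z^{s-1}f(z)\,\dd z$; once I show $\widetilde f(s)$ equals the Gamma quotient in the integrand, the result follows by Mellin inversion. Substituting $z=u^2$ turns $\widetilde f(s)$ into $\tfrac{2}{\sqrt\pi}$ times a sum of three integrals of the shape $\int_0^\infty u^{\lambda-1}K_\mu(u)K_\nu(u)\,\dd u$, with $(\lambda,\mu,\nu)$ equal to $(2s-a-b+1,\,a-1,\,b)$, then $(2s-a-b+1,\,b-1,\,a)$, then $(2s-a-b,\,a,\,b)$. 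For each I will use the classical formula \cite[p.~693, Formula~\textbf{6.576.4}]{gr} (in the form obtained by applying the duplication formula to the Gamma in its denominator),
\begin{equation*}
\int_0^\infty u^{\lambda-1}K_\mu(u)K_\nu(u)\,\dd u=\frac{\sqrt\pi}{4\,\Gamma\!\bigl(\tfrac{\lambda}{2}\bigr)\Gamma\!\bigl(\tfrac{\lambda+1}{2}\bigr)}\,\Gamma\!\Bigl(\tfrac{\lambda+\mu+\nu}{2}\Bigr)\Gamma\!\Bigl(\tfrac{\lambda-\mu+\nu}{2}\Bigr)\Gamma\!\Bigl(\tfrac{\lambda+\mu-\nu}{2}\Bigr)\Gamma\!\Bigl(\tfrac{\lambda-\mu-\nu}{2}\Bigr),
\end{equation*}
valid for $\Re(\lambda)>|\Re(\mu)|+|\Re(\nu)|$; under the hypotheses $-1<\Re(a),\Re(b),\Re(a\pm b)<1$ a short case check shows all three of these conditions hold throughout $\Re(s)>\max\{0,\Re(a),\Re(b),\Re(a+b)\}$.

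After these substitutions, repeated use of $\Gamma(w+1)=w\Gamma(w)$ puts each of the three terms in the form $\tfrac{\sqrt\pi}{4}$ times $\dfrac{\Gamma(s)\Gamma(s-a)\Gamma(s-b)\Gamma(s-a-b)}{\Gamma\bigl(s-\tfrac{a+b+1}{2}\bigr)\Gamma\bigl(s-\tfrac{a+b}{2}\bigr)}$ multiplied by an elementary rational factor; summing, the three rational factors combine into
\begin{equation*}
\frac{(s-a)(s-a-b)+(s-b)(s-a-b)+(a+b-1)\bigl(s-\tfrac{a+b}{2}\bigr)}{\bigl(s-\tfrac{a+b}{2}\bigr)\bigl(s-\tfrac{a+b+1}{2}\bigr)}.
\end{equation*}
Factoring $s-a-b$ out of the first two summands of the numerator and using $2s-a-b=2\bigl(s-\tfrac{a+b}{2}\bigr)$ and $2s-a-b-1=2\bigl(s-\tfrac{a+b+1}{2}\bigr)$ shows this numerator equals $2\bigl(s-\tfrac{a+b}{2}\bigr)\bigl(s-\tfrac{a+b+1}{2}\bigr)$, so the rational factor is simply $2$, and together with the overall $\tfrac{2}{\sqrt\pi}$ this gives $\widetilde f(s)=\dfrac{\Gamma(s)\Gamma(s-a)\Gamma(s-b)\Gamma(s-a-b)}{\Gamma\bigl(s-\tfrac{a+b+1}{2}\bigr)\Gamma\bigl(s-\tfrac{a+b}{2}\bigr)}$. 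By \eqref{strivert} this function decays exponentially along vertical lines (any polynomial growth being swamped by a factor $e^{-\pi|t|}$), hence it is in $L^1$ there, while $f(z)$ is continuous on $(0,\infty)$ with $z^{c-1}f(z)\in L[0,\infty)$ precisely for $c>\max\{0,\Re(a),\Re(b),\Re(a+b)\}$; so the Mellin inversion theorem \cite[p.~341, Theorem~1]{Mcla} yields $f(z)=\tfrac{1}{2\pi i}\int_{(c)}\widetilde f(s)z^{-s}\,\dd s$, which by the first part is the Meijer $G$-function. Integer values among $a,b,a\pm b$ cause no trouble, since the identity is between meromorphic functions of $a,b$ and the general case follows from the non-integral one by continuity.

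I expect the main obstacle to be organizational rather than conceptual: keeping the half-integer shifts in the various Gamma arguments straight when moving between the $\Gamma(\lambda)$ and $\Gamma(\lambda/2)\Gamma((\lambda+1)/2)$ forms of the Bessel-product Mellin transform and when applying the recurrence, so that the three terms genuinely share the common denominator $\Gamma\bigl(s-\tfrac{a+b+1}{2}\bigr)\Gamma\bigl(s-\tfrac{a+b}{2}\bigr)$; once the polynomial identity above is spotted, the collapse is immediate.
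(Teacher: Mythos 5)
Your proposal is correct, but it takes a genuinely different route from the paper. The paper's proof also gets the first equality from the definition \eqref{MeijerG}, but for the substantive step it starts from the known evaluation $G_{2,\,4}^{4,\,0}\left(\begin{smallmatrix}A,\,A+\frac12\\ B,\,C,\,2A-C,\,2A-B\end{smallmatrix}\Big|\,z\right)=\frac{2}{\sqrt\pi}z^{A}K_{B-C}(\sqrt z)K_{B+C-2A}(\sqrt z)$ with $A=\frac{-a-b}{2}$, $B=0$, $C=-a$, and then lowers the upper parameter by $1$ using the differentiation identity \cite[p.~621, Formula (37)]{prud3}; the wanted $G$-function becomes $-\frac{2}{\sqrt\pi}z^{\frac{1-a-b}{2}}\frac{d}{dz}\bigl(\sqrt z\,K_a(\sqrt z)K_b(\sqrt z)\bigr)$, and the derivative formula for $K_\nu$ produces exactly the three-term combination, so the $(a+b-1)$ term appears naturally as the derivative of the prefactor. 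You instead verify the identity by computing the Mellin transform of the Bessel-product side term by term via the classical formula for $\int_0^\infty u^{\lambda-1}K_\mu(u)K_\nu(u)\,\dd u$ (your parameter bookkeeping and the collapse of the rational factor to $2$ check out: the numerator is $(s-a-b)\cdot 2\bigl(s-\tfrac{a+b}{2}\bigr)+(a+b-1)\bigl(s-\tfrac{a+b}{2}\bigr)=2\bigl(s-\tfrac{a+b}{2}\bigr)\bigl(s-\tfrac{a+b+1}{2}\bigr)$), and then apply Mellin inversion, with the $L^1$ decay on vertical lines coming from \eqref{strivert}. What your approach buys is a self-contained verification with explicit convergence hypotheses, independent of the unproved closed form cited from the Wolfram site; what the paper's approach buys is brevity, validity without any restriction on $a,b$, and a conceptual explanation of where the three-term structure comes from. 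Your restriction to $-1<\Re(a),\Re(b),\Re(a\pm b)<1$ (the range of Theorem \ref{cohen sigma ab}, where the lemma is used) followed by analytic continuation in $a,b$ is a legitimate way to recover the general statement, so this is only a presentational difference, not a gap.
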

\begin{proof}
	The first equality follows from the definition of Meijer $G$-function. 
	Now it is known \cite{wolfram another} that
	\begin{align}\label{known meijer g}
		G_{2, \, \, 4}^{4, \, \, 0} \left(\begin{matrix}
			A, A+\frac{1}{2}\\
			B, C, 2A-C, 2A-B
		\end{matrix} \Bigg| \, z \right)=\frac{2}{\sqrt{\pi}}z^{A}K_{B-C}(\sqrt{z})K_{B+C-2A}(\sqrt{z}).
	\end{align}
	Let $A=\frac{-a-b}{2}, B=0$ and $C=-a$. Then the resulting Meijer $G$-function differs from the one in \eqref{new Meijer G evaluation} only in one of the top parameters with the difference between them being $1$. To address this issue, we use the identity \cite[p.~621, Formula (37)]{prud3}, namely, for $n\leq p-1$,
	\begin{align*}
		\frac{d}{dz}\left[z^{1-a_p}G_{p, \, \, q}^{m, \, \, n} \left(\begin{matrix}
			a_1,\cdots,a_n, a_{n+1},\cdots, a_p\\
			b_1,\cdots,b_m, b_{m+1},\cdots, b_q
		\end{matrix} \Bigg| \, z \right)\right]=-z^{-a_p}G_{p, \, \, q}^{m, \, \, n} \left(\begin{matrix}
		a_1,\cdots,a_n, a_{n+1},\cdots, a_p-1\\
		b_1,\cdots,b_m, b_{m+1},\cdots, b_q
		\end{matrix} \Bigg| \, z \right)
	\end{align*}
	Therefore letting $m=4, n=0, p=2, q=4$ and $a_p=(1-a-b)/2$ in the above identity, and using \eqref{known meijer g} in the second step, we see that
	\begin{align*}
	&G_{2, \, \, 4}^{4, \, \, 0} \left(\begin{matrix}
		\frac{-a-b}{2}, \frac{-a-b-1}{2}\\
		0, -a, -b, -a-b
	\end{matrix} \Bigg| \, z \right)\nonumber\\
	&=-z^{\frac{1-a-b}{2}}\frac{d}{dz}\left(z^{\frac{1+a+b}{2}}G_{2, \, \, 4}^{4, \, \, 0} \left(\begin{matrix}
	\frac{-a-b}{2}, \frac{-a-b+1}{2}\\
	0, -a, -b, -a-b
	\end{matrix} \Bigg| \, z \right)\right)\nonumber\\
	&=-\frac{2}{\sqrt{\pi}}z^{\frac{1-a-b}{2}}\frac{d}{dz}\left(\sqrt{z}K_a(\sqrt{z})K_b(\sqrt{z})\right)\nonumber\\
	&=\frac{1}{\sqrt{\pi}}z^{ \frac{1-a-b}{2}}\left(K_{a-1}(\sqrt{z})K_{b}(\sqrt{z})+K_{b-1}(\sqrt{z})K_{a}(\sqrt{z})+\frac{(a+b-1)}{\sqrt{z}}K_{a}(\sqrt{z})K_{b}(\sqrt{z})\right),
	\end{align*}
	where, in the last step, we used the standard formula for the differentiation of the $K$-Bessel function \cite[p.~929, \textbf{8.486.11}]{gr}, namely,
 $\frac{d}{dw}K_{\nu}(w)=-\frac{1}{2}\left(K_{\nu-1}(w)+K_{\nu+1}(w)\right)$. This completes the proof.
\end{proof}

\begin{remark}
We note a related formula which appears in \cite[p.~647, Formula (55)]{brychkov}:
\begin{align*}
G_{2, \, \, 4}^{4, \, \, 0} \left(\begin{matrix}
	\frac{1}{2}, 1\\
	a+b, a-b, b-a, -a-b
\end{matrix} \Bigg| \, z \right)&=\frac{2}{(a+b)\sqrt{\pi}}K_{2a}(\sqrt{z})K_{2b}(\sqrt{z})\nonumber\\
&\quad+\frac{\sqrt{z}}{(a^2-b^2)\sqrt{\pi}}\left(K_{2a-1}(\sqrt{z})K_{2b}(\sqrt{z})-K_{2a}(\sqrt{z})K_{2b-1}(\sqrt{z})\right).
\end{align*}
\end{remark}
Armed with these results, we now ready to prove the Cohen-type identity for $\sigma_a(n)\sigma_b(n)$.

\begin{proof}[Theorem \textup{\ref{cohen sigma ab}}][]
	We assume
	\small\begin{align*}
	\max{\{-1, -1+\textup{Re}(a), -1+\textup{Re}(b), -1+\textup{Re}(a+b)\}}<c<\min{\left\{0, \textup{Re}(a), \textup{Re}(b), \textup{Re}(a+b), \textup{Re}\left(\tfrac{a+b+1}{2}\right)\right\}}. 
	\end{align*}
	\normalsize
	The above strip is of positive width because of the conditions on $a$ and $b$ given in the hypotheses. 
Let
\begin{equation*}
S(a, b, x):=\sum_{n=1}^{\infty}{\sigma_{a}(n)\sigma_{b}(n)}\frac{x\big(x^{-a}-n^{-a}\big)\big(x^{-b}-n^{-b}\big)}{x^2-n^2}.
\end{equation*}	
Then an application of $\sigma_{s}(n)=\sigma_{-s}(n)n^s$ in the first step and Lemma \ref{I evaluation} in the second leads to
\begin{align*}
S(a, b, x)&=\sum_{n=1}^{\infty}\frac{\sigma_{-a}(n)\sigma_{-b}(n)}{n}\frac{\frac{x}{n}\left(\left(\frac{x}{n}\right)^{-a}-1\right)\left(\left(\frac{x}{n}\right)^{-b}-1\right)}{\frac{x^2}{n^2}-1}\nonumber\\
&=\frac{\pi}{2}\sin\left(\frac{\pi a}{2}\right)\sin\left(\frac{\pi b}{2}\right)\sum_{n=1}^{\infty}\frac{\sigma_{-a}(n)\sigma_{-b}(n)}{n}\nonumber\\
&\quad\times\frac{1}{2\pi i}\int_{(c)}\frac{\sec\big(\frac{\pi}{2}s\big)\sec(\frac{\pi}{2}(s-a))\sec(\frac{\pi}{2}(s-b))\sec(\frac{\pi}{2}(s-a-b))}{\sec(\frac{\pi}{2}(2s-a-b-1))}\left(\frac{x}{n}\right)^{-s} \dd s\nonumber\\
&=\frac{\pi}{2}\sin\left(\frac{\pi a}{2}\right)\sin\left(\frac{\pi b}{2}\right)\nonumber\\
&\quad\times\frac{1}{2\pi i}\int_{(c)}\sum_{n=1}^{\infty}\frac{\sigma_{-a}(n)\sigma_{-b}(n)}{n^{1-s}}\frac{\sec\big(\frac{\pi}{2}s\big)\sec(\frac{\pi}{2}(s-a))\sec(\frac{\pi}{2}(s-b))\sec(\frac{\pi}{2}(s-a-b))}{\sec(\frac{\pi}{2}(2s-a-b-1))}x^{-s} \dd s\nonumber\\
&=\frac{\pi}{2}\sin\left(\frac{\pi a}{2}\right)\sin\left(\frac{\pi b}{2}\right)\frac{1}{2\pi i}\int_{(c)}\frac{\zeta(1-s)\zeta(1-s+a)\zeta(1-s+b)\zeta(1-s+a+b)}{\zeta(2-2s+a+b)}\nonumber\\
&\quad\times\frac{\sec\big(\frac{\pi}{2}s\big)\sec(\frac{\pi}{2}(s-a))\sec(\frac{\pi}{2}(s-b))\sec(\frac{\pi}{2}(s-a-b))}{\sec(\frac{\pi}{2}(2s-a-b-1))}x^{-s} \dd s,
\end{align*}
where we interchanged the order of summation and integration because of the uniform convergence of the associated series in the above strip, and where we used \eqref{sigma ab dirichlet} in the last step.

Next, use \eqref{zetafe asym} for each of the zeta functions occurring in the integrand of the above equation and simplify to obtain
\begin{align*}
S(a, b, x)&=2(2\pi)^{a+b}\sin\left(\frac{\pi a}{2}\right)\sin\left(\frac{\pi b}{2}\right)\frac{1}{2\pi i}\int_{(c)}\frac{\Gamma(s)\Gamma(s-a)\Gamma(s-b)\Gamma(s-a-b)}{\Gamma(2s-a-b-1)}\nonumber\\
&\quad\times\frac{\zeta(s)\zeta(s-a)\zeta(s-b)\zeta(s-a-b)}{\zeta(2s-a-b-1)}(4\pi^2x)^{-s}\dd s.
\end{align*} 
Now we wish to invoke \eqref{cab dirichlet}. It necessitates shifting the line of integration from Re$(s)=c$ to Re$(s)=\eta$, where $\eta$ is defined in \eqref{eta}. Clearly, Stirling's formula \eqref{strivert} implies that the integrals along the horizontal segments tend to zero as the height of the rectangular contour approaches $\infty$. The integrand has simple poles at $s=0, a, b, a+b$ (due to the gamma functions), at $s=1, 1+a, 1+b, 1+a+b$ (due to the zeta functions), and at the non-trivial zeros of $\zeta(2s-a-b-1)$ at $s=(1+\rho_m+a+b)/2$, where $\rho_m$ are the non-trivial zeros of $\zeta(s)$. (Here we have assumed the simplicity of the zeros.)

The residues at these poles can be easily calculated to be the ones given in \eqref{residues 1} and \eqref{residues 2}. Therefore invoking the residue theorem, expressing the quotient of zeta functions by the left-hand side of \eqref{cab dirichlet} and interchanging the order of summation and integration (valid due to uniform convergence), we find that
\begin{align*}
S(a, b, x)&=	2(2\pi)^{a+b}\sin\left(\tfrac{\pi a}{2}\right)\sin\left(\tfrac{\pi b}{2}\right)\Bigg\{\sum_{n=1}^{\infty}C_{a, b}(n)\frac{1}{2\pi i}\int_{(\eta)}\frac{\Gamma(s)\Gamma(s-a)\Gamma(s-b)\Gamma(s-a-b)}{\Gamma(2s-a-b-1)}\left(4\pi^2nx\right)^{-s}\dd s\nonumber\\
&\quad-\Bigg(\sum_{k=0}^{1}R_k(x)+ R_{k+a}(x)+R_{k+b}(x)+R_{k+a+b}(x)+\lim_{T_n\to\infty}\sum_{|\gamma_m|<T_n}R_{\rho_m,a,b}(x)\Bigg)\Bigg\}.
\end{align*}
This results in \eqref{sigma ab final} upon invoking Lemma \ref{Meijer G evaluation}.

\end{proof}

\begin{proof}[Corollary \textup{\ref{cohen d(n) squared}}][]
Divide both sides of \eqref{sigma ab final} by $ab$ and then let $a\to0$ and $b\to0$. Using the fact that $\sigma_s(n)n^{-\frac{s}{2}}=O\left(n^{\frac{1}{2}|\textup{Re}(s)|+\epsilon}\right)$, it is easily seen that the series on the left-hand side of \eqref{sigma ab final} is uniformly convergent in any compact interval of $(-1, 1)$, viewed as a function of the complex variable $a$ or $b$. Hence we can interchange the order of limits and summation. The same can be done on the right-hand side as well. This leads to \eqref{cohen d(n) squared} upon simplification\footnote{The residue $R_0(x)$ in this case was calculated using \emph{Mathematica}.}.
\end{proof}

\subsection{Ramanujan-Guinand type identity for $\sigma_a(n)\sigma_b(n)$}

We prove Theorem \ref{rg sigma ab} in this subsection. Let $b=c=2$ and then $\a=s-a/2-b/2, \mu=a/2, \nu=b/3$ in \cite[p.~384, formula \textbf{2.16.33.1}]{prud2} so that for $c=\textup{Re}(s)>\max{\{0, \textup{Re}(a), \textup{Re}(b), \textup{Re}(a+b)\}}$,
\begin{equation*}
	\begin{split}
	\frac{1}{2\pi i}	\int_{(c)}\frac{\Gamma(\frac{s}{2})\Gamma(\frac{s-a}{2})\Gamma(\frac{s-b}{2})\Gamma(\frac{s-a-b}{2})}{\Gamma(\frac{2s-a-b}{2})}x^{-s}\dd s=8x^{(-a-b)/2}K_{a/2}(2x)K_{b/2}(2x).
	\end{split}
\end{equation*}
Now let $c=\textup{Re}(s)>1+\max{(0, \textup{Re}(a), \textup{Re}(b), \textup{Re}(a+b))}$ and define
\begin{equation}
I(x):=	\frac{1}{2\pi i}\int_{(c)}\frac{\Gamma(\frac{s}{2})\zeta(s)\Gamma(\frac{s-a}{2})\zeta(s-a)\Gamma(\frac{s-b}{2})\zeta(s-b)\Gamma(\frac{s-a-b}{2})\zeta(s-a-b)}{\Gamma(\frac{2s-a-b}{2})\zeta(2s-a-b)}x^{-s}\dd s.
\end{equation}
Then, using \eqref{sigma ab dirichlet}, we have
\begin{align*}
I(x)=8x^{(-a-b)/2}\sum_{n=1}^{\infty}\frac{\sigma_{a}(n)\sigma_{b}(n)}{n^{(a+b)/2}}K_{a/2}(2nx)K_{b/2}(2nx).
\end{align*}
On the other hand, invoking \eqref{zetafe sym} in the first step, we get
\begin{align*}
	I(x)=\hspace{-1mm}\frac{\pi^{-a-b-\frac{3}{2}}}{2\pi i}\hspace{-2mm}\int_{(c)}\hspace{-2mm}\frac{\Gamma\left(\frac{1-s}{2}\right)\zeta(1-s)\Gamma\left(\frac{1-s+a}{2}\right)\zeta(1-s+a)\Gamma\left(\frac{1-s+b}{2}\right)\zeta(1-s+b)\Gamma\left(\frac{1-s+a+b}{2}\right)\zeta(1-s+a+b)}{\Gamma\left(\frac{1-2s+a+b}{2}\right)\zeta(1-2s+a+b)\pi^{-2s}x^{s}}\dd s.
\end{align*}
Now shift the line of integration to $c'=\textup{Re}(s)<\min{(0, \textup{Re}(a), \textup{Re}(b), \textup{Re}((a+b)/2), \textup{Re}(a+b))}$ so as to be able to use \eqref{cab dirichlet}, consider the contributions of the poles at  $(\rho_m+a+b)/2$, and at $k, k+a, k+b$ and $k+a+b$, where $k=0$ or $1$, and invoke the residue theorem to get
\begin{align*}
I(x)&=\pi^{-a-b-\frac{3}{2}}\sum_{n=1}^{\infty}\frac{C_{-a, -b}(n)}{n}\frac{1}{2\pi i}\int_{(c')}\frac{\Gamma\left(\frac{1-s}{2}\right)\Gamma\left(\frac{1-s+a}{2}\right)\Gamma\left(\frac{1-s+b}{2}\right)\Gamma\left(\frac{1-s+a+b}{2}\right)}{\Gamma\left(\frac{1-2s+a+b}{2}\right)}\left(\frac{x}{\pi^2n}\right)^{-s}\dd s +R(x)\nonumber\\
&=\frac{\pi^{-a-b-1}}{2^{\frac{a+b-3}{2}}}\sum_{n=1}^{\infty}\frac{C_{-a, -b}(n)}{n}\frac{1}{2\pi i}\int_{(\frac{c'}{2})}\frac{\Gamma\left(\frac{1}{2}-w\right)\Gamma\left(\frac{1+a}{2}-w\right)\Gamma\left(\frac{1+b}{2}-w\right)\Gamma\left(\frac{1+a+b}{2}-w\right)}{\Gamma\left(\frac{1+a+b}{4}-w\right)\Gamma\left(\frac{3+a+b}{4}-w\right)}\left(\frac{x^2}{4\pi^4n^2}\right)^{-w}\dd w\nonumber\\
&\quad+R(x),
\end{align*}
where $R(x)$ is the sum of the residues in \eqref{residues 3} and \eqref{residues 4}. In the last step, we used the duplication formula and then employed the change of variable $s=2w$. The integral in the last step can be easily seen to be the Meijer $G$-function claimed in \eqref{jhep}, which completes the proof.

\begin{proof}[Corollary \textup{\ref{rg d(n) squared cor}}][]
Let $a=b=0$ in Theorem \ref{rg sigma ab}. We found using \emph{Mathematica} that $x$ times the residue $\tilde{R}_1(x)$ is a polynomial in $\log(x)$. Althought it can be explicitly written down, we avoid giving it here as it is quite complicated.
	\end{proof}
	
\section{Proofs of Theorem \ref{voronoi mu(n)} and Ramanujan's identity \eqref{mr gen}}\label{mr gen section}

\begin{proof}[Theorem \textup{\ref{voronoi mu(n)}}][]
Using the Mellin inversion formula, we get
$\Phi(s)=\int_{0}^{\infty}\phi(x)x^{s-1}\dd x $ for $\Re s>0$ and $\Phi(s)=\int_{0}^{\infty}(\phi(x)-k)x^{s-1}\dd x $ for $-1<\Re s<0$, where $k$ is the residue of $\Phi(s)$ at $s=0$.

Choose $\tau$ such that $0<\tau<\delta$, where $\delta$ is the number defined in  \eqref{delta mu(n)}, and define
\begin{align}\label{before intermediate}
I:=\frac{1}{2\pi i}\int_{(1+\tau)}\frac{\Phi(s)}{\zeta(s)}\dd s.
\end{align}
Then, one one hand, 
\begin{equation*}
	\begin{split}
		I=\frac{1}{2\pi i}\int_{(1+\tau)}\frac{\Phi(s)}{\zeta(s)}\dd s=\sum_{n=1}^{\infty}\mu(n)\frac{1}{2\pi i}\int_{(1+\tau)}\Phi(s)n^{-s}\dd s
		=\sum_{n=1}^{\infty}\mu(n)\phi(n).
	\end{split}
\end{equation*}
If we now shift the line of integration to Re$(s)=-\tau$, and choose the sequence $\{T_n\}$ as in Theorem \ref{vsf lambda(n)} so that the integrals along the horizontal segments go to zero. Then by Cauchy's residue theorem, we have
\begin{equation}\label{intermediate step for I}
I=\frac{1}{2\pi i}\int_{(-\tau)}\frac{\Phi(s)}{\zeta(s)}\dd s+\Res_{s=0}\frac{\Phi(s)}{\zeta(s)}+\lim_{T_n\to \infty}\sum_{|\gamma_m|<T_n}\frac{1}{\zeta'(\rho_m)}\int_{0}^{\infty}\phi(x)x^{\rho_m-1}\dd x. 
\end{equation}
From \eqref{before intermediate} and \eqref{intermediate step for I}, we have
\begin{align}\label{eq96}
\sum_{n=1}^{\infty}\mu(n)\phi(n)=\frac{1}{2\pi i}\int_{(-\tau)}\frac{\Phi(s)}{\zeta(s)}\dd s-2k+\lim_{T_n\to \infty}\sum_{|\gamma_m|<T_n}\frac{1}{\zeta'(\rho_m)}\int_{0}^{\infty}\phi(x)x^{\rho_m-1}\dd x.
\end{align}
With a fair amount of calculation using Parseval's formula \eqref{Parseval1}, one can see that
\begin{align}
\frac{1}{2\pi i}\int_{(-\tau)}\frac{\Phi(s)}{\zeta(s)}\dd s=4\sum_{n=1}^{\infty}\frac{\mu(n)}{n}\int_{0}^{\infty}(\phi(t)-k)\bigg(\frac{\cos^2(\pi/nt)-1}{t}\bigg)\dd t, 
\end{align}
which, along with \eqref{eq96}, completes the proof.
\end{proof}

We prove Corollary \ref{mr gen corollary} here.
Let $\phi(x)=\frac{\sqrt{a}}{x}\tilde\phi(\frac{a}{x})$ in Theorem \ref{voronoi mu(n)}, where $\tilde{\phi}$ is chosen so that the hypotheses of Theorem \ref{voronoi mu(n)} are satisfied, and, in addition, there is no residue of $\Phi(s)$ at $s=0$, that is, $k$ in Theorem \ref{voronoi mu(n)} is equal to $0$. Then
\begin{align*}
		&\sqrt{a}\sum_{n=1}^{\infty}\frac{\mu(n)}{n}\tilde\phi\left(\frac{a}{n}\right)+4\sum_{n=1}^{\infty}\frac{\mu(n)}{n}\int_{0}^{\infty}\frac{\sqrt{a}}{t^2}\tilde\phi\left(\frac{a}{t}\right)\sin^2\left(\frac{\pi}{nt}\right)\dd t\nonumber\\
		&= \lim_{T_n\to \infty}\sum_{|\gamma_m|<T_n}\frac{1}{\zeta'(\rho_m)}\int_{0}^{\infty}\frac{\sqrt{a}}{x}\tilde\phi\left(\frac{a}{x}\right)x^{\rho_m-1}\dd x.
\end{align*}
Employ change of variables $t\to a/t$ and $x\to a/x$ in the two integrals and use the elementary identity $2\sin^{2}(\theta)=1-\cos(2\theta)$ so that
\begin{equation*}
	\begin{split}
		&\sqrt{a}\sum_{n=1}^{\infty}\frac{\mu(n)}{n}\tilde\phi\left(\frac{a}{n}\right)+\frac{2}{\sqrt{a}}\sum_{n=1}^{\infty}\frac{\mu(n)}{n}\int_{0}^{\infty}\tilde\phi(t)\left(1-\cos\left(\frac{2\pi t}{a n}\right)\right)\dd t\\&= \lim_{T_n\to \infty}\sum_{|\gamma_m|<T_n}\frac{a^{\rho_m-1/2}}{\zeta'(\rho_m)}\int_{0}^{\infty}\tilde\phi\left(x\right)x^{-\rho_m}\dd x.
	\end{split}
\end{equation*}
From the hypotheses given at the beginning of \eqref{mr gen}, the integral $
\frac{2}{\sqrt{\pi}}\int_{0}^{\infty}\tilde\phi(u)\cos(2 u x)\dd u$ exists; hence defining it to be $\tilde{\psi}(x)$ and then employing it and the prime number theorem in the form $\sum_{n=1}^{\infty}\mu(n)/n=0$, we are led to
\begin{equation*}
\sqrt{\a}\sum_{n=1}^{\infty}\frac{\mu(n)}{n}\tilde\phi\left(\frac{\a}{n}\right)-\frac{\sqrt{\pi}}{\sqrt{\a}}\sum_{n=1}^{\infty}\frac{\mu(n)}{n}\tilde{\psi}\left(\frac{\pi}{\a n}\right)= \lim_{T_n\to \infty}\sum_{|\gamma_m|<T_n}\frac{\a^{\rho_m-1/2}}{\zeta'(\rho_m)}\Gamma\left(1-\rho_m\right)Z_1\left(1-\rho_m\right).
\end{equation*}
Now let $\b=\pi/\a$ and use \eqref{nmt} to arrive at the first equality in \eqref{mr gen}. The second follows by swapping $\a$ and $\b$ in the first and using the second integral in \eqref{bef nmt}.

\section{Oscillations of Riesz sums}\label{sign changes}

Throughout this section we assume RH and the simplicity of the zeros of $\zeta(s)$. The Gonek-Hejhal conjecture \cite{gonek-gh}, \cite{hejhal-gh} (see also \cite[Equation (3)]{ng} states that for $k\in\mathbb{R}$,
\begin{equation}\label{gh conjecture}
\sum_{|\gamma_m|\leq T}\frac{1}{|\zeta'(\rho_m)|^{2k}}\asymp T(\log T)^{(k-1)^2}.
\end{equation}
In particular, we have
$$\sum_{|\gamma_m|\leq T}\frac{1}{|\zeta'(\rho_m)|}\asymp T(\log T)^{1/4}.$$
The conjecture is still open but 
assuming RH, Heap, Li and Zhao \cite{heap-li-zhao} have proved the conjectured lower bound in \eqref{gh conjecture} for all fractional $k\geq0$, assuming RH and  the simplicity of the zeros; in particular, for $k=1/2$, we have
\begin{equation}\label{hlz}
\sum_{|\gamma_m|\leq T}\frac{1}{|\zeta'(\rho_m)|}\gg T(\log T)^{1/4}.
\end{equation}
The special case $k=1/2$ of a recent result of Bui, Florea and Milinovich \cite[Theorem 1.1]{bui-florea-milinovich} implies that for any $\delta>0$,
$$
\sum_{\gamma_m\in \mathcal{F_T}}\frac{1}{|\zeta'(\rho_m)|}\ll T^{1+\delta},
$$
where $$
\mathcal{F}_T=\bigg\{\gamma\in (T,2 T]:|\gamma-\gamma'|\gg \frac{1}{\log T}\hspace{1mm}\text{for any other ordinate}\hspace{1mm}\gamma'\bigg\}.
$$
Since there is no non-trivial zero of $\zeta(s)$ with $|\gamma_m|<14$ , if $\mathcal{F}=\sqcup_{k=0}^{\lfloor\log_2(T)\rfloor}\mathcal{F}_{T/2^k}$, then for any $\delta>0$,
$$\sum_{|\gamma_m|\leq T\atop \gamma_m\in \mathcal{F}}\frac{1}{|\zeta'(\rho_m)|}\ll T^{1+\delta}\log T.$$
The set of excluded zeros whose ordinates do not belong to the family $\mathcal{F}$ conjecturally has arbitrarily small proportion \cite[p.~2682]{bui-florea-milinovich}.
Thus, it is reasonable to assume that for any $\epsilon>0$,
\begin{equation}\label{conj1}
	\sum_{|\gamma_m|\leq T}\frac{1}{|\zeta'(\rho_m)|}\ll T^{1+\epsilon}.   
\end{equation}
We note that the above conjecture is weaker than the Gonek-Hejhal conjecture.

To prepare ourselves for stating and proving the main results, we begin with some lemmas.
\begin{lemma}\label{beta}
	For $\Re(s)>0$, $\Re(k)>-1 $ and $y>0$, \begin{equation*}
		\int_{0}^{y}\bigg(1-\frac{x}{y}\bigg)^kx^{s-1}\dd x=y^s\frac{\Gamma(s)\Gamma(k+1)}{\Gamma(s+k+1)}.
	\end{equation*}    
\end{lemma}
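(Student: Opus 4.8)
This is the classical Beta-function integral, so the plan is short and elementary. The substitution $x = yu$ transforms the left-hand side into $y^s \int_0^1 (1-u)^k u^{s-1}\, \dd u$, which is exactly $y^s B(s, k+1)$, the Euler Beta integral. The convergence of this integral at the endpoints is guaranteed precisely by the hypotheses $\Re(s) > 0$ (for integrability near $u=0$, where $u^{s-1}$ has a mild singularity) and $\Re(k) > -1$ (for integrability near $u=1$, where $(1-u)^k$ has a mild singularity).

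First I would record the substitution $x = yu$, $\dd x = y\, \dd u$, so that
\begin{equation*}
	\int_{0}^{y}\bigg(1-\frac{x}{y}\bigg)^k x^{s-1}\dd x = \int_{0}^{1}(1-u)^k (yu)^{s-1}\, y\, \dd u = y^s\int_{0}^{1}u^{s-1}(1-u)^k\, \dd u.
\end{equation*}
Then I would invoke the standard evaluation of the Beta function, $B(p,q) = \int_0^1 u^{p-1}(1-u)^{q-1}\, \dd u = \Gamma(p)\Gamma(q)/\Gamma(p+q)$, valid for $\Re(p) > 0$ and $\Re(q) > 0$, applied with $p = s$ and $q = k+1$; the condition $\Re(k) > -1$ is exactly $\Re(q) > 0$. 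This gives $y^s \Gamma(s)\Gamma(k+1)/\Gamma(s+k+1)$, which is the claimed identity.

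There is no real obstacle here — the only point worth a sentence of care is justifying that the original integral converges absolutely under the stated hypotheses, which follows since $|(1-x/y)^k x^{s-1}| = (1-x/y)^{\Re(k)} x^{\Re(s)-1}$ is integrable on $(0,y)$ when $\Re(s) > 0$ and $\Re(k) > -1$, so that the change of variables and the appeal to the Beta integral are legitimate. One could alternatively cite a standard reference such as \cite[p.~8, formula 1.5.1]{aar} for the Beta integral itself and keep the proof to two lines.
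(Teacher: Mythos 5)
Your proof is correct and is exactly the argument the paper gives: substitute $x=yt$ and invoke the Euler Beta integral $B(s,k+1)=\Gamma(s)\Gamma(k+1)/\Gamma(s+k+1)$, with the hypotheses $\Re(s)>0$, $\Re(k)>-1$ ensuring convergence. Your added sentence on absolute convergence is a harmless refinement of the same one-line proof.
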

\begin{proof}
	Substitute $x=y t$ and use the evaluation of Euler's beta integral.
\end{proof}
We will also need the following corollary of the well-known Kronecker Theorem \cite[Theorem IV, p.~53]{cassels}.
\begin{lemma}\label{kroneckerlemma}
	(\cite[Theorem B]{maksimova})  Let $\{\alpha_n\}_{n=1}^{N}$ be $N$ real numbers  linearly independent over $\mathbb{Q}$ and $\{\beta_n\}_{n=1}^{N}$ be $N$ arbitrary real numbers. Then for any $\epsilon>0$, there exist an arbitrary large positive real number $t$ such that 
	$||\alpha_nt-\beta_n||<\epsilon$ for all $1\leq n\leq N$, where $||x||$ denotes distance of $x$ from the nearest integer.
\end{lemma}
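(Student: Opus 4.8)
The plan is to obtain Lemma~\ref{kroneckerlemma} as a corollary of the classical one-parameter form of Kronecker's theorem \cite[Theorem IV, p.~53]{cassels}, which, under exactly the hypothesis that $\alpha_1,\dots,\alpha_N$ are linearly independent over $\mathbb{Q}$, asserts that the one-parameter subgroup $\gamma\colon\mathbb{R}\to\mathbb{T}^N$, $\gamma(t):=(\alpha_1 t,\dots,\alpha_N t)\bmod\mathbb{Z}^N$, has dense image in the torus $\mathbb{T}^N=\mathbb{R}^N/\mathbb{Z}^N$; equivalently, for every $\epsilon>0$ and arbitrary reals $\beta_1,\dots,\beta_N$ there exist a real $t$ and integers $p_1,\dots,p_N$ with $|\alpha_n t-\beta_n-p_n|<\epsilon$ for all $n$. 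One may either take this density as given, or recover it directly by duality: a character $x\mapsto e^{2\pi i\langle m,x\rangle}$ of $\mathbb{T}^N$ (with $m\in\mathbb{Z}^N$) is trivial on $\gamma(\mathbb{R})$ precisely when $\langle m,(\alpha_1,\dots,\alpha_N)\rangle=0$, and the $\mathbb{Q}$-linear independence of the $\alpha_n$ forces $m=0$, so the only character trivial on the closed subgroup $\overline{\gamma(\mathbb{R})}$ is the trivial one, whence $\overline{\gamma(\mathbb{R})}=\mathbb{T}^N$.

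The remaining content of the lemma is that $t$ may be chosen arbitrarily large and positive, and here I would argue through the semigroup structure of the forward orbit. Since $\gamma(0)=0$ and $\gamma(t_1+t_2)=\gamma(t_1)+\gamma(t_2)$, the set $S:=\{\gamma(t):t\ge 0\}$ is a sub-semigroup of $\mathbb{T}^N$ containing the identity, so its closure $K:=\overline{S}$ is a closed sub-semigroup of the compact group $\mathbb{T}^N$ containing $0$; by the standard fact that such a sub-semigroup is automatically a subgroup (for $x\in K$ the points $nx$, $n\ge 1$, accumulate, so $kx$ comes arbitrarily close to $0$ for suitable $k\ge 1$, and hence $-x\in K$), $K$ is a closed subgroup. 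As $K$ then also contains $-\gamma(t)=\gamma(-t)$ for all $t\ge 0$, we get $K=\overline{\gamma(\mathbb{R})}=\mathbb{T}^N$. Consequently, for any prescribed $T_0>0$ the tail $\{\gamma(t):t\ge T_0\}=\gamma(T_0)+S$ has closure $\gamma(T_0)+K=\mathbb{T}^N$; picking $t\ge T_0$ with $\gamma(t)$ within $\epsilon$, in the coordinatewise distance-to-the-nearest-integer metric, of $(\beta_1,\dots,\beta_N)\bmod\mathbb{Z}^N$ yields $\|\alpha_n t-\beta_n\|<\epsilon$ for all $n$, which is the assertion. (Many standard references state Kronecker's theorem with the ``$t$ arbitrarily large'' clause already built in; in that case this paragraph is unnecessary and one simply quotes \cite[Theorem B]{maksimova}.)

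The only step that is not pure bookkeeping is the passage from ``for some $t$'' to ``for arbitrarily large positive $t$'', i.e.\ the identification $K=\mathbb{T}^N$ of the forward-orbit closure; I expect this, together with the character computation ensuring $\overline{\gamma(\mathbb{R})}=\mathbb{T}^N$, to be the place requiring a little care, while the reduction to statements about $\gamma$ on the torus is routine.
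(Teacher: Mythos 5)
Your proposal is correct, but there is nothing in the paper to compare it with line by line: the paper does not prove this lemma at all, it simply quotes it as Theorem B of \cite{maksimova}, presented as a known corollary of the one-parameter Kronecker theorem in \cite[Theorem IV, p.~53]{cassels}. What you supply is a complete self-contained derivation, and it holds up. The duality step is sound: a character $x\mapsto e^{2\pi i\langle m,x\rangle}$ vanishes on the orbit $\gamma(\mathbb{R})$ exactly when $\sum_n m_n\alpha_n=0$, and $\mathbb{Q}$-linear independence of the $\alpha_n$ (equivalently, the absence of a nontrivial integer relation) forces $m=0$, so $\overline{\gamma(\mathbb{R})}=\mathbb{T}^N$. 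Your handling of the ``arbitrarily large positive $t$'' clause via the forward-orbit semigroup is also correct: the closure $K$ of $\{\gamma(t):t\ge 0\}$ is a closed sub-semigroup containing $0$, the accumulation argument for $\{kx\}_{k\ge1}$ (using the translation-invariant metric on $\mathbb{T}^N$) shows $-x\in K$ for every $x\in K$, hence $K$ is a closed subgroup containing $\gamma(\mathbb{R})$ and thus equals $\mathbb{T}^N$, and then each tail $\gamma(T_0)+S$ is dense, which is exactly the assertion of the lemma. In effect your argument reproves the quoted reference rather than deviating from the paper's (nonexistent) proof; the only economy you could claim is that several standard formulations of Kronecker's theorem, including the form the paper points to, already build in the arbitrarily-large-$t$ clause, in which case your second paragraph becomes unnecessary, as you yourself note.
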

Using Lemma \ref{kroneckerlemma}, we obtain the following result.
\begin{lemma}\label{lemma3}
	Let $\{C_n\}_{n=1}^{\infty}$ be an infinite sequence of complex numbers such that $\sum_{n=1}^{\infty}|C_n|=S<\infty.$
	Let $\{\alpha_n\}_{n=1}^{\infty}$ be a sequence of real numbers that are linearly independent over $\mathbb{Q}$. Then we have
	\begin{align*}
	\limsup_{x\to \infty}\sum_{n=1}^{\infty}\big(C_n\exp(x\di \alpha_n)+\bar C_n\exp(-x\di \alpha_n) \big)&=2S,\nonumber\\
	 \liminf_{x\to \infty}\sum_{n=1}^{\infty}\big(C_n\exp(x\di \alpha_n)+\bar C_n\exp(-x\di \alpha_n) \big)&=-2S.
	\end{align*}
	\end{lemma}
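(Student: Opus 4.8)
\textbf{Proof proposal for Lemma \ref{lemma3}.}

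The plan is to prove the statement about $\limsup$; the one about $\liminf$ follows by replacing $x$ with a suitable shifted sequence or simply by noting that the expression is real and applying the $\limsup$ result to $-\sum_n(C_n e^{ix\alpha_n}+\bar C_n e^{-ix\alpha_n})$. Write $C_n = r_n e^{i\theta_n}$ with $r_n = |C_n|\ge 0$, so that the $n$-th term of the series equals $2r_n\cos(x\alpha_n+\theta_n)$ and the whole sum is $F(x):=2\sum_{n=1}^\infty r_n\cos(x\alpha_n+\theta_n)$. Since $|2r_n\cos(x\alpha_n+\theta_n)|\le 2r_n$ and $\sum_n r_n = S<\infty$, the series converges absolutely and uniformly in $x$, and trivially $F(x)\le 2S$ for all $x$; hence $\limsup_{x\to\infty}F(x)\le 2S$. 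The substance is the reverse inequality $\limsup_{x\to\infty}F(x)\ge 2S$.

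For the lower bound, fix $\epsilon>0$ and choose $N$ so large that $\sum_{n>N}r_n<\epsilon$. Apply Lemma \ref{kroneckerlemma} with the finitely many linearly independent reals $\alpha_1,\dots,\alpha_N$ and the targets $\beta_n$ chosen so that $2\pi\beta_n \equiv -\theta_n \pmod{2\pi}$, i.e. $\beta_n = -\theta_n/(2\pi)$: there exists an arbitrarily large $t$ with $\|\tfrac{t\alpha_n}{2\pi} - \beta_n\| < \delta$ for all $1\le n\le N$, where $\delta$ is chosen small enough (depending on $\epsilon$) that $\|\tfrac{t\alpha_n}{2\pi}-\beta_n\|<\delta$ forces $\cos(t\alpha_n+\theta_n) > 1-\epsilon$. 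For such $t$,
\begin{equation*}
F(t) = 2\sum_{n\le N} r_n\cos(t\alpha_n+\theta_n) + 2\sum_{n>N} r_n\cos(t\alpha_n+\theta_n) \ge 2(1-\epsilon)\sum_{n\le N} r_n - 2\sum_{n>N} r_n \ge 2(1-\epsilon)(S-\epsilon) - 2\epsilon.
\end{equation*}
Since such $t$ can be taken arbitrarily large, $\limsup_{x\to\infty}F(x)\ge 2(1-\epsilon)(S-\epsilon)-2\epsilon$, and letting $\epsilon\to 0^+$ gives $\limsup_{x\to\infty}F(x)\ge 2S$. Combined with the trivial upper bound this yields the first assertion, and the second follows identically (choosing instead $2\pi\beta_n\equiv \pi-\theta_n$, so that each cosine is near $-1$).

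The only genuine subtlety — and the step to handle with a little care — is the truncation interchange: one must split the series at level $N$ \emph{before} invoking Kronecker's theorem, because Lemma \ref{kroneckerlemma} applies only to finitely many $\alpha_n$, and the tail $\sum_{n>N} r_n\cos(t\alpha_n+\theta_n)$ must be controlled purely by absolute convergence, uniformly in $t$. This is exactly what the hypothesis $\sum_n|C_n|<\infty$ provides, so no further input is needed. (If one wanted the $\liminf$ statement via a direct argument rather than by symmetry, note that $\alpha_n>0$ is not required anywhere; linear independence over $\mathbb{Q}$ is used only through Lemma \ref{kroneckerlemma}, and the $\theta_n$ are arbitrary, so the same Kronecker approximation with shifted targets produces cosines near $-1$.)
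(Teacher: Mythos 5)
Your proposal is correct and follows essentially the same route as the paper: write $C_n=|C_n|e^{i\theta_n}$, truncate at $N$ using $\sum_n|C_n|<\infty$, apply Kronecker's theorem (Lemma \ref{kroneckerlemma}) to $\alpha_1,\dots,\alpha_N$ with targets making each cosine near $+1$ (resp.\ $-1$), and bound the tail by absolute convergence. The only differences are cosmetic (e.g.\ stating the trivial bound $F(x)\le 2S$ explicitly and using $\cos>1-\epsilon$ instead of $1+O(\epsilon^2)$).
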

	\begin{proof}
		Let $C_n=|C_n|\exp(\di \theta_n)$.
		Thus, $C_n\exp(x\di \alpha_n)+\bar C_n\exp(-x\di \alpha_n)=2|C_n|\cos(\theta_n+x\alpha_n).$ Fix $N$ such that $\sum_{n>N}|C_n|<\epsilon$. Since $\{\alpha_n\}_{n=1}^{N}$ are linearly independent over $\mathbb{Q}$, from Kronecker's theorem (Lemma \ref{kroneckerlemma}) for any $\epsilon>0$, we can find an arbitrarily large number $x$ such that $\left|\left|x\frac{\alpha_n}{2\pi}+\frac{\theta_n}{2\pi}\right|\right|<\frac{\epsilon}{2\pi}$,
		where $||x||$ denotes distance of $x$ from the nearest integer.
		Thus $2\pi k-\epsilon<x\alpha_n+\theta_n\leq 2\pi k+\epsilon$ for some integer $k.$
		Thus $\cos(x\alpha_n+\theta_n)=1+O(\epsilon^2)$ for all $n\leq N.$
		So we get
		\begin{equation*}
			\begin{split}
				\sum_{n=1}^{\infty}\big(C_n\exp(x\di \alpha_n)+\bar C_n\exp(-x\di \alpha_n) \big) &=2\sum_{n\leq N}|C_n|(1+O(\epsilon^2))+2\sum_{n>N}|C_n|\cos(x\alpha_n+\theta_n)\\
				&=2\sum_{n\leq N}|C_n|+O\bigg(\sum_{n\leq N}|C_n|\epsilon^2)\bigg)+2\sum_{n>N}|C_n|\cos(x\alpha_n+\theta_n)\\
				&= 2\sum_{n\leq N}|C_n|+O(S\epsilon^2))+O(\epsilon)=2S+O(\epsilon).
			\end{split}
		\end{equation*}
		Now since $\sum_{n=1}^{\infty}\left|2C_n\cos(\theta_n+x\alpha_n)\right|\leq2S$,
		$$\limsup_{x\to \infty}\sum_{n=1}^{\infty}\big(C_n\exp(x\di \alpha_n)+\bar C_n\exp(-x\di \alpha_n) \big)=2S. $$
Also, for any $\epsilon>0$, we can find an arbitrarily large number $x$ such that $\left|\left|x\frac{\alpha_n}{2\pi}+\frac{\theta_n-\pi}{2\pi}\right|\right|<\frac{\epsilon}{2\pi}.$ For such $x$, we have $|\cos(x\alpha_n+\theta_n)+1|<\epsilon$.
Using a similar method as shown above, it can be seen that
$$\liminf_{x\to \infty}\sum_{n=1}^{\infty}\big(C_n\exp(x\di \alpha_n)+\bar C_n\exp(-x\di \alpha_n) \big)=-2S. $$
\end{proof}

\begin{lemma}\label{lemma4}
	Let $\delta>0$. Assuming RH and \eqref{conj1}, as $\delta\to0$, we have
	\begin{equation}\label{8.6}
		A:=\sum_{\gamma_m}\frac{1}{|\zeta'(\rho_m)||\rho_m|^{1+\delta}}\asymp \frac{1}{\delta},
	\end{equation}
	\begin{equation}\label{8.7}
		B:=\sum_{\gamma_m}\frac{|\zeta(2\rho_m)|}{|\zeta'(\rho_m)||\rho_m|^{1+\delta}}\asymp \frac{1}{\delta},
	\end{equation}
	and 
	\begin{equation}\label{8.8}
		C:=\sum_{\gamma_m}\frac{|\zeta^4(\rho_m/2)|}{|\zeta'(\rho_m)||\rho_m|^{2+\delta}}\asymp \frac{1}{\delta}.
	\end{equation}
\end{lemma}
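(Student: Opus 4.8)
The plan is to reduce each of the three sums to one of the shape $\displaystyle\sum_{\gamma_m}\frac{1}{|\zeta'(\rho_m)|\,|\gamma_m|^{1+\delta'}}$ with $\delta'=\delta+O(\epsilon)$, and then to evaluate that by Abel summation against the weighted zero‑count, using only the one‑sided bounds \eqref{hlz} and \eqref{conj1}. Since RH is in force, $\rho_m=\tfrac12+i\gamma_m$, so $|\rho_m|=(\tfrac14+\gamma_m^2)^{1/2}\asymp|\gamma_m|$, and since there is no zero with $|\gamma_m|<14$ nothing is singular near the origin. For a nonnegative weight $\{w_m\}$, writing $W(t):=\sum_{|\gamma_m|\le t}w_m/|\zeta'(\rho_m)|$, I would use
\[
\sum_{|\gamma_m|\le X}\frac{w_m}{|\zeta'(\rho_m)|\,|\gamma_m|^{1+\delta'}}=\frac{W(X)}{X^{1+\delta'}}+(1+\delta')\int_{14}^{X}\frac{W(t)}{t^{2+\delta'}}\,\dd t,
\]
so that once $W(t)$ is sandwiched between constant multiples of $t$, letting $X\to\infty$ and using $\int_{14}^{\infty}t^{-1-\delta'}\,\dd t\asymp(\delta')^{-1}\asymp\delta^{-1}$ finishes the job, the boundary term tending to $0$.

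For \eqref{8.6} ($w_m\equiv1$): the lower bound $A\gg\delta^{-1}$ comes from $W(t)=N(t)\gg t(\log t)^{1/4}\gg t$ in \eqref{hlz}, and the upper bound $A\ll\delta^{-1}$ from $N(t)\ll_\epsilon t^{1+\epsilon}$ in \eqref{conj1} on taking $\epsilon=\delta/2$, so the effective exponent becomes $1+\delta-\tfrac{\delta}{2}>1$. For \eqref{8.8} I would first strip the factor $|\zeta^4(\rho_m/2)|$ via the functional equation \eqref{fe zeta}: on RH, $\zeta(\rho_m/2)=\chi\!\big(\tfrac14+\tfrac{i\gamma_m}{2}\big)\zeta\!\big(\tfrac34-\tfrac{i\gamma_m}{2}\big)$, where \eqref{chi} gives $|\chi(\tfrac14+\tfrac{i\gamma_m}{2})|\asymp|\gamma_m|^{1/4}$, while RH (the Lindel\"{o}f bound together with $1/\zeta(\sigma+it)\ll t^{\epsilon}$ for $\tfrac12<\sigma<1$) gives $|\gamma_m|^{-\epsilon}\ll|\zeta(\tfrac34-\tfrac{i\gamma_m}{2})|\ll|\gamma_m|^{\epsilon}$; hence $|\zeta^4(\rho_m/2)|\asymp|\gamma_m|^{1+O(\epsilon)}$ and \eqref{8.8} becomes a sum of the type \eqref{8.6} with $1+\delta$ replaced by $1+\delta+O(\epsilon)$, giving $C\asymp\delta^{-1}$. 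For the upper bound in \eqref{8.7} I would insert $|\zeta(2\rho_m)|=|\zeta(1+2i\gamma_m)|\ll\log|\gamma_m|$, absorb the logarithm into an $\epsilon$‑power of $|\gamma_m|$, and proceed exactly as for $A$.

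The delicate point, and the one I expect to be the main obstacle, is the \emph{lower} bound in \eqref{8.7}: there $|\zeta(1+2i\gamma_m)|\gg1/\log|\gamma_m|$ only yields $B\gg\log(1/\delta)$, so I must rule out that $\zeta$ is atypically small on the $1$‑line along the ordinates carrying the bulk of the weight $1/|\zeta'(\rho_m)|$. I would do this by Cauchy--Schwarz,
\[
N(T)^{2}\le\Bigl(\sum_{|\gamma_m|\le T}\frac{1}{|\zeta'(\rho_m)|\,|\zeta(2\rho_m)|}\Bigr)\Bigl(\sum_{|\gamma_m|\le T}\frac{|\zeta(2\rho_m)|}{|\zeta'(\rho_m)|}\Bigr),
\]
bounding the first factor by $\ll_\epsilon T^{1+\epsilon}$ via \eqref{conj1} and the bound $1/|\zeta(1+it)|\ll t^{\epsilon}$ (a fortiori from \eqref{vk}), and the left side by $\gg T^{2}(\log T)^{1/2}$ via \eqref{hlz}; this gives $\sum_{|\gamma_m|\le t}|\zeta(2\rho_m)|/|\zeta'(\rho_m)|\gg t^{1-\epsilon}$, which fed into the Abel identity above (with $\epsilon$ small and then fixed relative to $\delta$) yields $B\gg\delta^{-1}$. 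The remaining care is keeping the implied constants uniform in $\delta$ as $\epsilon=\epsilon(\delta)\to0$; the vanishing of the boundary contributions and the comparison $|\rho_m|\asymp|\gamma_m|$ are routine. Thus the hardest step is passing from the pure‑size information \eqref{hlz}--\eqref{conj1} on $1/|\zeta'(\rho_m)|$ to control of the same sum twisted by the $1$‑line values $\zeta(1+2i\gamma_m)$.
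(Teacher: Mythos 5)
Your proposal is correct at the same level of rigor as the paper, and for \eqref{8.6}, \eqref{8.8} and the upper bound in \eqref{8.7} it is essentially the paper's own argument: partial (Stieltjes) summation against $S(t)=\sum_{0<\gamma_m\le t}1/|\zeta'(\rho_m)|$, with \eqref{hlz} giving $S(t)\gg t$ for the lower bounds, \eqref{conj1} with $\epsilon\asymp\delta$ for the upper bounds, and, for \eqref{8.8}, exactly the same reduction $\zeta(\tfrac14+\tfrac{it}{2})\asymp t^{1/4}\zeta(\tfrac34-\tfrac{it}{2})$ followed by the RH bounds $t^{-\epsilon}\ll|\zeta(\tfrac34-it)|\ll t^{\epsilon}$ (the paper cites Titchmarsh, Equations (14.2.5)--(14.2.6), which is the same input you invoke). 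The one genuine deviation is the lower bound in \eqref{8.7}. The paper does it directly: under RH, Littlewood's theorem gives $|\zeta(1+2it)|\gg t^{-\epsilon}$, and with $\epsilon=\delta/2$ the sum is bounded below by $\int \dd S(t)/t^{1+3\delta/2}\gg1/\delta$; no Cauchy--Schwarz is needed, and in particular the lower bound for $B$ uses only \eqref{hlz} plus this pointwise bound, not \eqref{conj1}. Your Cauchy--Schwarz detour is valid once the notation is repaired: the left-hand side of your displayed inequality must be $\bigl(\sum_{|\gamma_m|\le T}1/|\zeta'(\rho_m)|\bigr)^{2}$ (which is what you in fact use when you bound it by $T^{2}(\log T)^{1/2}$ via \eqref{hlz}), not the square of the usual zero-counting function; with that reading, $\sum_{|\gamma_m|\le T}|\zeta(2\rho_m)|/|\zeta'(\rho_m)|\gg T^{1-O(\epsilon)}$ follows and feeds into your Abel identity to give $B\gg1/\delta$. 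What each route buys: the paper's is shorter and economizes on hypotheses, while yours avoids quoting a named lower bound for $\zeta$ on the $1$-line beyond \eqref{vk} but pays by invoking \eqref{conj1} a second time. Two calibration points: your claim that the pointwise route ``only yields $B\gg\log(1/\delta)$'' is not accurate (keeping the $(\log t)^{1/4}$ from \eqref{hlz}, even the unconditional $|\zeta(1+it)|\gg1/\log t$ gives a power of $1/\delta$, and Littlewood's RH bound $\gg1/\log\log t$ gives essentially $1/\delta$ up to a $\log(1/\delta)$ factor); and the uniformity issue you rightly flag—implied constants in \eqref{conj1} and in conversions like $\log t\ll_\epsilon t^{\epsilon}$ when $\epsilon=\epsilon(\delta)\to0$—afflicts your Cauchy--Schwarz chain exactly as much as it does the paper's direct argument, so it does not actually favor your route; both proofs treat these $\epsilon$-dependent constants as benign.
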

\begin{proof}
	Let $$S(T):=\sum_{0<\gamma_m\leq T}\frac{1}{|\zeta'(\rho_m)|}.$$
	Since $S(t)$ is an increasing function, we have $\dd S(t)\geq0.$
	Writing the sum $A$ as an integral and then performing integration by parts, we see that
	\begin{equation*}
		\begin{split}
			A=2\sum_{\gamma_m>0}\frac{1}{|\zeta'(\rho_m)||\rho_m|^{1+\delta}}=2\int_{1}^{\infty}\frac{\dd S(t)}{(\frac{1}{4}+t^2)^{\frac{1+\delta}{2}}}\asymp\int_{1}^{\infty}\frac{\dd S(t)}{t^{1+\delta}}=\frac{S(t)}{t^{1+\delta}}\bigg|_{1}^{\infty}+(1+\delta)\int_{1}^{\infty}\frac{S(t)}{t^{2+\delta}}\dd t.
		\end{split}
	\end{equation*}
	To obtain the upper bound, we use \eqref{conj1} with  $\epsilon=\delta/2$ so as to get
	\begin{equation}\label{A-one}
		A\ll \frac{t^{1+\epsilon}}{t^{1+\delta}}\bigg|_{1}^{\infty}+(1+\delta)\int_{1}^{\infty}\frac{t^{1+\epsilon}}{t^{2+\delta}}\dd t=\frac{1}{t^{\delta/2}}\bigg|_{1}^{\infty}+(1+\delta)\int_{1}^{\infty}\frac{1}{t^{1+\delta/2}}\dd t\asymp\frac{1}{\delta}.
	\end{equation}
	Thus we have $A\ll\delta^{-1}$. 
	To prove the lower bound, using \eqref{hlz}, we have
	\begin{equation}\label{A-two}
		\begin{split}
			A\gg \frac{t\log^{1/4}t}{t^{1+\delta}}\bigg|_{1}^{\infty}+(1+\delta)\int_{1}^{\infty}\frac{t\log^{1/4}t}{t^{2+\delta}}\dd t=\frac{\log^{1/4}t}{t^{\delta}}\bigg|_{1}^{\infty}+(1+\delta)\int_{1}^{\infty}\frac{\log^{1/4}t}{t^{1+\delta}}\dd t\asymp \frac{1}{\delta}.
		\end{split}
	\end{equation}
	From \eqref{A-one} and \eqref{A-two}, we conclude that $A\asymp \frac{1}{\delta}.$
	
	Next, we prove that $B\asymp\delta^{-1}.$
	We know from the results of Titchmarch \cite{titchmarsh} and Littlewood \cite[Theorem 1]{littlewood} that for any given $\epsilon>0$, $\zeta(1+\di t)\ll t^\epsilon$ and $\zeta(1+\di t)\gg t^{-\epsilon}.$
	Taking $\epsilon=\delta/2$ and analyzing the sum $B$ in a similar way as $A$, we see that
	\begin{equation*}
		\begin{split}
			B=\sum_{\gamma_m}\frac{|\zeta(2\rho_m)|}{|\zeta'(\rho_m)||\rho_m|^{1+\delta}}\asymp\int_{1}^{\infty}|\zeta(1+2\di t)|\frac{\dd S(t)}{t^{1+\delta}}\ll \int_{1}^{\infty}\frac{\dd S(t)}{t^{1+\delta-\epsilon}}\ll \int_{1}^{\infty}\frac{\dd S(t)}{t^{1+\delta/2}},
		\end{split}
	\end{equation*}
thereby leading to $B\ll \delta^{-1}.$
We prove the lower bound as follows:
	\begin{equation*}
		\begin{split}
			B=\sum_{\gamma_m}\frac{|\zeta(2\rho_m)|}{|\zeta'(\rho_m)||\rho_m|^{1+\delta}}\asymp\int_{1}^{\infty}|\zeta(1+2\di t)|\frac{\dd S(t)}{t^{1+\delta}}\gg\int_{1}^{\infty}\frac{\dd S(t)}{t^{1+3\delta/2}}\gg\frac{1}{\delta}.
		\end{split}
	\end{equation*}
	We now prove $C\asymp\delta^{-1}.$ Note that
	\begin{equation*}
		\begin{split}
			C=\sum_{\gamma_m>0}\frac{|\zeta^4(\rho_m/2)|}{|\zeta'(\rho_m)||\rho_m|^{2+\delta}}\asymp\int_{1}^{\infty}\bigg|\zeta^4\bigg(\frac{1}{4}+\frac{\di t}{2}\bigg)\bigg|\frac{\dd S(t)}{t^{2+\delta}}.
		\end{split}
	\end{equation*}
	Using the fact $\zeta(s)\asymp t^{\frac{1}{2}-\sigma}\zeta(1-s)$, we have
	$\zeta(1/4+\di t)\asymp t^{1/4}\zeta(3/4-\di t).$
	Using RH, Titchmarsh\cite[p.~337, Equations (14.2.5), (14.2.6)]{titch} proved that
	$\zeta(3/4-\di t)\ll t^{\epsilon}$ and $\zeta(3/4-\di t)\gg t^{-\epsilon}.$
	Thus $\zeta(1/4+\di t)\ll t^{1/4+\epsilon}$ and $\zeta(1/4+\di t)\gg t^{1/4-\epsilon}.$ 
	Thus we have $\zeta^4(1/4+\di t)\ll t^{1+4\epsilon}$ and $\zeta^4(1/4+\di t)\gg t^{1-4\epsilon}$. We choose $\epsilon=\delta/8.$
	Thus we have,
	$$
	C\asymp \int_{1}^{\infty}\bigg|\zeta^4\bigg(\frac{1}{4}+\frac{\di t}{2}\bigg)\bigg|\frac{\dd S(t)}{t^{2+\delta}}\ll\int_{1}^{\infty}\frac{\dd S(t)}{t^{1+\delta-4\epsilon}}=\int_{1}^{\infty}\frac{\dd S(t)}{t^{1+\delta/2}} \ll \frac{1}{\delta}
	$$
	and
	$$
	C\asymp \int_{1}^{\infty}\bigg|\zeta^4\bigg(\frac{1}{4}+\frac{\di t}{2}\bigg)\bigg|\frac{\dd S(t)}{t^{2+\delta}}\gg\int_{1}^{\infty}\frac{\dd S(t)}{t^{1+3\delta/2}} \gg \frac{1}{\delta}.
	$$
		Thus $C\asymp \frac{1}{\delta}$.
\end{proof}

\begin{theorem}\label{mu riesz sign changes}
	Let $\delta>0$. Assume RH,  simplicity of the non-trivial zeros of $\zeta(s)$, Equation \eqref{conj1}, and the Linear Independence conjecture. Then there exists an absolute positive constant $c$ such that for all sufficiently small $\delta$,
	\begin{equation}\label{final 1}
	\limsup_{y\to \infty} \sum_{n\leq y}\frac{\mu(n)(1-n/y)^\delta}{\sqrt{y}}\geq\frac{c}{\delta}\hspace{4mm}\textup{and}\hspace{4mm}
	\liminf_{y\to \infty} \sum_{n\leq y}\frac{\mu(n)(1-n/y)^\delta}{\sqrt{y}}\leq-\frac{c}{\delta}.
\end{equation}
\end{theorem}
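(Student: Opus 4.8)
The plan is to apply the Vorono\"{\dotlessi}-type machinery already developed for $\mu(n)$, namely Theorem \ref{voronoi mu(n)}, to the Riesz weight $\phi(x)=(1-x/y)^{\delta}$ for $x<y$ and $0$ otherwise, and then to extract the oscillation from the resulting series over the non-trivial zeros. First I would record that this $\phi$ is the Mellin--Barnes integral of $\Phi(s)=y^{s}\Gamma(s)\Gamma(\delta+1)/\Gamma(s+\delta+1)$ (this is precisely Lemma \ref{beta}), which is holomorphic in $-1<\Re s<2$ except for a simple pole at $s=0$; its residue there is $k=\Gamma(\delta+1)/\Gamma(\delta+1)=1$, and the decay hypothesis \eqref{delta mu(n)} holds because $\Gamma(s)/\Gamma(s+\delta+1)\ll t^{-1-\delta}$ by Stirling \eqref{strivert}. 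Plugging into Theorem \ref{voronoi mu(n)} and using $\int_0^\infty \phi(x)x^{\rho_m-1}\,\dd x=y^{\rho_m}\Gamma(\rho_m)\Gamma(\delta+1)/\Gamma(\rho_m+\delta+1)$ from Lemma \ref{beta} again, one obtains
\begin{equation*}
\sum_{n\le y}\mu(n)(1-n/y)^{\delta}= -2+\lim_{T_n\to\infty}\sum_{|\gamma_m|<T_n}\frac{\Gamma(\delta+1)\Gamma(\rho_m)}{\zeta'(\rho_m)\Gamma(\rho_m+\delta+1)}\,y^{\rho_m}+E(y),
\end{equation*}
where $E(y)$ is the contribution of the Bessel-type series in Theorem \ref{voronoi mu(n)} evaluated at this $\phi$; the routine estimate (using $\sum\mu(n)/n=0$ and the compact support of $\phi$) shows $E(y)=o(\sqrt y)$ as $y\to\infty$, so after dividing by $\sqrt y$ it is negligible.

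Next I would analyze the zero-sum. Under RH write $\rho_m=\tfrac12+i\gamma_m$, so $y^{\rho_m}/\sqrt y=y^{i\gamma_m}=\exp(i\gamma_m\log y)$, and the main term becomes
\begin{equation*}
\frac{1}{\sqrt y}\sum_{\gamma_m>0}\left(C_m e^{i\gamma_m\log y}+\overline{C_m}\,e^{-i\gamma_m\log y}\right),\qquad
C_m:=\frac{\Gamma(\delta+1)\Gamma(\tfrac12+i\gamma_m)}{\zeta'(\rho_m)\Gamma(\tfrac12+i\gamma_m+\delta+1)}.
\end{equation*}
By Stirling, $|\Gamma(\tfrac12+i\gamma_m)/\Gamma(\tfrac32+\delta+i\gamma_m)|\asymp |\gamma_m|^{-1-\delta}$, so $\sum_m|C_m|\asymp \Gamma(\delta+1)\sum_m |\zeta'(\rho_m)|^{-1}|\gamma_m|^{-1-\delta}$, which is exactly the quantity $A$ in Lemma \ref{lemma4}. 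Hence $\sum_m|C_m|=:S\asymp 1/\delta$ as $\delta\to0$, under RH and \eqref{conj1} together with the Heap--Li--Zhao lower bound \eqref{hlz}. Now the Linear Independence conjecture says the $\gamma_m$ (equivalently $\gamma_m/(2\pi)$, up to the harmless rescaling by $\log y$ which acts as the free real parameter) are linearly independent over $\mathbb{Q}$; applying Lemma \ref{lemma3} with $C_n=C_m$ and $\alpha_n=\gamma_m$ and the variable $x=\log y\to\infty$ gives $\limsup$ of the zero-sum equal to $2S$ and $\liminf$ equal to $-2S$. Since the $-2$ and the $o(1)$ from $E(y)/\sqrt y$ do not affect $\limsup/\liminf$, this yields
\begin{equation*}
\limsup_{y\to\infty}\frac{1}{\sqrt y}\sum_{n\le y}\mu(n)(1-n/y)^{\delta}=2S\ge \frac{c}{\delta},\qquad
\liminf_{y\to\infty}\frac{1}{\sqrt y}\sum_{n\le y}\mu(n)(1-n/y)^{\delta}=-2S\le -\frac{c}{\delta},
\end{equation*}
for a suitable absolute constant $c>0$ and all sufficiently small $\delta$, which is \eqref{final 1}.

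The main obstacle is justifying the interchange of the limit $y\to\infty$ (i.e.\ $\log y\to\infty$ in Kronecker's theorem) with the infinite sum over zeros — that is, making the bracketed/conditionally convergent series $\lim_{T_n\to\infty}\sum_{|\gamma_m|<T_n}$ behave like the absolutely convergent series to which Lemma \ref{lemma3} applies. This is where one genuinely needs the conjectural input: the bound \eqref{conj1} (weaker than Gonek--Hejhal) makes $\sum_m|C_m|$ finite, so the series is absolutely convergent, the bracketing in Theorem \ref{voronoi mu(n)} is then superfluous, and Lemma \ref{lemma3} applies verbatim. A secondary technical point is the $o(\sqrt y)$ bound on the Bessel-type correction term $E(y)$; this should follow by estimating $\int_0^\infty(\phi(t)-1)\sin^2(\pi/(nt))t^{-1}\,\dd t$ crudely and summing against $\mu(n)/n$, exactly as in the derivation of Ramanujan's identity \eqref{mr gen} in Section \ref{mr gen section}, but it must be done carefully to confirm uniformity is not needed in $\delta$ for the final $\limsup$/$\liminf$ assertion. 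Everything else is routine bookkeeping with Stirling's formula and the lemmas already in hand.
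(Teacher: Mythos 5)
Your argument is correct and follows essentially the same route as the paper: the Riesz weight $\phi(x)=(1-x/y)^{\delta}$, Lemma \ref{beta} for the Mellin transforms, Stirling's formula to compare the zero-sum coefficients with the quantity $A$ of Lemma \ref{lemma4} (so $\sum_m|C_m|\asymp 1/\delta$), and Lemma \ref{lemma3} under LI with $x=\log y$, the remaining terms being $O(1)$ after division by $\sqrt{y}$. The only cosmetic difference is that the paper works from the intermediate identity \eqref{eq96} and bounds the $\Re(s)=-\tau$ integral directly by $O(y^{-\tau})$, whereas you invoke the final form of Theorem \ref{voronoi mu(n)} and must estimate the $\sin^2$-series $E(y)$; since by the proof of that theorem this series equals the same shifted integral (and a crude absolute estimate also gives $E(y)=o(\sqrt{y})$), your claim is justified.
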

\begin{proof}
	In \ref{eq96}, we take 
	\begin{equation}\label{phi special}
	\phi(x)=
	\begin{cases}
		\left(1-\frac{x}{y}\right)^{\delta}&\text {if $x\leq y$},\\
		0 &\text{otherwise}.
	\end{cases}
	\end{equation}
	For a fixed $a>0$ as $|s|>1$, we have
	\begin{equation}\label{asmp}
		\frac{\Gamma(s)}{\Gamma(s+a)}=\frac{1}{s^a}+O\bigg(\frac{1}{s^{a+1}}\bigg).
	\end{equation}
	From Lemma \ref{beta} and \eqref{asmp},
	\begin{equation*}
		\begin{split}
			\frac{1}{\zeta'(\rho_m)}\int_{0}^{y}\bigg(1-\frac{x}{y}\bigg)^\delta x^{\rho_m-1}\dd x=\frac{y^{\rho_m}}{\zeta'(\rho_m)}\frac{\Gamma(1+\delta)\Gamma(\rho_m)}{\Gamma(1+\rho_m+\delta)} \asymp\frac{y^{\rho_m}}{\zeta'(\rho_m)}\frac{\Gamma(1+\delta)}{\rho_m^{1+\delta}} + O\bigg(\frac{\sqrt{y}}{\zeta'(\rho_m)\rho_m^2}\bigg).
		\end{split}
	\end{equation*}
	Since, using the ideas in the proof of Lemma \ref{conj1} and under the same hypotheses, it is evident that the series $\sum\limits_{\rho_m}\frac{1}{\left|\zeta'(\rho_m)\rho_m^2\right|}$ converges, we get
	\begin{equation*}
		\sum_{\gamma_m}\frac{1}{\zeta'(\rho_m)}\int_{0}^{y}\bigg(1-\frac{x}{y}\bigg)^\delta x^{\rho_m-1}\dd x=\sqrt{y}\sum_{\gamma_m}\frac{y^{\di \gamma_m}}{\zeta'(\rho_m)}\frac{\Gamma(1+\delta)}{\rho_m^{1+\delta}} + O(\sqrt{y}).
	\end{equation*}
	Now we apply Lemma \ref{lemma3} with $x=\log y$, $C_m=\rho^{-\delta-1}_m/\zeta'(\rho_m)$, assume the Linear Independence conjecture so that we can let $\alpha_m=\gamma_m, m>0$, and use \eqref{8.6} to obtain
	\begin{equation}\label{to get final}
	\limsup_{y\to\infty}\sum_{\gamma_m}\frac{y^{\di\gamma_m}}{\rho_m^{1+\delta}\zeta'(\rho_m)}\geq\frac{c'}{\delta}\hspace{4mm}
	\text{and}\hspace{4mm}
	\liminf_{y\to\infty}\sum_{\gamma_m}\frac{y^{\di\gamma_m}}{\rho_m^{1+\delta}\zeta'(\rho_m)}\leq-\frac{c'}{\delta}
	\end{equation}
	for some absolute constant $c'>0.$
	The Mellin transform of $\phi(x)$ is 
	$
	\Phi(s)=y^s\frac{\Gamma(1+\delta)\Gamma(s)}{\Gamma(1+\delta+s)}
	$ and
	\begin{equation*}
		\int_{(-\tau)}\frac{\Phi(s)}{\zeta(s)}\dd s\ll \int_{(-\tau)}\bigg|\frac{\Phi(s)}{\zeta(s)}\bigg| |\dd s|\ll y^{-\tau}\int_{(-\tau)}\bigg|\frac{\Gamma(1+\delta)\Gamma(s)}{\Gamma(1+\delta+s)\zeta(s)}\bigg||\dd s|=O(y^{-\tau}). 
	\end{equation*}
Moreover, the residue of $\Phi(s)$ at $s=0$	is $1$. Therefore, equation \eqref{eq96} becomes
	\begin{equation*}
		\begin{split}
			\sum_{n\leq y}\mu(n)(1-n/y)^\delta&= \lim_{T_n\to \infty}\sum_{|\gamma_m|<T_n}\sqrt{y}\frac{y^{\di \gamma_m}}{\zeta'(\rho_m)}\frac{\Gamma(1+\delta)}{\rho_m^{1+\delta}} + O(\sqrt{y})-2+O(y^{-\tau}). 
		\end{split}
	\end{equation*}
	Dividing both sides by $\sqrt{y}$, we get
	\begin{equation*}
		\begin{split}
			\frac{1}{\sqrt{y}}\sum_{n\leq y}\mu(n)(1-n/y)^{\delta}&= \lim_{T_n\to \infty}\sum_{|\gamma_m|<T_n}\frac{y^{\di \gamma_m}}{\zeta'(\rho_m)}\frac{\Gamma(1+\delta)}{\rho_m^{1+\delta}} + O(1),
		\end{split}
	\end{equation*}
	which, along with \eqref{to get final}, implies \eqref{final 1}.
\end{proof}

\begin{theorem}\label{lambda riesz sign changes}
	Let $\delta>0$. Assume RH,  simplicity of the non-trivial zeros of $\zeta(s)$, Equation \eqref{conj1}, and the Linear Independence conjecture. Then there exists an absolute positive constant $c$ such that for all sufficiently small $\delta$,
	\begin{equation}\label{final 2}
	 \limsup_{y\to \infty} \sum_{n\leq y}\frac{\lambda(n)(1-n/y)^\delta}{\sqrt{y}}\geq\frac{c}{\delta}\hspace{4mm}\textup{and}\hspace{4mm}
	\liminf_{y\to \infty} \sum_{n\leq y}\frac{\lambda(n)(1-n/y)^\delta}{\sqrt{y}}\leq-\frac{c}{\delta}.
\end{equation}
\end{theorem}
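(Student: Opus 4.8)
The plan is to repeat the argument of Theorem~\ref{mu riesz sign changes} essentially verbatim, but with the Voronoi summation formula for $\lambda(n)$ in the role of the one for $\mu(n)$. Fix a small $\delta>0$ and take $\phi(x)=(1-x/y)^{\delta}$ for $0<x\le y$ and $\phi(x)=0$ otherwise. By Lemma~\ref{beta} its Mellin transform is $\Phi(s)=y^{s}\,\Gamma(1+\delta)\,\Gamma(s)/\Gamma(s+1+\delta)$, which is holomorphic in $-1<\Re s<2$ apart from a simple pole at $s=0$, and which by \eqref{strivert} satisfies $\Phi(\sigma+it)\ll_{y,\delta}|t|^{-1-\delta}$; hence $\phi$ is admissible in Theorem~\ref{vsf lambda(n)}, say with $\tau=\delta/2$. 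Rather than quote the final identity \eqref{vsf lambda eqn}, I would use the intermediate form \eqref{first expression for I}--\eqref{added} from its proof, i.e.
\begin{equation*}
\sum_{n\le y}\lambda(n)\Bigl(1-\tfrac{n}{y}\Bigr)^{\delta}=\frac{1}{2\pi i}\int_{(-\tau)}\frac{\zeta(2s)}{\zeta(s)}\Phi(s)\,\dd s+R_{0}+R_{1/2}+\lim_{T_n\to\infty}\sum_{|\gamma_m|<T_n}R_{\rho_m},
\end{equation*}
with $R_{1/2}=\tfrac{1}{2\zeta(1/2)}\int_{0}^{\infty}\phi(x)x^{-1/2}\dd x$, $R_{\rho_m}=\tfrac{\zeta(2\rho_m)}{\zeta'(\rho_m)}\int_{0}^{\infty}\phi(x)x^{\rho_m-1}\dd x$, and $R_{0}=\Res_{s=0}\tfrac{\zeta(2s)}{\zeta(s)}\Phi(s)$. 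The advantage of \eqref{added} over \eqref{vsf lambda eqn} is that the integral over $\Re s=-\tau$ can be bounded directly: there $\zeta(2s)/\zeta(s)\ll_{\tau}(1+|t|)^{\tau}$ by the functional equation \eqref{fe zeta} and \eqref{chi}, exactly as in \eqref{left line 1}, while $\Phi(-\tau+it)\ll_{y,\delta}y^{-\tau}(1+|t|)^{-1-\delta}$, so (since $\tau<\delta$) this integral is $O_{y,\delta}(y^{-\tau})$. This sidesteps the series over $c(n)$ entirely; alternatively one could start from \eqref{vsf lambda eqn} and, as in the proof of Corollary~\ref{lambda riesz}, show that the $c(n)$-series contributes only $O(\sqrt y)$, using \eqref{c(n) over n} and Theorem~\ref{c(n) log(n) over n thm}.

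Next I would evaluate the remaining terms. By Lemma~\ref{beta}, $R_{1/2}=\frac{\sqrt{\pi}\,\Gamma(1+\delta)}{2\zeta(1/2)\,\Gamma(3/2+\delta)}\sqrt{y}=O(\sqrt{y})$ with implied constant bounded uniformly for small $\delta$; and, using $\zeta(2s)/\zeta(s)=1+\log(2\pi)s+O(s^{2})$ near $s=0$, the residue $R_{0}$ is linear in $\log y$, hence $o(\sqrt y)$, again with control uniform in small $\delta$. For the zeros, RH gives $\rho_m=\tfrac{1}{2}+i\gamma_m$, so $y^{\rho_m}=\sqrt{y}\,y^{i\gamma_m}$, and by Lemma~\ref{beta} together with the asymptotic \eqref{asmp} (with $a=1+\delta$),
\begin{equation*}
R_{\rho_m}=\sqrt{y}\,\Gamma(1+\delta)\,\frac{\zeta(2\rho_m)}{\zeta'(\rho_m)\,\rho_m^{1+\delta}}\,y^{i\gamma_m}+\sqrt{y}\cdot O\!\left(\frac{|\zeta(2\rho_m)|}{|\zeta'(\rho_m)|\,|\rho_m|^{2+\delta}}\right).
\end{equation*}
By \eqref{8.7} of Lemma~\ref{lemma4} the principal series converges absolutely, so the bracketing in $\lim_{T_n\to\infty}$ may be discarded; the error series is $O(1)$ by the same partial summation (the additional power of $|\rho_m|$ makes it converge uniformly down to $\delta=0$). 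Collecting everything and dividing by $\sqrt y$, and writing $C_{m}:=\Gamma(1+\delta)\,\zeta(2\rho_m)/(\zeta'(\rho_m)\rho_m^{1+\delta})$,
\begin{equation*}
\frac{1}{\sqrt{y}}\sum_{n\le y}\lambda(n)\Bigl(1-\tfrac{n}{y}\Bigr)^{\delta}=\sum_{\gamma_m>0}\bigl(C_{m}\,y^{i\gamma_m}+\overline{C_{m}}\,y^{-i\gamma_m}\bigr)+E_{\delta}(y),\qquad \limsup_{y\to\infty}|E_{\delta}(y)|\le M,
\end{equation*}
for an absolute constant $M$ independent of (small) $\delta$.

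The final step is to apply Lemma~\ref{lemma3} with $x=\log y$ and $\alpha_m=\gamma_m$, the Linear Independence conjecture supplying the $\mathbb{Q}$-linear independence of the positive ordinates. Since $\sum_{\gamma_m>0}|C_m|=\tfrac{1}{2}\Gamma(1+\delta)\,B\asymp\delta^{-1}$ by \eqref{8.7}, Lemma~\ref{lemma3} gives $\limsup_{y\to\infty}\sum_{\gamma_m>0}(C_m y^{i\gamma_m}+\overline{C_m}y^{-i\gamma_m})\gg\delta^{-1}$ and a matching $\liminf\ll-\delta^{-1}$; combining with the bound $|E_\delta(y)|\le M$ and taking $\delta$ small enough that the $\delta^{-1}$ term swamps $M$ yields \eqref{final 2} for a suitable absolute $c>0$. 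The only genuinely delicate point is the uniformity bookkeeping: one must make sure that none of the error contributions --- $R_0$, $R_{1/2}$, the integral on $\Re s=-\tau$, and the $O(|\rho_m|^{-2-\delta})$ tail of the zero-sum --- carries a constant that degenerates as $\delta\to0$, for otherwise it could overtake the $\delta^{-1}$ main term; every other step is a faithful copy of the proof of Theorem~\ref{mu riesz sign changes}.
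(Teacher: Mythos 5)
Your proposal is correct and follows essentially the same route as the paper's own proof: the paper likewise takes the Riesz kernel $(1-x/y)^{\delta}$, works from the intermediate identity \eqref{added} rather than \eqref{vsf lambda eqn} (so the $c(n)$-series never enters), bounds the integral on $\Re s=-\tau$ by $O(y^{-\tau})$, expands the zero-residues via Lemma \ref{beta} and \eqref{asmp}, and concludes with Lemma \ref{lemma3} together with \eqref{8.7}. The only slip is cosmetic: since $\Phi(s)=y^{s}\Gamma(1+\delta)\Gamma(s)/\Gamma(s+1+\delta)$ has a simple (not double) pole at $s=0$ with residue $1$, one has $R_{0}=\Res_{s=0}\tfrac{\zeta(2s)}{\zeta(s)}\Phi(s)=1$ exactly, not a quantity linear in $\log y$, which only strengthens the $o(\sqrt y)$ bound you use.
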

\begin{proof}
	From \eqref{first expression for I}, \eqref{added} and \eqref{r0}, for a small positive $\tau$,
	\begin{align*}
		\sum_{n=1}^{\infty}\lambda(n)\phi(n)=\frac{1}{2\pi i}\int_{(-\tau)}\frac{\zeta(2s)}{\zeta(s)}\Phi(s)\dd s+\textup{Res}_{s=0}\frac{\zeta(2s)}{\zeta(s)}\Phi(s)+\frac{\Phi\left(\frac{1}{2}\right)}{2\zeta(\frac{1}{2})}+\lim_{T_n\to\infty}\sum_{|\gamma_m|<T_n}\frac{\zeta(2\rho_m)\Phi(\rho_m)}{\zeta'(\rho_m)}.
	\end{align*}
	Let $\phi$ be defined in \eqref{phi special}. 
From Lemma 
\ref{beta} and  \eqref{asmp},
\begin{equation*}
	\begin{split}
		\frac{\zeta(2\rho_m)}{\zeta'(\rho_m)}\int_{0}^{y}\bigg(1-\frac{x}{y}\bigg)^\delta x^{\rho_m-1}\dd x=y^{\rho_m}\frac{\zeta(2\rho_m)}{\zeta'(\rho_m)}\frac{\Gamma(1+\delta)\Gamma(\rho_m)}{\Gamma(1+\rho_m+\delta)} \asymp y^{\rho_m}\frac{\zeta(2\rho_m)}{\zeta'(\rho_m)}\frac{\Gamma(1+\delta)}{\rho_m^{1+\delta}} + O\bigg(\sqrt{y}\frac{\zeta(2\rho_m)}{\zeta'(\rho_m)\rho_m^2}\bigg).
	\end{split}
\end{equation*}
Now use Lemma \ref{lemma3} with $C_m=\rho^{-\delta-1}_m\zeta(2\rho_m)/\zeta'(\rho_m)$ and $\alpha_m=\gamma_m$ for $m>0$ and $x=\log y$ and use \eqref{8.7} to obtain
\begin{equation}\label{to get final 1}
\limsup_{y\to\infty}\sum_{m=1}^{\infty}y^{\di\gamma_m}\frac{\zeta(2\rho_m)}{\rho_m^{1+\delta}\zeta'(\rho_m)}\geq\frac{c'}{\delta}\hspace{4mm}\text{and}\hspace{4mm}\liminf_{y\to\infty}\sum_{m=1}^{\infty}y^{\di\gamma_m}\frac{\zeta(2\rho_m)}{\rho_m^{1+\delta}\zeta'(\rho_m)}\leq-\frac{c'}{\delta}
\end{equation}
for some absolute constant $c'>0.$
Moreover,
\begin{equation*}
	\int_{(-\tau)}\frac{\zeta(2s)}{\zeta(s)}\Phi(s)\dd s\ll \int_{(-\tau)}\bigg|\frac{\zeta(2s)}{\zeta(s)}\bigg||\Phi(s)||\dd s|\ll y^{-\tau}\int_{(-\tau)}\bigg|\frac{\Gamma(1+\delta)\Gamma(s)\zeta(2s)}{\Gamma(1+k+s)\zeta(s)}\bigg||\dd s|=O(y^{-\tau}). 
\end{equation*}
Also $\Res_{s=0}\frac{\zeta(2s)}{\zeta(s)}\Phi(s)=1$. So
\begin{equation*}
	\begin{split}
		\sum_{n\leq y}\lambda(n)\bigg(1-\frac{n}{y}\bigg)^\delta&=\frac{y^{1/2}}{2\zeta(\frac{1}{2})}\frac{\Gamma(1/2)\Gamma(1+\delta)}{\Gamma(1+\delta+1/2)}+ \lim_{T_n\to \infty}\sqrt{y}\sum_{|\gamma_m|<T_n}y^{\di \gamma_m}\frac{\zeta(2\rho_m)}{\zeta'(\rho_m)}\frac{\Gamma(1+\delta)}{\rho_m^{1+\delta}} + O\left(\sqrt{y}\right)\\
		&\quad+O(y^{-\tau})+1.
	\end{split}
\end{equation*}
Dividing both sides by $\sqrt{y}$, we get
\begin{equation*}
	\begin{split}
		\sum_{n\leq y}\frac{\lambda(n)(1-n/y)^{\delta}}{\sqrt{y}}&= \lim_{T_n\to \infty}\sum_{|\gamma_m|<T_n}\frac{y^{\di \gamma_m}}{\zeta'(\rho_m)}\frac{\Gamma(1+\delta)}{\rho_m^{1+\delta}} + O(1),
	\end{split}
\end{equation*}
which, along with \eqref{to get final 1}, implies \eqref{final 2}.

\end{proof}

\begin{theorem}\label{d(n) squared sign changes}
	Let $\delta>0$. Assume RH,  simplicity of the non-trivial zeros of $\zeta(s)$, Equation \eqref{conj1}, and the Linear Independence conjecture. Then there exists an absolute positive constant $c$ such that for all sufficiently small $\delta$,
	\begin{align}\label{final 3}
	\limsup_{y\to \infty}\frac{\sum_{n\leq y}d^2(n)(1-n/y)^{1+\delta}-g(y)}{y^{1/4}}\geq\frac{c}{\delta}
	\hspace{4mm}\text{and}\hspace{4mm}
	\liminf_{y\to \infty}\frac{\sum_{n\leq y}d^2(n)(1-n/y)^{1+\delta}-g(y)}{y^{1/4}}\leq-\frac{c}{\delta},
\end{align}
	where
	 \begin{align}
	 	g(y)&:=\frac{y}{2+\delta}\sum_{j=0}^{3}B_{\delta, j}(y),\label{g(y)}\\
 B_{\delta, j}(y)&:=A_j\int_{0}^y\left(1-\frac{x}{y}\right)^{1+\delta}\log^{j}(x)\dd x\hspace{8mm}(0\leq j\leq 3),\nonumber\\
 \end{align} 
 with $A_j, 1\leq j\leq 3$, defined in \eqref{A0-A3}.
\end{theorem}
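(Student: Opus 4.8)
\textbf{Proof proposal for Theorem~\ref{d(n) squared sign changes}.}
The plan is to mimic the proofs of Theorems~\ref{mu riesz sign changes} and~\ref{lambda riesz sign changes}, working from the contour shift that underlies Theorem~\ref{vsf dn squared}. Take the test function $\phi(x)=(1-x/y)^{1+\delta}$ for $0<x\le y$ and $\phi(x)=0$ otherwise, so that $\sum_{n\le y}d^2(n)(1-n/y)^{1+\delta}=\sum_{n=1}^{\infty}d^2(n)\phi(n)=\frac{1}{2\pi i}\int_{(1+\tau)}\frac{\zeta^4(s)}{\zeta(2s)}\Phi(s)\,\dd s$, and, by Lemma~\ref{beta}, $\Phi(s)=y^s\Gamma(2+\delta)\Gamma(s)/\Gamma(s+2+\delta)$. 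By~\eqref{strivert} this only satisfies $\Phi(\sigma+\di t)\ll_{y,\delta}|t|^{-2-\delta}$, short of the $|t|^{-3-\delta}$ needed in Theorem~\ref{vsf dn squared}, and moreover $\Phi$ has a simple pole at $s=0$, which that theorem disallows. Both difficulties are circumvented by shifting the line of integration only as far as $\Re s=\sigma_1:=\tfrac14-\tfrac{\delta}{4}$ (for small $\delta$, $0<\sigma_1<\tfrac14$), rather than past $0$. With the sequence $\{T_n\}$ of Theorem~\ref{vsf lambda(n)} and RH (via the functional equation~\eqref{fe zeta} one has $\zeta^4(\sigma_1+\di T_n)\ll T_n^{1+\delta+\epsilon}$ and $1/\zeta(2\sigma_1+2\di T_n)\ll T_n^{-\delta/2+\epsilon}$), the bound $\Phi(\sigma+\di T_n)\ll_\delta T_n^{-2-\delta}y^{1+\tau}$ still forces the horizontal integrals to zero as $T_n\to\infty$, as in the proof of Theorem~\ref{vsf dn squared}. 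Between $\Re s=\sigma_1$ and $\Re s=1+\tau$ the integrand has only the order‑four pole at $s=1$ (note $1/\zeta(2s)$ vanishes at $s=\tfrac12$) and the simple poles at $s=\rho_m/2$, whose real part $\tfrac14$ now lies to the right of $\sigma_1$ under RH. Hence, by the residue theorem,
\begin{equation*}
\sum_{n\le y}d^2(n)(1-n/y)^{1+\delta}=\frac{1}{2\pi i}\int_{(\sigma_1)}\frac{\zeta^4(s)}{\zeta(2s)}\Phi(s)\,\dd s+R_1+\lim_{T_n\to\infty}\sum_{|\gamma_m|<T_n}R_{\rho_m/2},
\end{equation*}
where $R_1$ is the residue at $s=1$ appearing in~\eqref{1R}, i.e. the smooth main term $g(y)$ of~\eqref{final 3}, and $R_{\rho_m/2}=\frac{\zeta^4(\rho_m/2)}{2\zeta'(\rho_m)}\int_0^y(1-x/y)^{1+\delta}x^{\rho_m/2-1}\,\dd x$ as in~\eqref{zeroR}.

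The next step is to show the shifted integral is $o(y^{1/4})$. On $\Re s=\sigma_1$ one has $\Re(1-\sigma_1)=\tfrac34+\tfrac{\delta}{4}>\tfrac12$ and $\Re(1-2\sigma_1)=\tfrac12+\tfrac{\delta}{2}>\tfrac12$, so RH gives $\zeta(1-\sigma_1-\di t)\ll_\epsilon|t|^\epsilon$ and $1/\zeta(1-2\sigma_1-2\di t)\ll_\epsilon|t|^\epsilon$; feeding these into~\eqref{fe zeta} yields $|\zeta^4(\sigma_1+\di t)|\ll|t|^{1+\delta+\epsilon}$ and $|1/\zeta(2\sigma_1+2\di t)|\ll|t|^{-\delta/2+\epsilon}$, and combined with $|\Phi(\sigma_1+\di t)|\ll_{y,\delta}y^{\sigma_1}|t|^{-2-\delta}$ from~\eqref{strivert} the integrand is $O_{y,\delta}\big(y^{\sigma_1}|t|^{-1-\delta/2+2\epsilon}\big)$, absolutely integrable once $\epsilon<\delta/4$. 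Thus $\frac{1}{2\pi i}\int_{(\sigma_1)}\frac{\zeta^4(s)}{\zeta(2s)}\Phi(s)\,\dd s=O_\delta(y^{1/4-\delta/4})=o(y^{1/4})$. Next, Lemma~\ref{beta} together with~\eqref{asmp} gives $R_{\rho_m/2}=y^{1/4}y^{\di\gamma_m/2}C_m+O\big(|\zeta^4(\rho_m/2)|\,y^{1/4}/(|\zeta'(\rho_m)|\,|\rho_m|^{3+\delta})\big)$ with $C_m:=\frac{\zeta^4(\rho_m/2)\Gamma(2+\delta)}{2\zeta'(\rho_m)(\rho_m/2)^{2+\delta}}$; under RH and~\eqref{conj1} the error series converges and is $O(y^{1/4})$ with implied constant bounded uniformly as $\delta\to0$, while estimate~\eqref{8.8} of Lemma~\ref{lemma4} shows $\sum_{\gamma_m}|R_{\rho_m/2}|\ll_\delta y^{1/4}/\delta<\infty$, so the sum over zeros is absolutely convergent and the bracketing $\lim_{T_n\to\infty}$ may be dropped.

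Finally, pairing $\gamma_m$ with $\gamma_{-m}=-\gamma_m$ (so $C_{-m}=\overline{C_m}$), invoking the Linear Independence conjecture so that $\{\gamma_m/2:m>0\}$ is linearly independent over $\mathbb{Q}$, and applying Lemma~\ref{lemma3} with parameter $x=\log y\to\infty$, one obtains $\limsup_{y\to\infty}\sum_{\gamma_m}C_m y^{\di\gamma_m/2}=2\sum_{m>0}|C_m|$ and $\liminf_{y\to\infty}\sum_{\gamma_m}C_m y^{\di\gamma_m/2}=-2\sum_{m>0}|C_m|$; by~\eqref{8.8}, $2\sum_{m>0}|C_m|\asymp\Gamma(2+\delta)2^{\delta}\sum_{\gamma_m}\frac{|\zeta^4(\rho_m/2)|}{|\zeta'(\rho_m)||\rho_m|^{2+\delta}}\asymp1/\delta$, so $2\sum_{m>0}|C_m|\ge c_1/\delta$ for an absolute $c_1>0$ and all sufficiently small $\delta$. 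Assembling the pieces,
\begin{equation*}
\frac{\sum_{n\le y}d^2(n)(1-n/y)^{1+\delta}-g(y)}{y^{1/4}}=\sum_{\gamma_m}C_m y^{\di\gamma_m/2}+O(1)+O_\delta(y^{-\delta/4}),
\end{equation*}
and evaluating along the sequences of $y$ furnished by Lemma~\ref{lemma3} gives $\limsup\ge c_1/\delta-O(1)$ and $\liminf\le-c_1/\delta+O(1)$, which yields~\eqref{final 3} with $c=c_1/2$ once $\delta$ is below an absolute threshold. The main obstacle is precisely this first step: since the order‑$(1+\delta)$ Riesz kernel decays too slowly for Theorem~\ref{vsf dn squared} to apply and $\Phi$ is singular at $s=0$, one must redo the contour shift by hand, stopping at $\Re s=\tfrac14-\tfrac\delta4$, and then use RH‑strength pointwise bounds for $\zeta$ (through the functional equation) both to make the residual integral absolutely convergent and to confirm that every non‑oscillating contribution is $o(y^{1/4})$ or $O(y^{1/4})$ with a $\delta$‑uniform constant, so that the $1/\delta$ coming from Lemma~\ref{lemma4} survives.
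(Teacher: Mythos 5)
Your proposal is correct and, in its endgame, identical to the paper's: the same test function $\phi(x)=(1-x/y)^{1+\delta}$, the same extraction of the main term from the order-four pole at $s=1$ (giving $g(y)$), the same use of Lemma \ref{beta} and \eqref{asmp} to write the residue at $s=\rho_m/2$ as $y^{1/4}y^{\di\gamma_m/2}C_m$ plus an absolutely summable error, and the same combination of Lemma \ref{lemma3} (under LI) with \eqref{8.8} of Lemma \ref{lemma4} to produce the $c/\delta$ oscillation. The genuine difference is where you stop the contour. The paper shifts all the way to $\Re s=-\tau$, so it must also collect the residue of $\zeta^4(s)\Phi(s)/\zeta(2s)$ at the simple pole $s=0$ of $\Phi$ (which it computes to be $-1/8$), and it treats the leftover integral on $\Re s=-\tau$ only implicitly ("we are only estimating the integral over $\Re s=-\tau$"); note that this estimate is actually delicate, since with $\Phi(\sigma+\di t)\ll_y |t|^{-2-\delta}$ the integrand on $\Re s=-\tau$ decays only like $|t|^{-1/2+2\tau-\delta}$ by the functional equation, so that integral is not absolutely convergent and exists only as a limit along the $T_n$'s. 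Your choice to stop at $\Re s=\tfrac14-\tfrac{\delta}{4}$ sidesteps both points: the pole at $s=0$ is never crossed (harmless, since its contribution is $O(1)$ anyway), and the residual vertical integral becomes absolutely convergent and explicitly $O_\delta\big(y^{1/4-\delta/4}\big)=o(y^{1/4})$, at the cost of invoking RH/Lindel\"of-type pointwise bounds for $\zeta$ and $1/\zeta$ just to the right of the critical line — bounds that are available because RH is assumed in the theorem. In short, you buy a cleaner and fully justified treatment of the non-oscillating remainder, while the paper's route is marginally shorter but leaves that estimate (and the attendant conditional convergence issue) to the reader; both proofs are otherwise the same, including the shared technical glossing of $1/\zeta(2\sigma+2\di T_n)\ll T_n^{\epsilon}$ on the horizontal segments, which the paper already uses in the proof of Theorem \ref{vsf dn squared}.
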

\begin{proof}
	In Theorem \ref{vsf dn squared}, we considered $\Phi(s)$ to be holomorphic in $-1<\textup{Re}(s)<2$ since we were explicitly evaluating the integral $\int_{(-\tau)}\frac{\zeta^4(s)}{\zeta(2s)}\Phi(s)\dd s$ to be the sum on the right-hand side of \eqref{vsf dn squared eqn}, and had we allowed $\Phi(s)$ to have a pole at $s=0$, computing this would have been unwieldy . However, in what follows, we allow $\Phi(s)$ to have a simple pole at $s=0$ because we are only estimating the integral over Re$(s)=-\tau$. We thus have to consider the contribution of the residue at this additional pole, and because of this, and \eqref{I-series}, \eqref{I-shift} and \eqref{1R}, for a small positive $\tau$,
\begin{align*}
\sum_{n=1}^{\infty}d^2(n)\phi(n)&=\frac{1}{2\pi i}\int_{(-\tau)}\frac{\zeta^4(s)}{\zeta(2s)}\Phi(s)\dd s+\int_{0}^{\infty}(A_0+A_1\log(x)+A_2\log^2( x)+A_3\log^3(x))\phi(x)\dd x\nonumber\\
&\quad+\textup{Res}_{s=0}\frac{\zeta^4(s)}{\zeta(2s)}\Phi(s)+\lim_{T_n\to\infty}\sum_{|\gamma_m|<T_n}\frac{\zeta^4(\rho_m)\Phi(\rho_m)}{\zeta(2\rho_m)},
\end{align*}
where $A_0, A_1, A_2$ and $A_3$ are defined in \eqref{A0-A3}. In the above identity, let
$$
\phi(x)=
\begin{cases}
	\left(1-\frac{x}{y}\right)^{1+\delta}&\text {if $x\leq y$},\\
	0 &\text{otherwise}.
\end{cases}
$$
Now
\begin{equation*}
	\begin{split}
		\frac{\zeta^4(\frac{\rho_m}{2})}{\zeta'(\rho_m)}\int_{0}^{y}\bigg(1-\frac{x}{y}\bigg)^{1+\delta}x^{\frac{\rho_m}{2}-1}\dd x&=\frac{\zeta^4(\frac{\rho_m}{2})}{\zeta'(\rho_m)}y^{\rho_m/2}\frac{\Gamma(2+\delta)\Gamma(\rho_m/2)}{\Gamma(2+\delta+\rho_m/2)}\\
		&=y^{1/4}\frac{\zeta^4(\frac{\rho_m}{2})\Gamma(2+\delta)}{\zeta'(\rho_m)(\rho_m/2)^{2+\delta}}y^{\di \gamma_m/2}+O\bigg(y^{1/4}\frac{\zeta^4(\frac{\rho_m}{2})}{\zeta'(\rho_m)\rho_m^3}\bigg).
	\end{split}
\end{equation*}
Also,
$$
\Res_{s=0}\frac{\zeta^4(s)}{\zeta(2s)}\Phi(s)=\Res_{s=0}\frac{\zeta^4(s)}{\zeta(2s)}y^s \frac{\Gamma(\delta+2)\Gamma(s)}{\Gamma(s+\delta+2)}=-\frac{1}{8}.
$$
So we get
\begin{equation*}
	\begin{split}
		\sum_{n\leq y}d^2(n)\bigg(1-\frac{n}{y}\bigg)^{1+\delta}&=g(y)-\frac{1}{8}+\lim_{T_n\to \infty}\sum_{|\gamma_m|\leq T_n}y^{1/4}\frac{2^{2+\delta}\zeta^4(\frac{\rho_m}{2})}{\zeta'(\rho_m)\rho_m^{2+\delta}}y^{\di \gamma_m/2}+O\bigg(y^{1/4}\frac{\zeta^4(\frac{\rho_m}{2})}{\zeta'(\rho_m)\rho_m^3}\bigg),
	\end{split}
\end{equation*}
where $g(y)$ is defined in \eqref{g(y)}. Dividing both sides by $y^{1/4}$, we get
\begin{equation*}
	\begin{split}
		\frac{ \sum_{n\leq y}d^2(n)(1-n/y)^{1+\delta}-g(y)}{y^{1/4}}=
		\lim_{T_n\to \infty}\sum_{|\gamma_m|\leq T_n}\frac{2^{2+\delta}\zeta^4(\frac{\rho_m}{2})}{\zeta'(\rho_m)\rho_m^{2+\delta}}y^{\di \gamma_m/2}+O(1).
	\end{split}
\end{equation*}
Using \eqref{8.8} and ideas similar to those used to prove Theorems \ref{mu riesz sign changes} and \ref{lambda riesz sign changes}, we are led to \eqref{final 3}.
\end{proof}
\begin{remark}
While we do not explicitly write down $B_{\delta, j}(x)$, $1\leq j\leq 3$, we note that $g(y)/y$ is a cubic polynomial in $\log(y)$. 
\end{remark}
\begin{remark}
	The assumption of \eqref{conj1} in Theorems \ref{mu riesz sign changes}, \ref{lambda riesz sign changes} and \ref{d(n) squared sign changes} can be replaced by the weaker assumption that the series $A$, $B$, and $C$ in Lemma \ref{lemma4} converge. 
	\end{remark}
Results analogous to those derived in Theorems \ref{mu riesz sign changes}-\ref{d(n) squared sign changes} can be obtained by choosing $\phi_y(x)=e^{-xy}$, however, the techniques involved in proving them are similar, and hence to avoid repetition, we have chosen to forego the calculations.

\section{Concluding remarks}\label{cr}
Vorono\"{\dotlessi} summation formulas have been historically used to improve the error term associated to the summatory function of the arithmetic function involved. Several articles have been written on refining the error term of $\sum_{n\leq x}d^2(n)$, in particular. Let $E(x)$ denote the error term. The current best upper bound on $E(x)$ is by Jia and Sankaranarayanan \cite{jia-sankaranarayanan} who proved that
$E(x)=O\left(x^{1/2}\log^5(x)\right)$ whereas Chandrasekharan and Narasimhan \cite{chna} have shown that $E(x)=\Omega_{\pm}(x^{1/4})$. See \cite{jia-sankaranarayanan}, \cite[p.~438]{robles-roy} for the history on this topic. It would be nice to see if the Vorono\"{\dotlessi} summation formula that we have obtained for $d^2(n)$ (Theorem \ref{vsf dn squared}) could be used for improving $E(x)$. Although the hypotheses of Theorem \ref{vsf dn squared} does not allow us to take $\phi(x)$ to be the characteristic function of the interval $[1,x]$, where $x>1$, heuristically, if we choose $\phi$ to be so, then the main term of our result turns out to be exactly the one given by Ramanujan \cite{ramanujan-some formulae}, namely\footnote{Our $A_0, A_1, A_2$ and $A_3$ are respectively $D, C, B$ and $A$ of Ramanujan.}, 
$A_0x+A_1x\log(x)+A_2x\log^2(x)+A_3x\log^{3}(x)$. Thus, it is certainly of merit to expand the class of admissible functions $\phi$ for which Theorem \ref{vsf dn squared} is valid and thereby obtain better estimates on the error term.

 Koshliakov \cite{koshliakov} devised an ingenious method to derive the Vorono\"{\dotlessi} summation for $\sum_{\alpha<n<\beta}d(n)\phi(n)$,  where $0<\a<\b, \a,\b\notin\mathbb{Z},$ and $f$ denotes a functional analytic inside a closed contour strictly containing $[\a, \b]$. This technique requires nothing more than the functional equation of $\zeta(s)$ and the Cauchy residue theorem. It was adapted in \cite[Theorem 6.1]{bdrz1} and then in \cite[Theorem 2.2]{dmv} to handle the corresponding formulas for $\sigma_s(n)$ and $\sigma_s^{(k)}(n)$ respectively. In his method, Koshliakov starts from the identity \eqref{obe} (which, as mentioned in the introduction, results from \eqref{voronoi dn} by replacing $x$ by $ix$ and $-ix$ and adding the resulting two identities), replaces $x$ by $iz$ and $-iz$ in \eqref{obe}, adds the corresponding sides of the two resulting identities, applies the functional equation of $\zeta(s)$, and then analyzes a certain contour integral to arrive at the Vorono\"{\dotlessi} summation formula. 
 
 Unfortunately, we were unable to adapt this technique for the arithmetic functions we have considered in our paper. For example, consider the case of $d^{2}(n)$. We \emph{do} have the analogue of \eqref{voronoi dn}, namely, \eqref{cohen square of d(n)}. However, if we replace $x$ in there by $ix$ and $-ix$, and add the resulting two identities to obtain a corresponding analogue of \eqref{obe}, the problem is the resulting series of the non-trivial zeros of $\zeta(s)$ diverges. However, if one is able to modify this step and thereby circumvent this obstacle, one may be able to obtain the Vorono\"{\dotlessi} summation not only for $\sum_{\alpha<n<\beta}d^2(n)\phi(n)$, but also for $\sum_{\alpha<n<\beta}\sigma_a(n)\sigma_b(n)\phi(n)$. Then one may as well try out the same in the setting of $\lambda(n)$ for which, again, the analogue of \eqref{voronoi dn} exists, namely, \eqref{cohen lambda eqn}. This definitely seems to be a worthwhile problem to try. 

It would also be important to obtain analogues of our summation formulas and identities of Ramanujan-Guinand type and Cohen type for different $L$-functions. For example, Berndt, Kim and Zaharescu \cite{bkz-koshliakov} derived the Ramanujan-Guinand type formula for the twisted divisor function $d_{\chi}(n):=\sum_{d|n}\chi(d)$, where $\chi$ is an even primitive Dirichlet character, and used to show that when $\chi$ is real, $L(1, \chi)>0$.

In Section \ref{sign changes}, we obtained results on oscillations of the Riesz sums associated with $\lambda(n), \mu(n)$ and $d^2(n)$. As remarked at the end of that section, one may derive similar results where the test function is $e^{-xy}$. The key thing which makes such results work is the presence of the series over the non-trivial zeros of $\zeta(s)$. In light of this, it may be interesting to find other functions than the ones we have studied which also exhibit oscillations caused due to the series over the non-trivial zeros.
\begin{center}
\textbf{Acknowledgements}
\end{center}
The authors sincerely thank Timothy Trudgian, M. Ram Murty, Andr\'{e}s Chirre, Nathan Ng, Arindam Roy and Nicolas Robles for helpful discussions, and to Rahul Kumar for sending us copies of \cite{heap-li-zhao}, \cite{littlewood} and \cite{titchmarsh}. The second author is supported by the Swarnajayanti Fellowship grant SB/SJF/2021-22/08 of ANRF (Govt. of India). The first author is a postdoctoral fellow funded by the same grant. Both the authors thank ANRF for the support.

\end{document}